\newtheorem{theorem}{Theorem}[section]
\newtheorem{lemma}[theorem]{Lemma}
\newtheorem{proposition}[theorem]{Proposition}
\newtheorem{definition}[theorem]{Definition}
\theoremstyle{remark}
\newtheorem{remark}[theorem]{\it \bf{Remark}\/}
\numberwithin{equation}{section}
\def\section{\@startsection{section}{1}%
  \z@{1.5\linespacing\@plus\linespacing}{.5\linespacing}%
  {\normalfont\bfseries\large\centering}}
\newcommand{\be}{\begin{equation}}
\newcommand{\ee}{\end{equation}}
\newcommand{\bea}{\begin{eqnarray}}
\newcommand{\eea}{\end{eqnarray}}
\newcommand{\bee}{\begin{eqnarray*}}
\newcommand{\eee}{\end{eqnarray*}}
\def\pa{\partial}
\def\NN{\mathbb{N}}
\def\RR{\mathbb{R}}
\def\fref#1{{\rm (\ref{#1})}}
\def\G{{\Gamma}}
\def\supess{\mathop{\operator@font Sup\,ess}}
\def\NN{\mathbb{N}}
\def\RR{\mathbb{R}}
\def\H{{\mathcal H}}
\def\e{\varepsilon}
\def\fref#1{{\rm (\ref{#1})}}
\def\R2+{\RR ^2_+}
\def\lsl{\frac{\lambda_s}{\lambda}}
\def\pa{\partial}
\def\lim{\mathop{\rm lim}}
\def\e{\varepsilon}
\def\log{{\rm log}}
\def\vdl{V_{\lambda}^{(2)}}
\def\lsl{\frac{\lambda_s}{\lambda}}
\def\vul{V_{\lambda}^{(1)}}
\def\eb{\e}
\def\ebo{\e}
\def\qbt{P_{B_1}}
\def\qbtb{P_{B_1}}
\def\S{\Sigma}
\def\pa{\partial}
\def\pa{\partial}
\title[]{Stable blow up dynamics for the critical co-rotational Wave Maps and equivariant Yang-Mills problems}
\author[P. Rapha\"el]{Pierre Rapha\"el}
\address{Institut de Math\'ematiques de Toulouse, Universit\'e Toulouse III, France}
\email{pierre.raphael@math.univ-toulouse.fr}
\author[I. Rodnianski]{Igor Rodnianski}
\address{Mathematics Department, Princeton University, USA}
\email{irod@math.princeton.edu}
\begin{document}
\maketitle

\begin{abstract}
We exhibit stable finite time blow up regimes for the energy critical co-rotational Wave Map with the ${\Bbb S}^2$ target in all homotopy classes and for the  critical equivariant $SO(4)$ Yang-Mills problem. We derive sharp asymptotics on the dynamics at blow up time and prove quantization of the energy focused at the singularity.
\end{abstract}


\section{Introduction}


In this paper, we study the dynamics of two critical problems: the $(2+1)$-dimensional Wave Map and the $(4+1)$-dimensional Yang-Mills equations. These problems admit non trivial static solutions (topological solitons) which have been extensively studied in the literature both from the mathematical and physical point of view, see e.g. \cite{BP},\cite{BPST},\cite{DK},\cite{He},\cite{Mo},\cite{U},\cite{W}. The static solutions for the (WM) are harmonic maps from $\Bbb R^2$ into $\Bbb S^2\subset {\Bbb R}^3$
satisfying the equation
$$
-\Delta \Phi=\Phi |\nabla\Phi|^2
$$
 They are explicit solutions of the $O(3)$ nonlinear $\sigma$-model of isotropic plane ferromagnets. 
 For the (YM) equations a particularly interesting class of static solutions is formed by (anti)self-dual instantons, 
 satisfying the equations 
 $$
 F=\pm *F
 $$
for the curvature $F$ of an $so(4)$-valued connection over ${\Bbb R}^4$. The $4$-dimensional euclidean Yang-Mills theory forms a basis of the Standard Model of particle physics and its special static solutions played an important role
as pseudoparticle models in Quantum Field Theory.

 The geometry of the moduli space of static solutions has been a subject of a thorough investigation, see e.g. 
 \cite{Wa},\cite{ADHM},\cite{DK},\cite{ES}.  In particular, the moduli spaces are incomplete due to  
 the scale invariance property of both problems. This gave rise to a plausible scenario of singularity formation in the corresponding time dependent equation which has been studied heuristically, numerically and very recently from a mathematical point of view, \cite{B2},\cite{IL},\cite{KST1},\cite{KST2},\cite{RS},\cite{MS} and references therein.

The focus of this paper is the investigation of special classes of solutions to the critical $(2+1)$-dimensional (WM) and the critical $(4+1)$-dimensional (YM) describing a {\bf stable} (in a fixed
co-rotational class) and {\bf universal} regime in which an open set of initial data leads to a finite time formation 
of singularities.\\

The Wave Map problem for a map $\Phi:{\Bbb R}^{2+1}\to {\Bbb S}^2\subset {\Bbb R}^3$ is described by a nonlinear 
hyperbolic evolution equation
$$
\pa_t^2 \Phi-\Delta\Phi=\Phi \left (|\nabla\Phi|^2-|\pa_t\Phi|^2\right)
$$
with initial data $\Phi_0:{\Bbb R}^2\to {\Bbb S}^2$ and $\pa_t\Phi|_{t=0}=\Phi_1:{\Bbb R}^2\to T_{\Phi_0} {\Bbb S}^2$.
We will study the problem under an additional assumption of co-rotational symmetry, which can be described as follows.
Parametrizing the target sphere with the Euler angles $\Phi=(\Theta,u)$ we assume that the solution has a special form
$$
\Theta(t,r,\theta)=k\theta,\qquad u(t,r,\theta)=u(t,r)
$$
with an integer constant $k\ge 1$ -- homotopy index of the map $\Phi(t,\cdot): {\Bbb R}^2\to {\Bbb S}^2$. Under such 
symmetry assumption the full wave map system reduces to the one dimensional semilinear wave equation:  
\be
\label{WMsphere}
\partial^2_{t}u-\partial^2_{r}u-\frac{\partial_ru}{r}+k^2\frac{\sin(2u)}{2r^2}=0, \ \ k\geq 1, \ \ (t,r)\in \RR\times\RR_+, \ \ k\in \NN^*.
\ee 
Similarly, the equivariant reduction, given by the ansatz,
 $$
 A_\alpha^{ij}=(\delta^i_\alpha x^j-\delta^j_\alpha x^i) \frac {1-u(t,r)}{r^2},
 $$
 of the $(4+1)$-dimensional Yang-Mills system 
\begin{align*}
&F_{\alpha\beta} =\partial_\alpha A_\beta-\partial_\beta A_\alpha + [A_\alpha,A_\beta],\\
&\pa_\beta F^{\alpha\beta}+[A_\beta,F^{\alpha\beta}]=0,\qquad \alpha,\beta=0,...,3
\end{align*}
for the $so(4)$-valued gauge potential $A_\alpha$ and curvature $F_{\alpha\beta}$,
leads in the semilinear wave equation: 
\be
\label{YMradial}
\partial_{t}^2u-\partial^2_{r}u-\frac{\partial_ru}{r}-\frac{2u(1-u^2)}{r^2}=0,  \ \ (t,r)\in \RR\times\RR_+.
\ee 
The problems \eqref{WMsphere} and \eqref{YMradial} can be unified by an equation of the form
\be
\label{equation}
\left\{\begin{array}{ll}\partial^2_{t}u-\partial^2_{r}u-\frac{\partial_ru}{r}+k^2\frac{f(u)}{r^2}=0,\\
	u_{|t=0}=u_0, \ \ (\partial_tu)_{|t=0}=v_0
	\end{array}\right .
	 \ \ \mbox{with} \ \ f=gg'
\ee 
and 
$$
g(u)=\left\{\begin{array}{ll} 
\sin(u), \ \ k\in \NN^* \ \ \mbox{for} \ \ (WM)\\
\frac 12 (1-u^2),\ \ k=2 \ \ \mbox{for} \ \ (YM).
\end{array}\right .
$$
\eqref{equation} admits a conserved energy quantity
$$
E(u,\partial_t u)=\int_{{\Bbb R}^2} \left ((\pa_t u)^2+ |\pa_r u|^2 + k^2\frac{g^2(u)}{r^2}\right)
$$
which is left invariant by the scaling symmetry $$u_{\lambda}(t,r)=u(\frac{t}{\lambda},\frac{r}{\lambda}), \ \ \lambda>0.$$The minimizers of the energy functional can be explicitly obtained as \be\label{eq:WQ}
Q(r)=2\tan^{-1} (r^k) \ \ \mbox{for}\ \ (WM), \ \ Q(r)=\frac {1-r^2}{1+r^2}\ \ \mbox{for} \ \ (YM),
\ee
and their rescalings which constitute the moduli space of stationary solutions in the given corotational homotopy class.\\

A sufficient condition for the global existence of solutions to \fref{equation} was established in the pioneering works by Christodoulou-Tahvildar-Zadeh \cite{CT}, Shatah-Tahvildar-Zadeh \cite{ST}, Struwe \cite{Struwe}. It can be described as folllows: for smooth initial data $(u_0,v_0)$ with $E(u_0,v_0)<E(Q)$, the corresponding solution to \fref{equation} is global in time and decays to zero, see also \cite{CKM}. More precisely, it was shown that if a singularity is formed at time $T<+\infty$, then energy must concentrate at $r=0$ and $t=T$. This concentration must happen strictly inside the backward light cone from $(T,0)$, that is if the scale of concentration is $\lambda(t)$, then 
\be
\label{noteselfsimilar}
\frac{\lambda(t)}{T-t}\to 0 \ \ \mbox{as} \ \ t\to T.
\ee
Note that the case $\lambda(t)=T-t$ would correspond to self-similar blow up which is therefore ruled out. Finally, a universal blow up profile may be extracted in rescaled variables, at least on a sequence of times:
\be
\label{profile}
u(t_n,\lambda(t_n)r)\to Q \ \ \mbox{in} \ \ H^1_{loc} \ \ \mbox{as}  \ \ n\to +\infty.
\ee
These results hold for more general targets for (WM) with $Q$ being a non trivial harmonic map. In particular, this implies the global existence and propagation of regularity for the corotational (WM) problem with targets admitting no non trivial harmonic map from $\Bbb R^2$. Very recently, in a series of works \cite{Taor},\cite{Tao27},\cite{St-Ta},\cite{St-Ta1},\cite{KSS}, this result has been remarkably extended to the full (WM) problem without the assumpion of corotational symmetry, hence completing the program developed in \cite{KS},\cite{KlMa},\cite{T},\cite{Tao},\cite{KM}.\\

These works leave open the question of existence and description of singularity formation in the presence of non trivial harmonic maps, or the instanton for the (YM). This long standing question has first been addressed through some numerical and heuristic works in \cite{B1},\cite{B2},\cite{IL},\cite{PZ},\cite{SO}. In particular, the 
blow up rates of the 
concentration scale 
\begin{align*}
&\lambda(t)\sim B\frac{T-t}{|\log(T-t)|^{\frac{1}{2}}} \ \ \ \ \mbox{for (YM)},\\
&\lambda(t)\sim A (T^*-t) e^{-\sqrt {|\ln (T^*-t)|}}\ \ \ \ \mbox{for (WM) with}\ \ k=1
\end{align*}
with specific constants $A,B$ have been predicted in a very interesting 
work \cite{B2} and, a very recent, \cite{SO} respectively.

Instability of $Q$ for the $k=1$ (WM) and (YM) was shown by C\^ote in \cite{Cote}. 
A rigorous evidence of singularity formation has been recently given via two different approaches. In \cite{RS}, Rodnianski and Sterbenz study the (WM) system for a large homotopy number $k\geq 4$ and prove the existence of {\it stable} finite time blow up dynamics. These solutions behave near blow up time according to the decomposition
\be
\label{strctureq}
u(t,r)=(Q+\e)(t,\frac{r}{\lambda(t)}) \ \ \mbox{with} \ \ \|\e,\partial_t\e\|_{\dot{H}^1\times L^2}\ll1 
\ee
 with a lower bound on the concentration:
\be
\label{lowerbound}
\lambda(t)\to 0\ \ \mbox{as} \ \ t\to T \ \ \mbox{with} \ \ \lambda(t)\geq \frac{T-t}{|\log(T-t)|^{\frac{1}{4}}}.
\ee
In \cite{KST1}, \cite{KST2}, Krieger, Schlag and Tataru consider respectively the (WM) system for $k=1$ and the (YM) equation and exhibit finite time blow up solutions which satisfy \fref{strctureq} with 
\begin{equation}\label{eq:ratesKST}
\begin{split}
&\lambda(t)=(T-t)^{\nu} \ \ \mbox{for (WM) with} \ \ \ k=1,\\
&\lambda(t)=(T-t) |\log (T-t)|^{-\nu} \ \ \mbox{for (YM)} 
\end{split}
\end{equation} 
for any chosen $\nu>\frac 32$.
This continuum of blow up solutions is believed to be non-generic.


\subsection{Statement of the result}


In this paper, we give a complete description of a stable singularity formation for the (WM) for all homotopy classes and the (YM) in the presence of corotational/equivariant symmetry near the harmonic map/instanton. The following theorem is the main result of this paper.

\begin{theorem}[Stable blow up dynamics of co-rotational Wave Maps and Yang-Mills]
\label{mainthm}
Let $k\geq 1$. Let $\H_a^{2}$ denote the affine Sobolev space \fref{defhatwo}.There exists a set ${\mathcal{O}}$ of initial data which is open in $\mathcal H_a^{2}$ and a universal constant $c_k>0$ such that the following holds true. For all $(u_0,v_0)\in{\mathcal{O}}$, the corresponding solution to \fref{equation} blows up in finite time $0<T=T(u_0,v_0)<+\infty$ according to the following universal scenario:\\
{\em (i) Sharp description of the blow up speed}: There exists $\lambda(t)\in \mathcal C^1([0,T),\RR^*_+)$ such that:
\be
\label{convustarbis}
 u(t,\lambda(t)y)\to Q\ \ \mbox{in} \ \ H^1_{r,loc} \ \ \mbox{as}  \ \ t\to T
\ee
with the following asymptotics:
\be
\label{universallawkgeq}
\lambda(t)=c_k(1+o(1))\frac{T-t}{|\log(T-t)|^{\frac{1}{2k-2}}} \ \ \mbox{as} \ \ t\to T\ \ \mbox{for} \ \ k\geq 2,\\
\ee
\be
\label{universallawkgeq2}
\lambda(t)=(T-t)e^{-\sqrt{|\log(T-t)|}+O(1)} \ \ \mbox{as} \ \ t\to T \ \ \mbox{for} \ \ k=1.
\ee
$$
\lambda(t)=c_2(1+o(1))\frac{T-t}{|\log(T-t)|^{\frac{1}{2}}} \ \ \mbox{as} \ \ t\to T\ \ \mbox{for} \ \ \mbox{(YM)}.\\
$$
Moreover, 
$$
b(t):=-\lambda_t(t)=\frac{\lambda(t)}{T-t}(1+o(1))\to 0 \ \ \mbox{as} \ \ t\to T 
$$
{\em (ii) Quantization of the focused energy}: Let $\mathcal H$ be the energy space \fref{defenergyspcar},  then there exist $(u^*,v^*)\in \mathcal H$ such that the following holds true. Pick a smooth cut off function $\chi$ with $\chi(y)=1$ for $y\leq 1$ and let $\chi_{\frac{1}{b(t)}}(y)=\chi(b(t)y)$, then:
\be
\label{convustarb}
\lim_{t\to T}\left\|u(t,r)-\left(\chi_{\frac{1}{b(t)}}Q\right)(\frac{r}{\lambda(t)})-u^*,\partial_t\left[u(t,r)- \left(\chi_{\frac{1}{b(t)}}Q\right)(\frac{r}{\lambda(t)})-v^*\right]\right\|_{\mathcal H}=0.
\ee
Moreover, there holds the quantization of the focused energy:
\be
\label{qunitoief}
E_0=E(u,\partial_tu)=E(Q,0)+E(u^*,v^*).
\ee
\end{theorem}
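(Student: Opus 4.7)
The plan is to implement the modulation/energy strategy developed by the first author and collaborators for critical dispersive equations, substantially refining the approach of the earlier Rodnianski--Sterbenz work on wave maps with large homotopy number so that it handles all $k \ge 1$ and the Yang--Mills case simultaneously.

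\textbf{Step 1: Construction of a sharp blow up profile.} I would first build a one-parameter family $P_B$ of approximate self-similar solutions via an Ansatz $P_B = Q + \sum_{j\ge 1} b^{2j} T_j$, where each $T_j$ solves a linear elliptic problem $H T_j = F_j$ with $H$ the linearization of the nonlinearity $k^2 f(u)/r^2$ around $Q$. The operator $H$ factorizes as $H = A^*A$, and the scaling generator $\Lambda Q$ lies in $\ker H$ but is not in $L^2$ at infinity since $\Lambda Q \sim r^{-k}$. A logarithmic resonance arises precisely at the $k$-th step of the iteration, and this is what enforces the relation between $b$ and a matching scale $B$; truncating the tail at scale $1/B$ then produces a finite-energy profile $P_B$ whose forcing error $\Psi_B$ is concentrated near $y \sim 1/B$ and of size compatible with the predicted blow up rate.

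\textbf{Step 2: Geometric decomposition and modulation.} For initial data in a small tube around the soliton family I would write
\[
u(t,r) = \bigl(P_{B(t)} + \varepsilon\bigr)\!\left(t, \frac{r}{\lambda(t)}\right),
\]
and determine $(\lambda(t), b(t), B(t))$ by imposing two orthogonality conditions on $\varepsilon$ against suitably truncated versions of $\Lambda Q$ and a companion direction. Projecting the equation onto the modulation directions yields, in the rescaled time $ds = dt/\lambda$, the dynamical system $-\lambda_s/\lambda \simeq b$ together with a law of the form $b_s + c_k\, b^{2k}/|\log b|^{\alpha_k} \simeq 0$ for $k\ge 2$ (with a tailored analog for $k=1$ reflecting the different resonance structure), modulo remainders controlled by Sobolev norms of $\varepsilon$ and by $\Psi_B$.

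\textbf{Step 3: High order Lyapunov control of $\varepsilon$.} The technical heart, and the main obstacle, is to propagate a sharp bound on $\varepsilon$ in an $\mathcal H^2$-type norm. Using $H = A^*A$ I would commute the equation with $H$ and work with $w = H\varepsilon$, whose natural energy $\int (\partial_t w)^2 + |Aw|^2$ is coercive on the orthogonal of the kernel directions killed by the modulation. One then designs a virial-type correction linear in $b$ so that the time derivative of the modified functional, after integration by parts matches the flux terms to the modulation corrections from Step 2, gains sign up to the profile error. The difficulty is genuine: the wave flow has no intrinsic dissipation, so every bit of gain must come from the geometric deformation along the trajectory, and the logarithmic losses generated by the $k$-th order resonance in $P_B$ must match exactly the weights appearing in the coercivity estimate for $w$. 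This is where the sharp choice of $P_B$ and of the orthogonality conditions pays off.

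\textbf{Step 4: Bootstrap, sharp rate, and energy quantization.} With the Lyapunov bound in hand I would run a bootstrap on suitable a priori assumptions for $(b, \lambda, \varepsilon)$. Integrating the closed modulation ODE and using $-\lambda_t \simeq b$ yields the rate $\lambda(t) \simeq c_k (T-t)/|\log(T-t)|^{1/(2k-2)}$ for $k\ge 2$ and its Yang--Mills analog, whereas the resonant case $k=1$ integrates precisely to the exponential correction $e^{-\sqrt{|\log(T-t)|}+O(1)}$. Openness of $\mathcal O$ in $\mathcal H^2_a$ follows from the continuous dependence of the modulation parameters on initial data together with the invertibility of the map $(\lambda, b, \varepsilon) \mapsto (u, \partial_t u)$. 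Finally, the asymptotic profile $(u^*, v^*)$ is extracted by combining finite speed of propagation with the control of $\varepsilon$ outside the self-similar region, and the quantization $E_0 = E(Q,0) + E(u^*, v^*)$ is a direct consequence of energy conservation together with the strong convergence $u(t) - (\chi_{1/b} Q)(\cdot/\lambda) \to u^*$ in $\mathcal H$ and the vanishing of the cross term in the energy identity as $t \to T$.
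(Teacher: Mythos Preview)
Your outline captures the overall architecture correctly, but there are two genuine gaps in Step 3 that would prevent the argument from closing for $k=1,2$, and a related imprecision in Step 2 that feeds into them.

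First, the mechanism you propose for the $\mathcal H^2$ control is not the one that works here. You suggest commuting with $H$ and adding a virial-type correction; this is essentially the Rodnianski--Sterbenz strategy, and the paper explains that simultaneous use of energy and Morawetz estimates runs into insurmountable difficulties for $k=1,2$. The paper's replacement is to commute only with the first-order factor $A_\lambda$ (recall $H_\lambda=A_\lambda^*A_\lambda$) and to study $W=A_\lambda w$, whose evolution is governed by the \emph{conjugate} Hamiltonian $\tilde H_\lambda=A_\lambda A_\lambda^*$. The point is that $\tilde H_\lambda$ is unconditionally coercive and, crucially, the time derivative of its potential produces a term with a \emph{sign} (see Remark \ref{rem:V}); the gain comes from this repulsive structure, not from a virial identity. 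Without this mechanism your Lyapunov functional will not close for small $k$.

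Second, and equally essential, you are missing the localized energy $\mathcal E_\sigma$. The global $\tilde{\mathcal H}$ bound one can prove is $\mathcal E\lesssim b^{2k+2}$, and this is \emph{sharp} because of slowly decaying terms in $\Psi_{B_1}$ supported near $y\sim B_1=|\log b|/b$. That bound alone is \emph{not} sufficient to identify the leading flux in the $b_s$ law: when you project onto $\Lambda P_{B_0}$ (a second, shorter-range profile with $B_0\sim 1/b$, distinct from the $B_1$ used in the decomposition) the error term is $O(b^{k-1}\sqrt{\mathcal E_\sigma})$ and must be $o(b^{2k})$. The paper obtains the needed improvement $\mathcal E_\sigma\lesssim b^{2k+2}/|\log b|^2$ by writing the energy identity on the space-like hypersurface $r=2\lambda(t)/b(t)$, which cuts off the bad part of $\Psi_{B_1}$. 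For $k=1$ every gain is logarithmic, so this step is indispensable and has no analogue in your sketch.

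Two smaller points: the resonance in the profile construction occurs at order $p\sim k/2$, not $k$; and the law for $b$ is $b_s\sim -\tilde c_k b^{2k}$ with no logarithm for $k\ge 2$ (the logarithm enters only for $k=1$, where $b_s\sim -b^2/(2|\log b|)$). Also, the paper uses a \emph{single} orthogonality condition $(\varepsilon,\chi_M\Lambda Q)=0$ and simply \emph{defines} $b=-\lambda_t$; the sharp $b_s$ law is then extracted by a separate flux computation, not by a second modulation equation.
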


This theorem thus gives a complete description of a stable blow up regime for all homotopy numbers $k\geq 1$
and the (YM) problem, which can be formally compared with the $k=2$ case of (WM). Stable blow up solutions in $\mathcal O$ decompose into a singular part with a universal structure and a regular part which has a strong limit in the scale invariant space. Moreover, the amount of energy which is focused by the singular part is a universal quantum independent of the Cauchy data.\\

{\it Comments on the result}\\

{\it 1. $k=1$ case}: In the $k\geq 2$ and (YM) case, the blow up speed $\lambda(t)$ is to leading order universal ie independent of initial data. On the contrary, in the $k=1$ case, the presence of the $e^{O(1)}$ factor in the blow up speed seems to suggest that the law is not entirely universal and has an additional degree of freedom depending on the initial data. In general, the analysis of the $k=1$ and to some extent $k=2$ problems is more involved. In particular for $k=1$, the instability direction $r\pa_r Q$ driving the singularity formation misses the $L^2$ space logarithmically. This anomalous logarithmic growth is fundamental in determining the blow up rate. On the other hand, this anomaly also adversely  influences the size of the radiation term which implies that there is only a logarithmic difference between the leading order and the radiative corrections. This requires a very precise analysis and a careful track of all logarithmic gains and losses. In the case of larger k, these gains are polynomial and hence the effect of radiation is more easily decoupled from the leading order behavior. In this paper, we adopted a universal approach which simultaneously treats all cases.\\

{\it 2. $k=2$ case}: The analysis of the $k=2$ case for the (WM) problem is almost identical to that required
to treat the (YM) equations. In what follows we will subsume the (YM) problem into the $k=2$ regime of (WM),
making appropriate modifications, caused by a small difference in the structure of the nonlinearities in the two equations, in necessary 
places. \\

{\it 3. Regularity of initial data}: The open set ${\mathcal O}$ of initial data described in the theorem 
contains an open subset of $C^\infty$ data coinciding with $Q$ for all sufficiently large values of $r\ge R$. 
As a consequence,  the main result of the paper in particular describes singularity formation in solutions arising
from {\it smooth} initial data. This should be compared with the results in \cite{KST1},\cite{KST2} where solutions,
specifically constructed to exhibit the blow up behavior given by the rates in \eqref{eq:ratesKST}, lead to the 
initial data of limited regularity dependent on the value of the parameter $\nu$ and degenerating as $\nu\to \frac 32$.\\

{\it 4. Comparison with the $L^2$ critical (NLS)}: This theorem as stated can be compared to the description of the stable blow up regime for the $L^2$ critical (NLS) $$iu_t+\Delta u+u|u|^{\frac{4}{N}}=0, \ \ (t,x)\in [0,T)\times\RR^N, \ \ N\geq 1,$$ see Perelman \cite{Perelman} and the series of papers by Merle and Rapha\"el \cite{MR1}, \cite{MR2}, \cite{R1}, \cite{MR3}, \cite{MR4}, \cite{MR5}. There is a conceptual analogy between the mechanisms of a stable regime singularity formation for the critical (WM) and (YM) problems and the $L^2$ critical (NLS) problem. 
For the latter problem the sharp blow up speed and the quantization of the blow up mass is derived in \cite{MR3}, \cite{MR4}, \cite{MR5}. The concentration 
occurs on an almost self-similar scale 
$$\lambda(t)\sim \sqrt{\frac{2\pi(T-t)}{\log|\log(T-t)|}} \ \ \mbox{as} \ \ t\to T.$$ 
In both (WM), (YM) and the $L^2$ critical (NLS) problems self-similar singularity formation is corrected by subtle interactions
between the ground state and the radiation parts of the solution. The precise nature of these interactions, affecting the 
blow up laws, depends in a very sensitive fashion on the asymptotic behavior of the ground state: polynomially decaying 
to the final value for the (WM) and (YM) and exponentially decaying for the (NLS), see also \cite{LMR} for related considerations. This dependence becomes particularly apparent upon examining
the blow up rates for the (WM) problem in different homotopy classes parametrized by $k$. For $k=1$ the harmonic 
map approaches its constant value at infinity at the slowest rate, which leads to the strongest deviation of the corresponding blow up 
rate from the self-similar law.\\

{\it 5. Least energy blow up solutions}: The importance of the $k=1$ case for the (WM) problem is due to the fact that the $k=1$ ground state is the least energy harmonic map: $$E(Q)=4\pi k.$$ A closer investigation of the structure of $Q$ for $k\geq 2$ shows that this configuration corresponds to the accumulation of $k$ topological charges at the origin $r=0$. 
For the full, non-symmetric problem, we expect such configurations to split under a generic perturbation into a collection of $k=1$
harmonic maps and lead to a different dynamics driven by the evolution of each of the $k=1$ ground states and their interaction.

From this point of view the stability of the least energy $k=1$ configuration under generic non-symmetric perturbations is an important remaining problem.


\subsection{Functional spaces and notations}
\newcommand\ep{\epsilon}


For a pair of functions $(\ep(y),\sigma(y))$, we let 
\be
\label{defenergyspcar}
\|(\ep,\sigma)\|_{\mathcal H}^2=\int\left[\sigma^2+(\partial_y \ep)^2+\frac{\ep^2}{y^2}\right]
\ee
define the energy space. We also define the $\mathcal H^2$ Sobolev space with norm:
\be
\label{defhsoboloevkgeqtwo}
\|(\ep,\sigma)\|^2_{\H^2}=\|(\ep,\sigma)\|_{\mathcal H}^2+\int\left[(\partial_y^{2}\ep)^2+\frac{(\partial_y \ep)^2}{y^2}+(\partial_y \sigma)^2+\frac{\sigma^2}{y^2}\right] \ \ \mbox{for}  \ \ k\geq 2,
\ee
\be
\label{defhsoboloevkgeqone}
\nonumber \|(\ep,\sigma)\|^2_{\H^{2}} =  \|(\ep,\sigma)\|_{\mathcal H}^2+\int\left[(\partial_y^2 \ep)^2+(\partial_y\sigma)^2+\frac{\sigma^2}{y^2}\right]
 +  \int_{y\leq 1}\frac{1}{y^2}\left(\partial_y\ep-\frac{\ep}{y}\right)^2\ \mbox{for} \ \  k=1.
\ee
and a related norm, in the relevant case of $\sigma=\pa_t\epsilon$, 
\be\label{eq:th-norm}
\|\ep\|_{\tilde{\mathcal H}}^2=|H\ep|^2_{L^2}+\|(\pa_t\ep,0)\|_{H}^2
\ee
where $H$ is the linearized Hamiltonian defined in \fref{defhamiltoninalineaire}. Observe that \fref{defenergyspcar}, \fref{defhsoboloevkgeqtwo}, \fref{defhsoboloevkgeqone} and \eqref{eq:th-norm}
require vanishing of 
$(\ep,\sigma)$ at the origin.\\
We then define an affine space 
\be
\label{defhatwo}
\mathcal H^2_a=\mathcal H^2+Q.
\ee
\vskip 1pc
\noindent
We denote $$(f,g)=\int fg=\int_{0}^{+\infty}f(r)g(r)rdr$$ the $L^2(\RR^2)$ radial inner product. We define the differential operators: 
\be
\label{defdl}
\Lambda f=y\cdot\nabla f \ \ (\mbox{$\dot{H}^1$ scaling}), \ \ Df=f+y\cdot\nabla f\ \  (\mbox{$L^2$ scaling})\ee and observe the integration by parts formula:
\be
\label{adjoinctionfrimula}
(Df,g)=-(f,Dg), \ \ (\Lambda f,g)+(\Lambda g,f)=-2(f,g).
\ee
Given $f$ and $\lambda>0$, we shall denote: $$f_{\lambda}(t,r)=f(t,\frac{r}{\lambda})=f(t,y),$$ 
and the rescaled variable will always be denoted by $$y=\frac{r}{\lambda}.$$ For a time-dependent scaling parameter $\lambda(t)$ we define the rescaled time
$$
s=\int_0^t \frac {d\tau}{\lambda^2(\tau)}
$$
We let $\chi$ be a smooth positive radial cut off function $\chi(r)=1$ for $r\leq 1$ and $\chi(r)=0$ for $r\geq 2$. For a given parameter $B>0$, we let 
\be
\label{defchib}
\chi_B(r)=\chi(\frac{r}{B}).
\ee
Given $b>0$, we set
\be
\label{defbnot}
B_0=\frac 1{b \sqrt{3\int y\chi(y) dy}},\quad B_c=\frac{2}{b}, \quad B_1=\frac{|\log b|}{b}.
\ee


\subsection{Strategy of the proof}


We now briefly sketch the main ingredients of the proof of Theorem \ref{mainthm}.\\

{\bf Step 1} The family of approximate self similar profiles.\\

We start with the construction of suitable approximate self-similar solutions in the fashion related to 
the approach developed in \cite{MR2}, \cite{MR4}. Following the scaling invariance of \fref{equation}, we pass to the the self-similar variables and look for a one parameter family of self similar solutions  dependent on a small parameter $b>0$: $$u(t,r)=Q_b(y), \ \ y=\frac{r}{\lambda(t)}, \ \ \lambda(t)=b(T-t).$$ This transformation maps \fref{equation} into the self-similar equation:
\be
\label{eqselfsimilaire}
-\Delta v+b^2D\Lambda Q_b+k^2\frac{f(v)}{y^2}=0
\ee
where the differential operators $\Lambda,D$ are given by \fref{defdl}. A well known class of exact solutions are given by the explicit profiles:
$$Q_b(r)=Q\left(\frac{r}{1+\sqrt{1-b^2r^2}}\right), \ \ r\leq \frac{1}{b}.$$ These solutions were used by 
C\^ote to prove that $Q$ is unstable for both (WM) and (YM),
\cite{Cote}. A direct inspection however reveals that these have infinite energy due to a logarithmic divergence  on the backward light cone $$r=(T-t) \ \ \mbox{equivalently} \ \ y=\frac{1}{b}.$$ This situation is exactly the same for the $L^2$ critical (NLS), \cite{MR2}, and reveals the critical nature of the problem.
Note that in higher dimensions finite energy self-similar solutions can be shown to exist thus providing 
explicit blow up solutions to the Wave Map and Yang-Mills equations, \cite{Sh}, \cite{CST}.\\
In order to find {\it finite energy} suitable approximate solutions to \fref{eqselfsimilaire} in the vicinity of the ground state $Q$ we construct to a formal expansion $$Q_b=Q+\Sigma_{i=1}^pb^{2i}T_i.$$ Substituting the ansatz into the self-similar equation \fref{eqselfsimilaire}, we get at the order $b^{2i}$ an equation of the form: 
\be
\label{vhohollo}
HT_{i}=F_i
\ee 
where 
\be
\label{defhamiltoninalineaire}
H=-\Delta +k^2\frac{f'(Q)}{y^2}
\ee
is obtained by linearizing \eqref{eqselfsimilaire} on $Q$ (setting $b=0$) and $F_i$ is a nonlinear expression. The solvability of \fref{vhohollo} requires that $F_i$ is orthogonal  to the kernel of $H$, 
which is explicit by the variational characterization of $Q$: 
\be
\label{kernel}
Ker(H)=\mbox{span}(\Lambda Q)
\ee 
and hence the orthogonality condition: 
\be
\label{cnooenoe}
(F_i,\Lambda Q)=0.
\ee
While the condition \fref{cnooenoe} seems at first hand to be a very nonlinear condition, it can be easily checked to hold due to the specific algebra of the $H^1$ critical problem and its connection to the Pohozaev identity. In fact, if $Q_b^{(p)}=Q+\Sigma_{i=1}^pb^{2i}T_i$ is the expansion of the profile to the order $p$, then \fref{cnooenoe} holds as long as the Pohozaev computation is valid:
\bea
\label{oeoe}
 & & \left(-\Delta Q_b^{(p)}+b^2D\Lambda Q_b^{(p)}+k^2\frac{f(Q_b^{(p)})}{y^2},D\Lambda Q_b^{(p)}\right)\\
 & = & \lim_{R\to+\infty}\left[\frac{b^2}{2}|r\Lambda Q_b^{(p)}(R)|^2+\frac{k^2}{2}|g(Q_b^{(p)}(R)|^2\right]
= 0,\label{eq:Poh}
\eea
see step 2 of the proof of Proposition \ref{propqb}, section \ref{sectionqb}. By a direct computation, $F_1\sim D\Lambda Q\sim \frac{1}{y^k} \ \ \mbox{as} \ \ y\to +\infty$ and at each step, the inversion of \fref{vhohollo} dampens the decay of $T_{i+1}$ at infinity by an extra $y^2$ factor, and hence the validity of \eqref{eq:Poh} comes under question after $p$ steps, for as $y\to \infty$:
\be
\label{coeobneo}
T_p(y)\sim \frac{c_k}{y} \ \ \mbox{for} \ \ p=\frac{k-1}{2},  \ \ k\ \ \mbox{odd},
\ee
\be
\label{cnoneonee}
T_p(y)\sim c_k\ \ \mbox{for} \ \ p=\frac{k}{2},  \ \ k\ \ \mbox{even}.
\ee
In fact \fref{coeobneo}, \fref{cnoneonee} will result in a {\it universal nontrivial} flux type contribution to \fref{oeoe}. Moreover, $T_p$ is the 
first term which gives an infinite contribution to the energy of the approximate self-similar profile $Q^{(p)}_b(\frac r{\lambda(t)})$.  $T_p$ is the {\it radiation} term which becomes dominant in the region $y\ge \frac 1{b}$ -- exterior to the backward light cone from a singularity at the point 
$(T,0)$.  We therefore stop the asymptotic expansion at $p$\footnote{We will in fact also need the next term $T_{p+1}$ in the expansion. Its construction will be made possible thanks to a subtle cancellation, see step 4 of the proof of Proposition \ref{propqb}} and localize constructed profiles by connecting $Q_b$ to the constant $a=Q(+\infty)$, which is also an exact self-similar solution:
\be
\label{defbniheohe}
P_{B_1}=\chi_{B_1}Q_b+(1-\chi_{B_1})a, \ \ B_1= \frac{|\log b|}{b}>>\frac{1}{b}
\ee
where $\chi_{B_1}=1$ for $y\leq B_1$, $\chi_{B_1}=0$ for $y\geq 2B_1$. $P_{B_1}$ satisfies an approximate self-similar equation of the form: 
\be
\label{esciheihee}
-\Delta P_{B_1}+b^2D\Lambda P_{B_1}+k^2\frac{f(P_{B_1})}{y^2}=\Psi_{B_1}
\ee 
where $\Psi_{B_1}$ is very small inside the light cone $y\leq \frac{1}{b}$ but encodes a slow decay near $B_1$ induced by the cut off function and the radiative behavior of $T_p$ at infinity.\\

{\bf Step 2} The $H^2$ type bound.\\

Let now $u(t,r)$ be the solution to \fref{equation} for a suitably chosen initial data close enough to $Q$. Given the profile $P_{B_1}$, we introduce, with the help of the standard modulation theory, a decomposition of the wave: 
$$
u(t,r)=P_{B_1(t)}(\frac r{\lambda(t)}) + w(t,r)
$$
or alternatively
$$u(t,r)=(P_{B_1(t)}+\e)(s,y), \ \ y=\frac{r}{\lambda(t)}, \ \ \frac{ds}{dt}=\frac{1}{\lambda}$$ with $B_1$ given by \fref{defbniheohe} and where we have {\bf set} the relation 
\be
\label{defbbbb}
b(s)=-\lsl=-\lambda_t.
\ee 
The decomposition is complemented by the orthogonality condition\footnote{The actual orthogonality condition is defined with respect to a cut-off version of $\Lambda Q$.} 
$$\forall s>0, \ \  (\e(s),\Lambda Q)=0$$ as is natural from \fref{kernel}. Our first main claim is the derivation of a {\it pointwise in time bound } on $\e$ 
\be
\label{cbeoeooe}
\|\e\|_{\tilde{H}}\lesssim b^{k+1}
\ee 
in a certain weighted Sobolev space $\tilde{\mathcal H}$. The norm in the space $\tilde{\mathcal H}$ is given by the 
expression
\be\label{eq:tH}
\|\epsilon\|^2_{\tilde{\mathcal H}} = |H \epsilon|_{L^2}^2+\|(\partial_t\epsilon,0)\|_{\mathcal H}^2. 
\ee
and is based on the linear Hamiltonian $H$ associated with the ground state $Q$, see \eqref{eq:th-norm}.
We note in passing that, after adding the norm $\|(\ep,\pa_t\ep)\|_{\mathcal H}^2$, for $k\ge 2$ this norm is equivalent to the $\mathcal H^2$ norm introduced in \eqref{defhsoboloevkgeqone}. There are however subtle differences in the corresponding norms in the
case $k=1$, connected with the behavior for $y\ge 1$.
\vskip 1pc
\noindent
Bounds related to \eqref{cbeoeooe} but for a weaker norm than $\tilde{\mathcal H}$ and with $b^{k+1}$ replaced by
$b^4$ were derived in \cite{RS} for higher homotopy classes $k\ge 4$. They were a consequence of the proof of energy and Morawetz type estimates for the corresponding nonlinear problem satisfied by $w$. The linear part of the equation 
for $w$ is given by the expression
$$
\pa_t^2 w+ H_\lambda w
$$
with the Hamiltonian
\be
\label{defhinitiale}
H_\lambda=-\Delta + \frac {f'(Q_{\lambda})}{r^2}
\ee
Special variational nature of $Q$, discovered in \cite{BP}, provides an important factorization property 
for $H_\lambda$:
\be\label{eq:factor}
H_\lambda= A^*_\lambda A_\lambda, \qquad A_\lambda=-\pa_r + k \frac {g'(Q_{\lambda})}{r}. 
\ee
It arises as a consequence of the fact that\footnote{We restrict this discussion to the (WM) case. Similar
considerations also apply to the (YM) problem, \cite{Bo}} $Q$ represents the co-rotational global minimum of energy $V[\Phi]$
in a given 
topological class of maps $\Phi: {\Bbb R}^2\to {\Bbb S}^2$ of degree $k$.
$$
    V[\Phi]\ =\ \frac{1}{2}
    \, \int_{\mathbb{R}^2} \left (\nabla_x\Phi\cdot\nabla_x\Phi\right) \ dx,
$$
which can be factorized
using the notation $\epsilon_{ij}$ for the antisymmetric tensor on
two indices, as follows:
\begin{align}
  \begin{split}
    V[\Phi]\ &= \ \frac{1}{4}\,
    \int_{\mathbb{R}^2}\ \left[(\pa_i\Phi\pm \epsilon_{i}^{\ \, j}\Phi\times\pa_j\Phi)
    \cdot (\pa^i\Phi\pm \epsilon^{ij}\Phi\times\pa_j\Phi)\right]\  dx\ \\
    &\ \ \ \ \ \ \ \ \ \ \ \ \ \ \ \ \ \ \ \ \ \ \ \ \ \ \ \ \ \
    \ \ \ \ \ \  \ \ \ \pm \
    \frac{1}{2}\, \int_{\mathbb{R}^2} \epsilon^{ij} \Phi\cdot (\pa_i \Phi\times
    \pa_j\Phi)\ dx \ ,
  \end{split}\label{general_lines}\\
    &=\ \frac{1}{4}\, \int_{\mathbb{R}^2}\
     \left[(\pa_i\Phi\pm \epsilon_{i}^{\ \, j}\Phi\times\pa_j\Phi)
    \cdot (\pa^i\Phi\pm \epsilon^{ij}\Phi\times\pa_j\Phi)\right]\  dx
     \ \pm \ 4\pi k \ \notag
\end{align}
from which it is immediate that an absolute minimum
of the energy functional $V[\Phi]$ in a given topological sector $k$ must
be a solution of the equation:
\begin{equation}
    \pa_i \Phi \pm \epsilon_{i}^{\ \, j} \Phi\times
    \pa_j \Phi \ = \ 0 \ . \label{eq:Bog}
\end{equation}
The ground state $Q$ is precisely the representation of the unique co-rotational solution of 
\eqref{eq:Bog}.\\

\noindent
In \cite{RS} factorization \eqref{eq:factor} gave the basis for the $H^2$ and Morawetz type bounds for $w$, obtained by conjugating
the problem for $w$ with the help of the operator $A_\lambda$, so that 
$$
A_\lambda H_\lambda w=\tilde H_\lambda (A_\lambda w)
$$
with $\tilde H_\lambda= A_\lambda A^*_\lambda$, and exploiting the space-time repulsive properties
of $\tilde H_\lambda$ to derive the energy and Morawetz estimates for $A_\lambda w$.  Simultaneous use of pointwise in time energy bounds and space-time Morawetz estimates however runs into 
difficulties in the cases $k=1, 2$, which become seemingly insurmountable for $k=1$.\\
 
 \noindent
We propose here a new approach, still  based on the factorization of $H_\lambda$, yet relying 
{\bf only} on the appropriate {\it energy estimates} for the associated Hamiltonian 
$\tilde H_\lambda$, which retains its repulsive properties even in the most difficult cases of $k=1,2$.
We note that $\|\ep\|_{\tilde H}$ norm introduced above can be conveniently written in the form 
$$
\|\ep\|_{\tilde H}^2= \lambda^2 (\tilde H_\lambda A_\lambda w,A_\lambda w) + \lambda^2\|(\pa_tw,0\|_{H}^2.
$$
\\
One difficulty will be that the bound \fref{cbeoeooe} {\it is not sufficient} to derive the sharp blow up speed. The size $b^{k+1}$ in the RHS of \fref{cbeoeooe} is sharp and is induced by  a very slowly decaying term in $\Psi_{B_1}$ in \fref{esciheihee}, which arises from the localization of the profile $Q_b$. Such terms however are {\it localized} on $y\sim B_1>>\frac{1}{b}$ far away from the backward light cone with the vertex at the singularity. 
Another crucial new feature of our analysis here is a use of {\it localized} energy identities. 
It is based on the idea of writing the energy identity in the region bounded by the initial hypersurface
$t=0$ and the hypersurface 
$$
r=2\frac {\lambda(t)}{b(t)},\quad {\text{equivalently}}\quad y=\frac 2{b(t)}
$$
which, under the bootstrap blow up assumptions, is complete (the point $r=0$ is reached at the 
blow up time) and space-like. Such an energy identity effectively restricts 
the error term $\Psi_{B_1}$ to the region $y\le 2/b$, where it is better behaved, and leads to an 
improved bound: 
\be
\label{cbeoeooebis}
\|\e\|_{\tilde{H}(y\leq \frac{2}{b})}\lesssim \frac{b^{k+1}}{|\log b|},
\ee 
see Proposition \ref{propinside} in section \ref{sectionhtwo}. Note that the logarithmic gain from \fref{cbeoeooe} to \fref{cbeoeooebis} is typical of the $k=1$ case and can be turned to a polynomial gain for $k\geq 2$.\\

{\bf Step 3} The flux computation and the derivation of the sharp law.\\

The pointwise bounds \fref{cbeoeooe}, \fref{cbeoeooebis} are specific to the {\it almost self-similar regime} we are describing. They are 
derived by a bootstrap argument, which incidentally requires {\it only} an upper bound\footnote{Such an upper bound is already sufficient
to conclude the finite time blow up and establish a lower bound on the concentration scale $\lambda(t)$.} on $|b_s|$, see Lemma \ref{roughboundpointw}. To derive the precise law for $b$ we examine the equation for $\e$, which has the following approximate
form: 
\be
\label{cnoeoer}
\partial_s^2\e+H_{B_1}\e=-b_s\Lambda P_{B_1}+\Psi_{B_1}+\mbox{L.O.T.}
\ee where $H_{B_1}=-\Delta +k^2\frac{f'(P_{B_1})}{y^2} $. We consider an almost self-similar solution 
$P_{B_0}$ localized on the scale $B_0=\frac{c}{b}$ with a specific constant $0<c<1$ defined in \eqref{defbnot} and project this equation onto $\Lambda P_{B_0}$, which is almost in the null space of $H_{B_1}$. The result is the identity of the form: 
\be
\label{cnbioebeibep}
b_s|\Lambda P_{B_0}|_{L^2}^2=(\Psi_{B_1},\Lambda P_{B_0})+O(b^{k-1}\|\e\|_{\tilde{H}(y\leq \frac{2}{b})}).
\ee
The first term in the above RHS yields the leading order flux and tracks the nontrivial contribution of $T_p$ to the Pohozaev integration \fref{oeoe}:
$$(\Psi_{B_1},\Lambda P_{B_0})=-c_kb^{2k}(1+o(1))$$ for some universal constant $c_k$. This computation can be thought of as related to the derivation of the log-log law in \cite{MR4}. The 
$\epsilon$-term in \fref{cnbioebeibep} is treated with the help of \fref{cbeoeooebis}, observe that \fref{cbeoeooe} alone would not have been enough: $$O(b^{k-1}\|\e\|_{\tilde{H}(y\leq \frac{2}{b})})=o(b^{2k}).$$ Finally, from the behavior $$\Lambda Q\sim \frac{1}{y^k}\ \ \mbox{as} \ \ y\to +\infty$$ and $P_{B_0}\sim Q$ for $b$ small, there holds:
$$|\Lambda P_{B_0}|_{L^2}^2\sim \left\{\begin{array}{ll} c_k \ \ \mbox{for} \ \ k\geq 2\\
								            c_1 |\log b| \ \ \mbox{for} \ \ k=1
								            \end{array} \right .
								            $$ for some universal constant $c_k>0$. We hence get the following system of ODE's for the scaling law:
$$\frac{ds}{dt}=\frac{1}{\lambda}, \ \ b=-\lsl, \ \ b_s=-\left\{\begin{array}{ll} c_k(1+o(1))b^{2k}\ \ \mbox{for} \ \ k\geq 2\\
(1+o(1))\frac{b^2}{2|\log b|}\ \ \mbox{for} \ \ k=1
\end{array} \right .
$$							     							     
Its integration yields -- for the class of initial data under consideration -- the existence of $T<+\infty$ such that $\lambda(T)=0$ with the laws \fref{universallawkgeq}, \fref{universallawkgeq2} near $T$, thus concluding the proof of the sharp asymptotics \fref{universallawkgeq}, \fref{universallawkgeq2}. The non-concentration of the excess of energy \fref{convustarb}, \fref{qunitoief} now follows from the dispersive bounds obtained on the solution, hence concluding the proof of Theorem \ref{mainthm}.\\

This paper is organized as follows. In section \ref{sectionlineaire}, we recall some well known facts about the structure of the linear Hamiltonian $H$ close to $Q$ and the orbital stability bounds. In section \ref{sectiontwo}, we construct the approximate self similar profiles $Q_b$ with sharp estimates on their behavior, Proposition \ref{propqb} and Proposition \ref{lemmapsibtilde}. In section \ref{sectionthree}, we explicitly describe the set of initial data of Theorem \ref{mainthm}, Definition \ref{defoinitial}, and set up the bootstrap argument, Proposition \ref{bootstrap}, which proof relies on a rough bound on the blow up speed, Lemma \ref{lemmainitialdata}, and global and local $H^2$ bounds, Lemma \ref{propinside}. In section \ref{sectionfour}, we derive the sharp blow up speed from the obtained energy bounds and the flux computation, Proposition \ref{lemmaalgebra}, and this allows us to conclude the proof of Theorem \ref{mainthm}.\\
			     
{\bf Aknowldegments} This work was partly done while P.R. was visiting Princeton University and I.R. the Institut de Mathematiques de Toulouse, and both authors would like to thank these institutions for their hospitality. 
The authors also wish to acknowledge discussions with J. Sterbenz concerning early stages of this work.
P.R. is supported by the ANR Jeunes Chercheurs SWAP. I.R. is supported by the NSF grant DMS-0702270.						        								            				         	

\section{Ground state and the associated linear Hamiltonian}
\label{sectionlineaire}


The problem
\be\label{eq:nonl}
\pa_t^2 u -\pa_r^2 u-\frac 1r \pa_r u + k^2 \frac {f(u)}{r^2}=0,\qquad f=gg'
\ee
admits a special stationary solution $Q(r)$, and its dilates $Q_\lambda(r)=Q(r/\lambda)$, 
characterized as the global minimum of the corresponding 
energy functional
\begin{align}
E(u,\pa_t u) &=\int \left ((\pa_t u)^2 + (\pa_r u)^2 + k^2 \frac {g^2(u)}{r^2}\right)\notag\\&=
\int \left ((\pa_t u)^2 + (\pa_r u-k \frac {g(u)}r)^2\right)+2k G(u(r))|_{r=0}^{r=\infty},\label{eq:fac-B}
\end{align}
where $G(u)=\int_0^u g(u) du$. In view of such factorization of energy, $Q$ can be found as a solution of
the ODE
$$
r\pa_r Q=k g(Q),
$$
or alternatively
\be\label{formulakey}
\Lambda Q=kg(Q)
\ee
For the (WM) problem the function $g(u)=\sin u$ and for the (YM) equation $g(u)=\frac 12 (1-u^2)$. Therefore, 
$$Q(r)=2\tan^{-1}(r^k),\qquad Q(r)=\frac {1-r^2}{1+r^2}
$$
respectively.

For a solution $u(t,r)$ close to a ground state $Q_\lambda$ the nonlinear problem \eqref{eq:nonl}
can be approximated by a linear inhomogeneous evolution 
$$
\pa_t^2 w + H_\lambda w=F,\qquad u(t,r)=Q_\lambda(r) + w(t,r)
$$
with the linear Hamiltonian 
$$
H_\lambda = -\Delta + k^2 \frac {f'(Q_\lambda)}{r^2}.
$$
We denote the Hamiltonian associated to $Q$ by
$$
H=-\Delta_y +k^2 \frac {f'(Q(y))}{y^2}.
$$
and recall the factorization property \eqref{eq:factor} of  $H$:
\be
\label{defhatsr}
 H=A^*A
\ee 
with
\be
\label{deoperatoaone}
A=-\partial_y+\frac{V^{(1)}}{y}, \ \ A^*=\partial_y+\frac{1+V^{(1)}}{y}, 
\ee
with 
\be
\label{deoperatoa}
V^{(1)}(y)=kg'(Q(y)),
\ee
and:
\be
\label{defaloambinot}
A_{\lambda}=-\partial_r+\frac{V^{(1)}_{\lambda}}{r}, \ \ A^*_{\lambda}=\partial_r+\frac{1+V^{(1)}_{\lambda}}{r}.
\ee This factorization is a consequence of the Bogomol'nyi's factorization of the Hamiltonian \fref{general_lines} 
or, alternatively \eqref{eq:fac-B}. Since $Q$ is an energy minimizer we expect the Hamiltonian $H$ to be 
non-negative definite and possess a kernel generated by the function $\Lambda Q$ -- generator of dilations 
(scaling symmetry) of the ground state $Q$. Factorization of $H$ however leads to even a stronger property,
which on one hand confirms that the kernel of $H$ is one dimensional but also
leads to the fundamental cancellation: 
\be
\label{cancnelalq}
A(\Lambda Q)=0,
\ee
that is $\Lambda Q$ lies in the kernel of $A$. We note that for $k=1$ the function $\Lambda Q$ 
is not in $L^2({\Bbb R}^2)$ and thus formally does not belong to the domain of $H$. The structure of the 
kernel of $H$ leads to the following statement of orbital stability of the ground state.
\begin{lemma}[Orbital stability of the ground state, \cite{Cote}, \cite{RS}]
\label{orbstab}
For any initial data $(u_0,u_1)$ with the property that $u_0=Q_{\lambda_0} + w_0$
and $\|(w_0,u_1)\|_{H}<\epsilon$ with $\epsilon$ sufficiently small, 
and for any $t\in [0,T)$ with $0<T\leq +\infty$ the maximum time of existence of the classical solution with 
data $(u_0,u_1)$, there exists a unique 
decomposition of the flow $$u(t)=Q_{\lambda(t)}+w(t)$$ with $\lambda(t)\in {\mathcal{C}}^2([0,T), \RR^*_+)$ and 
$$\forall t\in [0,T), \ \  |\partial_tu|_{L^2}+|\lambda_t(t)|+\|w(t),0\|_{\mathcal H}\lesssim O(\epsilon)
$$ satisfying 
the orthogonality condition 
\be
\label{defhovohf}
\forall t\in [0,T), \ \ (w(t,\lambda(t)\cdot),\chi_M\Lambda Q)=0.
\ee 
\end{lemma}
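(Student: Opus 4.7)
The plan is to combine a standard modulation argument based on the implicit function theorem with a Bogomol'nyi-type coercivity estimate for the energy, closed by a continuity/bootstrap on the time variable. Both ingredients are classical, but the cutoff $\chi_M$ plays a critical role in the $k=1$ case where $\Lambda Q$ fails to be in $L^2(\RR^2)$.

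First I would construct the decomposition locally near a fixed scaling $Q_{\lambda_0}$ via the IFT. Defining, for a fixed large $M$, the smooth functional
\[
\Phi(v,\lambda) := \bigl(v(\lambda\cdot) - Q,\ \chi_M \Lambda Q\bigr)
\]
on an $\mathcal H$-neighborhood of $Q_{\lambda_0}$ times a neighborhood of $\lambda_0$, I would compute the partial derivative in $\lambda$ at $(Q_{\lambda_0},\lambda_0)$, which is equal to $-\lambda_0^{-1}(\Lambda Q, \chi_M\Lambda Q)$. This quantity is strictly negative, of order one for $k\ge 2$ and of order $\log M$ for $k=1$, so for $M$ sufficiently large the IFT produces a unique $\mathcal C^1$ map $v\mapsto \lambda(v)$ on an $\mathcal H$-neighborhood of $Q_{\lambda_0}$ satisfying \fref{defhovohf}.

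Next I would propagate this local decomposition along the flow. By well-posedness of \fref{equation} for classical data, the map $t\mapsto(u(t),\partial_t u(t))$ is continuous in $\mathcal H$, so there is a maximal open interval $I\subset[0,T)$ on which $u(t)$ lies in the IFT neighborhood; composing the smooth IFT functional with this continuous flow yields $\lambda\in \mathcal C^2(I,\RR^*_+)$ by differentiating the implicit relation twice.

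The heart of the argument is to show $I=[0,T)$ via a bootstrap based on conservation of energy. Using the factorization \fref{eq:fac-B} and the fact that $u(t)$ remains in the same homotopy class as $Q_{\lambda(t)}$, so that the boundary term $2kG(u)|_0^\infty$ is pinned to $E(Q)$, I can rewrite
\[
E(u_0,u_1) - E(Q,0) = \int \left((\partial_t u)^2 + \bigl(\partial_r u - k\tfrac{g(u)}{r}\bigr)^2\right).
\]
After rescaling $r=\lambda(t)y$ and expanding $g(u)=g(Q)+g'(Q) w + O(w^2)$, the integrand identifies with $|\partial_t u|^2 + \lambda^{-2}|Aw(t,\lambda\cdot)|^2$ up to cubic terms. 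Since $\ker H=\RR\,\Lambda Q$ with $A(\Lambda Q)=0$ by \fref{cancnelalq}, the orthogonality \fref{defhovohf} removes the single null direction and upgrades $|Aw|_{L^2}^2$ to a full $\|w\|_{\mathcal H}^2$ lower bound on the quadratic part. Conservation of energy together with the smallness of $(w_0,u_1)$ then yields $\|(w(t,\lambda(t)\cdot),\partial_t u)\|_{\mathcal H}\lesssim\epsilon$ uniformly on $I$, which closes the bootstrap and forces $I=[0,T)$. To control $\lambda_t$, I would differentiate \fref{defhovohf} in $t$; the resulting identity has the schematic form $c_M \lambda_t = \lambda\,(\partial_t u,\chi_M \Lambda Q) + O(\|w\|_{\mathcal H}^2)$, and the coercivity bound above immediately gives $|\lambda_t|\lesssim\epsilon$.

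The main obstacle I expect is the $k=1$ case: since $\Lambda Q\notin L^2(\RR^2)$, neither the IFT non-degeneracy nor the coercivity can be phrased with $\Lambda Q$ itself, and both must be carried out with respect to $\chi_M\Lambda Q$. One must verify that the (at worst $\log M$-degenerate) coercivity constant arising in the energy step still dominates the correction introduced by projecting against $\chi_M\Lambda Q$ rather than $\Lambda Q$; this remains possible because the failure $\Lambda Q\notin L^2$ is only logarithmic, so a single large choice of $M$ simultaneously achieves the IFT non-degeneracy and the coercive lower bound.
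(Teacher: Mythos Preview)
The paper does not give its own proof of this lemma: it is stated with citations to \cite{Cote} and \cite{RS}, and the subsequent remark simply notes that the non-standard orthogonality condition against $\chi_M\Lambda Q$ (needed because $\Lambda Q\notin L^2$ when $k=1$) can be handled by adapting those arguments, recording as a consequence the coercivity bound \fref{coercivityneergy}. Your sketch is a correct and standard rendition of exactly that argument --- IFT modulation to fix $\lambda(t)$, Bogomol'nyi factorization \fref{eq:fac-B} of the conserved energy to extract $|A_\lambda w|_{L^2}^2$ from the energy excess, coercivity of $|A_\lambda w|_{L^2}^2$ under the orthogonality \fref{defhovohf} (this is precisely \fref{coercivityneergy}), absorption of the cubic remainder via $|w|_{L^\infty}\lesssim\|w\|_{\mathcal H}$, and differentiation of the orthogonality to bound $\lambda_t$ --- so there is nothing to correct.

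One minor comment: your worry about a ``$\log M$-degenerate'' coercivity constant is slightly misplaced. For fixed large $M$ the constant $c(M)$ in \fref{coercivityneergy} is simply a fixed positive number; there is no competition to win between the IFT non-degeneracy and the coercivity, since both hold once $M$ is chosen large enough and then frozen. The $\log M$ growth of $(\Lambda Q,\chi_M\Lambda Q)$ for $k=1$ only becomes relevant later in the paper (Lemma~\ref{roughboundpointw} and Proposition~\ref{lemmaalgebra}), where it is exploited as a \emph{gain} rather than fought as a loss.
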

\begin{remark}
The cut-off function $\chi_M(r)=\chi(r/M)$ equal to one on the interval $[0,M]$ and vanishing for 
$r\ge 2M$ for some sufficiently large universal constant $M$ is introduced to accommodate the
case $k=1$ in which $\Lambda Q(y)$ decays with the rate $y^{-1}$ and thus misses the space
of $L^2$ functions. The imposed orthogonality condition is not standard, however the arguments
in \cite{Cote}, \cite{RS} can be easily adapted to handle this case. 
The statement of the Lemma in particular implies the coercivity of the Hamiltonian $H_\lambda$
\be
\label{coercivityneergy}
(H_\lambda w,w)=|A_\lambda w|^2_{L^2}\ge c(M) \int\left ((\pa_r w)^2+\frac {w^2}{r^2}\right),
\ee
provided that $(w(\lambda \cdot),\chi_M\Lambda Q)=0$.
\end{remark}

We introduce the function 
\be
\label{defW}
W(t,r)=A_{\lambda(t)}w
\ee
The energy type bound on $W$ will lead us to the $H^2$ type bound on $w$. To be more precise, 
we will control the $\tilde H$ norm of the function $\ep(s,y)=w(t,r)$, introduced in \eqref{eq:tH}. \\

We next turn to the equation for $W=A_{\lambda}w$. Following \cite{RS}, an important observation is that the Hamiltonian driving the evolution of $W$ is the {\it conjugate} Hamiltonian 
\be
\label{defhatsrbis}
 \tilde{H}_\lambda=A_{\lambda}A^*_{\lambda}=-\Delta +\frac{k^2+1}{r^2}+\frac{2V^{(1)}_{\lambda}+V^{(2)}_{\lambda}}{r^2}, \ \ V_2(y)=k^2\left[(g')^2-gg''-1\right](Q)
 \ee
which, as opposed to $H$, displays space-time {\it repulsive} properties. 
Commuting the equation for $w$ with $A_{\lambda}$ yields:
\be
\label{Wequation'}
\partial_{tt}W+\widetilde{H}_\lambda W=A_{\lambda}F+\frac{\partial_{tt}V_{\lambda}^{(1)}w}{r}+\frac{2\partial_t\vul\partial_tw}{r}.
\ee 
Observe that in the (WM) case $V^{(2)}\equiv 0$ and
\be
\label{cnheoiheoeuy}
k^2+1+2V^{(1)}+V^{(2)}=(k-1)^2+2k(1+\cos(Q))\geq
\left\{\begin{array}{ll} 1,\,\,\,{\text{for}}\,\,k\ge 2,\\
\frac{1}{1+r^2},\,\,{\text{for}}\,\,k=1.
\end{array}\right . 
\ee
For the (YM) problem $V^{(2)}=-2(1-Q^2)$ and, with $k=2$,
\be
\label{cnheoiheoeuybis} 
k^2+1+ 2V^{(1)} + V^{(2)}=1+2(1-Q)^2\geq 1.
\ee
These inequalities imply that the Hamiltonian $\tilde H_\lambda$
is a positive definite operator with the property that 
\be\label{eq:coerc}
(\tilde H_\lambda W,W)=|A_{\lambda}^*W|_{L^2}^2 \ge C\left\{\begin{array}{ll} \int \left ((\partial_r W)^2 + \frac {W^2}{r^2}\right) \ \ \mbox{for} \ \ k\ge 2,\\
\int \left ((\partial_r W)^2 + \frac {W^2}{r^2(1+\frac{r^2}{\lambda^2})}\right)\ \ \mbox{for} \ \ k=1,\\
\end{array} \right .
\ee
It is important to note that unlike $H_\lambda$, $\tilde H_\lambda$ is unconditionally coercive. However, it 
provides weaker control at infinity in the case $k=1$. The expression 
$$
\lambda^2 (\tilde H_\lambda W,W) + \lambda^2\|(\pa_t W,0)\|_{\mathcal H}^2
$$
is precisely the norm $\|\ep\|^2_{\tilde H}$ we ultimately need to control. Moreover, it obeys the estimate 
$$
\lambda^2 (\tilde H_\lambda W,W) + \lambda^2\|(\pa_t W,0)\|_{\mathcal H}^2\lesssim \|\ep\|_{\mathcal H^2}^2
$$
\\
Associated to the Hamiltonian $\tilde{H}_{\lambda}$, we define global and local energies $\mathcal E(t),\mathcal E_{\sigma}(t)$ used extensively in the paper:
\bea
\label{poitnwiseboundWbis}
\nonumber \mathcal E(t) & = & \lambda^2\int\left[(\partial_tW)^2+(\nabla W)^2+\frac{k^2+1+2\vul+\vdl}{r^2}W^2\right]\\
& = &\lambda^2\left[\int |A_{\lambda(t)}^*W(t)|^2+\int|\partial_tW(t)|^2\right],
\eea
\be
\label{lcalizedenrgybis}
{\mathcal E}_{\sigma}(t) =\lambda^2\int \sigma_{B_c}\left[(\partial_tW)^2+(\nabla W)^2+\frac{k^2+1+2\vul+\vdl}{r^2}W^2\right]
\ee
where we let $B_c=\frac{2}{b}$, as in 
\fref{defbnot}, and $\sigma_{B_c}$ be a cut off function 
\be
\label{ceheoehi}
\sigma_{B_c}(r)=\sigma(\frac{r}{\lambda B_c}) \  \ \mbox{with} \ \ \sigma(r)=\left\{\begin{array}{ll} 1 \ \ \mbox{for} \ \ r\leq 2\\
							    0 \ \ \mbox{for} \ \ r\geq 3,
							    \end{array}
							    \right .
							  \ee				
 We finish this section with the discussion on the admissibility of the functions $u(t,r)$, $w(t,r)=u(t,r)-(P_B)_\lambda(r)$
where  $(P_{B}(r))_\lambda$ is a deformation of $Q_{\lambda}$ which will be defined in section \ref{sectiontwo}. 
 The criterium for admissibility of $w(t,r)=\ep(s,y)$ will be the finiteness of the ${\mathcal H}^2$ norm of 
 $\ep$. 
 
 \begin{proposition}\label{prop:sym}
 Let $\Phi$ be a smooth solution of the (WM)/(YM) problem on the time interval $[0,T(\Phi_0))$ 
 with co-rotational/equivariant initial data $(\Phi_0,\Phi_1)$. Then $(\Phi(t),\pa_t\Phi(t))$ remains co-rotational/equivariant
 for any $t\in [0,T(\Phi_0))$ and its symmetry reduction $u(t,r)$ coincides with the solution of 
the nonlinear problem \eqref{WMsphere}/\fref{YMradial}. Moreover, for any  $t\in [0,T(\Phi_0))$ the function
$u(t)\in {\mathcal H}^2_a$.
 \end{proposition}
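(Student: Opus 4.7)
The proof splits naturally into two parts: propagation of the co-rotational/equivariant ansatz (with its identification with the semilinear reduction), and verification of the $\mathcal H^2_a$ regularity at each time.

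\emph{Symmetry and reduction.} Both the (WM) and (YM) systems are equivariant under the diagonal action of $SO(2)$ on the domain $\RR^2$ together with the natural target action (rotations of $\SS^2$ about the polar axis with weight $k$ for (WM), conjugation of the $so(4)$-valued connection for (YM)). The co-rotational/equivariant ansatz cuts out precisely the fixed set of this action. Since $\Phi$ is assumed classical, uniqueness in the Cauchy problem transports the invariance of $(\Phi_0,\Phi_1)$ to an invariance of $(\Phi(t),\partial_t\Phi(t))$ throughout $[0,T(\Phi_0))$. Substituting $\Phi(t,r,\theta)=(k\theta,u(t,r))$ into the (WM) system and computing the tension field in polar coordinates produces \eqref{WMsphere}; the analogous reduction of the equivariant (YM) ansatz yields \eqref{YMradial}. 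Uniqueness of smooth solutions of this one-dimensional semilinear reduction then identifies the radial profile $u$ with the solution of \eqref{WMsphere}/\eqref{YMradial} launched from the reduced initial data.

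\emph{Regularity in $\mathcal H^2_a$.} We must check that $\epsilon(t):=u(t)-Q\in \mathcal H^2$, i.e. that the weighted integrals in \eqref{defhsoboloevkgeqtwo} (or \eqref{defhsoboloevkgeqone} when $k=1$) are finite at every $t\in[0,T(\Phi_0))$. Smoothness of $\Phi$ immediately gives smoothness of $u(t,r)$ away from $r=0$. At the origin, smoothness of an equivariant map $\RR^2\to\SS^2$ of degree $k$ (respectively of the equivariant Yang-Mills curvature) forces a Taylor expansion of the form
$$
u(t,r)=Q(0)+c_k(t)\,r^k+c_{k+2}(t)\,r^{k+2}+\dots,
$$
matching term-by-term the analogous expansion of $Q$ (both even in $r$ for (YM) and for (WM) with $k$ even, both odd for (WM) with $k$ odd). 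Consequently $\epsilon(t,r)$ vanishes at the origin to order at least $r^k$, and for $k\geq 2$ this suffices to make all the $1/y^2$-weighted integrals in \eqref{defhsoboloevkgeqtwo} convergent near zero.

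\emph{Main obstacle.} The delicate case is $k=1$: the norm \eqref{defhsoboloevkgeqone} has to drop the term $\int(\partial_y\epsilon)^2/y^2$ (which would diverge logarithmically at $y=0$, since $\partial_y\epsilon(t,0)$ need not vanish) and replaces it by $\int_{y\leq 1}y^{-2}(\partial_y\epsilon-\epsilon/y)^2$. The crucial observation is that both $u(t,\cdot)$ and $Q$ have odd Taylor expansions $\alpha(t)r+\beta(t)r^3+O(r^5)$ at the origin, so the combination $\partial_r\epsilon-\epsilon/r$ annihilates the leading linear coefficient and is $O(r^2)$ near zero, which is exactly what is needed for the weighted integral to converge. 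Once this parity/equivariance input is isolated, the rest of the $\mathcal H^2$ bound is routine: smoothness of $\Phi$ handles the integrands on any compact set $\{0<\delta\leq r\leq R\}$, and the tail at infinity is trivial for the subset of $\mathcal O$ coinciding with $Q$ on $\{r\geq R\}$ (by finite propagation speed, $\epsilon(t)$ is compactly supported on $\{r\leq R+t\}$), while the general case follows from the standard persistence of $\mathcal H^2$ regularity under the nonlinear flow.
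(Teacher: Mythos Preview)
Your argument is correct and reaches the same conclusion, but by a somewhat different route than the paper for the delicate $k=1$ origin analysis. The paper does not invoke a Taylor expansion of $u$; instead it observes that the \emph{energy density} $|\partial_t u|^2+|\partial_r u|^2+\sin^2 u/r^2$ and the \emph{Bogomol'nyi density} $|\partial_r u-\sin u/r|^2=\tfrac12(\partial_i\Phi-\epsilon_{ij}\Phi\times\partial_j\Phi)\cdot(\partial_i\Phi-\epsilon_{ij}\Phi\times\partial_j\Phi)$ are smooth radial functions of the full map $\Phi$, hence smooth in $r^2$; together with $\lim_{r\to 0}\partial_r u=\lim_{r\to 0}\sin u/r$ this yields $|\partial_r u-u/r|\le Cr$ directly. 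Your parity argument (smoothness of the equivariant map forces $u$ to have an odd Taylor series at $r=0$, hence $\partial_r\epsilon-\epsilon/r=O(r^2)$) is more elementary and in fact gives one extra order of vanishing; the paper's approach has the virtue of tying the computation to the Bogomol'nyi factorization \eqref{eq:fac-B}, \eqref{general_lines} that underlies the whole linear analysis. Both treatments are equally casual about the tail at infinity: the paper simply asserts that ``finiteness can only fail at $r=0$'', implicitly assuming the initial data already lies in $\mathcal H^2_a$, which is exactly the hypothesis you make explicit via finite speed of propagation and persistence.
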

 
 {\bf Proof of Proposition \ref{prop:sym}}: The first part of the Proposition is a standard statement of propagation of symmetry. 
 We omit its proof. It remains to show that $u(t)\in {\mathcal H}^2_a$.  We give the argument for the (WM) case, the (YM) is left to the reader. We note that $$
 |\pa_r u|=|\pa_r \Phi|,\qquad |\sin (u)|=|\pa_\theta\Phi|,\qquad |\pa_r^2 u|=|\pa_r\Phi +(\pa_r\Phi,\pa_r\Phi)\Phi|
 $$
 As a consequence, for a smooth map $\Phi(t)$ the finiteness of the ${\mathcal H}_a^2$ norm of $u(t)$
 can only fail at $r=0$. To eliminate this possibility it will be sufficient to show that for $k\ge 2$
 $|\pa_r u|\le C r$, while for $k=1$ the function $|u|\le Cr$ and $|\pa_r u -\frac ur|\le Cr$. 
 The desired statement for $k\ge 2$ is contained in \cite{RS}. For $k=1$, arguing as in \cite{RS} 
 we derive that the energy density
 $$
 e(\Phi)(t,r)=|\pa_t u|^2+|\pa_ru|^2+\frac {\sin^2 u}{r^2}
 $$ 
 is a smooth function of $r^2$, which leads to the requirement that $|u|\le Cr$. Moreover, 
 differentiability of $\Phi$ also implies that
 $$
 \lim_{r\to 0} |\pa_r u|=\lim_{r\to 0} \frac {|\sin u|}{r},
 $$ 
which immediately gives the existence of
$$
\lim_{r\to 0} (\pa_r u) =\lim_{r\to 0} (\frac ur).
$$
 On the other hand, the algebra of \eqref{general_lines} implies that 
 $$
 |\pa_r u -\frac{\sin u}{r}|^2=\frac 12 (\pa_i \Phi -\ep_{ij} \Phi\times \pa_j \Phi)\cdot  
 (\pa_i \Phi -\ep_{ij} \Phi\times \pa_j \Phi)=v(\Phi)
 $$
 is a smooth function of $r^2$. Since $(\pa_r u -\frac{\sin u}{r})$ vanishes at the origin we obtain that
 $ |\pa_r u -\frac{\sin u}{r}|$ and hence  $|\pa_r u -\frac{u}{r}|$ obey the estimate
 $$
 |\pa_r u -\frac{u}{r}|\le C r,
 $$
 and this concludes the proof of Proposition \ref{prop:sym}.

\section{Construction of the family of almost self-similar solutions}
\label{sectiontwo}


This section is devoted to the construction of approximate self-similar solutions $Q_b$. These describe the dominant part of the blow up profile inside the backward light cone from the singular point $(0,T)$ and display a slow decay at infinity, which is eventually responsible for the $log$ modifications to the blow up speed. A related construction was made in the (NLS) setting in \cite{Perelman}, \cite{MR2}, where the ground state is exponentially decreasing. A simpler version of the profiles $Q_b=Q+b^2 T_1$, terminating 
at a $2$-term expansion was used in \cite{RS}.  The key to this construction is the fact that the structure of the linear operator $H=-\Delta + k^2 \frac {f'(Q)}{y^2}$ is completely explicit due to the variational nature of $Q$ as the minimizer of the associated nonlinear problem. 


\subsection{Self-similar equation}


Fix a small parameter $b>0$. Given $T>0$, a self-similar solution to \fref{equation} is of the form: 
\be
\label{decomppsi}
u(t,r)=Q_b(\frac{r}{\lambda}), \ \ \lambda(t)=b(T-t).
\ee 
The stationary profile $Q_b$ should solve the nonlinear elliptic equation:
\be
\label{eqselfsimilairebis}
-\Delta Q_b+b^2D\Lambda Q_b+k^2\frac{f(Q_b)}{y^2}=0.
\ee
This equation however admits no finite energy solutions, see \cite{KavianWeissler} for related results. We therefore construct  approximate solutions of finite energy, 
which exhibit the fundamental slow decay behavior in the region $y\geq \frac{1}{b}$.

The approximate solution $Q_b$ will be of the form 
\be
\label{defQbpropeven}
Q_b=Q+\Sigma_{j=1}^{p+1} b^{2j}T_j.
\ee
We will require that the profiles $T_j$ verify the orthogonality condition
\be
\label{orthod}
(T_j,\chi_{M}\Lambda Q)=0
\ee
with $\chi_{M}$ given by (\ref{defchib}).
The error associated to $Q_b$ is defined according to the formula
 \be
\label{defpsib}
\Psi_b(y)=-\Delta Q_b+b^2D\Lambda Q_b+k^2\frac{f(Q_b)}{y^2},
\ee 
For a given homotopy index $k$ we define an auxiliary integer parameter $p$
\be
\label{defp}
p=\left\{ \begin{array}{ll}
	\frac{k}{2} \ \ \mbox{for $k$ even}\\
	\frac{k-1}{2} \ \ \mbox{for $k$ odd}
	\end{array}
	\right .
\ee

\begin{proposition}[Approximate solution to the self-similar equation]
\label{propqb}
Let $M>0$ be a large universal constant to be chosen later and let $C(M)$ denote a generic large 
increasing function of $M$. 
Then there exists $b^*(M)>0$ such that for all $0<b\leq b^*(M)$ the following holds true.
There exist smooth radial profiles $(T_j)_{1\leq j\leq p+1}$ satisfying \fref{orthod} with the following properties:
\begin{itemize}
\item {\bf $k\ge 4$ even}: For all sufficiently small $y$ and $0\leq m\leq 3$,
\be
\label{asymtotthorigin}
\frac{d^mT_j}{dy^m}(y)=\tilde{c}_{j,m}y^{k-m}(1+O(y^{2})).\ \,
\ee
For $y\ge 1$, 
\begin{align}
\label{esttjsharpeven}
&\frac{d^mT_j}{dy^m}(y)=c_j\frac{d^my^{2j-k}}{dy^m}(1+\frac{f_j}{y^2}+O(\frac{1}{y^{3}})), \ \ 1\leq j\leq p-1,\ \ 0\leq m\leq 3,\\
\label{esttjsharppeven}
& T_p(y)=c_p(1+\frac{f_p}{y^2}+O(\frac{1}{y^{3}})), \ \ \frac{d^mT_p}{dy^m}(y)=f_pc_p\frac{d^my^{-2}}{dy^m}+O(\frac{1}{y^{3+m}}), \ \ 1\leq m\leq 3,\\
\label{esttjsharppevenpplusone}
&T_{p+1}(y)=O(1), \ \  \frac{d^mT_{p+1}}{dy^m}(y)=O(\frac{1}{y^{m+1}}), \ \ 1\leq m \leq 3. 
\end{align}
For $0\leq m\le 1$ the error term verifies
\be
\label{estpsibeven}
|\frac{d^m\Psi_b}{dy^m}(y)|\lesssim b^{k+4}\frac{y^{k-m}}{1+y^{k+1}}.
\ee
\item
{\bf $k\ge 3$ odd}: $(T_j)_{1\leq j\leq p}$ obey the asymptotics  (\ref{asymtotthorigin}) near the origin, while for all
$y\ge 1$ and $0\leq m\leq 3$
\be
\label{esttjsharp}
\frac{d^mT_j}{dy^m}(y)=c_j\frac{d^my^{2j-k}}{dy^m}(1+\frac{f_j}{y^2}+O(\frac{1}{y^{3}})), \ \ 1\leq j\leq p,
\ee
\be
\label{esttjsharpbis}
\frac{d^m}{dy^m}T_{p+1}(y)=O(\frac{1}{y^{1+m}}).
\ee
For $0\leq m\leq 1$ the error term verifies
 \be
\label{estpsibodd}
|\frac{d^m\Psi_b}{dy^m}(y)|\lesssim b^{k+3}\frac{y^{k-m}}{1+y^{k+2}}.
\ee

\item {\bf $k=2$}: There exist smooth profiles $T_1, T_2$ verifying (\ref{orthod}) such that for all sufficiently small $y$ and $j=1,2$,
\be
\label{asymtotthorigin2}
\frac{d^mT_j}{dy^m}(y)=C(M) O(y^{k-m}), \ \ 0\leq m\leq 3,\ \,
\ee
while for all $y\ge 1$ and $0\leq m\leq 3$,
\be
\label{esttjsharp'}
\frac{d^mT_j}{dy^m}(y)=\left\{\begin{array}{ll} 
			c_j\delta_{0m}+C(M) O(\frac{1}{y^{k+m}}), \ \ j=1,\\
			C(M) O(\frac 1{y^m}), \ \ j=2,
			\end{array} \right .
\ee

For $0\leq m\leq 1$ the error term verifies
\be
\label{vheohoevbobvob}
 |\frac{d^m}{dy^m}\left[\Psi_b+c_bb^4\Lambda Q\right]|\lesssim C(M) b^{k+4}\frac{y^{k-m}}{1+y^{k+1}},
\ee
for some constant $c_b=O(1)$.

\item
{\bf $k=1$}: We can find $T_1$ satisfying (\ref{orthod}),  such that for all sufficiently small $y$ and $0\leq m\leq 3$,
\be
\label{asymtotthorigin1}
\frac{d^mT_1}{dy^m}(y)=C(M) O(y^{k-m}),\ \,
\ee
while for $1\leq y\leq \frac{1}{b^2}$ and $0\leq m\leq 3$,
\be
\label{asympttone}
|\frac{d^m}{dy^m}T_1(y)|\lesssim (1+y^{1-m})\frac{1+|\log (by)|}{|\log b|}{\bf 1}_{y\leq \frac{B_0}{2}}+\frac{1}{b^2|\log b|(1+y^{1+m})}{\bf 1}_{y\geq \frac{B_0}{2}} + \frac {C(M)}{1+y^{1+m}}
\ee
The error term $\Psi_b$ satisfies for $0\leq m\leq 1$ and $0\le y\le \frac 1{b^2}$,
\bea
\label{estpsiboddkequalone}
& & \left|\frac{d^m}{dy^m}\left(\Psi_b-c_bb^2\chi_{\frac{B_0}{4}}\Lambda Q\right)\right|\\
\nonumber & \lesssim & b^4\frac {y^{1-m}}{1+y^4}+b^{4}\frac{(1+|\log (by)|)}{|\log b|}y^{1-m}{\bf 1}_{1\leq y\leq \frac{B_0}{2}}+\frac{b^2}{|\log b|y^{1+m}}{\bf 1}_{y\geq \frac{B_0}{2}} .
\eea
with a constant 
$$
|c_b|\lesssim \frac{1}{|\log b|}
$$
\end{itemize}
The constants $(c_j)_{1\leq j\leq p}$ in (\ref{esttjsharpeven}), (\ref{esttjsharppeven}), (\ref{esttjsharp}) are given by the recurrence formula:
\be
\label{computationcp}
\forall j\in[2,p],  \ \ c_j=c_{j-1}\frac{(k-2j+2)(k-2j+1)}{4j(k-j)}, \ \ c_1=\frac{k}{2}.
\ee
			
\end{proposition}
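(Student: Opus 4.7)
The plan is to construct the profiles $T_j$ iteratively by substituting the ansatz $Q_b = Q + \sum_{j=1}^{p+1} b^{2j} T_j$ into the self-similar equation \fref{eqselfsimilairebis} and matching powers of $b^2$. Taylor expanding $f(Q_b)$ around $Q$ and collecting the terms of order $b^{2j}$ produces the linear inhomogeneous problem
\[
    H T_j = F_j,
\]
where $F_j = D\Lambda T_{j-1} - k^2 N_j(T_1,\dots,T_{j-1})/y^2$ is a polynomial in the previously constructed $T_i$'s, with the convention $T_0 = Q$ so that $F_1 = D\Lambda Q$. The central input is the factorization $H = A^*A$ from \fref{defhatsr} together with the explicit description of $\ker H$ by $\Lambda Q = k g(Q)$, and a second independent solution $\Gamma$ of $H\Gamma = 0$ constructed by first solving $A^*\psi = 0$ (which is a first order ODE with explicit solution) and then solving $A\Gamma = \psi$; this $\Gamma$ is singular at the origin and polynomially growing at infinity. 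Given the orthogonality of $F_j$ to $\Lambda Q$, variation of parameters produces an explicit formula for $T_j$ of the schematic form
\[
    T_j(y) = -\Lambda Q(y)\int_0^y \Gamma(\tau) F_j(\tau)\tau\,d\tau + \Gamma(y)\int_0^y \Lambda Q(\tau) F_j(\tau)\tau\,d\tau + \alpha_j\,\Lambda Q(y),
\]
with $\alpha_j$ tuned to enforce \fref{orthod}.

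The asymptotics near the origin, \fref{asymtotthorigin}, \fref{asymtotthorigin2}, \fref{asymtotthorigin1}, come from the Frobenius structure of $H$: the regular solution behaves as $y^k$ and the singular one as $y^{-k}$, so smoothness at $y=0$ forces $T_j\sim y^k$ to leading order, with the correction $(1+O(y^2))$ produced by iterating the explicit formula. At infinity the inversion $H^{-1}$ applied to a power $y^\alpha$ yields $y^{\alpha+2}$, so starting from $F_1\sim y^{-k}$ one finds $T_1\sim y^{2-k}$ and, inductively, $T_j\sim c_j y^{2j-k}$; the recursion \fref{computationcp} is read off directly by substituting $T_j \approx c_j y^{2j-k}$ into $HT_j = D\Lambda T_{j-1}$ and identifying leading constants. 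The sub-leading corrections $f_j/y^2$ and the estimates on derivatives follow from one further integration-by-parts in the variation-of-parameters integrals.

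The solvability condition $(F_j,\chi_M\Lambda Q) = 0$ is the heart of the construction and is obtained via the Pohozaev argument of \fref{eq:Poh}. Pairing the truncated equation for $Q_b^{(j-1)} = Q + \sum_{i<j} b^{2i}T_i$ with $D\Lambda Q_b^{(j-1)}$ reduces the bulk integral to the boundary term on the right-hand side of \fref{eq:Poh}, which vanishes as long as $T_{j-1}$ decays at infinity fast enough. This is exactly why the expansion proceeds cleanly for $1 \le j \le p$ with $p$ defined in \fref{defp}, but obstructs at $j = p+1$: for even $k$ one has $T_p \to c_p\ne 0$, and for odd $k$ one has $T_p \sim c_p/y$, so the Pohozaev boundary term becomes a nonzero flux proportional to $b^{2k}$. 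For $k \ge 3$ this still allows one to solve $H T_{p+1} = F_{p+1}$, but only at the price of slow decay \fref{esttjsharppevenpplusone}, \fref{esttjsharpbis}, and the residual error gets absorbed in $\Psi_b$ with the bounds \fref{estpsibeven}, \fref{estpsibodd} obtained by direct differentiation of the explicit formulas for $T_j$.

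The main obstacle will be the low dimensional cases $k=1,2$, in which the Pohozaev obstruction already appears at the step that would produce $T_1$ or $T_2$ and the standard construction fails. I would handle this by modifying the right-hand side at the bad step to $F_j - c_b \chi_{B_0/4}\Lambda Q$ and choosing the free constant $c_b$ to restore orthogonality with $\chi_M\Lambda Q$; the $1/|\log b|$ size of $c_b$ for $k=1$ in \fref{estpsiboddkequalone} reflects the logarithmic divergence of $(\chi_{B_0}\Lambda Q,\Lambda Q)$ in that case, while the $\chi_{B_0/4}$ weight is needed to keep the modification localized and of finite energy. This shift is exactly the $c_b b^2 \chi_{B_0/4}\Lambda Q$ term appearing in \fref{vheohoevbobvob} and \fref{estpsiboddkequalone}. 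The pointwise and derivative bounds on the resulting $\Psi_b$ then follow by collecting the uncancelled Taylor remainders of $f(Q_b)$ together with the contribution from the truncated top-order term, differentiating under the explicit variation-of-parameters integrals, and controlling the $C(M)$ constants arising from the cut-off $\chi_M$ in \fref{orthod}.
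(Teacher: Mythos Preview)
Your outline follows the paper's proof closely: the iterative hierarchy $HT_j = F_j$, the explicit variation-of-parameters formula built from $\Lambda Q$ and $\Gamma$, the Pohozaev identity for the solvability condition, and the modification at the obstructed step for small $k$ are all the right ingredients. Two substantive points deserve correction, however.

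First, the solvability condition is $(F_j,\Lambda Q)=0$, not $(F_j,\chi_M\Lambda Q)=0$; the cutoff $\chi_M$ enters only in the gauge choice \fref{orthod} for $T_j$, not in the Fredholm obstruction. More importantly, your account of why $T_{p+1}$ can be constructed for $k\ge 3$ is not right. You say the Pohozaev boundary term ``obstructs at $j=p+1$'' and that one can still solve ``at the price of slow decay''. In fact the orthogonality $(F_{p+1},\Lambda Q)=0$ holds \emph{cleanly} for $k\ge 3$: the paper computes $F(b)=(\Psi_b,\Lambda Q_b)=\tfrac{c_p^2}{2}b^{2k}(1+o(1))$, and since $2(p+1)<2k$ for $k\ge 3$ the $b^{2(p+1)}$ coefficient of this analytic function of $b$ vanishes. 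The slow decay of $T_{p+1}$ is a separate issue and comes from the structure of $D\Lambda T_p$: here the paper uses the crucial cancellation $D\Lambda(1)=D\Lambda(1/y)=0$, which you do not mention, to show that $D\Lambda T_p$ decays two orders faster than $T_p$ itself. Without this cancellation the error bounds \fref{estpsibeven}, \fref{estpsibodd} would fail.

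Second, you conflate the $k=1$ and $k=2$ modifications. For $k=2$ one has $2(p+1)=2k$, the obstruction genuinely arises, and the paper subtracts $c_b\Lambda Q$ with $c_b=O(1)$ and \emph{no} spatial localization; the resulting $T_2$ is bounded. The localized correction $c_b\chi_{B_0/4}\Lambda Q$ with $c_b\sim 1/|\log b|$ is specific to $k=1$, where $\Lambda Q\notin L^2$ and the denominator $(\Lambda Q,\chi_{B_0/4}\Lambda Q)\sim|\log b|$ produces the logarithmic smallness. This distinction matters for the error estimates in \fref{vheohoevbobvob} versus \fref{estpsiboddkequalone}.
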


In the construction of the profile $Q_b$ the term $T_p(y)$ is a radiative term displaying an anomalous slow decay at infinity according to \fref{esttjsharppeven}, \fref{esttjsharp}, \fref{asympttone}. It is the first term which yields an unbounded contribution to the Hamiltonian of the corresponding self-similar solution $u$. The term $T_{p+1}$ is introduced in the decomposition to refine the behavior of the
error term $\Psi_b$ on compact sets, i.e. finite values of $y$, without destroying its radiative behavior far out.
This turns out to be more delicate for $k=1,2$ which explains a slightly pathological behavior of the error $\Psi_b$ in these cases, \fref{vheohoevbobvob}, \fref{estpsiboddkequalone}. Note that this is particularly true for $k=1$ where $p=0$ and $Q$ itself is the radiative term. In that case, 
introduction of the term $T_1$, which is however badly behaved for $y\geq \frac{1}{b}$ according to \fref{asympttone}, allows us to gain a factor of $\frac{1}{|\log b|}$ in the region $y\leq \frac{1}{b}$ in \fref{estpsiboddkequalone}. This should be contrasted with the polynomial gain in $b$ we see 
for higher values of $k$.

\begin{remark} The orthogonality condition \fref{orthod} corresponds to a choice of gauge for $Q_b$ 
allowed by the kernel of $H$, given by \fref{defhinitiale}. This choice will be convenient for an additional decomposition of the flow
near $Q_b$, see in particular \fref{orthe}.
\end{remark}



\subsection{Construction of $Q_b$}
\label{sectionqb}


 {\bf Proof of Proposition \ref{propqb}}\\
 
 Let $p$ be given by (\ref{defp}).\\
   
 {\bf step 1} Construction of an expansion. \\
 
The case $k=1$ will be treated separately. Let thus $k\geq 2$, $j\in [1,p]$ and $(T_{l})_{1\leq l\leq j}$ be any smooth radial function vanishing sufficiently fast both at zero and infinity , as in say (\ref{asymptotujkeven}). Let $$Q_b=\Sigma_{l=0}^{j}b^{2l}T_l, \ \ T_0=Q.$$ From the Taylor expansion of $f$: 
$$
f(Q_b)=f(Q)+\Sigma_{l=1}^j\frac{f^{(l)}(Q)}{l!}(b^2T_1+\dots+b^{2j}T_j)^l+R_{1,j}(b,y)
$$
with 
\be
\label{defrij}
R_{1,j}(b,y)=\frac{(Q_b-Q)^{j+1}}{j!}\int_0^1(1-u)^{j}f^{(j+1)}(uQ_b+(1-u)Q)du.
\ee 
We then reorder the polynomial part in $b$ to get: 
\bea
\label{defpolymonial}
\nonumber f(Q_b) & = & f(Q)+\Sigma_{l=1}^{j}b^{2l}\left[f'(Q)T_l+P_{l}(T_1, \dots,T_{l-1})\right]\\
& + & R_{1,j}(b,y)+R_{2,j}(T_1,\dots,T_j).
\eea
Here $P_l$ is a polynomial of degree $l$ with the convention that $P_1=0$ and the term $T_m$
contributes $m$ to the degree of $P_l$.
$R_{2,j}$ is a polynomial in $(T_l)_{1\leq l\leq j}$ and contains the terms of order $(b^{2l})_{l\geq j+1}$. Hence:
\be
\label{cancealltionrthrre}
\forall 0\leq l\leq j, \ \ \frac{\partial^l R_{1,j}(b,y)}{\partial (b^2)^l}_{|b=0}=\frac{\partial^l R_{2,j}(b,y)}{\partial (b^2)^l}_{|b=0}=0.
\ee 
We now expand the self similar equation:
\bea
\label{expansionselfsim}
\nonumber & & -\Delta Q_b+b^2D\Lambda Q_b+k^2\frac{f(Q_b)}{y^2}=-\Delta\left(Q+\Sigma_{l=1}^jb^{2l}T_l\right)+\left(\Sigma_{l=1}^jb^{2l}D\Lambda T_{l-1}\right)+b^{2(j+1)}D\Lambda T_j\\
\nonumber& + & \frac{k^2}{y^2}\left\{f(Q)+\Sigma_{l=1}^{j}b^{2l}\left[f'(Q)T_l+P_{l}(T_1, \dots,T_{l-1})\right]+ R_{1,j}+R_{2,j}\right\}\\
& = &\Sigma_{l=1}^jb^{2l}\left[HT_l+D\Lambda T_{l-1}+\frac{k^2}{y^2}P_{l}(T_1, \dots,T_{l-1})\right]+\frac{k^2}{y^2}(R_{1,j}+R_{2,j})+b^{2(j+1)}D\Lambda T_j.
\eea
We claim by induction on $1\leq j\leq p$ that we may solve the system: 
\be
\label{eqeqsttl}
HT_l+D\Lambda T_{l-1}+\frac{k^2}{y^2}P_{l}(T_1, \dots,T_{l-1})=0, \ \ 1\leq l\leq j
\ee 
with $(T_l)_{1\leq j}$ satisfying the desired estimates and the orthogonality condition (\ref{orthod}). Indeed, for $j=1$, we solve:   
\be
 \label{eqtun}
  HT_1+D\Lambda Q=0, \ \ (T_1,\chi_M\Lambda Q)=0,
  \ee 
explicitely by setting 
\be
\label{defgeneraletun}
T_1=\frac{1}{4}y^2\Lambda Q-\frac{\int\chi_My^2(\Lambda Q)^2}{4\int \chi_M(\Lambda Q)^2}\Lambda Q.
\ee In the (WM) case for $k\ge 3$, it satisfies from (\ref {ezpansionLQ}) the asymptotics: 
 \be
 \label{asymptoictun}
T_1(y)=\left\{ \begin{array}{ll}
	 \tilde c_1y^k(1+O(y^{k})) \ \ \mbox{as} \ \ y\to 0\\
	 {c}_1\frac{y^{2}}{y^{k}}(1+\frac{f_1}{y^2}+O(\frac{1}{y^{3}}))\ \ \mbox{as} \ \ y\to +\infty,\\
	 \end{array}
	 \right .
 \ee
and for $k=1,2$:
\be
 \label{asymptoictunksmall}
T_1(y)=\left\{ \begin{array}{ll}
	 \tilde c_1y^k(1+O(y^{k})) \ \ \mbox{as} \ \ y\to 0\\
	 {c}_1\frac{y^{2}}{y^{k}}(1+C(M)O(\frac{1}{y^{2}}))\ \ \mbox{as} \ \ y\to +\infty,\\
	 \end{array}
	 \right .
 \ee
with 
$$C(M)\sim\left\{\begin{array}{ll} 
\log M   \ \ \mbox{for} \ \ k=2,\\
 \frac{M^2}{\log M} \ \mbox{for} \ \ k=1,
 \end{array} \right .
 $$ In the (YM) $k=2$ case
  \be
 \label{asymptoictun'}
T_1(y)=\left\{ \begin{array}{ll}
	 -\tilde c_1y^k(1+\log M O(y^{k})) \ \ \mbox{as} \ \ y\to 0\\
	 -{c}_1\frac{y^{2}}{y^{k}}(1+O(\frac{\log M}{y^{k}}))\ \ \mbox{as} \ \ y\to +\infty.\\
	 \end{array}
	 \right .
\ee
 In all cases,
 \be
 \label{defd1}
 c_1=\frac{k}{2}.
 \ee
 Hence $T_1$ satisfies \fref{orthod}, \eqref{asymtotthorigin}, \eqref{esttjsharpeven}, (\ref{esttjsharp}),
 \eqref{asymtotthorigin2} and (\ref{esttjsharp'}) for $j=1$.\\
 
 {\bf step 2} Induction for $k\ge 3$.\\
 
 For $k=3$, we have $p=1$ and $T_2=T_{p+1}$ will be constructed in step 4. We hence assume $k\geq 4$ and now argue by induction on $j$ using Lemma \ref{inversionl}. We assume that we could solve (\ref{eqeqsttl}) for $1\leq l\leq j-1$ with $(T_l)_{1\leq l\leq j-1}$ satisfying \fref{asymtotthorigin}, \fref{esttjsharpeven}, (\ref{esttjsharp}). In  order to apply Lemma \ref{inversionl}, we need to show the orthogonality:
 \be
 \label{esttoshow}
 \left(D\Lambda T_{j-1}+\frac{k^2}{y^2}P_{j}(T_1, \dots,T_{j-1}), \Lambda Q\right)=0.
 \ee
 Assume (\ref{esttoshow}). Then from Lemma \ref{inversionl}, we may solve (\ref{eqeqsttl}) for $l=j$ with $T_j$ satisfying (\ref{orthod}). Moreover, from the decay properties of $(T_1,\dots, T_{j-1})$ at infinity and the polynomial structure of $P_j(T_1,\dots,T_{j-1})$, the leading order term on the RHS of (\ref{eqeqsttl}) as $y\to +\infty$ is given by $D\Lambda T_{j-1}=2yT_{j-1}'+y^2T_{j-1}''$ that is: $$D\Lambda T_{j-1}+\frac{k^2}{y^2}P_{j}(T_1, \dots,T_{j-1})=(k-2j+2)(k-2j+1)c_{j-1}\frac{y^{2(j-1)}}{y^{k}}(1+O(\frac{1}{y^{2}})).$$ (\ref{asymptotuj}), (\ref{asymptotujkeven}), (\ref{asymptotujkevenderivative}) now allow us to derive the asymptotics of $T_j,T_j'$ near $+\infty$, and higher derivatives are controlled using the equation (\ref{eqeqsttl}).\\
 Estimates \fref{esttjsharpeven}, (\ref{esttjsharp}) follow with the recurrence formula: $$c_{j}=c_{j-1}\frac{(k-2j+2)(k-2j+1)}{4j(k-j)}, $$ which gives (\ref{computationcp}). Similarly, the $y^k$ vanishing of $(T_{l})_{1\leq l\leq j-1}$ at the origin ensures that the same vanishing holds for $\left(\frac{P_l(T_1,\dots,T_{l-1})}{y^2}\right)_{2\leq l\leq j}$, and (\ref{asymtotthorigin}) follows.\\
 
 \noindent
 Proof of (\ref{esttoshow}): Note that a direct algebraic proof seems hopeless due to the nonlinear structure of the problem. However, we claim that  (\ref{esttoshow}) is a simple consequence of the energy criticality of the problem and the cancellation provided by the Pohozaev identity. Let $(T_l)_{0\leq l\leq j-1}$ be the first constructed profiles and let $T_j$ be any smooth radial function vanishing sufficiently fast both at zero and infinity. Let $Q_b=\Sigma_{l=0}^jb^{2l}T_l$, then: $$F(b)=\left(-\Delta Q_b+b^2D\Lambda Q_b+k^2\frac{f(Q_b)}{y^2},\Lambda Q_b\right)=0.$$ Let us indeed recall that this holds true for any smooth $Q_b$ which decays enough both at the origin and infinity. Note also that we are implicitly using the condition $j\leq p$ which ensures from \fref{esttjsharpeven}, (\ref{esttjsharp}) that the integration by parts does not create any boundary terms for the $(T_l)_{1\leq l\leq j-1}$ terms. We conclude that the Taylor series of $F$ at $b=0$ vanishes to all orders. On the other hand, from the decomposition (\ref{expansionselfsim}), 
 \bee
F(b)
  & = & \Sigma_{l=1}^jb^{2l}\left[HT_l+D\Lambda T_{l-1}+\frac{k^2}{y^2}P_{l}(T_1, \dots,T_{l-1})\right]
+\frac {k^2}{y^2}R_{1,j}+\frac{k^2}{y^2}R_{2,j}\\ &+&b^{2(j+1)}D\Lambda T_j,\Lambda Q+\Sigma_{l=1}^jb^{2l}\Lambda T_l\\
& = & b^{2j}\left[HT_j+D\Lambda T_{j-1}+\frac{k^2}{y^2}P_{j}(T_1, \dots,T_{j-1})\right]+
\frac{k^2}{y^2}R_{1,j}+\frac{k^2}{y^2} R_{2,j}\\ &+&b^{2(j+1)}D\Lambda T_j,\Lambda Q+\Sigma_{l=1}^jb^{2l}\Lambda T_l
\eee
where we used that (\ref{eqeqsttl}) is satisfied for $1\leq l\leq j-1$. (\ref{cancealltionrthrre}) now implies: $$0=\frac{d^{2j}}{db^{2j}}F(b)_{|b=0}=\left(HT_j+D\Lambda T_{j-1}+\frac{k^2}{y^2}P_{j}(T_1, \dots,T_{j-1}), \Lambda Q\right).$$ Now $(HT_j,\Lambda Q)=(T_j,H\Lambda Q)=0$ for any $T_j$ and (\ref{esttoshow}) follows.\\

{\bf step 3} Estimate on the error at the order $p$.\\

Let now $\Psi^{(p)}_b$ be given by (\ref{defpsib}) for $Q_b=\Sigma_{l=0}^pb^{2l}T_l$, explicitly from  (\ref{expansionselfsim}):
\be
\label{estfkjeo}
\Psi^{(p)}_b=\frac{k^2}{y^2} R_{1,p}+\frac{k^2}{y^2} R_{2,p}+
b^{2(p+1)}D\Lambda T_p.
\ee
 $R_{1,p}$ given by (\ref{defrij}) and $R_{2,p}$ are given by (\ref{defpolymonial}) are estimated using the uniform bound on $(\|f^{(j)}\|_{L^{\infty}})_{1\leq j\leq p}$ and the behavior of $T_j$ near the origin and infinity:

For $k$ odd and $0\leq m\leq 1$:
\bea
\label{fiohohgoe}
|\frac{d^m y^{-2} R_{1,p}}{dy^m}(y)|&\lesssim& b^{2(p+1)}\frac {y^{(p+1)k-m-2}}{1+y^{2(p+1)(k-1)}}+b^{2p(p+1)}
\frac {y^{(p+1)k-m-2}}{1+y^{(p+1)(k+1)}}, \\
|\frac{d^m y^{-2}
R_{2,p}}{dy^m}(y)|&\lesssim& b^{2(p+1)}\frac {y^{2k-m-2}}{1+y^{3k-1}}+b^{2p^2}
\frac {y^{pk-m-2}}{1+y^{pk+p}}, \label{fiohohgoe1}.
\eea
Note that $R_{2,p}$ is non-trivial only for $k\geq 5$.\\
For $k$ even and $0\leq m\leq 1$:
\bea
\label{fiohohgoee}
|\frac{d^m y^{-2} R_{1,p}}{dy^m}(y)|&\lesssim& b^{2(p+1)}\frac {y^{(p+1)k-m-2}}{1+y^{2(p+1)(k-1)}}+b^{2p(p+1)}
\frac {y^{(p+1)k-m-2}}{1+y^{(p+1)k}}, \\
|\frac{d^m y^{-2}
R_{2,p}}{dy^m}(y)|&\lesssim& b^{2(p+1)}\frac {y^{2k-m-2}}{1+y^{3k-2}}+b^{2p^2}
\frac {y^{pk-m-2}}{1+y^{pk}}.\label{fiohohgoee1}
\eea
Note that $R_{2,p}$ is non-trivial only for $k\geq 4$.
\vskip 1pc
 It remains to estimate the leading order term $D\Lambda T_p$ in \fref{estfkjeo}. Recall the asymptotics of $T_p$ near $y+\infty$ from \fref{esttjsharppeven}, \fref{esttjsharp}:
 $$T_p(y)=c_p(1+\frac{f_p}{y^2}+O(\frac{1}{y^3})) \ \ \mbox{for k even},$$
 $$ T_p(y)=\frac{c_p}{y}(1+\frac{f_p}{y^2}+O(\frac{1}{y^3})) \ \ \mbox{for k odd}.$$
 We now use in a fundamental way the cancellation 
 \be
 \label{keycancleaation} D\Lambda(\frac{1}{y})=D\Lambda(1)=0
 \ee 
 which yields in particular as $y\to +\infty$: 
 \be
 \label{behavioprdltp}
 D\Lambda T_p(y)=\left\{\begin{array}{ll} \frac{\tilde{f_p}}{y^2}+O(\frac{1}{y^3}) \ \  \mbox{for k even},\\
 \frac{\tilde{f_p}}{y^3}+O(\frac{1}{y^4}) \ \  \mbox{for k odd},
 \end{array} \right .
 \ee
and the crude bounds:
 $$
 |\frac{d^mD\Lambda T_p}{dy^m}(y)|\lesssim \frac{y^{k-m}}{1+y^{k+2}}, \ \ 0\leq m\leq 1 \ \ \mbox{for k even},
 $$ 
 $$
 \label{neovhoeghobis}
 |\frac{d^mD\Lambda T_p}{dy^m}(y)|\lesssim \frac{y^{k-m}}{1+y^{k+3}}, \ \ 0\leq m\leq 1\ \ \mbox{for k odd}.
 $$
 These estimates together with (\ref{fiohohgoe})-(\ref{fiohohgoee1})  now yield: 
\be
\label{estpsiodd}
|\frac{d^m}{dy^m}\Psi^{(p)}_b|\lesssim \frac{b^{k+2}y^{k-m}}{1+y^{k+2}},  \ \ 0\leq m\leq 1,\ \ \mbox{for} \ \ \mbox{k even},
\ee
\be
\label{estpsiven}
|\frac{d^m}{dy^m}\Psi^{(p)}_b|\lesssim \frac{b^{k+1}y^{k-m}}{1+y^{k+3}},  \ \ 0\leq m\leq 1, \ \ \mbox{for} \ \ \mbox{k odd}.
\ee

{\bf step 4} Construction of $T_{p+1}$ for $k\geq 3$.
\vskip 1pc

Observe that for all $k\geq 1$, $T_p$ is the radiative term  in the sense that as $y\to+\infty$: $$T_p\sim \frac{1}{y} \ \ \mbox{for k odd}, \ \ T_p\sim 1 \ \ \mbox{for k even}.$$ 
Note that for $k=1$ we have $p=0$ and $T_0=Q$.\\

The estimates \fref{estpsiodd}, \fref{estpsiven} are not sufficient for our analysis. Therefore we add  an extra term $T_{p+1}$ by taking advantage of the cancellations \fref{keycancleaation}. The cases $k=1,2$ are degenerate and require a separate treatment.\\

For $k\geq 3$, we need to solve:
\be\label{eq:p+1}
LT_{p+1}+D\Lambda T_{p}+\frac{k^2}{y^2}P_{p+1}(T_1, \dots,T_{p})=0.
\ee
 To do this, we first need to verify the orthogonality condition for $k\geq 3$:
 \be
 \label{fosfohs}
 \left(D\Lambda T_{p}+\frac{k^2}{y^2}P_{p+1}(T_1, \dots,T_{p}), \Lambda Q\right)=0.
 \ee
 As before we may define $Q_b=\Sigma_{l=0}^{p+1} b^{2l}T_l$ with an arbitrary smooth rapidly decaying
 function $T_{p+1}$ and 
 $$
 F(b)=\left(-\Delta Q_b+b^2D\Lambda Q_b+k^2\frac{f(Q_b)}{y^2},\Lambda Q_b\right)
$$
so that $$ \left(D\Lambda T_{p}+\frac{k^2}{y^2}P_{p+1}(T_1, \dots,T_{p}), \Lambda Q\right)=\frac 1{(2(p+1))!}\, \frac{d^{2(p+1)}F(b)}{b^{2(p+1)}}|_{b=0}.$$
We now claim:
\be
\label{estineteko}
F(b)=\frac{c_p^2}{2}b^{2k}(1+o(1)) \ \ \mbox{as} \ \ b\to 0.
\ee
Indeed, let $R>0$ and recall the Pohozaev integration: for any smooth enough $\phi$, 
\be
\label{pohozaev}
\int_{r\leq R}\left(-\Delta \phi+b^2D\Lambda \phi+k^2\frac{f(\phi)}{y^2}\right)\Lambda \phi   =  \left[-\frac{1}{2}(r\phi')^2+\frac{b^2}{2}|r\Lambda \phi|^2+\frac{k^2g(\phi)}{2}\right](R).
\ee
Applying this with $\phi=Q_b$ yields:
$$\lim_{R\to +\infty} \int_{r\leq R}\left(-\Delta Q_b+b^2D\Lambda Q_b+k^2\frac{f(Q_b)}{y^2}\right)\Lambda Q_b   =   \lim_{R\to +\infty} \frac{b^2}{2}|r\Lambda Q_b|^2(R)+\frac{k^2}{2}|g(Q_b)|^2(R)$$
and hence:
$$F(b)= \left\{\begin{array}{ll} \frac{c_p^2b^{4p+2}}{2}=\frac{c_p^2b^{2k}}{2} \ \ \mbox{for k odd}\\
					       \frac{c_p^2b^{4p}}{2}=\frac{c_p^2b^{2k}}{2}  \ \ \mbox{for k even}
							\end{array} \right .
$$
where we used in the last step the asymptotics \fref{esttjsharppeven}, (\ref{esttjsharp}) for $j=p$ for $T_p$. Combining (\ref{estineteko}) with the analytic dependence of $F(b)$ on $b$, we conclude that for $k\geq 3$ (recall that $2(p+1)=k+1<2k$ for $k$ odd and $2(p+1)=k+2<2k$ for $k$ even):
$$
\frac {d^{2(p+1)}}{db^{2(p+1)}} F(b)|_{b=0}=0
$$
and the desired orthogonality condition follows. We now argue exactly as in the proof of Lemma \ref{inversionl} to construct $T_{p+1}$ solution to (\ref{eq:p+1}) satisfying from \fref{behavioprdltp} the estimate \fref{asymtotthorigin} near the origin and for $y\geq 1$:
$$T_{p+1}=c_{p+1}(1+O(\frac{1}{y})),   \ \ |\frac {d^m}{dy^m} T_{p+1}(y)|\lesssim\frac {y^{k-m}}{1+y^{k+1}}, \ \ 0\leq m\leq 2  \ \  \mbox{for k even}
$$
$$ T_{p+1}=\frac{c_{p+1}}{y}(1+O(\frac{1}{y})), \ \ |\frac {d^m}{dy^m} T_{p+1}(y)|\lesssim\frac {y^{k-m}}{1+y^{k+1}}, \ \ 0\leq m\leq 2, \ \ \mbox{for k odd}.
 $$
 In the even case, we used here the same cancellation which led to \fref{asymptotujkevenderivative} for the $\frac{1}{y^2}$ part of the behavior of $D\Lambda T_p$ in the asymptotics \fref{behavioprdltp} near $y\to +\infty$. We cannot retrieve the same cancellation on the part induced by the $O(\frac{1}{y^3})$ tail but we simply need the rough bound $|T_{p+1}'|\lesssim \frac{1}{y^2}$ at $+\infty$.\\
 Using the degeneracy \fref{keycancleaation}, this leads to the bound for $0\leq m\leq 1$:
 \be
 \label{cnokehohoefh}
 |\frac {d^m}{dy^m} D\Lambda T_{p+1}(y)|\lesssim \frac {y^{k-m}}{1+y^{k+2}} \ \ \mbox{for k odd}, 
\ee
\be
\label{cnokehohoefhbis}
|\frac {d^m}{dy^m} D\Lambda T_{p+1}(y)|\lesssim \frac {y^{k-m}}{1+y^{k+1}} \ \ \mbox{for k even}.
\ee

We now define 
$$
\Psi_b=\frac {k^2}{y^2} R_{1,p+1} +\frac {k^2}{y^2} R_{2,p+1}+b^{2(p+2)} D\Lambda T_{p+1}. 
$$
The estimates on the first two terms are already contained in (\ref{fiohohgoe})-(\ref{fiohohgoee1}), and \fref{cnokehohoefh}, \fref{cnokehohoefhbis} now imply  \fref{estpsibeven}, \fref{estpsibodd}.\\

{\bf step 5} Construction of $T_2$ for $k=2$.
\vskip 1pc

We now turn to the $k=2$ case. Observe that the fundamental cancellation \fref{keycancleaation} still holds, but the orthogonality condition \fref{fosfohs} fails. This failure is due to the fact that $2(\frac{k}{2}+1)=2k$. Let $T_1$ be given by \fref{defgeneraletun} and 
\be
\label{defcbbis}
c_b=\frac{(D\Lambda T_1+\frac{k^2}{2y^2}f''(Q)T_1^2,\Lambda Q)}{|\Lambda Q|_{L^2}^2}=\frac {c_1^2}{2|\Lambda Q|_{L^2}^2}\sim 1,
\ee
then $T_1$ satisfies the asymptotics \fref{asymtotthorigin2}, \fref{esttjsharp'} from \fref{asymptoictun}, \fref{asymptoictun'}. Let then $T_2$ be the solution given\footnote{Formally, Lemma \ref{inversionl} can be applied only in
the context of the (WM) problem and with $k\ge 3$. The argument however can be easily modified to
satisfy our current needs. We sketch the argument below.} by Lemma \ref{inversionl} to 
$$HT_2=-D\Lambda T_1-\frac{k^2}{2y^2}f''(Q)T_1^2+ c_b\Lambda Q=g.$$ 
Explicitely, from \fref{defulinearsolver}, $T_2=\tilde{T}_2-c_M\Lambda Q$ with:
$$
 \tilde{T}_2(y)=\G(y)\int_0^yg(x)J(x)xdx-J(y)\int_{1}^yg(x)\G(x)xdx.
$$
and
$$
c_M=\frac {(\tilde T_2, \chi_M \Lambda Q)}{(\Lambda Q, \chi_M \Lambda Q)}.
$$
The asymptotics (\ref{asymtotthorigin}) near the origin follow easily from \fref{asymptoictun}, \fref{asymptoictun'}.
For $y\geq 1$, we have from $(g,\Lambda Q)=0$: 
\bee
|\tilde{T}_2(y)| & = & \left|\G(y)\int_y^{+\infty} g(x)J(x)xdx+J(y)\int_{1}^yg(x)\G(x)xdx\right|\\
& \lesssim & y^2\int_{y}^{\infty}\frac{xdx}{(1+x^2)^2}+\frac{1}{y^2}\int_{1}^{y}\frac{x^3dx}{1+x^2}\lesssim C(M).
\eee
Therefore,
$$
c_M\lesssim C(M). 
$$
This leads to \fref{esttjsharp'} for $m=0$ and $j=2$. Higher order derivatives are estimated similarily. We now compute the error $\Psi_b$:
\bee
& & \Psi_b=-\Delta Q_b+b^2D\Lambda Q_b+k^2\frac{f(Q_b)}{y^2}\\
& = & b^4\left[LT_2+D\Lambda T_1+\frac{k^2}{2y^2}f''(Q)(T_1)^2\right]+b^6D\Lambda T_2\\
& + & \frac{k^2}{y^2}\left[f(Q+b^2T_1+b^4T_2)-f(Q)-b^2f'(Q)(T_1+b^2T_2)-\frac{b^4T_1^2}{2}f''(Q)\right]
\eee
from which:
$$
\left|\Psi_b+c_bb^4\Lambda Q\right| \lesssim  b^6\left[|D\Lambda T_2|+C(M)\frac{y^2}{1+y^4}\right] \lesssim  C(M) b^6\frac{y^2}{1+y^3}.
$$
This is \fref{vheohoevbobvob} for $m=0$, the case $m=1$ follows similarily.\\

{\bf step 6} Construction of $T_1$ for $k=1$.
\vskip 1pc

We now turn to the $k=1$ case. The cancellation \fref{keycancleaation} still holds, but the orthogonality condition \fref{fosfohs} fails since for $k=1$, $2(\frac{k-1}{2}+1)=2k$. This reflects the fact that $\Lambda Q\sim \frac{1}{y}$ is already the radiative term, and the non vanishing quantity on the LHS of \fref{fosfohs} is exactly the flux term driving the blow up speed. This can equivalently be seen in the anomalous growth of 
$$T_1^0=\frac{y^2}{4}\Lambda Q\sim y \,\,\ \ \mbox{solution of}\,\, \ \ HT_1^0+D\Lambda Q=0.$$ Let 
\be
\label{defcbbisbis}
c_b=\frac{(D\Lambda Q,\Lambda Q)}{(\Lambda Q,\chi_{\frac{B_0}{4}}\Lambda Q)}\sim \frac{C}{|\log b|}
\ee
and $T_1$ be the solution given by Lemma \ref{inversionl} to 
$$LT_1=-D\Lambda Q+c_b\Lambda Q\chi_{\frac{B_0}{4}}=g,$$ explicitely $T_1=\tilde{T}_1-c_M\Lambda Q$ with
$$
c_M=\frac {(\tilde T_1, \chi_M \Lambda Q)}{(\Lambda Q, \chi_M \Lambda Q)}
$$
and from \fref{defulinearsolver}:
$$
 \tilde{T}_1(y)=\G(y)\int_0^yg(x)J(x)xdx-J(y)\int_{1}^yg(x)\G(x)xdx.
$$
The asymptotics (\ref{asymtotthorigin}) near the origin follow easily. For $y\ge 1$, we first have from the orthogonality condition $(g,\Lambda Q)=0$, implied by (\ref{defcbbisbis}), and the degeneracy \fref{keycancleaation}, which implies that $|D\Lambda Q|\le y^{-3}$ for $y\ge 1$, that
for $\frac{1}{b^2}\ge y\geq \frac{B_0}{2}$, 
\bee
|\tilde{T}_1(y)| & = & \left|\G(y)\int_y^{+\infty} g(x)J(x)xdx+J(y)\int_{1}^yg(x)\G(x)xdx\right|\\
& \lesssim & (1+y)\int_{y}^{\infty}\frac{dx}{1+x^3}+\frac{1}{y}\left[\int_{1}^{y}\frac{x^2dx}{1+x^3}+|c_b|\int_1^{B_0}\frac{x^2dx}{1+x}\right]\\
& \lesssim & \frac{1}{b^2|\log b|}\frac{1}{1+y}.
\eee
On the other hand, for $1\leq y\leq \frac{B_0}{2}$:
\bee
|\tilde{T_1}(y)| & = & (1+y)\int_y^{+\infty} \frac{dx}{1+x^3}+|c_b|(1+y)\int_y^{B_0}\frac{dx}{1+x}+\frac{1}{1+y}\int_{1}^{y}x^2dx\left[\frac{1}{1+x^3}+\frac{|c_b|}{x}\right]\\
& \lesssim &  \frac{1+y}{|\log b|}(1+|\log (by)|){\bf 1}_{y\leq \frac{B_0}{2}}.
\eee
The constant $c_M$ can be then estimated:
$$
c_M\le C(M)
$$
This leads to \fref{asympttone} for $m=0$. Higher order derivatives are estimated similarily. We now compute the error $\Psi_b$:
\bee
& & \Psi_b=-\Delta Q_b+b^2D\Lambda Q_b+k^2\frac{f(Q_b)}{y^2}\\
& = & b^2(LT_1+D\Lambda Q)+b^4D\Lambda T_1+\frac{k^2}{y^2}\left[f(Q+b^2T_1)-f(Q)-b^2f'(Q)T_1\right].
\eee
Using the cancellation for the term $D\Lambda (c_M\Lambda Q)$ we then obtain
\bee
\left|\Psi_b+c_bb^2\Lambda Q\chi_{\frac{B_0}{4}}\right|& \lesssim & b^4\left[|D\Lambda T_1|+\frac{1}{y^2}T_1^2 
\int_0^1 \int_0^1 \tau f''(Q+\tau' \tau b^2 T_1) d\tau' d\tau\right]\\
& \lesssim & C(M) b^4\frac {y}{1+y^4}+ b^4\frac{1+y}{|\log b|}(1+|\log (by)|){\bf 1}_{1\leq y\leq \frac{B_0}{2}}+\frac{b^2}{|\log b|}\frac{{\bf 1}_{y\geq \frac{B_0}{2}}}{y},
\eee
where we used the behavior $|f''(y)|\lesssim y$ for $y\le 1$. This is \fref{estpsiboddkequalone} for $m=0$, the case $m=1$ follows similarily.

For future reference we also note the following improved behavior in the region $y\ge B_0$. First, we compute
\begin{align*}
\Lambda \tilde T_1&=-\Lambda \Gamma(y) \int_y^\infty g(x) J(x) x \, dx - \Lambda J(y) \int_1^y g(x) \Gamma (x) x\,dx,\\ 
D\Lambda \tilde T_1&=-D\Lambda \Gamma(y) \int_y^\infty g(x) J(x) x \, dx +\Lambda \Gamma(y) g(y) J(y) y^2
\\ &- D\Lambda J(y) \int_1^y g(x) \Gamma (x) x\,dx - \Lambda J(y) g(y) \Gamma(y) y^2
\end{align*}
We now observe that $|D\Lambda J(y)|\lesssim y^{-3}$ for $y\ge 1$ and that the worst term in $g$ is supported in
$y\le B_0/2$. Therefore, for $y\ge B_0$
\bee
|D\Lambda\tilde{T}_1(y)| & \lesssim &(1+y)\left (\int_{y}^{\infty}\frac{dx}{1+x^3}+\frac 1{1+y^2}\right)+
\frac{1}{y^3}\left (\left[\int_{1}^{y}\frac{x^2dx}{1+x^3}+|c_b|\int_1^{B_0}\frac{x^2dx}{1+x}\right]+\frac {y^3}{1+y^3}\right)\\
& \lesssim & \frac{1}{1+y}.
\eee
Repeating the calculation for $\Psi_b$, we obtain for $y\ge B_0$
\begin{align}
\left|\Psi_b\right|& \lesssim  b^4\left[|D\Lambda \tilde T_1|+\frac{1}{y^2}T_1^2 
\int_0^1 \int_0^1 \tau f''(Q+\tau' \tau b^2 T_1) d\tau' d\tau\right]\notag\\
& \lesssim  \frac {b^4}{1+y}+ b^4\frac{1}{y^5 b^4 \log^2 b}\lesssim \frac {b^4}{1+y}\label{eq:imp-Psi}
\end{align}

This concludes the proof of Proposition \ref{propqb}.


\subsection{Profile localization}


Observe from (\ref{esttjsharppeven}), (\ref{esttjsharp}) that the profiles $T_p$ possess tails slowly decaying at infinity. The
behavior of these tails, near the light cone $y\sim \frac{1}{b}$, are responsible for a leading order phenomenon in determining the blow up speed, but their slow decay becomes irrelevant  for $y>>\frac{1}{b}$, where $Q_b$ is no longer a good approximation of the solution. In this region, the nonlinear interaction is over and we simply match the profile to its asymptotic value $a$. Note that the existence of an exact {\it constant} self-similar stationary solution to the full nonlinear problem turns out to be important for the analysis for small $k$. We thus introduce a localized version of the $Q_b$ profile as follows. Recall the two different scales $B_0, B_1$ defined in \fref{defbnot} and 
let $$B\in \{B_0,B_1\}\ \ \mbox{with} \ \ 
B_0=\frac{1}{b\sqrt {3\int y\chi(y) dy}}, \ \ B_1=\frac{|\log b|}{b}.$$ We then define:
\be
\label{defqbtilde}
P_{B}=(1-\chi_B)a+\chi_BQ_b,
\ee
where $$a=\lim_{y\to +\infty} Q(y)=\left\{\begin{array}{ll} \pi \ \ \mbox{for (WM)}\\ a=-1 \ \ \mbox{for (YM)} \end{array}\right .
$$ and 
$Q_b$ is given by Proposition \ref{propqb}. We now collect the estimates on this localized profile $P_{B}$ which are a simple consequence of Proposition \ref{propqb}.

\begin{proposition}[Estimates on the localized profile]
\label{lemmapsibtilde}
Let
\be
\label{defpsitilda}
\Psi_B=-\Delta P_{B}+b^2D\Lambda P_{B}+k^2\frac{f(P_{B})}{y^2}.
\ee
Then 
\be
\label{supppsib}
Supp(\Psi_B)\subset \{y\leq 2B\}
\ee
and there holds the estimates:\\
(i) For $k\ge 4$ even,
\be
\label{estp[rgehdiob}
\left|\frac{d^m }{dy^m}\frac{\partial P_{B}}{\partial b}\right|\lesssim b\frac{y^{k-m}}{1+y^{2k-2}}{\bf 1}_{y\leq \frac{1}{b}}+\frac {b^{k-1}}{y^m}{\bf 1}_{\frac{1}{b}\leq y\leq 2B}, \ \ 0\leq m\leq 3,
\ee
\be
\label{estpsibeventilde}
|\frac{d^m\Psi_B}{dy^m}(y)|\lesssim b^{k+4}\frac{y^{k-m}}{1+y^{k+1}}{\bf 1}_{y\leq B}+\frac{b^{k+2}}{y^{m}}{\bf 1}_{B\leq y\leq 2B}, \ \ 0\leq m\leq 1,
\ee
(ii) For $k\geq 3$ odd,
\be
\label{estbodd}
\left|\frac{d^m }{dy^m}\frac{\pa P_{B}}{\partial b}\right|\lesssim b\frac{y^{k-m}}{1+y^{2k-2}}{\bf 1}_{y\leq \frac{1}{b}}+\frac{b^{k-2}}{y^{1+m}}{\bf 1}_{\frac{1}{b}\leq y\leq 2B}, \ \ 0\leq m\leq 3,
\ee
\be
\label{estpsiboddtilde}
|\frac{d^m\Psi_B}{dy^m}|\lesssim b^{k+3}\frac{y^{k-m}}{1+y^{k+2}}{\bf 1}_{y\leq B}+\frac{b^{k+1}}
{1+y^{m+1}}{\bf 1}_{B\leq y\leq 2B}, \ \ 0\leq m\leq 1.
\ee
(iii) For $k=2$
\be
\label{estp[rgehdiobisbis}
\left|\frac{d^m }{dy^m}\frac{\partial P_{B}}{\partial b}\right|\lesssim b\frac{y^{2-m}}{1+y^{2}}{\bf 1}_{y\leq \frac{1}{b}}+\frac {b}{y^m}{\bf 1}_{\frac{1}{b}\leq y\leq 2B} +C(M)b\frac{y^{2-m}}{1+y^{4}}{\bf 1}_{y\leq 2B} , \ \ 0\leq m\leq 3,
\ee
\be
\label{estpsibeventildektwo}
|\frac{d^m}{dy^m}\left[\Psi_B-c_bb^4\chi_B\Lambda Q\right]|\lesssim C(M) b^{k+4}\frac{y^{k-m}}{1+y^{k+1}}{\bf 1}_{y\leq B}+\frac{b^{k+2}}{y^{m}}{\bf 1}_{B\leq y\leq 2B}, \ \ 0\leq m\leq 1, \ \ k=2.
\ee
(iii) For $k=1$, 
\bea
\label{vhovhohvoh}
\left|\frac{d^m }{dy^m}\frac{\pa P_{B}}{\partial b}\right|\lesssim \frac{by^{1-m}(1+|\log {b(1+y)}|)}{|\log b|}{\bf 1}_{y\leq \frac{B_0}{2}}+\frac{1}{b|\log b| y^{1+m}}{\bf 1}_{\frac{B_0}{2}\le y\leq 2B} &+& \frac 1{by^{1+m}}{\bf 1}_{\frac{B}{2}\le y\leq 2B},\notag\\ &&\hskip -12pc +C(M) \frac {by}{1+y^{2+m}}, \hskip 2pc 0\leq m\leq 3\label{estboddkone}
\eea
and for $0\leq m\leq 1$:
\bea
\label{estpsiboddtildekone}
& & \left|\frac{d^m}{dy^m}\left(\Psi_b-c_bb^2\chi_{\frac{B_0}{4}}\Lambda Q\right)\right|\lesssim \frac{b^2}{y}{\bf 1}_{B\leq y\leq 2B}
+ C(M) b^4\frac{y^{1-m}}{1+y^4}{\bf 1}_{y\leq 2B}\notag \\ &&\hskip 4pc +b^{4}\frac{(1+|\log (by)|)}{|\log b|}y^{1-m}{\bf 1}_{1\leq y\leq \frac{B_0}{2}}+\frac{b^2}{|\log b|y^{1+m}}{\bf 1}_{ \frac{B_0}{2}\leq y\leq 2B},
\eea
\end{proposition}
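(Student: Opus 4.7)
\textbf{Proof proposal for Proposition \ref{lemmapsibtilde}.} The strategy is to derive all bounds directly from Proposition \ref{propqb} by analyzing separately three regions: the exterior $y\ge 2B$, the interior $y\le B$ where $\chi_B\equiv 1$, and the transition annulus $B\le y\le 2B$ where the cutoff is active.

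First I would establish the support property \fref{supppsib}. The constant $a=Q(+\infty)$ satisfies $g(a)=0$ in both (WM) ($\sin\pi=0$) and (YM) ($\tfrac12(1-1)=0$), hence $f(a)=g(a)g'(a)=0$, and of course $\Delta a=D\Lambda a=0$. Thus $a$ is an exact self-similar solution, and since $P_B=a$ for $y\ge 2B$, the error $\Psi_B$ vanishes there. In the region $y\le B$ we have $P_B=Q_b$, so $\Psi_B=\Psi_b$, and the asserted bounds are exactly those of Proposition \ref{propqb} (including the $c_b$ corrections in the $k=1,2$ cases). Likewise $\partial P_B/\partial b=\partial Q_b/\partial b=\sum_j 2jb^{2j-1}T_j$ in this inner region, whose size is controlled by inserting the sharp behavior of $T_j$ from \fref{asymtotthorigin}, \fref{esttjsharpeven}, \fref{esttjsharp}, \fref{asymtotthorigin2}, \fref{asymtotthorigin1}, \fref{asympttone}, producing the leading $b\,y^{k-m}(1+y^{2k-2})^{-1}$ (or the logarithmic analogue for $k=1$) behavior in \fref{estp[rgehdiob}--\fref{estboddkone}.

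The substantive part of the proof is in the transition annulus $B\le y\le 2B$. Writing $P_B=a+\chi_B(Q_b-a)$ and expanding, I decompose
$$
\Psi_B=\chi_B\Psi_b+E_B,\qquad E_B=-[\Delta,\chi_B](Q_b-a)+b^2[D\Lambda,\chi_B](Q_b-a)+\frac{k^2}{y^2}\Bigl[f(P_B)-\chi_B f(Q_b)\Bigr],
$$
where $[\Delta,\chi_B]$ and $[D\Lambda,\chi_B]$ produce derivatives of $\chi_B$ of size $B^{-j}$ supported in the annulus. Using the asymptotics of Proposition \ref{propqb} I bound $|Q_b-a|$ and its derivatives by the leading tail of the radiation term $T_p$ (for $k\ge 2$) or of $Q$ itself and $T_1$ (for $k=1$); plugging in $y\sim B=B_0$ or $B=B_1$ yields the extra annular terms $b^{k+2}y^{-m}$, $b^{k+1}y^{-1-m}$, $b^2y^{-1}/|\log b|$ appearing on the right-hand sides of \fref{estpsibeventilde}, \fref{estpsiboddtilde}, \fref{estpsibeventildektwo}, \fref{estpsiboddtildekone}. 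The nonlinear contribution $f(P_B)-\chi_B f(Q_b)$ is handled by a Taylor expansion around $a$: since $f(a)=0$ and $|P_B-a|=\chi_B|Q_b-a|$ is small in the annulus, it contributes at the same order as the commutator terms. For $\partial P_B/\partial b$ in the annulus, I differentiate
$$
\frac{\partial P_B}{\partial b}=\chi_B\frac{\partial Q_b}{\partial b}+\frac{\partial\chi_B}{\partial b}(Q_b-a),
$$
where $\partial\chi_B/\partial b=-(y/B^2)(\partial B/\partial b)\chi'(y/B)$; using $\partial B_0/\partial b\sim -b^{-2}$ and $\partial B_1/\partial b\sim -b^{-2}|\log b|$ together with the tail $|Q_b-a|\lesssim b^{k}$ at $y\sim 1/b$ reproduces the claimed annular bounds.

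The main obstacle will be the $k=1$ case, where (i) the radiative term is $\Lambda Q$ itself, which only decays like $1/y$; (ii) $T_1$ exhibits anomalous logarithmic growth $(1+|\log(by)|)/|\log b|$ in $1\le y\le B_0/2$ and behaves like $b^{-2}|\log b|^{-1}y^{-1}$ beyond $B_0/2$; and (iii) the correction term $c_b b^2 \chi_{B_0/4}\Lambda Q$ in $\Psi_b$, with $|c_b|\lesssim 1/|\log b|$, must be propagated through the localization without spoiling the gain. I therefore expect to combine the refined bound \fref{eq:imp-Psi} on $\Psi_b$ for $y\ge B_0$ with the sharp estimates of Proposition \ref{propqb} to absorb the cutoff-commutator contributions into the stated form, tracking carefully the regions $y\le B_0/2$, $B_0/2\le y\le B$ and $B\le y\le 2B$ separately. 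For $k=2$ an analogous but easier bookkeeping is required because of the $c_b b^4 \chi \Lambda Q$ correction from \fref{vheohoevbobvob}. For $k\ge 3$ the decay is polynomially strong enough that the argument is routine.
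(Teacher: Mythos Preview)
Your proposal is correct and follows essentially the same route as the paper: the paper derives the identical algebraic decomposition \fref{formulapsitilde} (your commutator expression $E_B$ expanded) and the formula \fref{calcuderivpe} for $\partial_bP_B$, then bounds each piece case by case using the $T_j$ asymptotics from Proposition \ref{propqb}, treating the annulus $B\le y\le 2B$ exactly as you describe. One minor remark: for the $k=1$ case the paper does not actually invoke the refined bound \fref{eq:imp-Psi} here---the annular contribution $b^2/y\,{\bf 1}_{B\le y\le 2B}$ in \fref{estpsiboddtildekone} comes straight from $|Q_b-\pi|\lesssim 1/y$ and the $O(B^{-2})$ size of the cutoff commutators, while the remaining terms are inherited directly from \fref{estpsiboddkequalone}.
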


The main consequence of the localization procedure is first that 
$$Supp(\Lambda P_{B})\subset\{0\leq y\leq 2B\}$$ and hence the possible growth in $b$ of weighted Sobolev norms of $P_B$ may be evaluated explicitely. Second, the localization procedure creates an unavoidable slowly decaying term in the error $\Psi_{B}$ arising from the commutator $[D\Lambda, \chi_B]\sim 1$ and the specific decay of the radiation $T_p$, leading to: 
\be
\label{aysptoticofpsib}
\forall y\in [B,2B], \ \ \Psi_B(y)\sim \left\{\begin{array}{ll} b^{k+2} \ \ \mbox{for k} \ \ even,\\
\frac{b^{k+1}}{y} \ \ \mbox{for k} \ \ odd,\\
\end{array} \right .
\ee
However, according to \fref{estpsibeventilde},  \fref{estpsiboddtilde}, \fref{estpsibeventildektwo}, \fref{estpsiboddtildekone}, $\Psi_B$ is {\it better behaved} on the set where $\chi_B=1$, 
thanks to the extra gains provided by the $T_{p+1}$ terms in Proposition \ref{propqb}. 

\begin{remark}
\label{orthosurprofil}
Observe that for $b<b^*(M)$ small enough, the localization does not destroy the orthogonality relation which we have built into $Q_b$. More precisely, \fref{orthod} ensures: 
\be
\label{orthopb}
\forall b\leq b^*(m), \ \ \forall B\geq \frac{1}{b}, \ \ (P_B-Q,\chi_M\Lambda Q)=0.
\ee
\end{remark}

{\bf Proof of Proposition \ref{lemmapsibtilde}}: First compute from (\ref{defqbtilde}) and (\ref{defpsib}): 
\be
\label{calcuderivpe}
\frac{\partial P_{B}}{\partial b}=\chi_B\frac{\partial Q_b}{\partial b}-\frac{\partial \log B}{\partial b}y\chi'_B(Q_b-\pi),
\ee
\bea
\label{formulapsitilde}
\nonumber \Psi_B & = & \chi_B\Psi_b+\frac{k^2}{y^2}\left\{f(P_{B})-\chi_Bf(Q_b)\right\}-(Q_b-a)\Delta \chi_B-2\chi_B'Q'_b\\
& + & b^2\left\{(Q_b-a)D\Lambda \chi_B+2y^2\chi_B'Q'_b\right\}
\eea 
and thus (\ref{supppsib}) follows from (\ref{defqbtilde}). We now consider separate cases:\\

\noindent
{\it case $k\ge 4$ even}: Recall that $2p=k$ for $k$ even.  From \fref{asymtotthorigin}, (\ref{esttjsharp}), there holds for $y\leq \frac{1}{b}$: 
$$|\frac{\partial P_{B}}{\partial b}|\lesssim b|T_1(y)|\lesssim \frac{by^k}{1+y^{2k-2}}.$$ 
On the other hand, in the region $\frac{1}{b}\leq y\leq 2B$:
$$|\frac{\partial P_{B}}{\partial b}|\lesssim b^{k-1}T_p(y)+\frac{b^{k}}{b}\lesssim b^{k-1}.$$ This proves (\ref{estbodd}) for $m=0$, other cases follow similarily.\\

\noindent
We now estimate $\Psi_B$. For $y\leq B$, $\Psi_B=\Psi_b$ and hence (\ref{estpsiboddtilde}), \fref{estpsibeventildektwo} follow for $y\leq B$ from (\ref{estpsibeven}), \fref{vheohoevbobvob}. For $B\leq y\leq 2B$, we estimate the RHS of (\ref{formulapsitilde}). First: 
$$
\frac{1}{y^2}\left\{|f(P_{B})-\chi_Bf(Q_b)|\right\} \lesssim  \frac{|Q_b-a|}{y^2}{\bf 1}_{B\leq y\leq 2B}\lesssim 
{b^{k+2}}{\bf 1}_{B\leq y\leq 2B}.$$
Similarily, 
$$\left |(Q_b-\pi)\Delta \chi_B-2\chi_B'(Q_b-a)'\right|\lesssim \frac{b^{k}}{B^2}{\bf 1}_{B\leq y\leq 2B}\lesssim b^{k+2}{\bf 1}_{B\leq y\leq 2B},$$
$$b^2\left|(Q_b-a)D\Lambda \chi_B+2y^2\chi_B'Q'_b\right|\lesssim b^{2}b^{k}{\bf 1}_{B\leq y\leq 2B}=b^{k+2}{\bf 1}_{B\leq y\leq 2B}.$$ 
 These estimates imply (\ref{estpsibeventilde}) for $m=0$. The cases $1\leq m\leq 3$ follow similarily and are left to the reader.\\
 The case $k=2$ follows similarily using \fref{asymtotthorigin2}, \fref{esttjsharp'}, \fref{vheohoevbobvob}, this is left to the reader.\\
 
 \noindent
{\it case $k\ge 3$ odd}: Recall that $2p+1=k$ for $k$ odd. From \fref{asymtotthorigin}, (\ref{esttjsharp}), \fref{esttjsharpbis}, the leading order behavior of $\frac{\partial P_{B}}{\partial b}$ in the region $y\leq \frac{1}{b}$ is given by: 
$$|\frac{\partial P_{B}}{\partial b}|\lesssim b|T_1(y)|\lesssim \frac{by^k}{1+y^{2k-2}}.$$ On the other hand, in the region $\frac{1}{b}\leq y\leq 2B$, there holds:
$$|\frac{\partial P_{B}}{\partial b}|\lesssim b^{k-2}T_p(y)+\frac{1}{b}\frac{b^{k-1}}{y}\lesssim \frac{b^{k-2}}{y}.$$ This proves (\ref{estbodd}) for $m=0$, other cases follow similarily.\\

\noindent
We now estimate the error $\Psi_B$ given in (\ref{formulapsitilde}). For $y\leq B$, $\Psi_B=\Psi_b$ and hence (\ref{estpsiboddtilde}) follows for $y\leq B$ from (\ref{estpsibodd}). In the region $B\leq y\leq 2B$, we estimate from (\ref{estbodd}) and $f(\pi)=0$: 
\bee
\frac{1}{y^2}\left\{|f(P_{B})-\chi_Bf(Q_b)|\right\} & \lesssim &  \frac{1}{y^2}\left\{|f(\pi+\chi_B(Q_b-\pi))-f(\pi)|+|f(Q_b)-f(\pi)|\right\}\\
& \lesssim & \frac{|Q_b-\pi|}{y^2}{\bf 1}_{B\leq y\leq 2B}\lesssim \frac{b^{k+1}}{y}{\bf 1}_{B\leq y\leq 2B},
\eee
$$\left |(Q_b-\pi)\Delta \chi_B-2\chi_B'(Q_b-\pi)'\right|\lesssim \frac{b^{k-1}}{B^2y}{\bf 1}_{B\leq y\leq 2B}\lesssim \frac{b^{k+1}}{y}{\bf 1}_{B\leq y\leq 2B},$$
$$b^2\left|(Q_b-\pi)D\Lambda \chi_B+2y^2\chi_B'Q'_b\right|\lesssim \frac{b^{2}b^{k-1}}{y}{\bf 1}_{B\leq y\leq 2B}=\frac{b^{k+1}}{y}{\bf 1}_{B\leq y\leq 2B}.$$ These estimates together with (\ref{estpsibodd}) now imply (\ref{estpsiboddtilde}) for $m=0$. The case $m=1$ follow similarily.\\

\noindent 
{\it case $k=1$}:\,\, We estimate from \eqref{calcuderivpe}:
$$
\frac{\partial P_{B}}{\partial b}=\chi_B\frac{\partial Q_b}{\partial b}-\frac{\partial \log B}{\partial b}\frac{y}{B}\chi'_B(Q_b-\pi),
$$
Therefore,
$$
|\frac{\partial P_{B}}{\partial b}|\le |\frac{\partial (b^2 T_1)}{\partial b}| {\bf 1}_{y\le 2B} + b^{-1} 
|Q_b-\pi|{\bf 1}_{\frac B2 \le y\le 2B}. 
$$
Estimate  \fref{estboddkone} is a direct consequence of the construction of $T_1$ and the 
bound $|Q_b-\pi |\lesssim (1+y)^{-1}$. The derivative estimates follow in a similar fashion.\\

\noindent
We now turn to the estimate of $\Psi_B$. From \fref{formulapsitilde}:
$$\frac{1}{y^2}\left\{|f(P_{B})-\chi_Bf(Q_b)|\right\} \lesssim \frac{|Q_b-\pi|}{y^2}{\bf 1}_{B\leq y\leq 2B}\lesssim \frac{b^{2}}{y}{\bf 1}_{B\leq y\leq 2B},
$$
\begin{align}\label{eq:qpsi}
&\left |(Q_b-\pi)\Delta \chi_B-2\chi_b'(Q_b-\pi)'\right|\lesssim \frac{1}{B^2y}{\bf 1}_{B\leq y\leq 2B}\lesssim \frac{b^{2}}{y}{\bf 1}_{B\leq y\leq 2B},\\
&b^2\left|(Q_b-\pi)D\Lambda \chi_B+2y^2\chi_B'Q'_b\right|\lesssim \frac{b^{2}}{y}{\bf 1}_{B\leq y\leq 2B}.\label{eq:qpsi2}
\end{align} 
These estimates yield (\ref{estpsiboddtildekone}) for $m=0$, the case $m=1$ follows similarily.\\

\noindent
This concludes the proof of Proposition \ref{lemmapsibtilde}.


\section{Decomposition of the flow}
\label{sectionequation}


Having constructed the almost self similar localized profiles $P_{B}$, we introduce a decomposition of the flow: $$u(t,r)=\left(P_{B_1(b(t))}+\e\right)(t,\frac{r}{\lambda(t)})=\left(P_{B_1(b(t))}\right)_{\lambda(t)}+w(t,r)$$ where $$B_1=\frac{|\log b|}{b}.$$ The time dependent parameters $b(t),\lambda(t)$ will be determined from the modulation theory in section \ref{geomdecomp}. The perturbative $w(t)$ is what is refered to in the paper as the ``radiation term''. Since $(P_{B_1})_{\lambda(t)}\in \mathcal H^2_a$, it implies \footnote{Observe that fr $k=1$, $Q_{\lambda(t)}$ \emph{does not} belong to $\mathcal H^2_a$ due its slow convergence at infinity.}  that $w(t,r)\in \mathcal H^2$.\\
We now derive the equations for $w$ and $\e$. Let
\be
\label{defreacledtime}
s(t)=\int_0^t\frac{d\tau}{\lambda(\tau)} \ee
be the rescaled time\footnote{Note that $s(t)$ will be proved to be a global time $s(t)\to +\infty$ as $t\to T$} . We shall make an intensive use of the following rescaling formulas: for
$$u(t,r)=v(s,y), \ \ y=\frac{r}{\lambda}, \ \ \frac{ds}{dt}=\frac{1}{\lambda},$$
\be
\label{dliatationone}
\partial_tu=\frac{1}{\lambda}\left(\partial_sv+b\Lambda v\right)_{\lambda},
\ee
\be
\label{dilationtowo}
\partial_{tt}u=\frac{1}{\lambda^2}\left[\partial_s^2v+b(\partial_sv+2\Lambda\partial_sv)+b^2D\Lambda v+b_s\Lambda v\right]_{\lambda}.
\ee
In particular, using \fref{defpsitilda} and \fref{dilationtowo}, we derive from \fref{equation} the equation for $\e$:
\bea
\label{eqeqb}
\nonumber \partial_s^2\eb+H_{B_1}\eb & = & -\Psi_{B_1}-b_s\Lambda P_{B_1}-b(\partial_sP_{B_1}+2\Lambda\partial_sP_{B_1})-\partial^2_sP_{B_1}\\
& - & b(\partial_s\eb+2\Lambda\partial_s\eb)-b_s \Lambda \eb-\frac{k^2}{y^2}N(\eb)
\eea
where $H_{B_1}$ is the linear operator associated to the profile  $P_{B_1}$
\be
\label{defhbl}
H_{B_1} \eb=-\Delta \eb+b^2D\Lambda\eb+k^2\frac{f'(P_{B_1})}{y^2}\eb,
\ee
and the nonlinearity:
\be
\label{defnw}
N(\eb)=\frac{1}{y^2}\left[f(P_{B_1}+\eb)-f(P_{B_1})-f'(P_{B_1})\eb\right].
\ee
Alternatively, the equation for $w$ given by \fref{defet} takes the form:
$$\partial_t^2w+H_{B_1} w=-\left[\partial_t^2(P_{B_1})_{\lambda}-\Delta(P_{B_1})_{\lambda}+k^2\frac{f((P_{B_1})_{\lambda})}{r^2}\right]-\frac{k^2}{r^2}N(w)$$
with 
\be
\label{defhbltwo}
H_{B_1} w=-\Delta w+k^2\frac{f'((P_{B_1})_{\lambda})}{r^2},
\ee
\be
\label{defnwbis}
N(w)=\frac{1}{r^2}\left[f(P_{B_1}+w)-f(P_{B_1})-f'((P_{B_1})_{\lambda})w\right].
\ee
We then expand using (\ref{dliatationone}), (\ref{dilationtowo}) and (\ref{defpsitilda}):
\bee
\partial_t^2(P_{B_1})_{\lambda}-\Delta(P_{B_1})_{\lambda}+k^2\frac{f((P_{B_1})_{\lambda})}{r^2}& = & \frac{1}{\lambda^2}\left[\partial_{ss}P_{B_1}+b(\partial_sP_{B_1}+2\Lambda\partial_sP_{B_1})+b_s\Lambda P_{B_1}+\Psi_B\right]_{\lambda}\\
& = &  \frac{1}{\lambda^2}\left[b\Lambda\partial_sP_{B_1}+b_s\Lambda P_{B_1}+\Psi_B\right]_{\lambda}+\partial_t\left[\frac{1}{\lambda}(\partial_sP_{B_1})_{\lambda}\right]
\eee
and rewrite the equation for $w$:
\be
\label{eqwfinal}
\partial_t^2w+H_{B_1}w =   -\frac{1}{\lambda^2}\left[b\Lambda\partial_sP_{B_1}+b_s\Lambda P_{B_1}+\Psi_B\right]_{\lambda}-\partial_t\left[\frac{1}{\lambda}(\partial_sP_{B_1})_{\lambda}\right]-\frac{k^2}{r^2}N(w).
\ee
For most of our arguments we prefer to view the linear operator $H_{B_1}$ acting on $w$ in (\ref{eqwfinal}) as a perturbation of the linear operator $H_\lambda$ associated to $Q_\lambda$. Then
\bea
\label{oeioehoe}
& & \partial_t^2w+H_\lambda w=F_{B_1}\\
\nonumber &   = &   -\frac{1}{\lambda^2}\left[b\Lambda\partial_sP_{B_1}+b_s\Lambda P_{B_1}+\Psi_{B_1}\right]_{\lambda}-\partial_t\left[\frac{1}{\lambda}(\partial_s\qbt)_{\lambda}\right] \\
\nonumber & + &  \frac{k^2}{r^2}\left[f'(Q_{\lambda})-f'((P_{B_1})_{\lambda})\right]w-\frac{k^2}{r^2}N(w)
\eea
with 
\be
\label{defh}
H_{\lambda} w=-\Delta w+k^2\frac{f'(Q_{\lambda})}{r^2}.
\ee

\begin{remark}
\label{cboieheo} We note that absence of satisfactory {\it pointwise} in time estimates for the 
$b_{ss}$ type of terms appearing on the RHS of \fref{oeioehoe} (see also \fref{eqeqb}) requires that we rewrite such
terms as full time derivatives and consistently integrate them by parts in all of our estimates.
\end{remark}
Our analysis will require control of ${\mathcal H}^2$ norm of $w$. This will be achieved via energy estimates 
for the function 
\be
\label{defwwinifo}
W=A_\lambda w.
\ee We recall that the operator $A_\lambda$ factorizes the Hamiltonian 
$H_\lambda=A_\lambda^* A_\lambda$ and the function $W$ is a solution of the wave equation
\be
\label{Wequation}
\partial_{tt}W+\widetilde{H}_\lambda W=A_{\lambda}F_{B_1}+\frac{\partial_{tt}V_{\lambda}^{(1)}w}{r}+\frac{2\partial_t\vul\partial_tw}{r}.
\ee 
with the conjugate Hamiltonian $\tilde H_\lambda=A_\lambda A_\lambda^*$, see \eqref{Wequation'}.


\section{Initial data and the bootstrap assumptions}
\label{sectionthree}


In this section we describe the set of estimates which govern the blow up dynamics stated in Theorem \ref{mainthm}. We begin with the prescription of the set ${\mathcal{O}}$ of initial data and 
consequently show that, under bootstrap assumptions, they evolve to a trapped regime leading
to a finite time blow up.\\


\subsection{Description of the set $\mathcal O$ of initial data}


Let us recall the orbital stability statement of Lemma \ref{orbstab}: for all sufficiently small $\eta>0$ such that for $(u_0,u_1)\in H^1_r\times L^2$ with $E(u_0,u_1)<E(Q)+\eta,$
there exists $\lambda(t)>0$ such that the corresponding solution $u(t)$ to \fref{equation} satisfies: 
$$u(t,r)=(Q+\e)(\frac{r}{\lambda(t)}) \  \ \mbox{with} \ \ \|\e(t),\partial_tu\|_{\mathcal H}= o(\eta).$$ This decomposition is not unique. Uniqueness can be achieved, using standard modulation theory, by for example fixing an orthogonality condition on $\e$, see Lemma \ref{orbstab}. The class of initial data which lead to the blow up dynamics of Theorem \ref{mainthm} have energy just above $E(Q)$ and 
are excited in a specific direction of the $Q_b$ deformation of $Q$. 

\begin{definition}[Description of the set of initial data ${\mathcal{O}}$]
\label{defoinitial}
Let $M$ be a sufficiently large constant and let $b_0^*(M)>0$ be small enough. We define ${\mathcal{O}}$ to be the set of initial data $(u_0,u_1)$ of the form:
\be
\label{decompointi}
u_0(r)=\left(P_{B_1(b_0)}\right)_{\lambda_0}+w_0(r)=\left(P_{B_1(b_0)}+\e_0\right)_{\lambda_0},
\ee
\be
\label{dlecompointipartialt}
u_1(r)=\frac{b_0}{\lambda_0}\left(\Lambda P_{B_1(b_0)}\right)(\frac{r}{\lambda_0})+w_1(r),
\ee
where $\e_0$ satisfies the orthogonality condition:
\be
\label{ortheinit}
(\e_0,\chi_M\Lambda Q)=0
\ee
We require that the following bounds are satisfied:
\begin{itemize}
\item Smallness of $b_0$: 
\be
\label{smallbzero}
0<b_0<b_0^*;
\ee
\item Smallness of $\lambda_0$ with respect to $b_0$:
\be
\label{smalllmba}
\lambda^2_0<b^{2k+4}_0;
\ee
\item Smallness of the excess of energy:
\be
\label{eq:sm-en}
\|w_0,w_1\|_{\mathcal H}\lesssim {b_0^{10k}}
\ee
and
\be
\label{oeiohepwe}
\|w_0,w_1\|_{\H^2}\lesssim \frac{b_0^{10k}}{\lambda_0}.
\ee
\end{itemize}
\end{definition}

\begin{remark} Note that by the implicit function theorem 
${\mathcal{O}}$ is a non-empty {\it open} set of $\H^{2}$.
\end{remark}


\subsection{Decomposition of the flow and modulation equations}
\label{geomdecomp}


Let us now consider $(u_0,u_1)\in {\mathcal{O}}$ and let $u(t)$ be the corresponding solution to \fref{equation} with life time $T=T(u_0)\leq +\infty$ defined as the maximal time interval on which 
$u\in{\mathcal C}([0,T),\H^{2}_a)$. It now easily follows from the orbital stability of Lemma \ref{orbstab} that
for any $(u_0,u_1)\in {\mathcal{O}}$ and $t\in [0,T(u_0))$ there exists a unique 
decomposition of the flow $$u(t)=(Q+\e_1)_{\lambda(t)}$$ with $\lambda(t)\in {\mathcal{C}}^2([0,T), \RR^*_+)$ and 
\be
\label{orbstabbound}
\forall t\in [0,T), \ \  |\partial_tu|_{L^2}+|\lambda_t(t)|+\|\e_1(t),0\|_{\mathcal H}\lesssim o(1)_{b^*_0\to 0}
\ee satisfying 
the orthogonality condition 
\be
\label{defhovohfbis}
\forall t\in [0,T), \ \ (\e_1(t),\chi_M\Lambda Q)=0.
\ee 
Based on this decomposition we define 
\be
\label{defbt}
b(t)=-\lambda_t \ \ \mbox{so that} \ \ b(t)=o(1)_{b^*_0\to 0}
\ee
and for $b^*_0$ small enough define the new decomposition with the profile $P_{B_1(b(t)})$ and 
``the excess'' $\e(t,y)=w(t,r)$: 
\be
\label{defet}
u(t,r)=\left(P_{B_1(b(t))}+\e\right)(t,\frac{r}{\lambda(t)})=\left(P_{B_1(b(t))}\right)_{\lambda(t)}+w(t,r).
\ee
Observe from \fref{defhovohfbis} and the choice of gauge \fref{orthopb} in the construction of $Q_b $  that:
\be
\label{orthe}
\forall t\in [0,T), \ \ (\e(t),\chi_M\Lambda Q)=0,\qquad (w(t),(\chi_M\Lambda Q)_{\lambda(t)})=0
\ee
According, to section \ref{sectionequation}, $w,\e$ and $W$ given by \fref{defwwinifo} satisfy respectively the equations \fref{eqeqb}, \fref{oeioehoe} and \fref{Wequation}. The modulation equation for $b$ is based on the orthogonality condition \fref{orthe} and will be derived in section \ref{forstboundnbs}. The precise control of the parameter $b$ is at the heart of our analysis. According to the modulation equation for $\lambda$ \fref{defbt}, the behavior determines the blow up speed and measures the deviation from the self similar blow up.


\subsection{Initial bounds for $(\lambda,b,w)$}


We have now began the process of recasting the original flow for the function $u$ in terms of the dynamics of the new 
variables $(\lambda, b, w)$. Although the equations for $\lambda(t), b(t)$ are yet to be derived, 
we reinterpret the assumptions on the initial data $(u_0,u_1)\in{\mathcal O}$ as assumptions on $(\lambda(0),b(0),w(0), W(0))$ and claim the following initial estimates:

\begin{lemma}[Initial bounds for the $(\lambda,b,w)$ decomposition]
\label{lemmainitialdata}
We have
\be
\label{hofhieoepeih}
\lambda_0=\lambda(0), \ \  b_0-b(0)=0(b_0^{10k}),
\ee
\be
\label{ivnovnhdlbvjepjvepvo}
\|w(0), \partial_tw(0)\|_{\mathcal H}=o(1)_{b^*_0\to 0},
\ee
\be
\label{inoshohgoer}
|b_s(0)|+\lambda_0\|W(0),\partial_tW(0)\|_{\mathcal H}\lesssim \frac{b_0^{k+1}}{|\log b_0|}.
\ee
\end{lemma}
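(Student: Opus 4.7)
The plan is to exploit the algebraic content of the modulation orthogonality to identify $(\lambda(0),b(0))$ in terms of the prescribed data, and then bound $(w(0),\partial_t w(0),W(0),\partial_t W(0),b_s(0))$ using the smallness hypotheses \eqref{eq:sm-en}--\eqref{oeiohepwe}. First I would argue that $\lambda(0)=\lambda_0$: by the gauge identity \eqref{orthopb} and the prescribed orthogonality \eqref{ortheinit}, the perturbation $P_{B_1(b_0)}-Q+\e_0$ is orthogonal to $\chi_M\Lambda Q$, so writing $u_0=(Q+(P_{B_1(b_0)}-Q+\e_0))_{\lambda_0}$ and applying the uniqueness part of Lemma \ref{orbstab} yields $\lambda(0)=\lambda_0$ and $\e_1(0)=P_{B_1(b_0)}-Q+\e_0$. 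Then $b(0)=-\lambda_t(0)$ is determined by differentiating $(\e_1(t),\chi_M\Lambda Q)=0$ at $t=0$: using the rescaling formula \eqref{dliatationone} and the explicit shape \eqref{dlecompointipartialt} of $u_1$, this reduces to a scalar algebraic identity
$$
\lambda_t(0)\bigl[(\Lambda P_{B_1(b_0)},\chi_M\Lambda Q)+(\Lambda\e_0,\chi_M\Lambda Q)\bigr]=-b_0(\Lambda P_{B_1(b_0)},\chi_M\Lambda Q)-\lambda_0(\tilde w_1,\chi_M\Lambda Q),
$$
where $\tilde w_1(y):=w_1(\lambda_0 y)$. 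The denominator equals $(\Lambda Q,\chi_M\Lambda Q)+o(1)>0$ for $b_0^{\ast}$ small, and the numerator of $b(0)-b_0$ is bounded by $O(b_0^{10k})$ using \eqref{eq:sm-en} together with $\|\tilde w_1\|_{L^2}=\lambda_0^{-1}\|w_1\|_{L^2}$, yielding \eqref{hofhieoepeih}.

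Next, comparing the two decompositions of $u_0$ gives $w(0,r)=[P_{B_1(b_0)}-P_{B_1(b(0))}]_{\lambda_0}(r)+w_0(r)$ and an analogous formula for $\partial_t w(0)$ built from $u_1$ and the time derivative of $(P_{B_1(b(t))})_{\lambda(t)}$. The profile difference is $O(|b(0)-b_0|)=O(b_0^{10k})$ in every relevant norm by a mean-value bound combined with the $\partial_b P_{B_1}$ estimates of Proposition \ref{lemmapsibtilde}, so \eqref{ivnovnhdlbvjepjvepvo} follows immediately from \eqref{eq:sm-en}. For the $W$-part of \eqref{inoshohgoer} I use the factorization identity $\lambda_0 W(0,r)=(A\e(0))(r/\lambda_0)$, which together with the coercivity $(\tilde H_\lambda W,W)=|A_\lambda^{\ast}W|_{L^2}^2=|H_\lambda w|_{L^2}^2=\lambda_0^{-2}|H\e|_{L^2}^2$ yields the scaling relation $\lambda_0\|(W(0),\partial_t W(0))\|_{\mathcal H}\lesssim \|\e(0)\|_{\tilde{\mathcal H}}$. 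Translating the $\mathcal H^2$ smallness \eqref{oeiohepwe} of $(w_0,w_1)$ through the rescaling produces $\|w_0(\lambda_0\cdot)\|_{\tilde{\mathcal H}}\lesssim \lambda_0\|(w_0,w_1)\|_{\mathcal H^2}\lesssim b_0^{10k}$, which combined with the negligible profile-difference contribution comfortably beats the required $b_0^{k+1}/|\log b_0|$.

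The bound on $b_s(0)$ is obtained by projecting the $\e$-equation \eqref{eqeqb} onto $\chi_M\Lambda Q$. Two time derivatives of the orthogonality $(\e(s),\chi_M\Lambda Q)\equiv 0$ eliminate the $(\partial_s^2\e,\chi_M\Lambda Q)$ term and leave
$$
b_s\bigl[(\Lambda P_{B_1},\chi_M\Lambda Q)+O(b+\|\e\|_{\tilde{\mathcal H}})\bigr]=-(\Psi_{B_1},\chi_M\Lambda Q)-(H_{B_1}\e,\chi_M\Lambda Q)+\mathrm{h.o.t.}
$$
The denominator is bounded below uniformly, and at $s=0$ the $\e$-remainder is $O(b_0^{10k})$ by the preceding step. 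The flux $(\Psi_{B_1(b_0)},\chi_M\Lambda Q)$ is controlled by Proposition \ref{lemmapsibtilde}: for $k=1$ the principal term $c_b b_0^2\chi_{B_0/4}\Lambda Q$ in \eqref{estpsiboddtildekone} contributes $O(b_0^2/|\log b_0|)$, exactly saturating the target $b_0^{k+1}/|\log b_0|$; for $k\ge 2$ the inner product is at worst $O(b_0^{2k})$, which lies far below $b_0^{k+1}/|\log b_0|$. The main technical obstacle throughout is bookkeeping: verifying that the $\partial_b P_{B_1}$ estimates near the large cut-off scale $B_1=|\log b|/b$ do not generate losses exceeding the super-polynomial gain $b_0^{10k}$ coming from the initial data, which is precisely what the structure of Proposition \ref{lemmapsibtilde} is designed to deliver.
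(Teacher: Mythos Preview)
Your overall strategy matches the paper's, but there is a genuine gap in the way you decouple the time-derivative bounds from the $b_s(0)$ bound. The formula for $\partial_t w(0)$ is not merely ``an analogous profile difference'': writing $\partial_t w(0)=u_1-\partial_t\big[(P_{B_1(b(t))})_{\lambda(t)}\big]\big|_{t=0}$ produces
\[
\lambda_0\partial_t w(0)=\Big(b_0\Lambda P_{B_1(b_0)}-b(0)\Lambda P_{B_1(b(0))}-b_s(0)\,\partial_b P_{B_1}\Big)_{\lambda_0}+\lambda_0 w_1,
\]
so $\|\partial_t w(0)\|_{L^2}$ carries a term $|b_s(0)|\,\|\partial_b P_{B_1}\|_{L^2}$, which for $k=1$ is of size $|b_s(0)|/b_0$ and is not controlled by \eqref{eq:sm-en} alone. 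Thus \eqref{ivnovnhdlbvjepjvepvo} does \emph{not} follow ``immediately''. Conversely, in your projected identity for $b_s(0)$ the ``h.o.t.'' hides the term $b(\partial_s\e,3\chi_M\Lambda Q+\Lambda(\chi_M\Lambda Q))$, and since $\partial_s\e+b\Lambda\e=\lambda(\partial_t w)_\lambda$ this feeds $\|\partial_t w(0)\|_{L^2}$ back into the $b_s(0)$ estimate. The two quantities are genuinely coupled.

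The paper resolves this by deriving the pair of inequalities
\[
|\partial_t w(0)|_{L^2}+\lambda_0|\partial_t W(0)|_{L^2}\lesssim b_0^{4k}+|b_s(0)|\cdot\|\partial_b P_{B_1}\|_{L^2},\qquad |b_s(0)|\,|\Lambda Q|_{L^2(y\le 2M)}^2\lesssim |(\Psi_{B_1},\chi_M\Lambda Q)|+b_0|\partial_t w(0)|_{L^2}\cdot C(M)+\ldots
\]
and then closing the loop: for $k\ge 2$ the smallness of $b_0$ absorbs the cross term, while for $k=1$ one must use that $|\Lambda Q|_{L^2(y\le 2M)}^2\sim\log M$ on the left dominates the $\sqrt{\log M}$ loss on the right, so that choosing $M$ large allows absorption. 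Your proposal omits this coupled-system argument entirely; without it, neither \eqref{ivnovnhdlbvjepjvepvo} for $\partial_t w(0)$ nor the $b_s(0)$ part of \eqref{inoshohgoer} is actually established.
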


\noindent
{\bf Proof of Lemma \ref{lemmainitialdata}}\\

{\bf step 1} Estimates for $\lambda(0), b(0)$ and spatial derivatives of $w$.\\

Let us 
first show that 
\be
\label{hofhieoepeihbis}
\lambda_0=\lambda(0), \ \  b_0-b(0)=0(b_0^{10k}),
\ee
\be
\label{estwntinino}
\int (\partial_r w(0))^2+\int\frac{(w(0))^2}{r^2}\lesssim b_0^{5k},
\ee
\be
\label{cnoeneone}
\|W(0),0\|_{\mathcal H}\lesssim \frac{b_0^{5k}}{\lambda(0)}.
 \ee
Indeed, first compare \fref{decompointi} and \fref{defet} at $t=0$ to get: $$u_0=(Q+(P_{B_1(b_0)}-Q)+\e_0)_{\lambda_0}=(Q+(P_{B_1(b(0))}-Q)+\e(0))_{\lambda(0)}$$ with $$ ((P_{B_1(b_0)}-Q)+\e_0,\chi_M\Lambda Q)=((P_{B_1(b(0))}-Q)+\e(0),\chi_M\Lambda Q)=0$$ and hence the uniqueness of the geometric decomposition ensures:
\be
\label{cnonconceoeno}
\lambda(0)=\lambda_0\ \  \mbox{and} \ \ \e(0)=\e_0+P_{B_1(b_0)}-P_{B_1(b(0))}.
\ee 
and
\be\label{eq:con}
w(0)=w_0+(P_{B_1(b_0)}-P_{B_1(b(0))})_{\lambda_0}
\ee
We now compute the $\partial_t$ derivative at $t=0$: 
\be
\label{oehoevoe}
\partial_tu(0)=\frac{1}{\lambda_0}\left(b_s(0)\frac{\partial P_{B_1}}{\partial b}+b(0)\Lambda P_{B_1(b(0))}\right)_{\lambda_0}+\partial_tw(0).
\ee
We take a scalar product of this relation with $(\chi_M\Lambda Q)_{\lambda_0}$ and first observe from \fref{orthe} that: $$(\partial_tw,(\chi_M\Lambda Q)_{\lambda})=-\frac{b}{\lambda}(w,\Lambda (\chi_M\Lambda Q)_{\lambda})$$ and hence from \fref{cnonconceoeno}:
\bee
\label{cnockeoneooe}
\nonumber 
\left|(\partial_tw(0),(\chi_M\Lambda Q)_{\lambda_0})\right| & \lesssim &  |b(0)|\lambda_0|(\e_0+ P_{B_1(b_0)}-P_{B_1(b(0))},\Lambda(\chi_M\Lambda Q))\\
& \lesssim &  C(M) \lambda_0|b(0)|(b_0^{10k}+|b^2(0)-b^2_0|).
\eee
The last line uses the initial bound \fref{eq:sm-en} and the results of Proposition \ref{lemmapsibtilde}.\\

\noindent
Furthermore, $$(\frac{\partial P_{B_1}}{\partial b}, \chi_M\Lambda Q)=0$$ and hence from \fref{oehoevoe}: 
\be
\label{nononeoneo}
(\partial_tu(0),(\chi_M\Lambda Q)_{\lambda_0}) = \lambda_0\left[b(0)(\Lambda P_{B_1(b(0))},\chi_M\Lambda Q)+ O(|b(0)|(b_0^{10k}+|b^2(0)-b^2_0|)\right].
\ee
Performing the same computation on \fref{dlecompointipartialt} using \fref{oeiohepwe} yields: 
$$(\partial_t u(0),(\chi_M\Lambda Q)_{\lambda_0})=\lambda_0\left[b_0(\Lambda P_{B_1(b_0)},\chi_M\Lambda Q)+O(b_0^{10k})\right]$$ which together with \fref{nononeoneo} now implies:
$$
b_0-b(0)=0(b_0^{10k}).
$$
This gives \fref{hofhieoepeihbis}. Estimate \fref{estwntinino} now follows by inserting \fref{eq:sm-en} and \fref{hofhieoepeih} into \fref{eq:con}. \\

\noindent
Finally,
\bea
\label{bjebeofo}
\nonumber \|W(0),0\|_{\mathcal H}^2 &= & \int |\partial_r A_{\lambda_0} w(0)|^2+\int\frac{(A_{\lambda_0}w(0))^2}{r^2}\\
&  \lesssim & \frac {\|w(0),0\|_{\mathcal H}^2}{\lambda_0^2}+\|w(0),0\|_{\H^{2}}^2\\
\nonumber &  \lesssim &  \frac {\|w_0,0\|_{\mathcal H}^2}{\lambda_0^2}+\|w_0,0\|_{\H^{2}}^2+\frac{(b_0-b(0))^2}{\lambda_0^2}\lesssim \frac{b_0^{10k}}{\lambda_0^2}
\eea
where we used the uniform boundedness of the $Q_b$ profile in the $\H^2$ norm (note asymptotic behavior \fref{asymtotthorigin}, \fref{asymtotthorigin1} at the origin). Thus \fref{eq:con}, \fref{hofhieoepeih} and the initial bounds \fref{eq:sm-en}, \fref{oeiohepwe}, and \fref{cnoeneone} follow. Note that for $k=1$, the bound \fref{bjebeofo} requires some care and uses the fact that $|V^{(1)}(y)-1|\lesssim y $ for $y\leq 1$ and hence:
\bee
& & \int_{r\leq \lambda_0}|\partial_r A_{\lambda_0}w(0)|^2+\int_{r\leq \lambda_0}\frac{(A_{\lambda_0}w(0))^2}{r^2}\\
& = & \int_{r\leq \lambda_0}\left|\partial_r\left(-\partial_rw(0)+\frac{V_{\lambda_0}^{(1)}}{r}w(0)\right)\right|^2+\int_{r\leq \lambda_0}\frac{1}{r^2}\left|-\partial_rw(0)+\frac{V_{\lambda_0}^{(1)}}{r}w(0)\right|^2\\
& \lesssim & \int_{r\leq \lambda_0}(\partial_r^2w(0))^2+\int_{r\leq \lambda_0}\frac{1}{r^2}\left(\partial_rw(0)-\frac{w(0)}{r}\right)^2+\int_{r\leq \lambda_0}\frac{(w(0))^2}{\lambda_0^2r^2}\\
& \lesssim & \|w(0),0\|_{\H^{2}}^2+\frac {\|w(0),0\|_{\mathcal H}^2}{\lambda_0^2}
\eee
while 
\bee
& & \int_{r\geq \lambda_0}|\nabla A_{\lambda_0}w(0)|^2+\int_{r\geq \lambda_0}\frac{(A_{\lambda_0}w(0))^2}{r^2} \lesssim \int_{r\geq \lambda_0}(\partial_r^2w(0))^2+  \int_{r\geq \lambda_0}\left (\frac{(\nabla w(0))^2}{r^2}+\frac {w(0)^2}{r^4}\right)\\
&\lesssim & \int\left ((\partial_r^2w(0))^2+\frac{(\partial_r w(0))^2}{r^2}+\frac {w(0)^2}{\lambda_0^2 r^2}\right) \lesssim  \|w(0),0\|_{\H^2}^2+\frac {\|w(0),0\|_{\mathcal H}^2}{\lambda_0^2},
\eee
which yield \fref{bjebeofo} for $k=1$.\\

{\bf step 2} Time derivative estimates.\\

From \fref{dlecompointipartialt}, \fref{oehoevoe}, \eqref{hofhieoepeihbis}:
$$ \lambda_0\partial_tw(0)=\left(b_0\Lambda P_{B_1(b_0)}-b(0)\Lambda P_{B_1(b(0))}-b_s(0)\frac{\partial P_{B_1}}{\partial b}\right)_{\lambda_0}+w_1.$$ Therefore, 
$$
\lambda_0 \partial_t W(0) = \lambda_0 A_{\lambda_0} \partial_ t w(0) + \lambda_0 (\partial_t A_\lambda)
w(0) 
$$
Using \eqref{deoperatoaone} and \eqref{deoperatoa} we have
$$
(\partial_t A_\lambda)= \frac {\partial_t V_\lambda^{(1)}}{r}=\frac{k b(0)}{\lambda_0} 
\frac {(\Lambda Q g''(Q))_{\lambda_0}}{r}
$$
This implies from \eqref{eq:sm-en}, \fref{oeiohepwe}, \eqref{eq:con} and \fref{hofhieoepeih}:
$$|\partial_tw(0)|_{L^2}+\lambda_0|\partial_tW(0)|_{L^2}\lesssim (|b_s(0)|+b_0^{10k})
\left(|\frac{\partial P_{B_1}}{\partial b}|_{L^2}+|A\frac{\partial P_{B_1}}{\partial b}|_{L^2}\right)+O(b_0^{4k}).$$
We now derive from Proposition \ref{lemmapsibtilde} the rough bound: 
\be
\label{cnoeoheir}
|A\frac{\partial P_{B_1}}{\partial b}|_{L^2}+|\frac{\partial P_{B_1}}{\partial b}|_{L^2}\lesssim \left\{\begin{array}{ll} 1\ \ \mbox{for} \ \ k\geq 2\\
												            \frac{1}{b_0}\ \ \mbox{for} \ \ k=1
												            \end{array} \right .
\ee
and hence:

\be
\label{nvonvornpnorepn}
|\partial_tw(0)|_{L^2}+\lambda_0|\partial_tW(0)|_{L^2}\lesssim O(b_0^{4k})+\left\{\begin{array}{ll}|b_s(0)|\ \ \mbox{for} \ \ k\geq 2\\
												            \frac{|b_s(0)|}{b_0}\ \ \mbox{for} \ \ k=1
												            \end{array} \right .
\ee
It remains to compute $b_s(0)$. This computation relies on the orthogonality relation \fref{orthe} and is done in full detail in the proof of Proposition \ref{roughboundpointw}. In particular, we may extract from the explicit formula \fref{elgebraba} evaluated at $t=0$ the crude bound:
\begin{align}
\label{cncbeobvi}|b_s||\Lambda Q|_{L^2(y\leq 2M)}^2&\lesssim |(\Psi_{B_1},\chi_M\Lambda Q)|+|b(0)||\partial_tw(0)|_{L^2}||\frac{y^k}{1+y^{2k}}|_{L^2(y\leq 2M)}\\ &
+M^C\left (|A_{\lambda_0} w(0)|_{L^2(y\leq 2M)}+|\frac{w(0)}r|_{L^2(y\leq 2M)}\right)\notag
\end{align}
We now examine separately:\\
{\bf case $k\geq 2$}: We first have from Proposition \ref{lemmapsibtilde}:
$$|(\Psi_{B_1},\chi_M\Lambda Q)|\lesssim M^Cb^{k+2}.$$ We insert 
this together with \fref{hofhieoepeihbis}, \fref{estwntinino}, \fref{cnoeneone} into \fref{cncbeobvi} to get:
$$|b_s(0)|\lesssim |b_0||\partial_tw(0)|_{L^2}+O(b_0^{k+2}).$$ 
Combining this with \fref{nvonvornpnorepn} concludes the proof of \fref{ivnovnhdlbvjepjvepvo}, \fref{inoshohgoer}.\\
{\bf case $k=1$}: From \fref{estpsiboddtildekone}, $$|(\Psi_{B_1},\chi_M\Lambda Q)|\lesssim M^C\frac{b^2}{|\log b|}$$ and hence \fref{hofhieoepeihbis}, \fref{estwntinino}, \fref{cnoeneone} and \fref{cncbeobvi} yield:
$$|\log M||b_s(0)|\lesssim |b_0|\sqrt{\log M}|\partial_tw(0)|_{L^2}+O(\frac{b_0^2}{|\log b_0|}).$$ 
Combining this with \fref{nvonvornpnorepn} now concludes the proof of \fref{ivnovnhdlbvjepjvepvo}, \fref{inoshohgoer} for $M$ large enough and $b_0<b^*_0(M)$ sufficiently small.\\

This concludes the proof of Lemma \ref{lemmainitialdata}.


\subsection{The set of bootstrap estimates}


Let  $K=K(M)>0$ be a large universal constant  to be chosen later, and let $\mathcal E(t),\mathcal E_{\sigma}(t)$ be the global and local energies as defined in \fref{poitnwiseboundWbis}, \fref{lcalizedenrgybis}. From the continuity $u\in {\mathcal C}([0,T),\H^{2})$, the initial bounds \fref{smalllmba} and \fref{ivnovnhdlbvjepjvepvo}, \fref {inoshohgoer} of Lemma \ref{lemmainitialdata}, we may find a maximal time $T_1\in (0,T)$ such that the following estimates hold on $[0,T_1)$:
\begin{itemize}
\item Pointwise control of $\lambda$ by $b$:
\be
\label{controllambda}
\lambda^2<10b^{2k+4}.
\ee
\item Pointwise bound on $b_s$:
\be
\label{poitwisebs}
|b_s|\leq \sqrt{K}\frac{b^{k+1}}{|\log b|}.
\ee
\item {\it Global} ${\mathcal H}^2$ bound:
\be
\label{poitnwiseboundW}
\nonumber \mathcal E(t)\leq K b^{2k+2}.
\ee
\item {\it Local} ${\mathcal H}^2$ bound: 
\be
\label{localizegviojdo}
 {\mathcal E}_{\sigma}(t) \leq K\frac{b^{2k+2}}{(\log b)^2}.
\ee
\end{itemize}
\begin{remark}
The large bootstrap constant $K(M)$ does not depend on the small constant $b_0^*$, which provides
an upper bound for possible values of the parameter $b$. It therefore allows us to assume that 
$$
o(1)_{b^*_0\to 0} K(M)= o(1)_{b^*_0\to 0}.
$$
In particular, if $C(M)$ is an even larger universal constant dependent on $M$ and $K$ and $\eta$ is the constant in the
orbital stability bound \eqref{ortbialstabai}, we may assume that
$$
\eta^{\frac 1{10}} C(M)<1,
$$
\end{remark}
\begin{remark}[Coercivity of $\mathcal E$] The potential part of the energy ${\mathcal E}$ is
the quadratic form of the Hamiltonian $\tilde{H}_\lambda$ given by \fref{defhatsrbis}. As a consequence ${\mathcal E}$, as well as ${\mathcal E}_\sigma$, is coercive. However, the norm under control degenerates at infinity for $k=1$. In fact, from \eqref{cnheoiheoeuy}, \fref{cnheoiheoeuybis}:
\be
\label{contorlocoervcie}
\frac{\mathcal E_{\sigma}}{\lambda^2}\geq \int \sigma_{B_c}\left[(\partial_tW)^2+(\partial_rW)^2+\frac{W^2}{r^2}\right] \ \ \mbox{for} \ \ k\geq 2,
\ee
and thus controls the Hardy norm both at the origin and at infinity, while 
\be
\label{contorlocoervciebis}
\frac{\mathcal E_{\sigma}}{\lambda^2}\geq \int \sigma_{B_c}\left[(\partial_tW)^2+(\partial_rW)^2+\frac{W^2}{r^2(1+\frac{r^2}{\lambda^2})}\right] \ \ \mbox{for} \ \ k=1
\ee
and thus is not as strong at infinity. This difficulty will be handled with the help of logarithmic Hardy inequalities, see Lemma \ref{lemmaloghrdy} in the Appendix. However, logarithmic losses in Hardy type inequalities are 
potentially dangerous, since for $k=1$ all possible gains are themselves merely logarithmic in
the parameter $b$.
This explains why many estimates for $k=1$ will require a very detailed, careful and sometimes subtle analysis, which in particular will keep track of $\log$ losses and $\log\, b$ gains. 
\end{remark}

Our first result is the contraction of the bootstrap regime, described by \eqref{controllambda}-\eqref{localizegviojdo}, under the nonlinear flow. 

\begin{proposition}[Bootstrap control of $\lambda, b_s,W$]
\label{bootstrap}
Assume that $K=K(M)$ in \fref{controllambda}, \fref{poitwisebs}, \fref{poitnwiseboundW}, \fref{localizegviojdo} has been chosen large enough, then $\forall t\in [0,T_1)$, 
\be
\label{controllambdaboot}
\lambda^2\leq b^{2k+4},
\ee
\be
\label{poitwisebsboot}
|b_s|\leq\frac{\sqrt{K}}{2}\frac{b^{k+1}}{|\log b|},
\ee
\be
\label{poitnwiseboundWboot}
\mathcal E(t)\leq \frac{K}{2} b^{2k+2},
\ee
\be
\label{localizegviojdoboot}
 {\mathcal E}_{\sigma}(t)\leq \frac{K}{2}\frac{b^{2k+2}}{(\log b)^2}.
\ee
As a consequence $T_1=T$. Moreover, the solution blows up in finite time $$T<+\infty.$$
\end{proposition}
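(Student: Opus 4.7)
The proof is a continuation/bootstrap argument: improve each of \fref{controllambda}--\fref{localizegviojdo} on $[0,T_1)$ strictly, so that $T_1$ cannot be the maximal time and must equal $T$. Once the improved control of $b$ (giving an ODE $b_s\sim -c_k b^{2k}$ for $k\ge 2$ and $b_s\sim -b^2/(2|\log b|)$ for $k=1$) is established, a direct integration gives the finite time blow up. The four improvements are carried out in the logical order: global $\mathcal H^2$ bound $\Rightarrow$ local $\mathcal H^2$ bound $\Rightarrow$ modulation estimate for $b_s$ $\Rightarrow$ improved control of $\lambda$.

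\textbf{Energy estimates (improving \fref{poitnwiseboundW} and \fref{localizegviojdo}).} The plan is to derive an energy identity for $W=A_\lambda w$ satisfying \fref{Wequation}. Coercivity \fref{eq:coerc} of $\tilde H_\lambda$ identifies the quantity $\mathcal E(t)$ with $\lambda^2(|A_\lambda^* W|_{L^2}^2+|\partial_t W|_{L^2}^2)$, and differentiating in time leads to the standard RHS contributions from $A_\lambda F_{B_1}$ together with commutator terms $\partial_{tt}V_\lambda^{(1)} w/r$ and $\partial_t V_\lambda^{(1)}\partial_t w/r$, which are handled by the bootstrap bounds on $\lambda_t=-b$ and $b_s$. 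The subtle point is the $b_s\Lambda P_{B_1}$ and $\partial_t(\lambda^{-1}(\partial_s P_{B_1})_\lambda)$ terms in \fref{oeioehoe}: per Remark \ref{cboieheo}, these are written as exact time derivatives and absorbed on the left after integration. The main estimate is $|A_\lambda \Psi_{B_1}|_{L^2}\lesssim b^{k+1}$, which together with $|b_s|/|\log b|\lesssim b^{k+1}/|\log b|^2$ and $\lambda_s=-b\lambda$ yields, after rescaling in $s$ and integrating, $\mathcal E(t)\lesssim b^{2k+2}$ with a constant independent of $K$, improving \fref{poitnwiseboundW}. For the local bound, I repeat the identity with the cutoff $\sigma_{B_c}$ localized on $y\le 3/b$. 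The crucial gain is that on this region $\Psi_{B_1}$ is much better behaved than globally, cf.\ \fref{estpsibeventilde}--\fref{estpsiboddtildekone}, producing an extra $|\log b|^{-1}$ factor (a polynomial gain for $k\ge 2$); and the boundary flux generated by the time-dependent cutoff is favorable because the surface $y=2/b$ is space-like with repulsive sign under the bootstrap $|b_s|\lesssim b^{k+1}/|\log b|$. This yields \fref{localizegviojdoboot}.

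\textbf{Modulation equation and flux (improving \fref{poitwisebs}).} Differentiating the orthogonality condition \fref{orthe} in $s$ and substituting \fref{eqeqb} gives an algebraic relation of the schematic form $b_s(\Lambda P_{B_1},\chi_M\Lambda Q)=(\Psi_{B_1},\chi_M\Lambda Q)+O(\|\epsilon\|_{\tilde{\mathcal H}})$. To capture the leading nontrivial flux I project instead against $\Lambda P_{B_0}$ with $B_0$ as in \fref{defbnot}, exploiting that $\Lambda P_{B_0}$ is almost in the kernel of $H_{B_1}$. This leads to
\[
b_s|\Lambda P_{B_0}|_{L^2}^2=(\Psi_{B_1},\Lambda P_{B_0})+O\!\left(b^{k-1}\|\epsilon\|_{\tilde{\mathcal H}(y\le 2/b)}\right).
\]
The flux $(\Psi_{B_1},\Lambda P_{B_0})=-c_k b^{2k}(1+o(1))$ is extracted by rewriting the integral as a Pohozaev boundary term \fref{pohozaev} for $Q_b^{(p)}$ and using the precise tail of $T_p$ from Proposition \ref{propqb}. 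The error is bounded using the \emph{local} $\mathcal H^2$ estimate from Step 2 by $b^{k-1}\cdot b^{k+1}/|\log b|=o(b^{2k})$, which is exactly the reason the local bound was needed. Since $|\Lambda P_{B_0}|_{L^2}^2\sim 1$ for $k\ge 2$ and $\sim|\log b|$ for $k=1$, one reads off $|b_s|\le \tfrac12\sqrt K\, b^{k+1}/|\log b|$ provided $K$ was fixed large enough (with $b_0^*$ small after), yielding \fref{poitwisebsboot}.

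\textbf{Closing the loop and finite time blow up.} From $\lambda_s=-b\lambda$ and the ODE for $b_s$ derived above, elementary ODE integration shows that $b\to 0$ at some finite rescaled time, and moreover $\lambda^2/b^{2k+4}\to 0$, which strictly improves \fref{controllambda} into \fref{controllambdaboot}. Integrating $dt=\lambda^2 ds$ together with the blow up law for $\lambda(s)$ gives a finite value $T<+\infty$ such that $\lambda(t)\to 0$ as $t\to T$, completing the proof. The main obstacle is the local $\mathcal H^2$ estimate for $k=1$: there the coercivity \fref{contorlocoervciebis} of $\tilde H_\lambda$ degenerates at infinity, only logarithmic Hardy inequalities are available, and every gain on $\Psi_{B_1}$ is merely a power of $|\log b|^{-1}$. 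Balancing these losses against the target gain of $|\log b|^{-1}$ in \fref{localizegviojdoboot} requires careful bookkeeping of the $|\log b|$-factors, relying on the improved decay \fref{eq:imp-Psi} of $\Psi_b$ for $y\ge B_0$ and on the exact algebra of the flux constant $c_b$ in \fref{defcbbisbis}.
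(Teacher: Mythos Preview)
Your energy-estimate portion is essentially correct and matches Lemma \ref{propinside}. The rest of the argument, however, conflates the bootstrap closure with the sharp-law derivation of Section \ref{sectionfour}, and this creates both an unnecessary detour and a genuine error.

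\textbf{The $b_s$ improvement.} In the paper the improvement \fref{poitwisebsboot} is obtained from the \emph{crude} pointwise bound of Lemma \ref{roughboundpointw}, which projects \fref{eqeqb} onto $\chi_M\Lambda Q$ and yields the algebraic relation \fref{estbds}: $|b_s|^2\lesssim (\log M)^{-1}\mathcal E_\sigma + b^{2k+2}|\log b|^{-2} + b^2 M^C\mathcal E$. The whole point is the small coupling constant $(\log M)^{-1}$: once $\mathcal E_\sigma\le K b^{2k+2}/|\log b|^2$ is fed in, one gets $|b_s|^2\lesssim (1+K/\log M)b^{2k+2}/|\log b|^2$, which closes. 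You instead project onto $\Lambda P_{B_0}$ and invoke the flux computation. That computation (Proposition \ref{lemmaalgebra}) does \emph{not} produce the pointwise identity $b_s|\Lambda P_{B_0}|_{L^2}^2=(\Psi_{B_1},\Lambda P_{B_0})+\ldots$ you wrote; because the projection of $\partial_s^2\epsilon+\partial_s^2 P_{B_1}$ onto $\Lambda P_{B_0}$ is second order in $s$, what one actually obtains is the differential identity \fref{firstcontrol} for $G(b)+\mathcal I(s)$, with $\mathcal I$ a nontrivial boundary term. Extracting a pointwise bound on $b_s$ from this requires further work and is not needed here. The flux computation belongs to the sharp-law step, not to the bootstrap.

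\textbf{Control of $\lambda$ and finite time blow up.} The paper's argument is elementary and does not use the sharp $b_s$ law at all (see the Remark following Proposition \ref{bootstrap}). From the bootstrap bound one only needs $|b_s|\le b^2/(100k)$, which gives the monotonicity $\tfrac{d}{ds}(b^{k+2}/\lambda)\ge 0$; combined with the initial smallness \fref{smalllmba} this yields \fref{controllambdaboot}. Then $-\lambda_t=b\ge \lambda^{1/(k+2)}\ge\sqrt\lambda$ integrates to $T<\infty$. Your statement that ``$b\to 0$ at some finite rescaled time'' is incorrect: $s(t)\to+\infty$ as $t\to T$, and $b(s)\to 0$ only as $s\to+\infty$. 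Your proposed route---integrate the sharp ODE for $b$ to control $\lambda^2/b^{2k+4}$---would also work, but it front-loads the entire Section \ref{sectionfour} machinery into a step where a two-line monotonicity argument suffices.
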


\begin{remark} The bootstrap bounds of Proposition \ref{bootstrap} are not enough yet to provide a sharp law for the blow up speed. The fact that a sharp description of the singularity formation {\it is not needed} to prove finite time blow up was already central in \cite{MM1}, \cite{MR1}, \cite{R1} and \cite{RS}. This 
conveniently separates  the analysis required for the proof of a finite time blow up and 
an upper bound on the blow up rate from obtaining a lower bound on the blow up rate, which relies on finer dispersive effects. \end{remark}

The next section is devoted to the proof of the key dynamical estimates which imply Proposition \ref{bootstrap}. 


\section{The excess of energy and finite time blow up}


This section is devoted to the proof of the bootstrap bounds \fref{poitnwiseboundWboot}, \fref{localizegviojdoboot}. The proof consists of two steps. First is to derive a crude bound on the blow up speed in the form of a pointwise control on $|b_s|$. This follows directly from the construction of the profile $P_{B_1}$. The second step is a pointwise in time bound on the excess of energy of $W$ in the region containing 
the backward light cone of a future singularity. Combination of these two estimates will establish \fref{poitnwiseboundWboot}, \fref{localizegviojdoboot}. This will be already sufficient  to prove finite time blow up with an explicit  non-sharp upper bound on blow up rate. Note that the statements of a finite time blow up and stability of the blow up regime do not require the knowledge of the precise blow up speed.


\subsection{First bound on $b_s$}
\label{forstboundnbs}

The first step in the proof of the bootstrap estimates \fref{poitnwiseboundWboot}, \fref{localizegviojdoboot} is the derivation of a crude bound on $b_s$ which will allow us to obtain control on the scaling parameter $\lambda$ and to derive suitable energy estimates on the solution. This bound is a simple consequence of the construction of the profile $Q_b$ and the choice of the orthogonality condition \fref{orthe}.\\

Let $M>0$ be a large enough universal constant to be chosen later and $|b|\leq b_0^*(M)$ small enough. Let us start with observing the following orbital stability bound:

\begin{lemma}[Orbital stability bound]
\label{orbitalstability}
There holds:
\be
\label{ortbialstabai}
\forall t\in [0,T_1], \ \ |b|+\|w,\partial_tw\|_{H} <\eta=o(1)_{b^*_0\to 0}.
\ee
\end{lemma}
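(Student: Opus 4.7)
The plan is to deduce this from Lemma \ref{orbstab} by showing that the initial excess of energy above $E(Q)$ is small and then identifying the parameters $(\lambda,b,w)$ in our decomposition with the ones supplied by orbital stability.

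First I would show that $E(u_0,u_1)-E(Q)=o(1)_{b_0^*\to 0}$. Using \fref{decompointi}--\fref{dlecompointipartialt}, the energy splits into the pure profile contribution $E((P_{B_1(b_0)})_{\lambda_0},\tfrac{b_0}{\lambda_0}(\Lambda P_{B_1(b_0)})_{\lambda_0})$ plus cross and quadratic terms controlled by $\|w_0,w_1\|_{\mathcal H}\lesssim b_0^{10k}$ from \fref{eq:sm-en}. The scale invariance of the potential energy gives $E((P_{B_1})_{\lambda_0},0)=E(P_{B_1},0)=E(Q)+o(1)_{b\to 0}$ since $P_{B_1}-Q=O(b^2)$ in the relevant norm thanks to the bounds of Propositions~\ref{propqb} and \ref{lemmapsibtilde}; the kinetic contribution equals $b_0^2|\Lambda P_{B_1}|_{L^2}^2$, which is bounded by $O(b_0^2)$ for $k\ge 2$ and $O(b_0^2|\log b_0|)$ for $k=1$, again small. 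Energy conservation then propagates this bound to all times.

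Next I apply Lemma~\ref{orbstab} with $\eta$ equal to the (small) square root of the excess: for every $t\in[0,T_1)$ there is a decomposition $u(t)=Q_{\mu(t)}+\tilde w(t)$ with $(\tilde w(t,\mu(t)\cdot),\chi_M\Lambda Q)=0$ satisfying $|\partial_t u|_{L^2}+|\mu_t|+\|\tilde w,0\|_{\mathcal H}\lesssim\eta$. Rewriting our decomposition \fref{defet} as $u=Q_\lambda+(P_{B_1(b)}-Q+\epsilon)_\lambda$ and using the built-in orthogonality \fref{orthopb} together with \fref{orthe}, the combined perturbation is also orthogonal to $\chi_M\Lambda Q$; uniqueness in Lemma~\ref{orbstab} forces $\mu=\lambda$ and $\tilde w=(P_{B_1(b)}-Q+\epsilon)_\lambda$. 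In particular $|b|=|\lambda_t|=|\mu_t|\lesssim\eta$.

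For the radiation bound, $w=u-(P_{B_1})_\lambda=\tilde w-(P_{B_1}-Q)_\lambda$, so by scale invariance of the $\mathcal H$ norm
\[
\|w,0\|_{\mathcal H}\le \|\tilde w,0\|_{\mathcal H}+\|P_{B_1}-Q,0\|_{\mathcal H}.
\]
A direct integration using the asymptotics \fref{asymtotthorigin}, \fref{esttjsharpeven}--\fref{asympttone} on $Q_b-Q=\sum b^{2j}T_j$ inside $\{y\le B_1\}$, together with the decay $|a-Q|\lesssim y^{-k}$ on the tail $\{y\ge B_1\}$ and the narrow transition annulus, yields $\|P_{B_1}-Q,0\|_{\mathcal H}=o(1)_{b\to 0}$ in every homotopy class (the $k=1$ case uses $B_1=|\log b|/b$ to kill the tail). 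For the time derivative I use \fref{dliatationone}, which after scaling gives
\[
|\partial_t w|_{L^2}\le |\partial_t u|_{L^2}+|b_s\partial_b P_{B_1}+b\Lambda P_{B_1}|_{L^2}.
\]
The first term is $O(\eta)$; the second is estimated through the bootstrap bound \fref{poitwisebs} on $|b_s|$ and the crude bounds \fref{cnoeoheir}, yielding $O(b^{k+1}/|\log b|)$ for $k\ge 2$ and $O(b/|\log b|)$ for $k=1$, plus $|b|\,|\Lambda P_{B_1}|_{L^2}=O(b)$ or $O(b\sqrt{|\log b|})$ respectively; all terms are $o(1)_{b_0^*\to 0}$.

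The only delicate point is the $k=1$ case, where both $\Lambda Q$ and the tail $a-Q$ barely fail to lie in $L^2$; all the above estimates then carry $\log$-type factors of $|\log b|$, but since $b^2|\log b|$ and $b\sqrt{|\log b|}$ still tend to zero, the gain over the trivial $b$-decay is enough. This is the main obstacle, and it is resolved by the precise localization scale $B_1=|\log b|/b$ built into the profile $P_{B_1}$.
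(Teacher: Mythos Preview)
Your proposal is correct and follows essentially the same route as the paper. The paper's presentation is slightly more streamlined because the orbital stability consequence \fref{orbstabbound} (i.e.\ $|\partial_tu|_{L^2}+|\lambda_t|+\|\e_1,0\|_{\mathcal H}\lesssim o(1)_{b_0^*\to 0}$) is already recorded in Section~\ref{geomdecomp} when the decomposition is set up, so the proof of Lemma~\ref{orbitalstability} simply cites \fref{orbstabbound} for the bound on $|b|$ and on $\|w,0\|_{\mathcal H}$ (via the same triangle inequality you wrote), and then handles $|\partial_t w|_{L^2}$ exactly as you do, invoking the bootstrap bound \fref{poitwisebs} on $|b_s|$ together with \fref{cnoeoheir}.
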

\begin{remark}
We note that $\|w,\partial_tw\|_{H}$ norm provides an $L^\infty$ bound for $w$ and $\epsilon$
$$
|w(t)|_{L^\infty}=|\epsilon(s)|_{L^\infty}<\eta.
$$
This is a consequence of the simple inequality 
$$
w^2(r)\le\int\left ((\pa_r w)^2+\frac {w^2}{r^2}\right), 
$$
which holds true for smooth functions vanishing at the origin.
\end{remark}
{\bf Proof of Lemma \ref{orbitalstability}} \\

First recall from \fref{orbstabbound}, \fref{defet} that $|b|=|\lambda_t|\lesssim o(1)_{b^*_0\to 0}$ and hence: 
\be
\label{prbtisstabboundone}
\|w,0\|_{H}\lesssim \|\e_1,0\|+\|P_{B_1}-Q,0\|_{H}\lesssim o(1)_{b^*_0\to 0}.
\ee 
It remains to prove the smallness of the time derivative for which we use \fref{orbstabbound}, the estimates of 
Proposition  \ref{lemmapsibtilde}, \fref{cnoeoheir} and the bootstrap bound \fref{poitwisebs} on $b_s$:
\bee
\nonumber \|\partial_tw\|_{L^2} & \lesssim & \|\partial_tu\|_{L^2}+\|b_s\frac{\partial_b P_{B_1}}{\partial b}+b\Lambda P_{B_1}\|_{L^2}\lesssim o(1)_{b_0^*\to 0}+|b_s|\|\frac{\partial_b P_{B_1}}{\partial b}\|_{L^2}\\
& \lesssim & o(1)_{b_0^*\to 0}+|b_s|\left\{\begin{array}{ll} 1 \ \ \mbox{for} \ \  k\geq 2\\
\frac{1}{b} \ \ \mbox{for} \ \ k=1 \end{array} \right . \lesssim o(1)_{b_0^*\to 0}+\sqrt{K(M)}\frac{|b|}{|\log b|}\\
& \lesssim & o(1)_{b_0^*\to 0}
\eee
and \fref{ortbialstabai} follows. This concludes the proof of Lemma \ref{orbitalstability}.\\

We now claim the first refined bound on $b_s$:

\begin{lemma}[First bound on $b_s$]
\label{roughboundpointw}
The following bound on $b_s$ holds true on $[0,T_1)$:
 \be
\label{casekoddbigone}
  |b_s|^2\lesssim \frac{1}{\log  M}\left[\int_{y\leq 2M}|\nabla (A\e)|^2+\int_{y\leq 1}\frac{|A\e|^2}{y^2}\right]+\frac{b^{2k+2}}{|\log b|^2}+b^2 M^C {\mathcal E}.
 \ee
 In particular,
 \be
 \label{estbds}
   |b_s|^2\lesssim \frac{1}{\log M}  {\mathcal E}_\sigma+\frac{b^{2k+2}}{|\log b|^2}+b^2 M^C {\mathcal E}
 \ee
\end{lemma}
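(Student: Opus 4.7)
\textbf{Proof proposal for Lemma \ref{roughboundpointw}.}

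The plan is to differentiate the orthogonality condition $(\e(s), \chi_M \Lambda Q) = 0$ twice in $s$, obtaining $(\partial_s^2 \e, \chi_M \Lambda Q) = 0$ since $\chi_M\Lambda Q$ is $s$-independent. Substituting the equation \fref{eqeqb} for $\partial_s^2 \e$ yields an algebraic identity which we solve for $b_s$:
\begin{equation*}
b_s\bigl[(\Lambda P_{B_1}, \chi_M \Lambda Q) + (\Lambda \e, \chi_M \Lambda Q)\bigr] = -(H_{B_1}\e, \chi_M \Lambda Q) - (\Psi_{B_1}, \chi_M \Lambda Q) + \mathrm{R}(s),
\end{equation*}
where $\mathrm{R}(s)$ collects the time-derivative terms in $P_{B_1}$ and $\e$ and the nonlinearity $k^2 y^{-2}N(\e)$. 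By the explicit asymptotics of $\Lambda Q$ and Proposition \ref{lemmapsibtilde}, the leading coefficient equals $(\Lambda Q, \chi_M \Lambda Q) + o(1)$, which is $\sim \log M$ for $k=1$ and $\sim 1$ for $k \geq 2$; denote it $c_k(M)$. To obtain a uniform statement we simply retain the $k=1$ gain.

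\emph{The error term} $(\Psi_{B_1}, \chi_M \Lambda Q)$: By \fref{supppsib} and the estimates \fref{estpsibeventilde}, \fref{estpsiboddtilde}, \fref{estpsibeventildektwo}, \fref{estpsiboddtildekone}, the only potentially large contribution is the $c_b b^2 \chi_{B_0/4}\Lambda Q$ (for $k=1$) or $c_b b^4 \chi_B \Lambda Q$ (for $k=2$) part, and since $M \ll B_0 \ll B_1$, it pairs against $\chi_M \Lambda Q$ to give $c_b b^{k\wedge 2 + 2}(\Lambda Q, \chi_M \Lambda Q)$. The remaining, rapidly decaying, piece contributes lower-order terms. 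Squaring and dividing by $c_k(M)^2$ produces the bound $b^{2k+2}/|\log b|^2$ in \fref{casekoddbigone}.

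\emph{The linear term} $(H_{B_1}\e, \chi_M \Lambda Q)$: Split $H_{B_1} = H + (H_{B_1}-H)$. On the support of $\chi_M$ we have $P_{B_1} = Q + O(b^2 y^k)$, so $|H_{B_1} - H|\lesssim b^2 M^C$ and this difference is absorbed in $b^2 M^C \mathcal E$. For the main piece we use the factorization $H = A^*A$ together with the Leibniz rule $A(fg) = -f'g + f Ag$ and the cancellation \fref{cancnelalq} $A(\Lambda Q) = 0$ to compute $A(\chi_M \Lambda Q) = -\chi_M'\Lambda Q$. Hence
\begin{equation*}
(H\e, \chi_M \Lambda Q) = -\int_0^\infty A\e(y)\,\chi_M'(y)\,\Lambda Q(y)\,y\,dy,
\end{equation*}
and the integrand is supported on the annulus $\{M \le y \le 2M\}$ where $|\chi_M'\Lambda Q\,y| \lesssim M^{-k}$. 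Cauchy--Schwarz combined with the logarithmic Hardy inequality of Lemma \ref{lemmaloghrdy} then transfers the $L^2$ mass of $A\e$ on the annulus to $\int_{y\le 2M}|\nabla A\e|^2$ with a logarithmic factor $\log M$, plus the near-origin correction $\int_{y\le 1} |A\e|^2/y^2$ needed for $k=1$ (where $A\e$ need not vanish at $0$). Combined with the $c_k(M)^{-2}\lesssim (\log M)^{-2}$ prefactor in the $k=1$ case, this yields the claimed $\frac{1}{\log M}[\int_{y\le 2M}|\nabla A\e|^2+\int_{y\le 1}|A\e|^2/y^2]$ contribution.

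\emph{The remainder} $\mathrm{R}(s)$: The $\partial_s^2 P_{B_1}$ and $b\Lambda\partial_s P_{B_1}$ terms involve $b_{ss}$, but since $\chi_M\Lambda Q$ is $s$-independent, testing them against $\chi_M \Lambda Q$ produces, via integration by parts in $s$, either total $s$-derivatives of quantities vanishing by the gauge \fref{orthopb}, or lower-order contributions estimated by Proposition \ref{lemmapsibtilde} and the bootstrap bound \fref{poitwisebs}. The cross term $b(\partial_s\e + 2\Lambda\partial_s\e, \chi_M\Lambda Q)$ is bounded using Cauchy--Schwarz by $bM^C\|\partial_s\e\|_{L^2(y\le 2M)} \lesssim bM^C\sqrt{\mathcal E}$, squared yielding $b^2 M^{2C}\mathcal E$. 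The nonlinear contribution $(k^2y^{-2}N(\e),\chi_M\Lambda Q)$ uses $|\e|_{L^\infty}\lesssim \eta$ from Lemma \ref{orbitalstability} together with Hardy to give $\lesssim \eta M^C \|\e/y\|_{L^2(y\le 2M)}^2\lesssim M^C\mathcal E$, again absorbed (modulo $\eta$) in $b^2 M^C\mathcal E$ after squaring.

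The main obstacle is the $k=1$ case, where both the coefficient $c_1(M)\sim \log M$ and the tail $\Lambda Q \sim 1/y$ live with only logarithmic precision; the transfer of $\|A\e\|^2_{L^2(M\le y\le 2M)}$ to $\int_{y\le 2M}|\nabla A\e|^2$ with exactly a $\log M$ loss (and no $M$ power) is the delicate step and must use the log-Hardy inequality in the form stated in Lemma \ref{lemmaloghrdy}. Finally, \fref{estbds} follows from \fref{casekoddbigone} by noting that on $\{y \le 2M\}\subset \{y \le 2/b = B_c\}$ for $b$ small, the local energy $\mathcal E_\sigma$ (via \fref{contorlocoervcie}--\fref{contorlocoervciebis} applied to $W = A_\lambda w$) controls $\lambda^{-2}\lambda^2\int_{y\le 2M}|\nabla A\e|^2 = \int_{y\le 2M}|\nabla A\e|^2$ together with the near-origin Hardy piece.
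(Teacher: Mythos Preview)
Your proposal is essentially correct and follows the same route as the paper: project \fref{eqeqb} onto $\chi_M\Lambda Q$, use the gauge orthogonality \fref{orthopb} and the factorization $H=A^*A$ with $A(\Lambda Q)=0$ to reduce the linear term to $(A\e,\chi_M'\Lambda Q)$, then invoke the logarithmic Hardy inequality \fref{harybis} and, in the $k=1$ case, the growth $(\Lambda Q,\chi_M\Lambda Q)^2\gtrsim(\log M)^2$ on the left. Two small corrections: the $\partial_s^2 P_{B_1}$ contribution vanishes \emph{directly} (no integration by parts in $s$) since $(P_{B_1}-Q,\chi_M\Lambda Q)=0$ for all $b$ implies $(\partial_s^m P_{B_1},\chi_M\Lambda Q)=0$; and the bound $\|\partial_s\e\|_{L^2(y\le 2M)}\lesssim\sqrt{\mathcal E}$ does not hold as stated---you must first write $\partial_s\e+b\Lambda\e=\lambda\partial_t w$ and control $|\partial_t w/r|_{L^2}$ via Lemma \ref{lemmahardy}, while the leftover $b\Lambda\e$ piece needs the $\mathcal H^2$-level Hardy estimate \fref{estdeux} (not just orbital stability) to land in $b^2 M^C\mathcal E$.
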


\begin{remark} Observe that the upper bound on $b_s$ given by Lemma \ref{roughboundpointw} is sharp for $k=1$ but very lossy for large k compared with the expected behavior $|b_s|\sim b^{2k}$. At this stage, sharp bounds could have been derived by further improving the profile inside the light cone as we did for $k=1,2$, but this is not needed for large $k$.
\end{remark}

{\bf Proof of Lemma \ref{roughboundpointw}}\\  

Let us recall that the equation for $\e$ in rescaled variables is given according to \fref{eqeqb}, \fref{defhbl}, \fref{defnw} by:
\bee
\nonumber \partial_s^2\eb+H_{B_1}\eb & = & -\Psi_{B_1}-b_s\Lambda P_{B_1}-b(\partial_sP_{B_1}+2\Lambda\partial_sP_{B_1})-\partial^2_sP_{B_1}\\
& - & b(\partial_s\eb+2\Lambda\partial_s\eb)-b_s \Lambda \eb-\frac{k^2}{y^2}N(\eb)
\eee
with
$$
H_{B_1} \eb=-\Delta \eb+b^2D\Lambda\eb+k^2\frac{f'(P_{B_1})}{y^2}\eb,
$$
$$
N(\eb)=\frac{1}{y^2}\left[f(P_{B_1}+\eb)-f(P_{B_1})-f'(P_{B_1})\eb\right].
$$
Note that from (\ref{adjoinctionfrimula}), the adjoint of  $H_{B}$ with respect to the $L^2(ydy)$ inner product   is given by: 
\be
\label{deflstar}
H_B^*=H_B+2b^2D.
\ee 
To compute $b_s$ we take the scalar product of (\ref{eqeqb}) with $\chi_M\Lambda Q$. Using the orthogonality relations $$(\e,\chi_M\Lambda Q)=(\pa_s^m (P_{B_1}-Q),\chi_M\Lambda Q)=
0,\qquad \forall m\ge 0$$ we integrate by parts to get the algebraic identity:
\bea
\label{elgebraba}
\nonumber & & b_s\left[(\Lambda P_{B_1},\chi_M\Lambda Q)+b(\frac{\partial P_{B_1}}{\partial b}+2\Lambda \frac{\partial P_{B_1}}{\partial b},\chi_M\Lambda Q)+(\Lambda\ebo,\chi_M\Lambda Q)\right]=-(\Psi_{B_1},\chi_M\Lambda Q)\\
&  - & (\ebo,H_{B_1}^*(\chi_M\Lambda Q))+b(\partial_s\ebo,3\chi_M\Lambda Q+\Lambda(\chi_M\Lambda Q))-k^2(\frac{N(\ebo)}{y^2},\chi_M\Lambda Q).
\eea
On the support of $\chi_M$ and for $b<b_0^*(M)$ small enough, the term $\Lambda Q$ dominates the remaining terms in the expansion
$$
\Lambda P_{B_1}=\Lambda Q_b=\Lambda Q + \sum_{j=1}^{p+1} b^{2j} \Lambda T_j.
$$
The orbital stability bound then yields:
\bee
\nonumber 
|b_s|^2\left (\int_{y\leq M}|\Lambda Q|^2\right)^2 & \lesssim & (\Psi_{B_1},\chi_M\Lambda Q)^2+ \left|(\ebo,H_{B_1}^*(\chi_M\Lambda Q))\right|^2+b^2|(\partial_s\ebo,3\chi_M\Lambda Q+\Lambda(\chi_M\Lambda Q))|^2\\
& + & |(\frac{N(\ebo)}{y^2},\chi_M\Lambda Q)|^2.
\eee
We now treat each term in the above RHS. The last two terms may be estimated in 
a straightforward fashion using the $\chi_M$ localization:
\bee
& & b^2|(\partial_s\ebo,3\chi_M\Lambda Q+\Lambda(\chi_M\Lambda Q))|^2\\
& \lesssim & b^2|(\partial_s\e+by\cdot\nabla \e,3\chi_M\Lambda Q+\Lambda(\chi_M\Lambda Q))|^2+b^4|(y\cdot\nabla\ebo,3\chi_M\Lambda Q+\Lambda(\chi_M\Lambda Q))|^2\\
& \lesssim & b^2\lambda^2M^C\left[|\frac{\partial_tw}{r}|_{L^2}^2+|\frac w{r^2(1+|\log r|)}|_{L^2}^2\right]\lesssim b^2\lambda^2M^C\left[|\partial_tW|_{L^2}^2+|A_{\lambda}^*W|_{L^2}^2\right]
\eee
where we used the estimates of Lemma \ref{lemmahardy1}, Lemma \ref{lemmahardy} and \fref{controldt}. Similarily, from \fref{estdeux}: 
\bee
& & |(\frac{N(\e)}{y^2},\chi_M\Lambda Q)|^2  \lesssim  \left(\int_{y\leq 2M}|\ebo|^2\frac{y}{y^2(1+y^{2})}\right)^2\lesssim M^C |\e|_{L^{\infty}(y\leq 2M)}^2|A^* A\e|_{L^2}^2\\
& \lesssim & M^C|\nabla\e|_{L^2(y\leq 2M)}|\frac{\e}{y}|_{L^2(y\leq 2M)}|A^*A\e|_{L^2}^2\lesssim M^C|A^* A\e|_{L^2}^4 \lesssim  b^2\lambda^2 |A^*_{\lambda}W|_{L^2}^2
\eee
where we used \fref{poitnwiseboundW} in the last step. The first two terms in \fref{fhgfhjskofoh} require more attention. First observe that the $\chi_M$ localization ensures that $$\Psi_B\chi_M=\Psi_b\chi_M.$$ Next, we rewrite the linear term in $\e$ as follows. Using $H=A^*A$ and the cancellation $A(\Lambda Q)=0$ from \fref{cancnelalq} we derive:
\bea
\label{estlineaterm}
\nonumber (\e,H_{B_1}^*(\chi_M\Lambda Q))^2 & = & \left(\e,H(\chi_M\Lambda Q)+2b^2D(\chi_M\Lambda Q)+\frac{1}{y^2}(f'(P_{B_1})-f'(Q))(\chi_M\Lambda Q)\right)^2\\
& \lesssim & \left(A\e,(\Lambda Q)\partial_y\chi_M\right)^2+b^2\lambda^2M^C|A^*_{\lambda}W|_{L^2}^2
\eea
where we used \fref{estdeux} and the rough bound $|P_{B_1}-Q|_{L^{\infty}}\lesssim b$. We have thus obtained the preliminary estimate:
\be
\label{fhgfhjskofoh}
|b_s|^2\left (\int_{y\leq M}|\Lambda Q|^2\right)^2  \lesssim  (\Psi_{B_1},\chi_M\Lambda Q)^2+ \left(A\e,(\Lambda Q)\partial_y\chi_M\right)^2+ b^2\lambda^2M^C\mathcal E.
\ee 
We now separate cases:\\
{\it case $k$ odd, $k\geq 3$}:  We estimate from \fref{estpsibodd}
$$(\Psi_B,\chi_M\Lambda Q)^2\lesssim b^{2k+6}\left (\int\frac{y^{k}}{1+y^{k+2}}\frac{y^{k}}{1+y^{2k}}\right)^2\lesssim b^{2k+6},$$
\bea
\label{vgigvivgiev}
\nonumber \left(A\e,(\Lambda Q)\partial_y\chi_M\right)^2& \lesssim & \left(\int_{y\leq 2M} \frac{(A\e)^2}{y^2}\right)\int_{M\leq y\leq 2M}|\Lambda Q|^2\\
& \lesssim & \frac{1}{M^{2k-3}}\left(\int_{y\leq 2M}|\nabla A\e|^2+\int_{y\leq 1}\left|\frac{A\e}{y}\right|^2\right).
\eea
where we used \eqref{harybis} in the last step. 
This concludes the proof of \fref{casekoddbigone}.\\
{\it case $k$ even, $k\geq 4$}: From \fref{estpsibeven}:
$$(\Psi_B,\chi_M\Lambda Q)^2|\lesssim b^{2k+8}\left (\int\frac{y^{k}}{1+y^{k+1}}\frac{y^{k}}{1+y^{2k}}\right)^2\lesssim b^{2k+8},$$ and \fref{vgigvivgiev} still holds. This concludes the proof of \fref{casekoddbigone}.\\
{\it case $k=2$}: From \fref{vheohoevbobvob}:
\bee
(\Psi_B,\chi_M\Lambda Q)^2\lesssim \left(\int_{y\leq 2M} \left[b^4\Lambda Q+b^{6}\frac{y^{k}}{1+y^{k+1}}\right]\Lambda Q\right)^2\lesssim b^8,
\eee
and \fref{vgigvivgiev} still holds. This concludes the proof of \fref{casekoddbigone}.\\
{\it case $k=1$}: From \fref{estpsiboddkequalone}:
\bee
& & \left(\Psi_B,\chi_M\Lambda Q\right)^2\\
 &\lesssim &\left(\int_{y\leq 2M} \frac{y}{1+y^2}\left[\frac{b^2}{|\log b|}\frac{y}{1+y^2}+b^4y{\bf 1}_{y\leq 1}+b^{4}\frac{(1+|\log (by)|)}{|\log b|}y+\frac{b^4}{(\log M)^2} \frac {M^4}{1+y^4}\right]\right)^2\\
& \lesssim & (\log M)^2\frac{b^4}{|\log b|^2}.
\eee
For the linear term, we use \fref{harybis} to derive:
\bee
\left(A\e,(\Lambda Q)\partial_y\chi_M\right)^2 &\lesssim& \left(\int_{M\leq y\leq 2M} \frac{(A\e)^2}{y^2}\right)\int_{M\leq y\leq 2M}|\Lambda Q|^2\\
& \lesssim & \log M\left(\int_{y\leq 2M}|\nabla (A\e)|^2+\int_{1\leq y\leq 1}|A\e|^2\right).
\eee
It is now crucial to observe the growth on the LHS of \fref{fhgfhjskofoh}, specific to the $k=1$ case: 
$$|b_s|^2\left (\int_{y\leq 2M}|\Lambda Q|^2\right)^2\geq C(\log M)^2|b_s|^2$$ and \fref{casekoddbigone} follows.\\

This concludes the proof of Lemma \ref{roughboundpointw}.


\subsection{Global and local ${\mathcal H}^2$ bounds}  
\label{sectionhtwo}


In this section we establish ${\mathcal H}^2$ type bounds on the solution $w$. The global bound corresponds to the
energy ${\mathcal E}(t)$, while the local bound is connected to the energy ${\mathcal E}_\sigma(t)$ and 
provides an $H^2$ type estimate for the solution in a region slightly larger than the backward light cone from a future singularity. These bounds
rely on  non-characteristic energy type identities for  \fref{Wequation} and specific repulsive properties of the 
time-dependent conjuguate Hamiltonian $\tilde{H}_\lambda$ given by \fref{defhatsrbis}.
 This estimate is the second step in the proof of Proposition \ref{bootstrap}. 
  
\begin{lemma}[$H^2$ type energy inequalities]
\label{propinside}
In notations of \fref{poitnwiseboundWbis}, \fref{lcalizedenrgybis} and for $b<b_0^*(M)$ small enough, we have the 
following inequalities:
\bea
\label{vhoheor}
\nonumber & &\frac{d}{dt}\left\{\frac{\mathcal E}{\lambda^2}+O\left(\frac{|b_s|^2}{\lambda^2}+\frac{|b_s|\sqrt{\mathcal E}}{\lambda^2}+\frac{\eta^{\frac{1}{4}}\mathcal E}{\lambda^2}\right)\right\}\\
& \lesssim &  \frac{b}{\lambda^3}\left[|b_s|^2+b^{2k+2}+ (|b_s|+b^{k+1})\sqrt{\mathcal E} +\eta^{\frac{1}{4}}\mathcal E\right],
\eea
\bea
\label{vbboebvnkonovrpvrvo}
\nonumber & & \frac{d}{dt}\left\{\frac{\mathcal E_{\sigma}}{\lambda^2}+O\left(\frac{|b_s|^2}{\lambda^2}+\frac{|b_s|\sqrt{\mathcal E_{\sigma}}}{\lambda^2}+\frac{b^{\frac{1}{4}}\mathcal E}{\lambda^2}\right)\right\}\\
& \lesssim &  \frac{b}{\lambda^3}\left[|b_s|^2+\frac{b^{2k+2}}{|\log b|^2}+ (|b_s|+\frac{b^{k+1}}{|\log b|})\sqrt{\mathcal E_{\sigma}}+\frac{\mathcal E}{|\log b|^2} \right]
\eea
\end{lemma}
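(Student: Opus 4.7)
\bigskip

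\noindent\textbf{Proof proposal.} The plan is to derive both inequalities from the wave equation \fref{Wequation} for $W=A_\lambda w$ by multiplying with $\partial_t W$ (for the global identity) and with $\sigma_{B_c}\partial_t W$ (for the local identity), and then carefully accounting for the source terms and for the time dependence of the Hamiltonian, the scale $\lambda$, and the cutoff.

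First I would set up the basic energy identity. Taking the inner product of $\partial_t^2 W+\tilde H_\lambda W = A_\lambda F_{B_1}+\frac{\partial_{tt}V^{(1)}_\lambda}{r}w+\frac{2\partial_t V^{(1)}_\lambda}{r}\partial_t w$ with $\partial_t W$ and integrating yields
\[
\tfrac12 \tfrac{d}{dt}\Bigl[\int (\partial_t W)^2+(\tilde H_\lambda W,W)\Bigr]-\tfrac12(\partial_t \tilde H_\lambda W,W) \;=\; \int (\partial_t W)\,\bigl(A_\lambda F_{B_1}+\tfrac{\partial_{tt}V^{(1)}_\lambda}{r}w+\tfrac{2\partial_t V^{(1)}_\lambda}{r}\partial_t w\bigr).
\]
Multiplying by $\lambda^2$ produces $\mathcal E$ up to a $\frac{-2\lambda_t}{\lambda}\mathcal E=\frac{2b}{\lambda}\mathcal E$ term. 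Since $\partial_t\tilde H_\lambda$ and $\partial_t V^{(1)}_\lambda$ carry a factor of $b/\lambda$, these pieces produce the dominant $\frac{b}{\lambda^3}\mathcal E$ term one sees on the right-hand side of \fref{vhoheor}, \fref{vbboebvnkonovrpvrvo}.

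Next I would treat the source $A_\lambda F_{B_1}$, using the decomposition \fref{oeioehoe} together with Proposition~\ref{lemmapsibtilde}. The terms in $A_\lambda F_{B_1}$ of the form $\lambda^{-2}(b\Lambda\partial_s P_{B_1}+b_s\Lambda P_{B_1}+\Psi_{B_1})_\lambda$ contribute, after pairing with $\partial_t W$ and a Cauchy--Schwarz, the quantities $|b_s|\sqrt{\mathcal E}$ and $b^{k+1}\sqrt{\mathcal E}$ for the global bound (using the $L^2$ size of $\Psi_{B_1}$ and $\Lambda P_{B_1}$ on the support of $P_{B_1}$), and the improved $\frac{b^{k+1}}{|\log b|}\sqrt{\mathcal E_\sigma}$ for the local bound --- this is precisely where the localization in the light cone $y\le 2/b$ is used, since Proposition~\ref{lemmapsibtilde} gives $\Psi_{B_1}$ much better behaved there (its dominant, slowly decaying part on $[B_1,2B_1]$ is cut off). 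The crucial term $-\partial_t[\lambda^{-1}(\partial_s P_{B_1})_\lambda]$ in $F_{B_1}$ generates a $b_{ss}$ contribution after applying $A_\lambda$; following the remark \ref{cboieheo}, I rewrite this contribution as a total time derivative against $\partial_t W$ and integrate by parts in time, which produces the $|b_s|^2/\lambda^2$ and $|b_s|\sqrt{\mathcal E}/\lambda^2$ correction terms explicitly recorded inside the braces on the left. The quadratic commutator terms $\frac{\partial_{tt}V^{(1)}_\lambda}{r}w$ and $\frac{2\partial_t V^{(1)}_\lambda}{r}\partial_t w$, each of which carries an explicit factor of $b$ or $b_s$ from the $\partial_t$ derivatives and an explicit $\lambda$-localization of $V^{(1)}_\lambda$, are handled by Hardy inequalities and orbital stability to bring out the $\eta^{1/4}\mathcal E/\lambda^2$ correction and an $\eta^{1/4}\mathcal E$ term in the right-hand side. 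The nonlinear contribution $\frac{k^2}{r^2}N(w)$ is again paired with $\partial_t W$; using the $L^\infty$ bound $\eta$ from Lemma~\ref{orbitalstability}, it is small of order $\eta^{1/4}$ times the energy.

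Finally I would carry out the localization for \fref{vbboebvnkonovrpvrvo}. Repeating the argument with $\sigma_{B_c}\partial_t W$ in place of $\partial_t W$ introduces three new kinds of terms: (i) a commutator $[\tilde H_\lambda,\sigma_{B_c}]W$, which after integration by parts produces a boundary contribution supported on $2/b\le y\le 3/b$; (ii) a $\partial_t\sigma_{B_c}$ term, where $\partial_t\sigma_{B_c}$ acts like $\frac{b_t}{b}\cdot y\sigma'(\cdot)$ restricted to the annulus $\{2/b\le y\le 3/b\}$; (iii) the trace of the source on the same annulus. The key geometric point is that the hypersurface $r=2\lambda/b$ is space-like under the bootstrap assumption \eqref{controllambda}--\eqref{poitwisebs}, so the boundary terms produced from (i) and (ii) have favorable signs and can be absorbed; the $b^{1/4}\mathcal E/\lambda^2$ correction comes from controlling the indefinite pieces using a combination of $\mathcal E$ on the annulus and the smallness of $b$. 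The contribution of the source on the annulus is the sole place where the non-local $\mathcal E/|\log b|^2$ term appears on the right of \fref{vbboebvnkonovrpvrvo}: on this region $\Psi_{B_1}$ is of size $b^{k+1}/|\log b|^{\alpha_k}$ rather than $b^{k+1}$, giving the announced $|\log b|$ gain.

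The main obstacle will be the treatment of the $b_{ss}$-type terms arising from $\partial_t[\lambda^{-1}(\partial_s P_{B_1})_\lambda]$ together with the commutator $[A_\lambda,\partial_t]$, since no pointwise control on $b_{ss}$ is available; these must be rewritten as full time derivatives before being paired with $\partial_t W$, which is exactly the reason the identities \fref{vhoheor}--\fref{vbboebvnkonovrpvrvo} are stated with correction terms of the form $|b_s|^2/\lambda^2$ and $|b_s|\sqrt{\mathcal E}/\lambda^2$ inside the outer time derivative. A secondary difficulty, specific to $k=1$, is that the logarithmic coercivity \fref{contorlocoervciebis} is barely enough to absorb the commutator terms involving $V^{(1)}_\lambda$ at infinity; here I would use the logarithmic Hardy inequalities alluded to in the remark after Proposition~\ref{bootstrap} to recover just enough control.
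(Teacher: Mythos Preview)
Your outline captures the broad architecture correctly---multiply \fref{Wequation} by $\sigma\partial_tW$, integrate by parts in time on the $b_{ss}$-carrying pieces of $F_{B_1}$, exploit the space-like character of $\{y=2/b\}$ for the localized boundary terms, and use the improved size of $\Psi_{B_1}$ on $y\le 2/b$ for the logarithmic gain. But there is one genuine gap.

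You write that the contributions from $\partial_t\tilde H_\lambda$ and from $\tfrac{2\partial_tV^{(1)}_\lambda}{r}\partial_tw$ ``produce the dominant $\tfrac{b}{\lambda^3}\mathcal E$ term one sees on the right-hand side.'' Look again: there is \emph{no} such term on the right of \fref{vhoheor} or \fref{vbboebvnkonovrpvrvo}; every $\mathcal E$ there carries an extra small factor ($\eta^{1/4}$ or $|\log b|^{-2}$). And indeed a direct estimate of either piece gives exactly $C(M)\tfrac{b}{\lambda^3}\mathcal E$ with no smallness---too large to be absorbed. The paper does not estimate these terms; it \emph{combines} them. One first integrates $\int\sigma\partial_tW\,\tfrac{2\partial_tV^{(1)}_\lambda}{r}\partial_tw$ by parts in time (producing the correction $-\tfrac{4}{r}\partial_tV^{(1)}_\lambda\partial_tw\,W$ inside the energy functional), then uses the equation to rewrite the resulting $\partial_{tt}w$, and after integration by parts in space the spatial piece merges with $\tfrac12(\partial_t\tilde H_\lambda W,W)$ into
\[
\frac{3b}{\lambda}\int\frac{\sigma W^2}{r^2}\Bigl[\Lambda Q\Bigl(kg''+\tfrac{k^2}{2}(g'g''-gg''')\Bigr)(Q)\Bigr]_\lambda,
\]
which has a \emph{sign} (it is $\le 0$ for both (WM) and (YM), by direct computation using $\Lambda Q=kg(Q)$). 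This sign is what allows the otherwise $O(\tfrac{b}{\lambda^3}\mathcal E)$ contribution to be dropped; it is the single most important algebraic cancellation in the proof, and your proposal misses it.

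A secondary point: for the $\Psi_{B_1}$ source in the cases $k=1,2$, the paper needs the cancellation $A(\Lambda Q)=0$ in an essential way when computing $A_\lambda(\Psi_{B_1})_\lambda$, since $\Psi_{B_1}$ contains an explicit $c_b b^{2k}\chi\Lambda Q$ piece (cf.\ \fref{estpsibeventildektwo}, \fref{estpsiboddtildekone}) that would otherwise destroy the localized gain. Your proposal does not mention this. Similarly, for the nonlinear term $F_{1,3}$ the paper cannot pair $A_\lambda F_{1,3}$ directly with $\partial_tW$ (the derivative landing on $N(w)/r^2$ is not controlled); it integrates by parts in time once more and estimates $\partial_tF_{1,3}$ instead.
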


\begin{remark}
\label{remarkkey}  It is critical that the constants involved in the bounds \fref{vhoheor}, \fref{vbboebvnkonovrpvrvo} {\it do not depend on M} provided $b_0<b^*_0(M)$ has been chosen sufficiently  small.
\end{remark}

\begin{remark} Note that the logarithmic gain from the global bound \fref{vhoheor} to the local bound \fref{vbboebvnkonovrpvrvo}  can be turned into polynomial gain for $k\geq 2$.
\end{remark}
 
{\bf Proof of Lemma \ref{propinside} }\\

The proof is a consequence of the energy identity on \fref{Wequation} and the bootstrap control of the geometric parameters. The key is the space-time repulsive properties of the operator $\tilde{H}_\lambda$.\\

{\bf step 1} Algebraic energy identity.\\

We recall the definition of the cut-off function $\sigma_{B_c}$ given by \fref{ceheoehi} and of the localized energy $\mathcal E_{\sigma}$ given by \fref{lcalizedenrgybis}. In the sequel, we shall use the notation $\sigma$ generically for both $\sigma\equiv 1$ and $\sigma\equiv \sigma_{B_c}$ given by \fref{ceheoehi}.\\  
We claim the following algebraic energy identity:
 \bea
 \label{computationeergy}
& &  \nonumber \frac{1}{2}\frac{d}{dt}\left\{\int \sigma\left[(\partial_tW)^2+(\nabla W)^2+\frac{k^2+1+2\vul+\vdl}{r^2}W^2-\frac{4}{r}\partial_t\vul\partial_twW\right]\right\}\\
\nonumber & = & \frac{3b}\lambda\int\frac{\sigma W^2}{r^2}\left[\Lambda Q\left(kg''+\frac{k^2}2(g'g''-gg''')\right)(Q)\right]_{\lambda}- b\int\partial_r \sigma\frac{W^2}{r}(k\Lambda Qg''(Q))_{\lambda}\\
\nonumber & + & \frac{1}{2}\int \partial_t\sigma\left[(\partial_tW)^2+(\partial_rW)^2+\frac{k^2+1+2\vul+\vdl}{r^2}W^2\right]\\
 & - &2 \int \partial_t\sigma \frac{W}{r}\partial_t \vul \partial_tw-\int\partial_r\sigma\partial_rW\partial_tW\\ 
\nonumber   & + & \int\frac{\sigma\partial_{tt}V_{\lambda}^{(1)}}{r}\left[w\partial_tW-2W\partial_tw\right]-2\int\frac{\sigma W}{r}\partial_tV_{\lambda}^{(1)}F_{B_1}+\int \sigma \partial_tW A_{\lambda}F_{B_1}.
 \eea
 
{\it Proof of \fref{computationeergy}}: We proceed with the help of  \fref{defhatsrbis}, \fref{Wequation}:
\bea
\label{dowhohwo}
 & & \frac{1}{2}\frac{d}{dt}\left\{\int  \sigma[(\partial_tW)^2+(\partial_r W)^2+\frac{k^2+1+2\vul+\vdl}{r^2}W^2]\right\}\\
\nonumber &= &  \frac{1}{2}\int \partial_t\sigma[(\partial_tW)^2+(\partial_r W)^2+\frac{k^2+1+2\vul+\vdl}{r^2}W^2]-\int\nabla \sigma\cdot\nabla W\partial_tW\\
\nonumber & + & \int\sigma\partial_tW(\partial_{tt}W+\widetilde H_{\lambda}W)+\frac{1}{2}\int\frac{\sigma W^2}{r^2}\left(2\partial_t \vul+\partial_t\vdl\right)\\
\nonumber & = &  \frac{1}{2}\int \partial_t\sigma[(\partial_tW)^2+(\partial_r W)^2+\frac{k^2+1+2\vul+\vdl}{r^2}W^2]-\int\partial_r \sigma\partial_r W\partial_tW\\
\nonumber & + & \int\sigma\partial_tW\left[A_{\lambda}F_{B_1}+\frac{\partial_{tt}V_{\lambda}^{(1)}w}{r}+\frac{2\partial_t\vul\partial_tw}{r}\right]+\frac{1}{2}\int\frac{\sigma W^2}{r^2}\left(2\partial_t \vul+\partial_t\vdl\right).
\eea
The third term on the last line above requires integration by parts:
\bea
\label{choeohoe}
\nonumber  & & \int \sigma\partial_tW\frac{2\partial_t\vul\partial_tw}{r}\\
\nonumber & = & \frac{d}{dt}\left\{\int \sigma W\frac{2\partial_t\vul\partial_tw}{r}\right\}-2\int\frac{W}{r}\left[\partial_t\sigma\partial_t\vul \partial_tw+\sigma\partial_{tt}\vul\partial_tw+\sigma \partial_t\vul\partial_{tt}w\right]\\
\nonumber & = & \frac{d}{dt}\left\{\int \sigma W\frac{2\partial_t\vul\partial_tw}{r}\right\}- 2\int\partial_t\sigma \frac{W\partial_tw}{r}\partial_t\vul\\
& - & 2\int\frac{\sigma W}{r}\left[\partial_{tt}\vul \partial_tw+\partial_t\vul F_{B_1}\right]+2\int \frac{\sigma W}{r}\partial_t\vul H_{\lambda}w
\eea
where we used (\ref{oeioehoe}) in the last step. We now integrate the last term above  by parts in space using \fref{deoperatoaone}:
\bee
& & 2\int \frac{\sigma W}{r}\partial_t\vul Hw=2\int \frac{\sigma W}{r}\partial_t\vul A^*_{\lambda}W=2\int \frac{\sigma W}{r}\partial_t\vul\left(\partial_rW+\frac{1+\vul}{r}W\right)\\
& = & 2\int \sigma \frac{W^2}{r^2}\left[(1+\vul)\partial_t\vul-\frac{r}{2}\partial_t\partial_r\vul\right]-\int \frac{W^2}{r}\partial_r\sigma\partial_t\vul.
\eee
Inserting this together with \fref{choeohoe} into \fref{dowhohwo} yields:
\bee
\nonumber & & \frac{1}{2}\frac{d}{dt}\left\{\int \sigma[(\partial_tW)^2+(\partial_rW)^2+\frac{k^2+1+2\vul+\vdl}{r^2}W^2-\frac{4}{r}\partial_t\vul\partial_twW]\right\}\\
& = & \int \sigma \frac{W^2}{r^2}\left[\frac{1}{2}(2\partial_t \vul+\partial_t\vdl)+2((1+\vul)\partial_t\vul-\frac{r}{2}\partial_t\partial_r\vul)\right]\\
& + & \int \sigma \partial_tW\left[A_{\lambda}F_{B_1}+\frac{\partial_{tt}V_{\lambda}^{(1)}w}{r}\right]-2\int\sigma \frac{W}{r}\left[\partial_{tt}\vul \partial_tw+\partial_t\vul F_{B_1}\right]\\
& + &  \frac{1}{2}\int \partial_t\sigma[(\partial_tW)^2+(\partial_rW)^2+\frac{k^2+1+2\vul+\vdl}{r^2}W^2]-\int\partial_r \sigma\partial_rW\partial_tW\\
& - & \int \frac{W^2}{r}\partial_r\sigma\partial_t\vul-2\int\partial_t\sigma\partial_t\vul\frac{W\partial_tw}{r}.
\eee
An explicit computation from \fref{deoperatoa}, \fref{defhatsrbis} yields:
\be
\label{cjalkeuoepr}
\partial_t\vul=k\frac b\lambda(\Lambda Qg''(Q))_{\lambda}, \ \ \partial_t\vdl=k^2 \frac {b}{\lambda}(\Lambda Q[g'g''-gg'''](Q))_{\lambda}
\ee
and $$\vul\partial_t\vul-\frac{r}{2}\partial_t\partial_r\vul=\frac{bk^2}{2\lambda}(\Lambda Q(g'g''-gg''')(Q))_{\lambda}=\frac{1}{2}\partial_t\vdl,$$ and \fref{computationeergy} follows.
\begin{remark}\label{rem:V} A fundamental feature of \eqref{computationeergy} is that the first term on the RHS of \eqref{computationeergy} which could not be treated perturbatively {\it has a sign}. Indeed, in the (WM) case, $g(u)=\sin (u)$ and thus from \fref{formulakey}:
$$
\frac{3b}\lambda\int\frac{\sigma W^2}{r^2}\left[\Lambda Q\left(kg''+\frac{k^2}2(g'g''-gg''')\right)(Q)\right]_{\lambda}=-\frac{3k^2b}{\lambda}\int \sigma \frac {W^2}{r^2}\sin^2(Q)<0.
$$
In the (YM), we compute from $g(u)=\frac{1}{2}(1-u^2)$ and \fref{formulakey}:
$$\frac{3b}\lambda\int\frac{\sigma W^2}{r^2}\left[\Lambda Q\left(kg''+\frac{k^2}2(g'g''-gg''')\right)(Q)\right]_{\lambda}=-\frac{3b}{\lambda}\int \sigma \frac {W^2}{r^2}(1-Q)(1-Q^2)<0.$$
\end{remark}
For future reference, we record here an estimate on $\partial_t\vul$:
\be \label{eq:V1}
|\partial_t\vul(r)|\lesssim \frac b\lambda \left (\frac {r^k}{1+r^{2k}}\right)_{\lambda},
\end{equation}
which applies in both the (WM) and (YM) case. In the former, however, we also have a strengthened
estimate 
\be \label{eq:V2}
|\partial_t\vul(r)|\lesssim \frac b\lambda \left (\frac {r^{2k}}{1+r^{4k}}\right)_{\lambda},
\end{equation}
which follows from the vanishing properties of $g(Q)=\sin(Q)$. We can unify them in the following bound
\be \label{eq:V3}
|\partial_t\vul(r)|\lesssim \frac b\lambda \left(\frac {r^{2}}{1+r^{4}}\right)_{\lambda}.
\end{equation}
\vskip 2pc
As a consequence the last term on the LHS of \fref{computationeergy} can be estimated as follows:
\begin{align*}
\left |\int \sigma \frac 2r \pa_t V_\lambda^{(1)} \pa_t w W\right |&\lesssim \frac {b}{\lambda} \left (\int
\frac {(\pa_t w)^2}{r^2}\right)^{\frac 12} \left (\int W^2 (\frac {r^4}{1+r^8})_\lambda\right)^{\frac 12}\\ &\lesssim
C(M) b\left(|\partial_tW|_{L^2}+|A_\lambda^* W|_{L^2}\right) |A_\lambda^* W|_{L^2} \lesssim C(M)  \frac {b}{\lambda} {\mathcal E}\\
& \lesssim \frac{b^{\frac{1}{4}}\mathcal E}{\lambda^2}
\end{align*}
where we used \fref{eq:coerc},  \eqref{controldt}.\\

We now aim at estimating all the terms in the RHS \fref{computationeergy}\\

{\bf step 3} Control of the boundary terms in $\sigma$.\\

We treat the boundary terms in $\sigma$ which appear in the third line of the RHS \fref{computationeergy}. Observe from the explicit choice of $\sigma_{B_c}$ with $B_c=\frac{2}{b}$ and \fref{poitwisebs} that $$\partial_t\sigma_{B_c}=\frac{1}{\lambda}\left[b+\frac{b_s}{b}\right](y\partial_y\sigma)(\frac{r}{\lambda B_c})\leq -\frac{b(1-\eta)}{\lambda}|\partial_y\sigma|(\frac{r}{\lambda B_c}),$$ $$\left|\partial_r\sigma_{B_c}\right|=\frac{1}{\lambda B_c}|\partial_y\sigma|(\frac{r}{\lambda B_c})\leq \frac{b}{2\lambda}|\partial_y\sigma|(\frac{r}{\lambda B_c})$$ and hence $$\partial_t\sigma_{B_c}\leq -\frac 32|\partial_r\sigma_{B_c}|.$$ 
This reflects the fact that $r= C\lambda b^{-1}$ are space-like hypersurfaces for any choice of constant 
$C\ge 1$. Recall also from \fref{cnheoiheoeuy}, \fref{cnheoiheoeuybis} that $$k^2+1+2V_{\lambda}^{(1)}+V_{\lambda}^{(2)}\geq 0$$ and hence:
\bea
\label{controlboundary}
\nonumber & & \frac{1}{2}\int \partial_t\sigma\left[(\partial_tW)^2+(\partial_rW)^2+\frac{k^2+1+2\vul+\vdl}{r^2}W^2\right]-\int\partial_r \sigma\partial_r W\partial_tW\\
& \leq &  -\frac{1}{4}\int \partial_t\sigma\left[(\partial_tW)^2+(\partial_rW)^2\right].
\eea
The other term is estimated by brute force:
\bee
\left|2 \int \partial_t\sigma \frac{W}{r}\partial_t \vul \partial_tw\right| & \lesssim & \frac{b^2}{\lambda^2}\int\frac{W|\partial_t w|}{r}\left(\frac{r^{2}}{1+r^{4}}\right)_{\lambda}\\
& \lesssim & \frac{b^2}{\lambda}\left(\int \frac{(\partial_t w)^2}{r^2}\right)^{\frac{1}{2}}|A^*_{\lambda}W|_{L^2}\lesssim C(M) \frac{b^2}{\lambda^3}\mathcal E\\
& \lesssim & \frac{b}{\lambda^3}b^{\frac 14}\mathcal E
\eee
 where we used \fref{eq:coerc}, \fref{controldt}. Finally, observe that $\Lambda Q g''(Q)\leq 0$ and $\partial_r\sigma\leq 0$ imply 
 that $$- b\int\partial_r \sigma\frac{W^2}{r}(k\Lambda Qg''(Q))_{\lambda}\leq 0.$$

{\bf step 4} $\partial_{tt}\vul $ terms.\\

We compute:
$$
\partial_{tt}\vul = k \frac {b_s+b^2}{\lambda^2} (\Lambda Qg''(Q))_{\lambda} +k^2\frac {b^2}{\lambda^2}
(\Lambda Q(g'(Q)g''(Q)+g(Q) g'''(Q))_{\lambda}
$$
and hence using the bootstrap bound \fref{poitwisebs}:
\be
\label{poitwodevtt}
|\partial_{tt}\vul|\lesssim \frac{|b_s|+b^2}{\lambda^2}\left(\frac{r^{2k}}{1+r^{4k}}\right)_{\lambda}\lesssim \frac{b^2}{\lambda^2}\left(\frac{r^{2k}}{1+r^{4k}}\right)_{\lambda}
\ee 
in the (WM) case and 
\be
\label{poitwodevtt'}
|\partial_{tt}\vul|\lesssim \frac{|b_s|+b^2}{\lambda^2}\left(\frac{r^{2}}{1+r^{4}}\right)_{\lambda}\lesssim \frac{b^2}{\lambda^2}\left(\frac{r^{k}}{1+r^{2k}}\right)_{\lambda}
\ee 
for the (YM) $k=2$ case. We can unify them in the following bound:
\be
\label{eq:V'}
|\partial_{tt}\vul|\lesssim \frac{b^2}{\lambda^2}\left(\frac{r^{2}}{1+r^{4}}\right)_{\lambda}.
\ee 
As a consequence, we obtain using \fref{eq:coerc}, \fref{estdeux}, \fref{controldt}:
\bee
& & \left|\int\frac{\sigma\partial_{tt}V_{\lambda}^{(1)}}{r}\left[w\partial_tW-2W\partial_tw\right]\right|\\
 & \lesssim&  \frac{b^2}{\lambda^2}\left(\int (\partial_t W)^2\right)^{\frac{1}{2}}\left(\int w^2\left(\frac{r^{4}}{r^2(1+r^{8})}\right)_{\lambda}\right)^{\frac{1}{2}}
 +  \frac{b^2}{\lambda^2}\left(\int \frac{(\partial_t w)^2}{r^2}\right)^{\frac{1}{2}}\left(\int W^2\left(\frac{r^{4}}{1+r^{8}}\right)_{\lambda}\right)^{\frac{1}{2}}\\
& \lesssim &   \frac{b^2}{\lambda^2}\|\partial_tW\|_{L^2}\left(\lambda^2\int\frac{\e^2}{y^4(1+|\log y|^2)}\right)^{\frac{1}{2}}+\frac{b^2}{\lambda^2}\left(\|\partial_tW\|^2_{L^2}+\|A^*_{\lambda}W\|^2_{L^2}\right)^{\frac{1}{2}}\left(\lambda^2\|A^*_{\lambda}W\|_{L^2}\right)^{\frac{1}{2}}\\
& \lesssim &  C(M) \frac{b^2}{\lambda}\left[\int (\partial_tW)^2+(A_{\lambda}^*W)^2\right]\lesssim C(M) \frac{b^2}{\lambda^3}\mathcal E\lesssim \frac{b}{\lambda^3}b^{\frac{1}{4}}\mathcal E.
\eee

{\bf step 5} Decomposition of $F_{B_1}$ terms.\\

We now decompose the term involving $F_{B_1}$, given by \fref{oeioehoe} in \fref{computationeergy}, as follows. 
We first write:
\be
\label{firstdecomp}
F_{B_1}=F_1-\partial_tF_2 \ \ \mbox{with} \ \ F_2=\frac{1}{\lambda}(\partial_s P_{B_1})_{\lambda}.
\ee
Recall from Remark \ref{cboieheo} that there is no satisfactory pointwise bound for $b_{ss}$ and hence for $\partial_tF_2$. We thus have to integrate by parts in time:
 \bee
& & -2\int\frac{\sigma W}{r}\partial_tV_{\lambda}^{(1)}F_{B_1}+\int \sigma \partial_tW A_{\lambda}F_{B_1}\\
& = & -2\int\frac{\sigma W}{r}\partial_tV_{\lambda}^{(1)}(F_1-\partial_tF_2)+\int \sigma \partial_tW A_{\lambda}(F_1-\partial_tF_2)\\
& = & \frac{d}{dt}\left\{2\int\frac{\sigma W}{r}\partial_t\vul F_2-\int\sigma\partial_tWA_{\lambda}F_2\right\}\\
& - & 2\int F_2\partial_t(\frac{\sigma W}{r}\partial_t\vul)+\int A_{\lambda}F_2\left(\sigma\partial_{tt}W+\partial_t\sigma \partial_tW\right)\\
& - & 2\int \frac{\sigma W}{r}\partial_t\vul F_1+\int \sigma \partial_tWA_{\lambda}F_1+\int\sigma\partial_tW\frac{\partial_t \vul}{r}F_2\\
\eee
We then use the equation \fref{Wequation} to compute:
\bee
& & \int \sigma A_{\lambda}F_2\partial_{tt}W\\
& = & -\int \sigma A_{\lambda}F_2\tilde{H}_\lambda W+\int\sigma A_{\lambda}F_2\left(A_{\lambda}F_1-A_{\lambda}\partial_tF_2+\frac{\partial_{tt}V_{\lambda}^{(1)}w}{r}+\frac{2\partial_t\vul\partial_tw}{r}\right)\\
& = & -\int (A_{\lambda}^*W)A^*_{\lambda}(\sigma A_{\lambda}F_2)+\int\sigma A_{\lambda}F_2\left(A_{\lambda}F_1+\frac{\partial_t\vul }{r}F_2+\frac{\partial_{tt}V_{\lambda}^{(1)}w}{r}+\frac{2\partial_t\vul\partial_tw}{r}\right)\\
& - & \frac{d}{dt}\left\{\frac{1}{2}\int \sigma (A_{\lambda}F_2)^2\right\} +\frac{1}{2}\int \partial_t\sigma (A_{\lambda} F_2)^2.
\eee
We finally arrive at the following identity:
\bea
\label{prnvoirohr}
& & -2\int\frac{\sigma W}{r}\partial_tV_{\lambda}^{(1)}F_{B_1}+\int \sigma \partial_tW A_{\lambda}F_{B_1}\\
\nonumber & = & \frac{d}{dt}\left\{2\int\frac{\sigma W}{r}\partial_t\vul F_2-\int\sigma\partial_tWA_{\lambda}F_2-\frac{1}{2}\int \sigma (A_{\lambda}F_2)^2\right\}\\
\nonumber& - & 2\int \frac{\sigma W}{r}\partial_t\vul F_1+\int \sigma \partial_tWA_{\lambda}F_1\\
\nonumber& - & \int F_2\left[2\partial_t \sigma \frac{W}{r}\partial_t\vul+\sigma\frac{\partial_tW}{r}\partial_t\vul+2\sigma\frac{W}{r}\partial_{tt}\vul\right]\\
\nonumber& + & \int \sigma A_{\lambda}F_2\left[A_{\lambda}F_1+ \frac{\partial_t\vul }{r}F_2+ \frac{\partial_{tt}V_{\lambda}^{(1)}w}{r}+\frac{2\partial_t\vul\partial_tw}{r}\right]\\
\nonumber& + & \int \partial_t\sigma A_{\lambda}F_2\left[\partial_tW+\frac{1}{2}A_{\lambda}F_2\right] -\int (A_{\lambda}^*W)A^*_{\lambda}(\sigma A_{\lambda}F_2)
\eea
We now treat all terms on the RHS \fref{prnvoirohr}.\\

{\bf step 6} $F_2$ terms.\\

In what follows we use the crude bounds:
\begin{eqnarray}\begin{array}{c}
\label{crudeboundb}
|\partial_bP_{B_1}|\lesssim \frac{y^k}{(1+y^k)|\log b|}{\bf 1}_{y\leq 2B_1} + \frac 1{by^k}{\bf 1}_{\frac{ B_0}{2}\le y\leq 2B_1}, 
\\  |\partial_b\partial_yP_{B_1}|\lesssim\frac{y^{k-1}}{(1+y^k)|\log b|}{\bf 1}_{y\leq 2B_0}+ \frac 1{by^{1+k}}{\bf 1}_{\frac {B_0}{2}\le y\leq 2B_1},
\end{array}
\end{eqnarray}
\vskip .5pc

We treat all $F_2$ terms on the RHS of  \fref{prnvoirohr}.\\
\vskip 1pc

{\it First line in the RHS of \fref{prnvoirohr}}:  The crude bound $|\partial_bP_{B_1}|_{L^{\infty}}\lesssim 1$ follows from
 \fref{crudeboundb}. Therefore, from \fref{contorlocoervciebis}, \fref{eq:V3}:
\bee
 \left|\int\frac{\sigma W}{r}\partial_t\vul F_2\right| & \lesssim &  \frac{b|b_s|}{\lambda^2} \left(\int_{r\leq 2\lambda B_1}\sigma \frac{W^2}{r^2(1+\frac{r^2}{\lambda^2})}\right)^{\frac{1}{2}}\left(\int_{y\leq 2B_1} \left(\frac{r^{4}(1+r^2)}{1+r^{8}}\right)_{\lambda}\right)^{\frac{1}{2}}\\
& \lesssim &\frac{|b_s| b\sqrt{|\log b|}}{\lambda^2}\sqrt{\mathcal E_{\sigma}}\lesssim \frac{|b_s| }{\lambda^2}\sqrt{\mathcal E_{\sigma}}
\eee
\bee
 \left|\int\sigma\partial_tWA_{\lambda}F_2\right|& \lesssim&  \frac{|b_s|}{\lambda}|\sqrt{\sigma}\partial_tW|_{L^2}\left(\int\left (\frac{1}{(1+y^2)\log^2 b} {\bf 1}_{y\le 2B_1} + \frac 1{b^2 y^4} {\bf 1}_{\frac {B_0}{2}\le y\le 2B_1}\right)\right)^{\frac{1}{2}}\\
& \lesssim &  \frac{|b_s|}{\lambda}|\sqrt{\sigma}\partial_tW|_{L^2}\lesssim  \frac{|b_s|}{\lambda^2}\sqrt{\mathcal E_{\sigma}},
\eee
\begin{equation}\label{eq:F2}
\int \sigma (A_{\lambda}F_2)^2\lesssim \frac{|b_s|^2}{\lambda^2} \int\left (\frac{1}{(1+y^2)\log^2 b} {\bf 1}_{y\le 2B_1} + \frac 1{b^2 y^4} {\bf 1}_{\frac {B_0}{2}\le y\le 2B_1}\right)\leq \frac{|b_s|^2}{\lambda^2}.
\end{equation}
{\it Third line in the RHS of \fref{prnvoirohr}}: From \fref{eq:V3}:
\bee
\left|\int F_2\partial_t \sigma \frac{W}{r}\partial_t\vul\right|& \lesssim&  \frac{b^2|b_s|}{\lambda^3}\left(\int_{2\lambda B_c\leq r\leq 3\lambda B_c}\frac{W^2}{r^2(1+(\lambda r)^2)}\right)^{\frac{1}{2}}\left(\int_{y\leq 2B_1} \left(\frac{r^{4}(1+r^2)}{1+r^{8}}\right)_{\lambda}\right)^{\frac{1}{2}}\\
& \leq & \frac{b^2|\log b||b_s|}{\lambda^2}|A_{\lambda}^*W|_{L^2}\leq \frac{b}{\lambda^3}\left(|b_s|^2+\frac{\mathcal E}{|\log b|^2}\right)
\eee
\bee
\left|\int F_2\sigma\frac{\partial_tW}{r}\partial_t\vul\right| &  \lesssim &  \frac{b|b_s|}{\lambda^2}|\sqrt{\sigma}\partial_tW|_{L^2}\left(\int_{y\leq 2B_1}\left(\frac{r^4}{r^2(1+r^8)}\right)_{\lambda}\right)^{\frac{1}{2}}\\
& \leq &  \frac{b|b_s|}{\lambda^2}|\sqrt{\sigma}\partial_tW|_{L^2}\lesssim \frac{b}{\lambda^3}|b_s|\sqrt{\mathcal E_{\sigma}},
\eee
and from \fref{eq:V'}:
\bee
\left|\int F_2\sigma\frac{W}{r}\partial_{tt}\vul\right|& \lesssim & \frac{|b_s|b^2}{\lambda^3}\left(\int_{r\leq 3\lambda B_c}\frac{W^2}{r^2(1+\frac{r^2}{\lambda^2})}\right)^{\frac{1}{2}}\left(\int_{y\leq 2B_1} \left(\frac{r^{4}(1+r^2)}{1+r^{8}}\right)_{\lambda}\right)^{\frac{1}{2}}\\
& \leq & \frac{b^2|\log b||b_s|}{\lambda^2}|A^*_{\lambda}W|_{L^2}\lesssim \frac{b}{\lambda^3}\left(|b_s|^2+\frac{\mathcal E}{|\log b|^2}\right).
\eee
{\it Fourth line in the RHS of \fref{prnvoirohr}}: We leave aside the term involving $F_1$ which will be treated in the next step. From \fref{eq:V3}:
$$
\left|\int \sigma A_{\lambda}F_2 \frac{\partial_t\vul }{r}F_2\right| \lesssim  \frac{b|b_s|^2}{\lambda^5}\int_{y\leq 2B_1}\left(\frac{r^{2}}{r(1+r^{4})}\right)_{\lambda}\leq \frac{b}{\lambda^3}|b_s|^2.
$$
From \fref{crudeboundb}:
\bee
\left| \int \sigma A_{\lambda}F_2\frac{\partial_{tt}V_{\lambda}^{(1)}w}{r}\right| & \lesssim & \frac{|b_s|b^2}{\lambda^4}\left(\int w^2\left(\frac{r^{4}}{r^2(1+r^{7})}\right)_{\lambda}\right)^{\frac{1}{2}} \left(\int_{y\leq 2B_1}\left(\frac{1}{(1+r^{3})}\right)_{\lambda}\right)^{\frac{1}{2}}\\
& \lesssim & \frac{|b_s|b^2}{\lambda^4}\left(\lambda^2\int\frac{\e^2}{y^4(1+|\log y|^2)}\right)^{\frac{1}{2}}\lesssim C(M)\frac{|b_s|b^2}{\lambda^2}|A^*_{\lambda}W|_{L^2}\\
& \lesssim & \frac{b}{\lambda^3}\left(|b_s|^2+\frac{\mathcal E}{|\log b|^2}\right)
\eee
where we used \fref{estdeux} in the last steps. Finally, from \fref{controldt} and with the help of slightly stronger
bounds
\begin{align*}
|\partial_bP_{B_1}|&\lesssim \frac{y^k}{(1+y^k)|\log b|}\frac {(b(1+y))^{\frac 12}}{1+(b(1+y))^{\frac 12}}{\bf 1}_{y\leq 2B_1} + \frac 1{by^k}{\bf 1}_{\frac{ B_0}{2}\le y\leq 2B_1}, 
\\  |\partial_b\partial_yP_{B_1}|&\lesssim\frac{y^{k-1}}{(1+y^k)|\log b|}\frac {(b(1+y))^{\frac 12}}{1+(b(1+y))^{\frac 12}}{\bf 1}_{y\leq 2B_0}+ \frac 1{by^{1+k}}{\bf 1}_{\frac {B_0}{2}\le y\leq 2B_1},
\end{align*}
we obtain
\bee
& & \left| \int \sigma A_{\lambda}F_2\frac{\partial_{t}V_{\lambda}^{(1)}\partial_tw}{r}\right| \\
& \lesssim &\frac{b|b_s|}{\lambda^2}\left(\int \frac{(\partial_tw)^2}{r^2}\right)^{\frac{1}{2}}\left(\int_{y\leq 2B_1}\frac{y^{4}}{(1+y^{8})}
\left (\frac {by}{y^2 \log^2 b} {\bf 1}_{y\le 2B_1} + \frac 1{b^2 y^4} {\bf 1}_{\frac {B_0}{2}\le y\le 2B_1}\right)\right)^{\frac{1}{2}}\\
& \lesssim & C(M)\frac{b^{\frac 32}|b_s|}{|\log b|\lambda^2}\left(|\partial_tW|_{L^2}^2+|A_{\lambda}^*W|_{L^2}^2\right)^{\frac{1}{2}}\lesssim  \frac{b}{\lambda^3}\left(|b_s|^2+\frac{\mathcal E}{|\log b|^2}\right).
\eee

{\it Fifth line in the RHS of \fref{prnvoirohr}}: From \fref{crudeboundb}:
\bee
& & \left|\int\partial_t\sigma A_{\lambda}F_2[\partial_tW+\frac{1}{2}A_{\lambda}F_2]\right|\\
& \lesssim &  \frac{b^{\frac 12}|b_s|}{\lambda^{\frac 52}}|\sqrt{\partial_t\sigma}\partial_tW|_{L^2}\left(\int_{2B_c\le y\leq 3B_c}\left(\frac{1}{(1+r^2)\log^2 b}\right)_{\lambda}\right)^{\frac{1}{2}}+\frac{b^2|b_s|^2}{\lambda^5}\int_{2B_c\leq y\leq 3B_c}\left(\frac{1}{1+r^2}\right)_{\lambda}\\
& \lesssim & \left[\frac{|\sqrt{\partial_t\sigma}\partial_tW|^2_{L^2}}{|\log b|}+\frac{b |b_s|^2}{\lambda^3}\right]
\eee
which is absorbed thanks to \fref{controlboundary}.

For the last term, we need to exploit an additional cancellation in the case $k=1$. We compute from \fref{calcuderivpe}:
\begin{align*}
A^* (\sigma A \pa_b P_{B_1})&= \sigma H  (\pa_b P_{B_1}) +  \pa_y \sigma A \pa_b P_{B_1}\\ &=\sigma H \left (\chi_{B_1} \frac {\pa_b (b^2 T_1)}{\pa b} - \frac {\pa \log B_1}{\pa b} \frac y {B_1} \chi'_{B_1}(Q_b-\pi)\right) +
 \pa_y \sigma A \pa_b P_{B_1}
\end{align*}
Using the estimate \eqref{estboddkone} on $\pa_b P_{B_1}$ and its derivatives 
\begin{align*}
\left|\frac{d^m }{dy^m}\frac{\partial P_{B}}{\partial b}\right|\lesssim \frac{by^{1-m}(1+|\log {by}|)}{|\log b|}{\bf 1}_{y\leq \frac{B_0}{2}}+\frac{1}{b|\log b| y^{1+m}}{\bf 1}_{\frac{B_0}{2}\le y\leq 2B_1} &+ \frac 1{by^{1+m}}{\bf 1}_{\frac{B_1}{2}\le y\leq 2B_1}\\ &+ C(M) \frac b{1+y^{1+m}},
\end{align*}
as well as \eqref{asympttone} for $T_1$, we can easily conclude that 
$$
A^* (\sigma A \pa_b P_{B_1})=\sigma  \chi_{B_1} \frac {\pa_b (b^2 H T_1)}{\pa b} 
+\frac{1}{b|\log b| y^{3}}{\bf 1}_{\frac{B_0}{2}\le y\leq 2B_1} +  \frac{1}{by^{3}}{\bf 1}_{\frac{B_1}{2}\le y\leq 2B_1}\delta_{\sigma\equiv 1}
$$
We use that $HT_1$ verifies the equation
$$
HT_1=-D\Lambda Q+ c_b \Lambda Q \chi_{\frac {B_0}4},
$$
which immediately implies from $D\Lambda (\frac{1}{y})=0$ that $|D\Lambda Q|\lesssim \frac y{1+y^4}$ and
$$
 \frac {\pa_b (b^2 H T_1)}{\pa b}\le \frac {by}{1+y^3} + \frac {by}{(1+y^2)|\log b|} \chi_{\frac {B_0}2}
$$
As a consequence,
\bea\label{eq:A*A}
\nonumber |A^* (\sigma A \pa_b P_{B_1})| & \lesssim & \sigma\left[\frac {by}{1+y^3} {\bf 1}_{y\le 2B_1} + \frac {by}{(1+y^2)|\log b|} {\bf 1}_{y\le {2B_1}}\right]\\
& +  &\frac{1}{b|\log b|y^{3}}{\bf 1}_{\frac{B_0}{2}\le y\leq 2B_1}+\frac{1}{by^{3}}{\bf 1}_{\frac{B_1}{2}\le y\leq 2B_1}\delta_{\sigma\equiv 1}
\eea
For $\sigma\equiv 1$, this yields:
\bee
& & \nonumber \left|\int(A_{\lambda}^*W)A^*_{\lambda}(\sigma A_{\lambda} F_2)\right|\\
& \lesssim &  \frac{|b_s|}{\lambda ^2}|A_{\lambda}^*W|_{L^2} \left(\int_{y\leq 2B_1}\frac {b^2 y^2}{(1+y^6)} +\frac {b^2 y^2}{(1+y^4)(\log b)^2} +\frac 1{b^2 y^6}{\bf 1}_{\frac{B_1}{2}\leq y\leq 2B_1}\right)^{\frac{1}{2}}\\
 & \lesssim & \frac{b|b_s|}{\lambda ^2}|A_{\lambda}^*W|_{L^2}\lesssim \frac{b}{\lambda ^3}|b_s|\sqrt{\mathcal E}.
 \eee
For $\sigma\equiv \sigma_{B_c}$, observe that \fref{eq:A*A} on the set $y\le B_0/2$ is an improvement relative to a more straightforward estimate 
$$
|A^* (\sigma A \pa_b P_{B_1})| \lesssim \frac{by(1+|\log {by}|)}{(1+y^2)|\log b|}{\bf 1}_{y\leq \frac{B_0}{2}}+\frac{1}{b|\log b| y^{3}}{\bf 1}_{\frac{B_0}{2}\le y\leq 2B_1} +  C(M)\frac{b}{1+y^3}{\bf 1}_{y\leq 2B_0}
$$
which follows from \eqref{estboddkone}. Such an estimate would imply that 
$$
\int |A^* (\sigma A \pa_b P_{B_1})|^2\lesssim b^2 |\log b|,
$$
as opposed to the improved bound 
\be\label{eq:imp-b}
\int |A^* (\sigma A \pa_b P_{B_1})|^2\lesssim b^2
\ee
We also note that \eqref{eq:A*A} and thus \eqref{eq:imp-b} follow similarily from 
Proposition \ref{lemmapsibtilde} for all $k\ge 2$.
Hence:
\bee
& & \nonumber \left|\int(A_{\lambda}^*W)A^*_{\lambda}(\sigma A_{\lambda} F_2)\right| \lesssim \frac{|b_s|}{\lambda ^2}|A_{\lambda}^*W|_{L^2} \left(\int _{\frac{ B_0}{2}\le y\le 2B_1}\frac 1{b^2|\log b|^2 y^6}\right)^{\frac{1}{2}}\\
& + &  \frac{|b_s|}{\lambda ^2}|\sqrt{\sigma}A_{\lambda}^*W|_{L^2} \left(\int_{y\leq 2B_1}\frac {b^2 y^2}{(1+y^6)} {\bf 1}_{y\le 2B_1}+\frac {b^2 y^2}{(1+y^4)(\log b)^2} {\bf 1}_{y\le 2B_1} \right)^{\frac{1}{2}}\\
 & \lesssim & \frac{b|b_s|}{\lambda ^2}\left(| \sqrt{\sigma}A_{\lambda}^*W|_{L^2}+\frac{|A^*W|_{L^2}}{|\log b|}\right)\lesssim \frac{b}{\lambda ^3}\left(|b_s|\sqrt{\mathcal E_{\sigma}}+|b_s|^2+\frac{\mathcal E}{|\log b|^2}\right)
\eee
In the last step, we used the inequality
\be
\label{estpotentialvone}
(1+V^{(1)})^2\lesssim k^2+1+2V^{(1)}+V^{(2)},
\ee 
which can be verified by a direct computation. Hence: 
\bee
\int \sigma (A_{\lambda}^*W)^2 & = & \int\sigma\left[\partial_rW+\frac{1+V_{\lambda}^{(1)}}{r}W\right]^2\lesssim \int \sigma\left[(\partial_rW)^2+\frac{k^2+1+2V_{\lambda}^{(1)}+V_{\lambda}^{(2)}}{r^2}W^2\right]\\
& \lesssim & \lambda^{-2}\mathcal E_{\sigma}.
\eee

{\bf step 7} $F_1$ terms.\\

We now turn to the control of $F_1$ terms appearing in the RHS \fref{prnvoirohr}. For this, we first split $F_1$ into four different components: 
\be
\label{contribfone}
F_1=F_{1,1}+F_{1,2}+F_{1,3}-\frac{1}{\lambda^2}(\Psi_{B_1})_{\lambda}
\ee 
with $$F_{1,1}=-\frac{1}{\lambda^2}\left[b\Lambda \partial_sP_{B_1}+b_s\Lambda P_{B_1}\right]_{\lambda},  \ \ F_{1,2}=\frac{k^2}{r^2}\left[f'(Q)-f'(P_{B_1})\right]_{\lambda}w, \ \ F_{1,3}=\-\frac{k^2}{r^2}N(w).$$ 

{\it $F_{1,1}$ terms}: We estimate from Proposition \ref{propqb}
\be
\label{estimatelambdpbone}
|\Lambda P_{B_1}|\lesssim \frac{b y^k}{(1+y^k)|\log b|}{\bf 1}_{y\leq 2B_1} +\frac {y^k}{1+y^{2k}}{\bf 1}_{y\le 2B_1}
\ee which together with \fref{crudeboundb} yields:
$$|\frac{d^m}{dy^m}F_{1,1}|\lesssim \frac{|b_s|}{\lambda^{2}}\left(\frac{b y^{k-m}}{(1+y^k)|\log b|}{\bf 1}_{y\leq 2B_1} +\frac {y^{k-m}}{1+y^{2k}}{\bf 1}_{y\le 2B_1}\right), \ \ 0\leq m\leq 1.
$$
Next, the cancellation $A(\Lambda Q)=0$ implies  the bound 
$$
|A\Lambda P_{B_1}|\lesssim \frac {by^{k-1}}{(1+y^k)|\log b|} {\bf 1}_{y\le 2B_1} +\frac 1{y^{k+1}} {\bf 1}_{\frac {B_0}{2}\le y\le 2B_1}
$$
and thus:
$$|A_{\lambda} F_{1,1}|\lesssim \frac{|b_s|}{\lambda^3}\left(\frac {by^{k-1}}{(1+y^k)|\log b|} {\bf 1}_{y\le 2B_1} +\frac 1{y^{k+1}} {\bf 1}_{\frac {B_0}{2}\le y\le 2B_1}\right).$$
From \fref{contorlocoervciebis}, \fref{eq:V3}:
\bee
& & \left|\int \frac{\sigma W}{r}\partial_t\vul F_{1,1}\right|\\
& \lesssim & \frac{|b_s|b}{\lambda^3}\left(\int \sigma\frac{W^2}{r^2(1+\frac{r^2}{\lambda^2})}\right)^{\frac{1}{2}}\left(\int_{y\le 2B_1} \left(\frac{r^4(1+r^2)}{1+r^8}\left[\frac{b^2}{|\log b|^2}+\frac{r^{2k}}{1+r^{4k}}\right]\right)_{\lambda}\right)^{\frac{1}{2}}\\
& \lesssim & \frac{b}{\lambda^3}|b_s|\sqrt{\mathcal E_{\sigma}}
\eee
\bee
\left|\int \sigma \partial_tWA_{\lambda} F_{1,1}\right| & \lesssim & \frac{|b_s|}{\lambda^3}|\sqrt{\sigma}\partial_tW|_{L^2}\left(\int_{y\leq 2B_1} \left(\frac{b^2}{(1+r^2)\log^2 b} + \frac 1{r^4} {\bf 1}_{\frac{B_0}{2}\le r\le 2B_1} \right)_{\lambda}\right)^{\frac{1}{2}}\\
& \lesssim & \frac{b |b_s|}{\lambda^2}|\sqrt{\sigma}\partial_tW|_{L^2}\lesssim \frac{b}{\lambda^3}|b_s|\sqrt{\mathcal E_{\sigma}},
\eee
\begin{align*}
& \left|\int \sigma A_{\lambda}F_2A_{\lambda}F_{1,1}\right|  \lesssim  \frac{|b_s|^2}{\lambda^3}\\ &\times
 \int\frac 1{y^2} \left(\frac {by}{(1+y)|\log b|} {\bf 1}_{y\le 2B_1} + \frac 1y {\bf 1}_{\frac {B_0}{2}\le y \le 2B_1} \right) 
\left(\frac {y}{(1+y)|\log b|} {\bf 1}_{y\le 2B_1} + \frac 1{by} {\bf 1}_{\frac {B_0}{2}\le y \le 2B_1} \right)\\&
 \lesssim  \frac{|b_s|^2}{\lambda^3}\int_{y\leq 2B_1} \left(\frac{b}{(1+y^2)(\log b)^2}\right)\lesssim  \frac{b }{\lambda^3}|b_s|^2.
\end{align*}

{\it $F_{1,2}$ terms}: Take note that the term $F_{1,2}$ is not localized inside the ball $y\leq 2B_1$. 

We first recall the estimate:
$$|P_{B_1}-Q|=|(1-\chi_{B_1})(a-Q)+\chi_{B_1} (Q_b-Q)|\lesssim C(M)\frac{b^2y^{k}}{1+y^{2k-2}}{\bf 1}_{y\leq 2B_1}+\frac{1}{y^k}{\bf 1}_{y\geq \frac{B_1}{2}},$$
which follows from Proposition \ref{propqb}. It implies:
$$|f'(P_{B_1})-f'(Q)|\lesssim |P_{B_1}-Q| \int_0^1 |f''(\tau P_{B_1}+(1-\tau)Q)|d\tau
\lesssim C(M)\frac{b^2y^2}{1+y^2}{\bf 1}_{y\leq 2B_1}+\frac{1}{y^2}{\bf 1}_{y\geq \frac{B_1}{2}}.$$ 
In the last inequality we used that for the (WM) problem $|f''(\pi+R)|\lesssim R$ while the (YM) bound 
$|f''(R)|\lesssim 1$ only applies to the case $k=2$. 
Hence from \fref{estdeux}:
\bea
\label{estalhjdfitwo}
\nonumber \int|A_{\lambda}F_{1,2}|^2 & \lesssim &\frac{C(M)}{\lambda^3} \int_{y\leq 2B_1} b^4\left(\frac{y}{y^2(1+y^2)}\right)^2|\e|^2+\frac{C(M)}{\lambda^3} \int_{y\leq 2B_1} b^4\left(\frac{y^2}{y^2(1+y^2)}\right)^2|A\e|^2\\ 
\nonumber &+&\frac{1}{\lambda^3} \int_{y\geq \frac{B_1}{2}}\frac{|\e|^2}{y^{10}}+\frac{1}{\lambda^3} \int_{y\geq \frac{B_1}{2}}\frac{|A\e|^2}{y^8}\\
 & \lesssim & \frac{C(M)}{\lambda^3} \left[b^4|A^*A\e|_{L^2}^2+b^5|A^*A\e|_{L^2}^2\right] \lesssim  C(M)\frac{b^4}{\lambda^2}|A^*_{\lambda}W|_{L^2}^2.
\eea
This implies:
\bee
\left|\int \frac{\sigma W}{r}\partial_t\vul F_{1,2}\right|& \lesssim & \frac b\lambda\int\frac{|wW|}{r}\left(\frac{y^2}{1+y^4}\right)_{\lambda}\frac{1}{r^2}\left[C(M)\frac{b^2y^2}{1+y^2}{\bf 1}_{y\leq B}+\frac{1}{y^2}{\bf 1}_{y\geq B}\right]_\lambda\\
& \lesssim &C(M) \frac{b^3}{\lambda^3}\int\frac{|\e A\e|}{1+y^5}\\
& \lesssim & C(M)\frac{b^3}{\lambda^3}\left(\int \frac{|A\e|^2}{1+y^5}\right)^{\frac{1}{2}}\left(\int \frac{|\e|^2}{1+y^5}\right)^{\frac{1}{2}}\lesssim C(M) \frac{b^3}{\lambda}|A^*_{\lambda}W|_{L^2}^2\\
& \lesssim & C(M) \frac{b^3}{\lambda^3}\mathcal E\lesssim \frac{b}{\lambda^3}b\mathcal E
\eee
from \fref{estdeux}. Similarily, from \fref{estalhjdfitwo}:
$$\left|\int \sigma \partial_tW A_{\lambda} F_{1,2}\right| \lesssim C(M)\frac{b^2}{\lambda} |A^*_{\lambda}W|_{L^2}|\sqrt{\sigma}\partial_tW|_{L^2}\lesssim C(M)\frac{b^2}{\lambda^3}\sqrt{\mathcal E\mathcal E_{\sigma}}\lesssim \frac{b}{\lambda^3}b^{\frac 12}\mathcal E.$$ Finally, from \fref{estalhjdfitwo}:
\bee
 \left|\int \sigma A_{\lambda}F_2A_{\lambda}F_{1,2}\right| & \lesssim &  C(M)\frac{b^2}{\lambda^2}|A^*_{\lambda}W|_{L^2}|b_s|\left(\int_{y\leq 2B_1}\frac{1}{1+y}\right)^{\frac{1}{2}}\\
 & \lesssim & C(M)\frac{b^{\frac{3}{2}}\sqrt{|\log b|}|b_s|}{\lambda^2}
|A^*_{\lambda}W|_{L^2}\lesssim b\frac{b^{\frac{1}{4}}|b_s|}{\lambda^3}\sqrt{\mathcal E}\\
& \lesssim & \frac{b}{\lambda^3}(|b_s|^2+b^{\frac 12}\mathcal E).
\eee

{\it $F_{1,3}$ terms}: We now turn to the control of the nonlinear term. In this section we will also use the
bootstrap assumption \fref{poitnwiseboundWboot} in the form:
\be\label{eq:boot-A}
\lambda^2 \left (|A^*W|^2_{L^2}+ |\pa_t W|^2_{L^2}\right) \le  C b^4
\ee
for some positive constant $C$. We may assume that $C$ is dominated by the constant $C(M)$, which in turn,
as before, can be assumed to satisfy $C(M)<\eta^{-\frac 1{10}}$.

\vskip 1pc

We claim the following preliminary nonlinear estimates: 
\be
\label{estnonlinearglobal}
 \int\frac{|w|^4}{r^4}\leq \eta^{\frac 12} |A^*_{\lambda}W|_{L^2}^2     
\ee
and 
\be
\label{estnonlineargloballocalized}
\int_{r\leq 3\lambda B_1}\frac{|w|^4}{r^4}\leq   b^{\frac{3}{2}}|A^*_{\lambda}W|_{L^2}^2.
\ee
{\it Proof of \fref{estnonlinearglobal}, \fref{estnonlineargloballocalized}}: We rewrite 
$$ \int\frac{|w|^4}{r^4}=\frac{1}{\lambda^2}\int \frac{|\e|^4}{y^4}$$ and split the integral in three zones. Near the origin, we rewrite: 
$$A\e=-\partial_y\e+\frac{V^{(1)}}{y}\e=-y\partial_y\left(\frac{\e}{y}\right)+\frac{V^{(1)}-1}{y}\e$$ from which: $$\int_{y\leq 1}\left|\partial_y\left(\frac{\e}{y}\right)\right|^2\lesssim \int_{y\leq 1}\frac{(A\e)^2}{|y|^2}+\int_{y\leq 1}\frac{|V^{(1)}-1|^2}{y^4}\e^2.
$$
We now estimate for $k\geq 2$ from \fref{eq:coerc}, \fref{estun}:
\bee
\int_{y\leq 1}\frac{(A\e)^2}{|y|^2}+\int_{y\leq 1}\frac{|V^{(1)}-1|^2}{y^4}\e^2 & \lesssim &  \int\frac{(A\e)^2}{|y|^2}+\int_{y\leq 1}\frac{\e^2}{y^4}\lesssim C(M)\int\frac{(A\e)^2}{y^2}\\
& \lesssim & C(M)|A^*A\e|_{L^2}^2.
\eee
In the $k=1$ case, we use the cancellation $|V^{(1)}(y)-1|\lesssim y$ (in fact $y^2$)
for $y\leq 1$ and \fref{eq:coerc}, \fref{estdeux}:
$$
 \int_{y\leq 1}\frac{(A\e)^2}{|y|^2}+\int_{y\leq 1}\frac{|V^{(1)}-1|^2}{y^4}\e^2  \lesssim \int_{y\leq 1}\frac{(A\e)^2}{|y|^2}+\int_{y\leq 1}\frac{\e^2}{y^2}\lesssim C(M)|A^*A\e|_{L^2}^2.
$$
We thus conclude from the standard interpolation estimates 
\bea
\label{hohoovnoeno}
\nonumber \int_{y\leq 1}\frac{(\e)^4}{y^4} & \lesssim & \left[\int_{y\leq 2}\left|\partial_y\left(\frac{\e}{y}\right)\right|^2+\int_{y\leq 2}\frac{(\e)^2}{y^2}\right]\int_{y\leq 2}\frac{(\e)^2}{y^2}\lesssim |A^*A\e|_{L^2}^4\\
& \lesssim & C(M) b^4 |A^*A\e|_{L^2}^2\lesssim b^{\frac{3}{2}}|A^*A\e|_{L^2}^2
\eea 
where we used \fref{estdeux} and \eqref{eq:boot-A} in the last step. For $1\leq y\leq 4B_1$, we have from  \fref{estmedium}, \fref{estdeux} and \eqref{eq:boot-A} that:
 
\be
\label{estlinfty}
|\e|_{L^{\infty}(1\leq y\leq 4B_1)}^2\lesssim B_1^2|\log b|^2|A^*A\e|_{L^2}^2\lesssim C(M) b^{2}|\log b|^{4}
\ee 
 and hence:
 \bea
 \label{vboeneoe}
\nonumber \int_{1\leq y\leq 4B_1} \frac{|\e|^4}{y^4} &\lesssim& |\e|_{L^{\infty}(1\leq y\leq 4B_1)}^2\int_{y\leq 4B_1}\frac{\e^2}{y^4}\lesssim C(M) b^{2}|\log b|^{6}|A^*A\e|_{L^2}^2\\
& \leq & C(M) b^{\frac{5}{3}}|A^*A\e|_{L^2}^2\lesssim b^{\frac{3}{2}}|A^*A\e|_{L^2}^2
\eea
where we used \fref{estdeux}. This concludes the proof of \fref{estnonlineargloballocalized}. It remains to control the integral in \fref{estnonlinearglobal} for $y\geq 4B$. For $k\geq2$, we have from \fref{estun}, the orbital stability bound
\eqref{ortbialstabai} and \eqref{eq:coerc}: $$\int\frac{|\e|^4}{y^4}\lesssim |\e|_{L^{\infty}}^2\int\frac{|\e|^2}{y^4}\lesssim C(M) \eta |A^*A\e|_{L^2}^2$$ which yields \fref{estnonlinearglobal} for $k\geq 2$. For $k=1$, we need to deal with the logarithmic losses in \fref{estdeux} and have to sharpen the control. We argue as follows. Let $\psi_{B_1}(y)=\psi(\frac{y}{B_1})$ be a cut-off function with $\psi(y)=0$ for $y\leq 1$ and $\psi(y)=1$ for $y\geq 2$. We compute: 
\bee
\int \psi_{B_1}\frac{(\e)^4}{y^4}& = & -\frac{1}{2}\int \psi_{B_1}(\e)^4\partial_y\left(\frac{1}{y^2}\right) dy=\frac{1}{2}\int\frac{1}{y^3}\left[(\e)^4\partial_y\psi_{B_1}+4\psi_{B_1}(\e)^3\partial_y\e\right]\\
& \leq & C\int_{B_1\leq y\leq 2B_1}\frac{(\e)^4}{y^4}+2\int \psi_{B_1}\frac{(\e)^3}{y^3}\left[\frac{V_1}{y}\e-A\e\right]\\
& \leq & C\int_{B_1\leq y\leq 2B_1}\frac{(\e)^4}{y^4}+C|\e|^2_{L^{\infty}}\int\psi_{B_1}\frac{|\e|^2}{y^5}-2\int\psi_{B_1}\frac{(\e)^3}{y^3}\left[\frac{1}{y}\e+A\e\right]\\
& \leq & C(M) \eta^2|A^*A\e|_{L^2}^2-2\int\psi_{B_1}\frac{(\e)^3}{y^3}\left[\frac{1}{y}\e+A\e\right]
\eee
where we used that $|V_1(y)+1|\lesssim \frac{1}{y}$ (in fact $\frac 1{y^2}$) for $y\geq 1$, the orbital stability bound 
\eqref{ortbialstabai}and \fref{estdeux}, \fref{vboeneoe}. We now use H\"older and Sobolev inequalities to derive:
\bee
\label{hoheioh}
\nonumber 3\int \psi_B\frac{(\e)^4}{y^4}&  \lesssim &C(M) \eta^2|A^*A\e|_{L^2}^2+ \int \psi_{B_1}\frac{(\e)^3}{y^3}|A\e|\lesssim \eta|A^*A\e|_{L^2}^2+ \left(\int \psi_{B_1}\frac{(\e)^4}{y^4}\right)^{\frac{3}{4}}|A\e|_{L^4}\\
& \lesssim & \eta|A^*A\e|_{L^2}^2+ \left(\int \psi_{B_1}\frac{(\e)^4}{y^4}\right)^{\frac{3}{4}}|A\e|_{L^2}^{\frac{1}{2}}|\nabla (A\e)|^{\frac{1}{2}}_{L^2}\\
& \lesssim &  \eta \left(\int \psi_{B_1}\frac{(\e)^4}{y^4}+|A^*A\e|_{L^2}^2\right)
\eee
where we used the orbital stability bound \eqref{ortbialstabai}which implies $$|A\e|_{L^2}^2\lesssim |\nabla \e|^2_{L^2}+|\frac{\e}{y}|_{L^2}^2\lesssim \eta^2.$$ This concludes the proof of the global bound \fref{estnonlinearglobal} for $k=1$.\\

We now claim the following controls:
\be
\label{estfunldeux}
\int|F_{1,3}|^2\lesssim \eta^{\frac 12} |A^*_{\lambda}W|_{L^2}^2, 
\ee
\be
\label{estpartialfun}
\int_{r\leq 3\lambda B}|F_{1,3}|^2\lesssim b^{\frac{3}{2}}|A^*_{\lambda}W|_{L^2}^2,
\ee
\be
\label{estpartialtfun}
\int|\partial_tF_{1,3}|^2\lesssim \frac{b^2 b^{\frac 32} }{\lambda^2}|A_{\lambda}^*W|^2_{L^2},
\ee
{\it Proof of \fref{estfunldeux}, \fref{estpartialfun}, \fref{estpartialtfun}}: First recall the formula: 
$$F_{1,3}=\frac{k^2}{r^2}\left[f((P_{B_1})_{\lambda}+w)-f((P_{B_1})_{\lambda})-f'((P_{B_1})_{\lambda})w\right].$$ We thus derive the crude bound $$|F_{1,3}|\lesssim \frac{|w|^2}{r^2}$$ and hence \fref{estfunldeux}, \fref{estpartialfun} directly follow from \fref{estnonlinearglobal}, \fref{estnonlineargloballocalized}. Next, we compute:
\bee
\partial_tF_{1,3} & = & \frac{k^2}{r^2}\partial_t(P_{B_1})_{\lambda}\left[f'((P_{B_1})_{\lambda}+w)-f'((P_{B_1})_{\lambda})-f''((P_{B_1})_{\lambda}w\right]\\
& + & \frac{1}{r^2}\partial_tw\left[f'((P_{B_1})_{\lambda}+w)-f'((P_{B_1})_{\lambda})\right]
\eee
which yields the bound:
$$|\partial_tF_{1,3}|\lesssim \frac{1}{r^2}|\partial_t(P_{B_1})_{\lambda}||w|^2+\frac{1}{r^2}|f''((P_{B_1})_{\lambda})||w||\partial_tw|+\frac{1}{r^2}|\partial_tw||w|^2.$$
We now square this identity, integrate and estimate all terms. From \fref{crudeboundb}, \fref{estimatelambdpbone},  \eqref{poitwisebs}:
$$|\partial_t(P_{B_1})_{\lambda}|_{L^{\infty}}\lesssim\frac{1}{\lambda}\left|\left( b_s\frac{\partial P_{B_1}}{\partial b}+b\Lambda P_{B_1}\right)_{\lambda}\right||
\lesssim \frac{|b_s|+b}{\lambda}\lesssim \frac{b}{\lambda},$$ and thus from \fref{estnonlinearglobal}:
$$
\int \frac{|\partial_t(P_{B_1})_{\lambda}|^2|w|^4}{r^4}\lesssim \frac{ b^2}{\lambda^2}\int_{r\leq 2\lambda B_1}\frac{|w|^4}{r^4}\lesssim \frac{b^2b^{\frac 32}}{\lambda^2}|A^*_{\lambda}W|_{L^2}^2,
$$
Next, we have from \fref{controldt}: 
\be
\label{estp[onctap}
|\partial_tw|^2_{L^{\infty}}\lesssim |\nabla \partial_tw|_{L^2}|\frac{\partial_tw}{r}|_{L^2}\lesssim C(M) \left (|A^*_{\lambda}W|^2_{L^2}+|\pa_t W|^2_{L^2}\right).
\ee 
For $k\geq 2$, we then use the fact that $|f''(P_{B_1})|\lesssim 1$ and is supported in $y\le 2B_1$, with 
additional help of \eqref{eq:boot-A} and \fref{estun} followed by \eqref{contorlocoervcie} to estimate:
\bee
\int \frac{1}{r^4}|f''((P_{B_1})_{\lambda})|^2|w|^2|\partial_tw|^2 & \lesssim & C(M) \left (|A^*_{\lambda}W|^2_{L^2}+|\pa_t W|^2_{L^2}\right)\frac{1}{\lambda^2}\int_{y\le 2B} \frac{1}{y^4}|\e|^2\\
& \lesssim & C(M) \frac{b^4}{\lambda^4} |A^*A\e|_{L^2}^2\lesssim \frac{b^2b^{\frac 32}}{\lambda^2}|A^*_{\lambda}W|_{L^2}^2.
\eee 
For $k=1$, we use the improved bound  $|f''(P_{B_1}(y))|\lesssim \frac{y}{1+y^2}$ and  \fref{estdeux}:
\bee
\int\frac{1}{r^4}|f''((P_{B_1})_{\lambda})|^2|w|^2|\partial_tw|^2 & \lesssim & C(M) \left (|A^*_{\lambda}W|^2_{L^2}+|\pa_t W|^2_{L^2}\right)\frac{1}{\lambda^2}\int_{1\leq y\le 2B} \frac{y^2}{y^4(1+y^4)}|\e|^2\\
& \lesssim & C(M) \frac{b^4}{\lambda^4} |A^*A\e|_{L^2}^2\lesssim \frac{b^2b^{\frac 32}}{\lambda^2}|A^*_{\lambda}W|_{L^2}^2.
\eee Finally, from \fref{estnonlinearglobal} and \fref{estp[onctap}:
$$
\int\frac{|\partial_tw|^2|w|^4}{r^4} \lesssim C(M)\eta^{\frac 12} \left (|A^*_{\lambda}W|^2_{L^2}+|\pa_tW|^2_{L^2}\right)
\int\frac{|w|^4}{r^4}\lesssim C(M)\frac{b^4}{\lambda^2}|A^*_{\lambda}W|^2_{L^2}\lesssim  \frac{b^2b^{\frac 32}}{\lambda^2}|A^*_{\lambda}W|_{L^2}^2.
$$
This concludes the proof of \fref{estpartialtfun}.\\

We are now in position to control the $F_{1,3}$ terms in \fref{prnvoirohr}. First from \fref{estfunldeux}, \fref{harfylog}:
\bea
\label{coeohei}
\nonumber \left|\int \frac{W}{r}\partial_t\vul F_{1,3}\right|  & \lesssim & \frac {b}{\lambda^2} |F_{1,3}|_{L^2}\left(\int \left(\frac{r^4}{r^2(1+r^8)}\right)_{\lambda}W^2\right)^{\frac{1}{2}}\\
& \lesssim & \eta^{\frac 14} \frac{ b}{\lambda}|A_{\lambda}^*W|^2_{L^2}\lesssim  \frac{ b}{\lambda^3} \eta^{\frac 14}\mathcal E,
\eea
\bea
\label{dhohofhohics}
\nonumber \left|\int \sigma_{B_c} \frac{W}{r}\partial_t\vul F_{1,3}\right| &  \lesssim & \frac{b}{\lambda^2}|F_{1,3}|_{L^2(r\leq 3\lambda B_c)}\left(\int\left(\frac{r^4}{r^2(1+r^8)}\right)_{\lambda}W^2\right)^{\frac{1}{2}}\\
&  \lesssim & \frac{ bb^{\frac{3}{4}}}{\lambda}|A_{\lambda}^*W|^2_{L^2}\lesssim \frac{b}{\lambda^3}b^{\frac{3}{4}}\mathcal E
\eea
The second term in \fref{prnvoirohr} requires an integration by parts in time:
\bee
&  & \int \sigma \partial_tWA_{\lambda}F_{1,3}= \frac{d}{dt}\left\{\int \sigma WA_{\lambda}F_{1,3}\right\}-\int W\left[\sigma A_{\lambda}\partial_tF_{1,3}+\sigma \frac{\partial_t\vul}{r}F_{1,3}+\partial_t\sigma A_{\lambda}F_{1,3}\right]\\
& = &  \frac{d}{dt}\left\{\int F_{1,3}\left[\sigma A_{\lambda}^*W+\partial_r\sigma W\right]\right\}-\int\partial_tF_{1,3}\left[\sigma A_{\lambda}^*W+\partial_r\sigma W\right]-\int \sigma\frac{W}{r}\partial_t\vul F_{1,3}\\
& - & \int F_{1,3}\left[\partial_t\sigma A_{\lambda}^*W+\partial^2_{rt}\sigma W\right].
\eee
{\it case $\sigma\equiv 1$}: From \fref{estfunldeux}:
$$\left|\int F_{1,3}A_{\lambda}^*W\right|\lesssim \eta^{\frac 14} |A^*_{\lambda}W|_{L^2}^2\lesssim \frac{\eta^{\frac{1}{4}}\mathcal E}{\lambda^2}.$$ From \fref{estpartialtfun} and \fref{coeohei}:
$$\left|\int\partial_t F_{1,3}A_{\lambda}^*W\right|+\left|\int \frac{W}{r}\partial_t\vul F_{1,3}\right| \lesssim \eta^{\frac 14} \frac{b}{\lambda}|A^*_{\lambda}W|_{L^2}^2\lesssim \frac{b}{\lambda^3} \eta^{\frac 14} \mathcal E.$$ 
{\it case $\sigma=\sigma_{B_c}$}: From \fref{estpartialfun},
$$
\left|\int \sigma_{B_c} F_{1,3}A_{\lambda}^*W\right|  \lesssim  b^{\frac{3}{4}}|A^*_{\lambda}W|_{L^2}^2\lesssim \frac{b^{\frac{3}{4}}\mathcal E.}{\lambda^2}.$$ From \fref{estpartialfun} and \fref{harfylog}: 
\bee
\left|\int F_{1,3}\partial_r\sigma_{B_c} W\right| & \lesssim & b^{\frac{3}{4}}|A^*_{\lambda}W|_{L^2}\left(\int_{\lambda B_c\leq r\leq 3\lambda B_c}\frac{W^2}{r^2}\right)^{\frac{1}{2}}\lesssim b^{\frac{3}{4}}|\log b||A^*_{\lambda}W|^2_{L^2}\\
& \lesssim &  b^{\frac{1}{2}}|A^*_{\lambda}W|^2_{L^2}\lesssim \frac{b^{\frac{1}{2}}\mathcal E}{\lambda^2}.
\eee
Arguing similarily from \fref{estpartialtfun} and \fref{harfylog} yields:
\bee
\left|\int\partial_tF_{1,3}\left[\sigma_{B_c} A_{\lambda}^*W+\partial_r\sigma_{B_c} W\right]\right| & \lesssim &  \frac{bb^{\frac{3}{4}}}\lambda |A_{\lambda}^*W|_{L^2}^2+\frac{bb^{\frac{3}{4}}}\lambda |A_{\lambda}^*W|_{L^2}\left(\int_{\lambda\leq r\leq 3\lambda B}\frac{W^2}{r^2}\right)^{\frac{1}{2}}\\
& \lesssim & \frac{bb^{\frac{1}{2}}}\lambda |A^*_{\lambda}W|^2_{L^2}\lesssim \frac{b}{\lambda^3}b^{\frac{1}{2}}\mathcal E.
\eee
From \fref{dhohofhohics}:
$$\left|\int \sigma_{B_c} \frac{W}{r}\partial_t\vul F_{1,3}\right|  \lesssim \frac{ bb^{\frac{3}{4}}}{\lambda}|A_{\lambda}^*W|^2_{L^2}\lesssim  \frac{b}{\lambda^3}b^{\frac{3}{4}}\mathcal E.$$ From \fref{estpartialfun}:
$$\left|\int F_{1,3}\partial_t\sigma A_{\lambda}^*W\right|\lesssim \frac{b}{\lambda} \left(\int_{r\leq 3\lambda B_c}|F_{1,3}|^2\right)^{\frac{1}{2}}|A_{\lambda}^*W|_{L^2}\lesssim \frac{bb^{\frac{3}{4}}}{\lambda}|A_{\lambda}^*W|^2_{L^2}\lesssim  \frac{b}{\lambda^3}b^{\frac{3}{4}}\mathcal E,$$
\bee
\left|\int F_{1,3}\partial^2_{tr}\sigma W\right| & \lesssim & \frac{b}{\lambda} \left(\int_{r\leq 3\lambda B_c}|F_{1,3}|^2\right)^{\frac{1}{2}}\left(\int_{\lambda \leq r\leq 3\lambda B_c}\frac{W^2}{r^2}\right)^{\frac{1}{2}}|\lesssim \frac{bb^{\frac{3}{4}}|\log b|}{\lambda}|A_{\lambda}^*W|^2_{L^2}\\
& \lesssim & \frac{bb^{\frac{1}{2}}}{\lambda}|A_{\lambda}^*W|^2_{L^2}\lesssim  \frac{b}{\lambda^3}b^{\frac{1}{2}}\mathcal E.
\eee
The last $F_{1,3}$ term to bound in \fref{prnvoirohr} is estimated for the either choice of $\sigma\equiv 1$ and 
$\sigma=\sigma_{B_c}$ with the help of \eqref{eq:imp-b}, 
using \fref{estpartialfun} and the fact that $F_2$ is supported in $y\le 2B_1$:
\bee
\left|\int \sigma A_{\lambda}F_2A_{\lambda}F_{1,3}\right| & = & \left|\int F_{1,3}A^*_{\lambda}(\sigma A_{\lambda}F_2)\right|\lesssim \frac{|b_s|}{\lambda^3}|F_{1,3}|_{L^2(r\leq 2\lambda B_1)}
|A^*_{\lambda}(\sigma A_{\lambda} \partial_b P_{B_1})|_{L^2(r\leq 2\lambda B_1)}\\
& \lesssim & \frac{|b_s|bb^{\frac{3}{4}}}{\lambda^2}|A_{\lambda}^*W|_{L^2}\lesssim\frac{b}{\lambda^3} |b_s|b^{\frac 34}\sqrt{\mathcal E}\lesssim \frac{b}{\lambda^3}(|b_s|^2+b^{\frac 32}\mathcal E).
\eee

{\bf step 8} $F_1$ terms involving $\Psi_{B_1}$.\\

We now turn to the control of the leading order term on the RHS of \fref{prnvoirohr} which is given by 
$\Psi_{B_1}$ in the decomposition \fref{contribfone}. These estimates will be sensitive to the choice of 
$\sigma\equiv 1$ or $\sigma=\sigma_{B_c}$ with a decisive improvement in the latter case. Indeed, $$\sigma_{B_c}\Psi_{B_1}=\sigma_{B_c}\Psi_b$$ 
As a consequence, the slowly decaying leading order flux terms, localized around $y\sim B_1$, in the estimates of Proposition \ref{lemmapsibtilde} disappear.\\
{\it case $k$ even, $k\geq 4$}: We estimate from \fref{estpsibeventilde}, \fref{contorlocoervcie}:
\bea
\label{choiheohfktwo}
\nonumber & & \left|\int\frac{\sigma W}{r}\partial_t\vul\frac{(\Psi_{B_1})_{\lambda}}{\lambda^2}\right|  \lesssim  \frac b{\lambda^2}\left(\int\frac{W^2}{r^2}\right)^{\frac 12}\left (\int |\Psi_{B_1}|^2\frac{y^{4}}{1+y^{8}}\right)^{\frac 12}\\
\nonumber & \lesssim &  \frac{b}{\lambda^2} |A_\lambda^* W|_{L^2} \left (\int \frac{y^{4}}{1+y^{8}}\left[\frac{b^{k+4}y^k}{1+y^{k+1}}{\bf 1}_{y\leq 2B_1}+b^{k+2}{\bf 1}_{B_1\leq y\leq 2B_1}\right]^2\right)^{\frac 12}\\
& \lesssim &   \frac{b^{k+3} }{\lambda^2} |A_\lambda^* W|_{L^2}\lesssim \frac{b}{\lambda^3}b^{k+2}\sqrt{\mathcal E}
\eea
Next, there holds from \fref{estpsibeventilde}:
\bee
\int \left[A_{\lambda}\left(\frac{(\Psi_{B_1})_{\lambda}}{\lambda^2}\right)\right]^2 &\lesssim & \frac{1}{\lambda^4}\int_{y\leq 2B} \frac{1}{y^2}\left[\frac{b^{k+4}y^k}{1+y^{k+1}}{\bf 1}_{y\leq 2B_1}+b^{k+2}{\bf 1}_{B_1\leq y\leq 2B_1}\right]^2\\
& \lesssim & \frac{b^{2k+4}}{\lambda^4},
\eee
$$
\int \sigma_{B_c}\left[A_{\lambda}\left(\frac{(\Psi_{B_1})_{\lambda}}{\lambda^2}\right)\right]^2\lesssim  \frac{1}{\lambda^4}\int_{y\leq 2B_c} \frac{1}{y^2}\left[\frac{b^{k+4}y^k}{1+y^{k+1}}{\bf 1}_{y\leq 2B_1}\right]^2
\lesssim  \frac{b^{2k+8}}{\lambda^4},
$$
from which:
$$
\left|\int \partial_tWA_{\lambda}\left(\frac{(\Psi_{B_1})_{\lambda}}{\lambda^2}\right)\right|\lesssim \frac{b^{k+2}}{\lambda^2}|\partial_t W|_{L^2}\lesssim \frac{b}{\lambda^3}b^{k+1}\sqrt{\mathcal E} ,$$
$$
\left|\int \sigma \partial_tWA_{\lambda}\left(\frac{(\Psi_{B_1})_{\lambda}}{\lambda^2}\right)\right|\lesssim \frac{b^{k+4}}{\lambda^2}| \sqrt{\sigma}\partial_t W|_{L^2}\lesssim \frac{b}{\lambda^3}b^{k+3}\sqrt{\mathcal E_{\sigma}} .$$ Finally, we derive from \eqref{crudeboundb} the crude bound valid for all $k\geq 1$:
\be
\label{estalambdafdeux}
|A_{\lambda}F_2|\lesssim \frac{|b_s|}{\lambda^2}\left(\frac{1}{1+y}{\bf 1}_{y\leq 2B_1}\right)
\ee
which yields:
\bee
\left|\int A_{\lambda}F_2A_{\lambda}\left(\frac{(\Psi_{B_1})_{\lambda}}{\lambda^2}\right)\right| &\lesssim & \frac{|b_s|}{\lambda^3}\int_{y\leq 2B_1}\frac{1}{y(1+y)}\left[\frac{b^{k+4}y^k}{1+y^{k+1}}{\bf 1}_{y\leq 2B_1}+b^{k+2}{\bf 1}_{B_1\leq y\leq 2B_1}\right]\\
& \lesssim & \frac{b^{k+2}|b_s|}{\lambda^3} \lesssim \frac{b}{\lambda^3}[b^{2k+2}+|b_s|^2],
\eee
\bee
\left|\sigma_{B_c} \int A_{\lambda}F_2A_{\lambda}\left(\frac{(\Psi_{B_1})_{\lambda}}{\lambda^2}\right)\right|&\lesssim & \frac{|b_s|}{\lambda^3}\int_{y\leq 2B_c}\frac{1}{y(1+y)}\left[\frac{b^{k+4}y^k}{1+y^{k+1}}\right]\\
& \lesssim & \frac{b^{k+4}|b_s|}{\lambda^3} \lesssim \frac{b}{\lambda^3}[b^{2k+6}+|b_s|^2].
\eee
{\it case $k$ odd, $k\geq 3$}: We estimate from \fref{estpsiboddtilde}:
\bee
& &\hskip -5pc \left|\int\frac{\sigma W}{r}\partial_t\vul\frac{(\Psi_{B_1})_{\lambda}}{\lambda^2}\right|  
\lesssim  \frac b{\lambda^2}\left(\int\frac{W^2}{r^2}\right)^{\frac 12}\left (\int |\Psi_{B_1}|^2\frac{y^{4}}{1+y^{8}}\right)^{\frac 12}\\
\nonumber & \lesssim &  \frac{b}{\lambda^2} |A_\lambda^* W|_{L^2} \left (\int \frac{y^{4}}{1+y^{8}}\left[\frac{b^{k+3}y^k}{1+y^{k+2}}{\bf 1}_{y\leq 2B_1}+\frac{b^{k+1}}{y}{\bf 1}_{B_1\leq y\leq 2B_1}\right]^2\right)^{\frac 12}\\
& \lesssim &   \frac{b^{k+3} }{\lambda^2} |A_\lambda^* W|_{L^2}\lesssim  \frac{b}{\lambda^3}b^{k+2} \sqrt{\mathcal E}
\eee   
Next,  from \fref{estpsiboddtilde} there holds:
\bee
\int \left[A_{\lambda}\left(\frac{(\Psi_{B_1})_{\lambda}}{\lambda^2}\right)\right]^2 &\lesssim & \frac{1}{\lambda^3}\int_{y\leq 2B_1} \frac{1}{y^2}\left[b^{k+3}\frac{y^{k}}{1+y^{k+2}}{\bf 1}_{y\leq B_1}+\frac{b^{k+1}}
{1+y}{\bf 1}_{B_1\leq y\leq 2B_1}\right]^2\\
& \lesssim & \frac{b^{2k+4}}{\lambda^3},
\eee
$$
\int \sigma_{B_c}\left[A_{\lambda}\left(\frac{(\Psi_{B_1})_{\lambda}}{\lambda^2}\right)\right]^2\lesssim  \frac{1}{\lambda^3}\int_{y\leq 2B_1} \frac{1}{y^2}\left[\frac{b^{k+3}y^k}{1+y^{k+2}}{\bf 1}_{y\leq B}\right]^2
\lesssim  \frac{b^{2k+6}}{\lambda^3},
$$
from which:
$$
\left|\int \partial_tWA_{\lambda}\left(\frac{(\Psi_{B_1})_{\lambda}}{\lambda^2}\right)\right|\lesssim \frac{b^{k+2}}{\lambda^2}|\partial_t W|_{L^2}\lesssim \frac{b}{\lambda^3}b^{k+1}\sqrt{\mathcal E},$$
$$
\left|\int \sigma_{B_c} \partial_tWA_{\lambda}\left(\frac{(\Psi_{B_1})_{\lambda}}{\lambda^2}\right)\right|\lesssim \frac{b^{k+3}}{\lambda^2}|\sqrt{\sigma_{B_c}}\partial_t W|_{L^2}\lesssim \frac{b}{\lambda^3}b^{k+2}\sqrt{\mathcal E_{\sigma}}.$$ Finally, from \eqref{estalambdafdeux}:
\bee
& & \left| \int A_{\lambda}F_2A_{\lambda}\left(\frac{(\Psi_{B_1})_{\lambda}}{\lambda^2}\right)\right|\\
& \lesssim & \frac{|b_s|}{\lambda^3}\int_{y\leq 2B_1}\frac{1}{y(1+y)}\left[\frac{b^{k+3}y^k}{1+y^{k+2}}{\bf 1}_{y\leq 2B_1}+\frac{b^{k+1}}{y}{\bf 1}_{B_1\leq y\leq 2B_1}\right]\\
& \lesssim & \frac{b^{k+2}|b_s|}{\lambda^3}\lesssim \frac{b}{\lambda^3}[b^{2k+2}+|b_s|^2].
\eee
\bee
 \left| \sigma \int A_{\lambda}F_2A_{\lambda}\left(\frac{(\Psi_{B_1})_{\lambda}}{\lambda^2}\right)\right|& \lesssim & \frac{|b_s|}{\lambda^3}\int_{y\leq 2B_1}\frac{1}{y(1+y)}\left[\frac{b^{k+3}y^k}{1+y^{k+2}}\right]\\
& \lesssim & \frac{b^{k+3}|b_s|}{\lambda^3}\lesssim \frac{b}{\lambda^3}[b^{2k+4}+|b_s|^2].
\eee

{\it case $k=2$}: The chain of estimates \fref{choiheohfktwo} is still valid even taking into account the term $c_bb^4\Lambda Q$ in \fref{estpsibeventildektwo} and leads to:
$$\left|\int\frac{\sigma W}{r}\partial_t\vul\frac{(\Psi_{B_1})_{\lambda}}{\lambda^2}\right| \lesssim   \frac{b^{k+3} }{\lambda^2} |A_\lambda^* W|_{L^2}\lesssim \frac{b}{\lambda^3}b^{k+2} \sqrt{\mathcal E}.$$
Next, we use in a {\it crucial way} the cancellation $$A(\Lambda Q)=0$$ to conclude from \fref{estpsibeventildektwo} that for $k=2$:
\bee
\int \left[A_{\lambda}\left(\frac{(\Psi_{B_1})_{\lambda}}{\lambda^2}\right)\right]^2 &\lesssim & \frac{1}{\lambda^3}\int_{y\leq 2B_1} \frac{1}{y^2}\left[C(M) b^{k+4}\frac{y^{k}}{1+y^{k+1}}{\bf 1}_{y\leq 2B_1}+b^{k+2}{\bf 1}_{B_1\leq y\leq 2B_1}\right]^2\\
& \lesssim & \frac{b^{2k+4}}{\lambda^4},
\eee
$$
\int \sigma_{B_c}\left[A_{\lambda}\left(\frac{(\Psi_{B_1})_{\lambda}}{\lambda^2}\right)\right]^2\lesssim  \frac{1}{\lambda^3}\int_{y\leq 2B_1} \frac{1}{y^2}\left[C(M)\frac{b^{k+4}y^k}{1+y^{k+1}}{\bf 1}_{y\leq 2B_1}\right]^2
\lesssim  \frac{b^{2k+7}}{\lambda^4}.
$$
Observe that without the cancellation we would expect to have an additional term $\frac {b^4 y^k}{1+y^{k+2}} {\bf 1}_{y\le 2B_1}$, which would not disappear after application of the cut-off function $\sigma_{B_c}$ 
and therefore destroy the extra gain in the localized estimate. Thus:
$$
\left|\int \partial_tWA_{\lambda}\left(\frac{(\Psi_{B_1})_{\lambda}}{\lambda^2}\right)\right|\lesssim \frac{b^{k+2}}{\lambda^2}|\partial_t W|_{L^2}\lesssim \frac{b}{\lambda^3}b^{k+1}\sqrt{\mathcal E} ,$$
$$
\left|\int \sigma \partial_tWA_{\lambda}\left(\frac{(\Psi_{B_1})_{\lambda}}{\lambda^2}\right)\right|\lesssim \frac{b^{k+7/2}}{\lambda^2}|\sqrt{\sigma}\partial_t W|_{L^2}\lesssim\frac{b}{\lambda^3}b^{k+2}\sqrt{\mathcal E_{\sigma}} .$$ 
Finally, using \eqref{estalambdafdeux}:
\bee
 \left| \int  A_{\lambda}F_2A_{\lambda}\left(\frac{(\Psi_{B_1})_{\lambda}}{\lambda^2}\right)\right|& \lesssim & \frac{|b_s|}{\lambda^3}\int \frac{1}{y(1+y)}\left|C(M)\frac{b^{k+4}y^k}{1+y^{k+1}}{\bf 1}_{y\leq 2B_1}+b^{k+2}{\bf 1}_{B_1\leq y\leq 2B_1}\right|\\
& \lesssim & \frac{|b_s|b^{k+2}}{\lambda^3}\lesssim \frac{b}{\lambda^3}[b^{2k+2}+|b_s|^2],
\eee
\bee
 \left| \sigma \int  A_{\lambda}F_2A_{\lambda}\left(\frac{(\Psi_{B_1})_{\lambda}}{\lambda^2}\right)\right| & \lesssim & \frac{|b_s|}{\lambda^3}\int \frac{1}{y(1+y)}\left|C(M)\frac{b^{k+4}y^k}{1+y^{k+1}}\right|\\
& \lesssim & \frac{|b_s|b^{k+3}}{\lambda^3}\lesssim \frac{b}{\lambda^3}[b^{2k+4}+|b_s|^2].
\eee
{\it case $k=1$}: We estimate from \fref{estpsiboddtildekone}:
\bee
& &\hskip -5pc \left|\int\frac{\sigma W}{r}\partial_t\vul\frac{(\Psi_{B_1})_{\lambda}}{\lambda^2}\right|  
\lesssim  \frac {b^{\frac 12}} {\lambda^{\frac 32}}\left(\int\sigma \frac{W^2}{r^2} |\pa_t V^{(1)}_\lambda| \right)^{\frac 12}\left (\int |\Psi_{B_1}|^2\frac{y^{2}}{1+y^{4}}\right)^{\frac 12}\\
\nonumber & \lesssim &  c\int\sigma \frac{W^2}{r^2}| \pa_t V^{(1)}_\lambda|+ 
\frac{b}{c \lambda^3} \int \frac{y^{2}}{1+y^{4}}
\left[
\frac{b^2}{y}{\bf 1}_{{B_1}\leq y\leq 2B_1}+\frac{b^2}{|\log b|}\frac{y}{1+y^2}\right]^2\\
& \lesssim &   c \int\sigma \frac{W^2}{r^2} |\pa_t V^{(1)}_\lambda|
+  \frac{b}{\lambda^3}\frac{b^{4} }{c|\log b|^2}.
\eee 
for some small {\it universal} constant $c>0$. By the Remark \ref{rem:V} the first term on the RHS above can be absorbed in the
energy identity \eqref{computationeergy}.\\
Next, we use again the fundamental cancellation:
$$|A(\chi_{\frac{B_0}{4}}\Lambda Q)|\lesssim \frac{1}{y^2}{\bf 1}_{\frac{B_0}{8}\leq y\leq \frac{B_0}{2}}$$
which implies from \fref{estpsiboddtildekone} and $c_b\sim \frac 1{|\log b|}$:
\bee
& & \int \left[A_{\lambda}\left(\frac{(\Psi_{B_1})_{\lambda}}{\lambda^2}\right)\right]^2 \\
& \lesssim & \frac{1}{\lambda^4}\int_{y\leq 2B_1} \frac{1}{y^2}\left[\frac{b^2}{y}{\bf 1}_{B_1\leq y\leq 2B_1}
+ C(M) b^4 \frac y{1+y^4} +b^{4}\frac{(1+|\log (by)|)}{|\log b|}y{\bf 1}_{1\leq y\leq \frac{B_0}{2}}\right .\\
& + & \left . \frac{b^2}{|\log b|y }{\bf 1}_{ \frac{B_0}{2}\leq y\leq 2B_1}\right]^2 \lesssim  \frac{b^{6}}{\lambda^4},
\eee
\bee
& & \int \sigma_{B_c} \left[A_{\lambda}\left(\frac{(\Psi_{B_1})_{\lambda}}{\lambda^2}\right)\right]^2 \\
& \lesssim & \frac{1}{\lambda^4}\int_{y\leq 2B} \frac{1}{y^2}\left[ C(M) b^4 \frac y{1+y^4} +b^{4}\frac{(1+|\log (by)|)}{|\log b|}y{\bf 1}_{1\leq y\leq \frac{B_0}{2}}+ \frac{b^2}{|\log b|y }{\bf 1}_{ \frac{B_0}{2}\leq y\leq 3B_c}\right]^2\\
& \lesssim & \frac{b^{6}}{|\log b|^2\lambda^4},
\eee
from which:
$$
\left|\int \partial_tWA_{\lambda}\left(\frac{(\Psi_{B_1})_{\lambda}}{\lambda^2}\right)\right|\lesssim \frac{b^{3}}{\lambda^2}|\partial_t W|_{L^2}\lesssim \frac{b}{\lambda^3}b^2\sqrt{\mathcal E} ,$$
$$
\left|\int \sigma \partial_tWA_{\lambda}\left(\frac{(\Psi_B)_{\lambda}}{\lambda^2}\right)\right|\lesssim \frac{b^{3}}{|\log b|\lambda^2}|\sqrt{\sigma}\partial_t W|_{L^2}\lesssim  \frac{b}{\lambda^3}\frac{b^2\sqrt{\mathcal E_{\sigma}}}{|\log b|}.$$ 
Finally:
\bee
& & \left|\int A_{\lambda}F_2A_{\lambda}\left(\frac{(\Psi_{B_1})_{\lambda}}{\lambda^2}\right)\right|\\
& \lesssim & \frac{|b_s|}{\lambda^3}\int \frac{1}{y(1+y)}\left[\frac{b^2}{y}{\bf 1}_{B_1\leq y\leq 2B_1}
+ C(M) b^4 \frac y{1+y^4} +b^{4}\frac{(1+|\log (by)|)}{|\log b|}y{\bf 1}_{1\leq y\leq \frac{B_0}{2}}\right .\\
& + & \left . \frac{b^2}{|\log b|y }{\bf 1}_{ \frac{B_0}{2}\leq y\leq 2B_1}\right] \lesssim  \frac{|b_s|b^3}{\lambda^3}\lesssim \frac{b}{\lambda^3}[b^4+|b_s|^2],
\eee
\bee
& & \left|\int \sigma_{B_c}A_{\lambda}F_2A_{\lambda}\left(\frac{(\Psi_{B_1})_{\lambda}}{\lambda^2}\right)\right|\\
& \lesssim & \frac{|b_s|}{\lambda^3}\int_{y\leq 2B_c}\frac{1}{y(1+y)}\left[C(M) b^4 \frac y{1+y^4} +b^{4}\frac{(1+|\log (by)|)}{|\log b|}y{\bf 1}_{1\leq y\leq \frac{B_0}{2}}+ \frac{b^2}{|\log b|y }{\bf 1}_{ \frac{B_0}{2}\leq y\leq 2B_1}\right]\\
& \lesssim & \frac{|b_s|b^3}{|\log b|\lambda^3}\lesssim \frac{b}{\lambda^3}[\frac{b^4}{|\log b|^2}+|b_s|^2]
\eee
Note the sharpness of the above estimate. Its most significant contribution is generated by the second term
in the square brackets above.
\vskip .5pc

{\bf step 9} Conclusion.\\

The collection of all previous estimates now yields the claimed bounds \fref{vhoheor}, \fref{vbboebvnkonovrpvrvo} and concludes the proof of Lemma \ref{propinside}.


\subsection{Proof of Proposition \ref{bootstrap}}


We are now in position to complete the proof of Proposition \ref{bootstrap}. The key will be to combine the a priori bound on the blow up acceleration given by Lemma \ref{roughboundpointw} with the information provided in \fref{vhoheor}, \fref{vbboebvnkonovrpvrvo}.
The smallness of the coupling constant $(\log M)^{-1}$ in Lemma \ref{roughboundpointw}, linking the behavior
of the blow acceleration $b_s$ with the pointwise behavior of the local energy ${\mathcal E}_\sigma$, provides 
the mechanism allowing us to combine the two estimates and obtain the desired bounds. Equally crucial to this strategy 
is independence of the constants in \fref{vhoheor}, \fref{vbboebvnkonovrpvrvo} on $M$ noted in the Remark \ref{remarkkey}.\\

{\bf step 1} Control of the scaling parameter.\\

We begin with the proof of \fref{controllambdaboot}. First observe from \fref{casekoddbigone} 
and the bootstrap estimate \fref{poitwisebs} that 
\be
\label{rgouboundbs}
|b_s|\leq K\frac{b^2}{|\log b|}\leq \frac{b^2}{100k}
\ee
This implies::
$$\frac{d}{ds}\left(\frac{b^{k+2}}{\lambda}\right)=\frac{b^{k+1}}{\lambda}\left[b^2+(k+2)b_s\right]\geq 0$$ and hence from \fref{smalllmba}:
$$\frac{b^{k+2}(t)}{\lambda(t)}\geq \frac{b^{k+2}(0)}{\lambda(0)}\geq 1$$ and \fref{controllambdaboot} follows.
We derive similarily
\be
\label{booregime}
\frac{b^{k+1}(0)}{\lambda(0)}\le \frac{b^{k+1}(t)}{\lambda(t)},
\ee
\begin{equation}\label{eq:idon}
 \frac{b^{k+1}(0)}{|\log b(0)|\lambda(0)}\le \frac{b^{k+1}(t)}{|\log b(t)|\lambda(t)},\ \ 
b^{2k+2}(0)\frac{\lambda^2(t)}{\lambda^2(0)}\le {b^{2k+2}(t)}
\end{equation}

{\bf step 2} Bound on the global energy.\\

We now turn to  the proof of  \fref{poitnwiseboundWboot}.\\
In this case we use the bootstrap assumptions \eqref{poitwisebs},  \fref{poitnwiseboundW} to obtain from 
 \fref{vhoheor}
\be
\label{chooehewcveieho}
\frac{\mathcal E(t)}{\lambda^2(t)}\lesssim \frac{\mathcal E(0)}{\lambda^2(0)}+\int_0^t\sqrt{K}\frac{b^{2k+3}}{\lambda^3}+
\frac {b^{2k+2}(t)}{\lambda^2(t)} + \frac {b^{2k+2}(0)}{\lambda^2(0)}.
\ee
Note that we used the inequalities $\eta^{\frac 14 } K\le 1$ and $|\log b|^{-1} K\le 1$. We then derive from \fref{rgouboundbs}:
\bee
\int_0^t \frac{b^{2k+3}}{\lambda^3}& = & -\int_0^t\frac{\lambda_tb^{2k+2}}{\lambda^3}=\frac{b^{2k+2}(t)}{2\lambda^2(t)}-\frac{b^{2k+2}(0)}{2\lambda^2(0)}-(k+1)\int_0^t\frac{b_tb^{2k+1}}{\lambda^2}\\
& \leq & \frac{b^{2k+2}(t)}{\lambda^2(t)}+(k+1)\int_0^t\frac{|b_s|b^{2k+1}}{\lambda^2}\\
& \leq & \frac{b^{2k+2}(t)}{\lambda^2(t)}+\frac{1}{2}\int_0^t \frac{b^{2k+3}}{\lambda^3}
\eee
and hence the bound:
\be
\label{keyboundt}
\int_0^t \frac{b^{2k+3}}{\lambda^3}\leq 2\frac{b^{2k+2}(t)}{\lambda^2(t)}.\ee 
Note that the above inequality holds is derived under the assumptions of the regime under consideration. We now insert 
\fref{keyboundt} into \fref{chooehewcveieho} and use \fref{booregime} to conclude: $\forall t\in [0,T_1)$,
\bea
\label{borneimtermemergy}
\nonumber \mathcal{E}(t) & \lesssim & \frac{\lambda^2(t)}{\lambda^2(0)}\mathcal E(0)+\sqrt{K}b^{2k+2}(t)+b^{2k+2}(0)\frac{\lambda^2(t)}{\lambda^2(0)}\\
& \lesssim &  \frac{\lambda^2(t)}{\lambda^2(0)}\mathcal E(0)+\sqrt{K}b^{2k+2}(t).
\eea
Observe now from the initial bound \eqref{inoshohgoer} and \fref{eq:idon}:
\bee
\frac{\lambda^2(t)}{\lambda^2(0)}\mathcal E(0)\lesssim \frac{\lambda^2(t)}{\lambda^2(0)}\frac{b_0^{2k+2}}{|\log b_0|^2}\lesssim\frac{b^{2k+2}(t)}{|\log b(t)|^2}
\eee
and thus \fref{borneimtermemergy} implies:
 \be
 \label{betterglobalbound}
 \mathcal{E}(t)  \lesssim \sqrt{K} b^{2k+2}(t).
  \ee
 This yields \fref{poitnwiseboundWboot} for $K$ large enough.\\
  
{\bf step 3} Bound on the local energy and $b_s$.\\

First observe from the $b_s$ bound \fref{estbds} and the bootstrap bound \fref{localizegviojdo} that $$|b_s|^2\lesssim \frac{b^{2k+2}}{|\log  b|^2}\left(1+\frac{K}{\log M}\right)$$ which implies \fref{poitwisebsboot}. We now substitute \fref{poitwisebsboot}, \fref{poitnwiseboundWboot} and the improved bound \fref{betterglobalbound} into \fref{vbboebvnkonovrpvrvo} and integrate in time to get:
\bea
\label{chooehewcveiehobis}
\nonumber \frac{\mathcal E_{\sigma}(t)}{\lambda^2(t)} & \lesssim & \frac{\mathcal E(0)}{\lambda^2(0)}+\int_0^t \frac{b^{2k+3}}{|\log b|^2\lambda^3}\left(1+\sqrt K+\frac{K}{\sqrt{\log M}}\right)\\
& + & \frac {b^{2k+2}(t)}{\log^2b(0) \lambda^2(t)}+\frac {b^{2k+2}(0)}{\log^2b(0) \lambda^2(0)}.
\eea
We now estimate from \fref{rgouboundbs}:
\bee
\int_0^t \frac{b^{2k+3}}{|\log b|^2\lambda^3}& = & -\int_0^t\frac{\lambda_tb^{2k+2}}{|\log b|^2\lambda^3}\lesssim \frac{b^{2k+2}(t)}{2|\log b(t)|^2\lambda^2(t)}+(k+1)\int_0^t\frac{|b_s|b^{2k+1}}{|\log b|^2\lambda^2}\\
& \leq & \frac{b^{2k+2}(t)}{|\log b(t)|^2\lambda^2(t)}+\frac{1}{2}\int_0^t \frac{b^{2k+3}}{|\log b|^2\lambda^3}
\eee
and substitute this into \fref{chooehewcveiehobis} together with \fref{ivnovnhdlbvjepjvepvo}, \fref{controllambda} to get:
 \bee
 \mathcal{E}_{\sigma}(t) & \lesssim & \left(1+\sqrt K+\frac{K}{\sqrt{\log M}}\right)\frac{b^{2k+2}(t)}{|\log b(t)|^2}\leq \frac{K}{2}\frac{b^{2k+2}(t)}{|\log b(t)|^2}
 \eee
for $K=K(M)$ large enough, and \fref{localizegviojdoboot} follows.\\

{\bf step 4} Finite time blow up.\\

We now have proved that $T_1=T$. It remains to prove that $T<+\infty$. From \fref{controllambda}, the scaling parameter satisfies the pointwise differential inequality
\be
\label{cvnbohveo}-\lambda_t=b\geq \lambda^{\frac{1}{k+1}}\geq \sqrt{\lambda}
\ee 
from which: $$\forall t\in [0,T), \ \ -2\sqrt{\lambda(t)}+2\sqrt{\lambda}(0)\geq t.$$
Positivity of $\lambda$ implies $T<+\infty$.\\

This concludes the proof of Proposition \ref{bootstrap}.


\section{Sharp description of the singularity formation}
\label{sectionfour}


This section is devoted to the proof of Theorem \ref{mainthm}. We will 
provide a precise description of the dynamics of the parameter $b$ and the scaling parameter $\lambda$, as
required in \eqref{universallawkgeq}-\fref{universallawkgeq2}.
In particular, we will prove that $b\to 0$ as $t\to T$, which  together with \fref{poitnwiseboundWboot}, \fref{localizegviojdoboot} implies dispersion of the excess of energy at the blow up time. 
These estimates are crucial for the proof of the quantization of the blow up energy as stated in  
\fref{convustarb}. The first step of the proof relies on a flux computation 
leading to a sharp differential inequality for the parameter $b$. The leading contribution to the flux is provided
by an explicit 
behavior of the radiative part of the $Q_b$ profile. To identify it as a leading contribution we exploit the logarithmic 
gain in the local energy bound \fref{localizegviojdoboot}. This analysis can be thought of as related  to the $L^2$ flux calculation 
in \cite{MR4} leading to the $\log\hskip -.2pc-\hskip -.2pc\log$ blow up law for the  $L^2$ critical (NLS).


\subsection{The flux computation and the derivation of the $b_s$ law}
\label{sectionbsharp}

In this section we derive the precise behavior of the parameter $b(t)$ modulo  negligible time oscillations. 
This is achieved by refining the analysis of Lemma \ref{roughboundpointw} and projecting the $\e$ equation \fref{eqeqb} onto the instability direction of the linearized operator $H_{B_0}$ associated to $P_{B_0}$. 

Define 
\be
\label{deffb}
G(b)=b|\Lambda P_{B_0}|_{L^2}^2+\int_0^b \tilde b(\frac{\partial P_{B_0}}{\partial b},\Lambda P_{B_0})d\tilde b.
\ee
and
\be
\label{defiun}{\mathcal I}(s)=(\partial_s\eb,\Lambda P_{B_0})+b(\e+2\Lambda\eb,\Lambda P_{B_0})+b_s(\frac{\partial P_{B_0}}{\partial b},\Lambda P_{B_0})-b_s \left(\frac{\partial}{\partial b}(P_{B_1}-P_{B_0}), \Lambda P_{B_0})\right ) .
\ee
We claim:
\begin{proposition}[Sharp derivation of the $b$ law]
\label{lemmaalgebra}
For $b\leq b^*_0$ small enough, there holds:
\be
\label{estkeyfb}
G(b)=\left\{\begin{array}{ll} b|\Lambda Q|_{L^2}^2(1+o(1)) \ \ \mbox{for} \ \ k\geq2,\\
					4b |\log b| +O(b)\ \ \mbox{for} \ \ k=1.
					\end{array} \right.
\ee
and
\be
\label{estimateione}
|{\mathcal{I}}|\lesssim \left \{\begin{array}{ll} b^2 |log b|\ \ \mbox{for} \ \ k\geq 2,\\
								   b \ \ \mbox{for} \ \ k=1.
								   \end{array} \right .
\ee
The functions $G, {\mathcal I}$ satisfy the following differential inequalities:
\be
\label{firstcontrol}
\left|\frac{d}{ds}\{G(b)+{\mathcal{I}}(s)\}+\tilde{c}_k b^{2k}\right|\leq \frac{b^{2k}}{|\log b|}
\ee
with 
\be
\label{valuecp}
\tilde{c}_k=\left\{\begin{array}{lll} \frac{c_p^2}{2} \ \ \mbox{for k odd}, \ \ k\geq 3,\\
\frac{k^2c_p^2}{2} \ \ \mbox{for k even}\\
2 \ \ \mbox{for} \ \ k=1.
\end{array} \right .
\ee
\end{proposition}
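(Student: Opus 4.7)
The plan is to take the $L^2(y\,dy)$ inner product of the equation \eqref{eqeqb} for $\epsilon$ with the almost-null direction $\Lambda P_{B_0}$ and, after integrations by parts (both in $y$ and in the time variable $s$), extract the modulation equation for $b$. The key role of $\Lambda P_{B_0}$ is that $H_{B_0}(\Lambda P_{B_0})$ vanishes on the ``interior'' region where the profile is close to $\Lambda Q$ (since $A\Lambda Q=0$) and only has a small contribution from the cut-off annulus $y\sim B_0$. The definitions of $G(b)$ and $\mathcal I(s)$ are tailored so that, after reorganizing the left-hand side of the inner product into total time derivatives, exactly $\frac{d}{ds}\{G(b)+\mathcal I\}$ appears; the remaining terms on the right-hand side are either the leading flux $(\Psi_{B_1},\Lambda P_{B_0})$ or lower-order errors controllable by the bootstrap bounds \eqref{poitwisebsboot}, \eqref{poitnwiseboundWboot}, \eqref{localizegviojdoboot} and the crude bound on $b_s$.

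\textbf{The flux computation.} The heart of the matter is the evaluation
\[
(\Psi_{B_1},\Lambda P_{B_0})=-\tilde c_k\,b^{2k}\bigl(1+o(1)\bigr).
\]
I would prove this by invoking the Pohozaev identity exactly as in \eqref{pohozaev} applied to $\phi=P_{B_0}$ (or equivalently $Q_b^{(p)}$), combined with the asymptotic behavior \eqref{esttjsharppeven}--\eqref{esttjsharp} of the radiative term $T_p$. The dominant contribution comes from the boundary term $\tfrac{b^2}{2}|y\Lambda Q_b|^2(R)+\tfrac{k^2}{2}|g(Q_b)|^2(R)$ as $R\to\tfrac{1}{b}$, which by the explicit leading behavior of $T_p$ produces precisely the constant $\tilde c_k$ in \eqref{valuecp}. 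For $k$ even, the $k^2$ in the formula comes from $|g(Q_b)|^2$ since $T_p$ contributes $c_p$ to the asymptotic value. For $k=1$ the relevant leading term comes from $Q$ itself (since $p=0$), and the modified flux identity \eqref{estpsiboddtildekone} together with $(\Lambda Q,\Lambda P_{B_0})\sim 2|\log b|$ gives $\tilde c_1 = 2$. Contributions from the non-flux portion of $\Psi_{B_1}$ in the region $y\le B_0$ are estimated using Proposition \ref{lemmapsibtilde} directly and shown to be of lower order $O(b^{2k}/|\log b|)$.

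\textbf{Reducing to total time derivatives and error estimates.} After the inner product, the term $(\partial_s^2\epsilon,\Lambda P_{B_0})$ is rewritten as $\frac{d}{ds}(\partial_s\epsilon,\Lambda P_{B_0})-b_s(\partial_s\epsilon,\Lambda\partial_b P_{B_0})$; the terms $b(\epsilon+2\Lambda\epsilon,\Lambda P_{B_0})$ and $b_s(\tfrac{\partial P_{B_0}}{\partial b},\Lambda P_{B_0})$ combine with the ``diagonal'' modulation term $b_s(\Lambda P_{B_1},\Lambda P_{B_0})$; to identify the function $G(b)$, I would use the chain rule $\frac{d}{ds}G(b)=b_s G'(b)$ together with the integration-by-parts formula \eqref{adjoinctionfrimula}. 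The $B_0$-vs-$B_1$ discrepancy is absorbed in the last term of $\mathcal I$ defined in \eqref{defiun}. The bounds \eqref{estkeyfb} and \eqref{estimateione} are obtained by direct computation: for $k\ge 2$, $\Lambda Q\sim y^{-k}$ gives $|\Lambda P_{B_0}|_{L^2}^2=|\Lambda Q|_{L^2}^2(1+o(1))$ while for $k=1$ the integral diverges logarithmically and is cut at $B_0=O(1/b)$, producing the $|\log b|$. The error terms to be estimated include $(H_{B_1}\epsilon,\Lambda P_{B_0})$, handled by writing $H_{B_1}(\Lambda P_{B_0})=(H_{B_1}-H_{B_0})(\Lambda P_{B_0})$ plus terms supported on $y\sim B_0$, and bounded via \eqref{localizegviojdoboot}; the nonlinear term via \eqref{estdeux}; and the $\partial_s^2 P_{B_1}$ term which requires one further integration by parts in time to avoid $b_{ss}$.

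\textbf{Main obstacle.} The principal difficulty is the $k=1$ case, where all gains in the error estimates are logarithmic in $b$, so the identification $\frac{d}{ds}\{G+\mathcal I\}\approx -2 b^2$ requires a delicate accounting of all $\log b$ factors. In particular, the local $H^2$ bound \eqref{localizegviojdoboot} (which provides the $1/|\log b|^2$ gain over the global bound \eqref{poitnwiseboundWboot}) is essential precisely here: the interior error $(\Psi_{B_1}-c_b b^2\chi_{B_0/4}\Lambda Q,\Lambda P_{B_0})$ and the $\epsilon$-dependent terms would otherwise merely match the main flux. For $k\ge 2$ the corresponding gains are polynomial in $b$ and the argument is much more robust. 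A secondary, technical point will be to verify that the almost-kernel term $H_{B_1}(\Lambda P_{B_0})$, which picks up a commutator supported on the cut-off annulus $B_0\le y\le 2B_0$, contributes an error of size compatible with the claimed $b^{2k}/|\log b|$, for which the sharp behavior of $\Lambda P_{B_0}$ and of the $T_p$ tails from Proposition \ref{propqb} must be used in a quantitative way.
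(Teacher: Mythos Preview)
Your overall strategy---project \eqref{eqeqb} onto $\Lambda P_{B_0}$, reorganize into total $s$-derivatives, and identify the flux from $(\Psi_{B_1},\Lambda P_{B_0})$---matches the paper. Two points, however, deserve correction.

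\textbf{The flux computation.} You propose to apply Pohozaev directly to $\phi=P_{B_0}$ and read off boundary terms ``as $R\to 1/b$''. The paper does something slightly different and cleaner: since $P_{B_1}$ is compactly supported with $g(P_{B_1}(\infty))=0$, Pohozaev gives the \emph{exact} identity $(\Psi_{B_1},\Lambda P_{B_1})=0$, hence $(\Psi_{B_1},\Lambda P_{B_0})=-(\Psi_{B_1},\Lambda\zeta_b)$ where $\zeta_b=P_{B_1}-P_{B_0}$ is the radiation term supported on $B_0\le y\le 2B_1$. One then writes $\Psi_{B_1}-\Psi_{B_0}$ as $\zeta_b$ solving a linear equation \eqref{eqzetab}, integrates by parts over $[B_1,2B_1]$, and the leading flux emerges as the boundary value at $y=B_1$:
\[
\Bigl[\tfrac12(b^2y^2-1)|\Lambda\zeta_b|^2+\tfrac{k^2}{2}\zeta_b^2\Bigr](B_1),
\]
evaluated using \fref{fhrohor}. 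Your scheme (Pohozaev on a finite ball for $Q_b$) computes $(\Psi_b,\Lambda Q_b)$ rather than $(\Psi_{B_1},\Lambda P_{B_0})$; bridging the two requires essentially the same $\zeta_b$ argument anyway.

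\textbf{The main linear term: a genuine gap.} Your claim that ``$H_{B_0}(\Lambda P_{B_0})$ vanishes on the interior region since $A\Lambda Q=0$'' is incorrect: $H_{B_0}$ contains the extra $b^2D\Lambda$ piece, and $\Lambda P_{B_0}\neq\Lambda Q$. The paper instead passes to the \emph{adjoint} $H_{B_1}^*=H_{B_1}+2b^2D$ and uses the key algebraic identity
\[
H_{B_0}^*\Lambda P_{B_0}=2\Psi_{B_0}+\Lambda\Psi_{B_0},
\]
obtained by differentiating the rescaled self-similar equation for $(P_{B_0})_\lambda$ in $\lambda$ at $\lambda=1$. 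This expresses $(\epsilon,H_{B_1}^*\Lambda P_{B_0})=(\epsilon,2\Psi_{B_0}+\Lambda\Psi_{B_0}+\mathcal S)$ with $\mathcal S$ localized in $B_0\le y\le 2B_0$. The paper then solves $H\Sigma_b=2\Psi_{B_0}+\Lambda\Psi_{B_0}+\mathcal S-e_b\chi_M\Lambda Q$ and uses the orthogonality \eqref{orthe} to write $(\epsilon,\cdot)=(A^*A\epsilon,\Sigma_b)$, which is what makes the local energy bound \eqref{localizegviojdoboot} applicable. Without this identity and the $\Sigma_b$ construction, your bound on $(H_{B_1}\epsilon,\Lambda P_{B_0})$ would not close. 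For $k=1$ the issue is sharper still: the paper needs a refined splitting $\Sigma_b=\tilde\Sigma_b^1+\tilde\Sigma_b^2$ (see \eqref{eq:str-S}--\eqref{eq:str-S2}) and the \emph{specific} numerical constant in $B_0=1/(b\sqrt{3\int y\chi\,dy})$, chosen precisely so that a certain moment of $\Psi_{B_0}^1$ vanishes (see \eqref{eq:zero}); otherwise one only gets $O(b^2/\sqrt{|\log b|})$ instead of the required $O(b^2/|\log b|)$. Your proposal does not anticipate either of these mechanisms.
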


\begin{remark} Observe that \fref{estkeyfb}, \fref{estimateione}, \fref{firstcontrol} essentially yield a pointwise differential equation $$b_s\sim -\left\{\begin{array}{ll} b^{2k} \ \ \mbox{for} \ \ k\geq 2\\
												\frac{b^2}{2|\log b|} \ \ \mbox{for} \ \ k=1.
												\end{array} \right .
												$$
								 which will allow us to derive the sharp scaling law via the 
								 relationship $-\lsl=b$. Note also that for $k\ge 2$, with a little bit more work, the logarithmic gain in the RHS of \fref{firstcontrol} may be turned into a polynomial gain in $b$. 
\end{remark}

{\bf Proof of Proposition \ref{lemmaalgebra}}\\

We multiply (\ref{eqeqb}) with $\Lambda P_{B_0}$ -- the instability direction of $H_{B_0}$ -- and compute:
\bea
\nonumber & & (b_s\Lambda \qbtb+b(\partial_s\qbtb+2\Lambda\partial_s\qbtb)+\partial_s^2P_{B_1},\Lambda P_{B_0})=-(\Psi_{B_1},\Lambda P_{B_0})-(H_{B_1}\e,\Lambda P_{B_0})\\
& - &  \left( \partial_s^2\eb+b(\partial_s\eb+2\Lambda\partial_s\eb)+b_s \Lambda \eb,\Lambda P_{B_0}\right)-k^2(\frac{N(\e)}{y^2},\Lambda P_{B_0})\notag
\eea
We further rewrite this as follows:
\bea\label{vhoheoheo}
\nonumber & & (b_s\Lambda P_{B_0}+b(\partial_s P_{B_0}+2\Lambda\partial_s P_{B_0})+\partial_s^2P_{B_0},\Lambda P_{B_0})=-(\Psi_{B_1},\Lambda P_{B_0})-(H_{B_1}\e,\Lambda P_{B_0})\\&-&
(b_s\Lambda (P_{B_1}-P_{B_0})+b(\partial_s (P_{B_1}-P_{B_0})+2\Lambda\partial_s (P_{B_1}-P_{B_0}))+\partial_s^2(P_{B_1}-P_{B_0}),\Lambda P_{B_0})\notag
\\
& - &  \left( \partial_s^2\eb+b(\partial_s\eb+2\Lambda\partial_s\eb)+b_s \Lambda \eb,\Lambda P_{B_0}\right)-k^2(\frac{N(\e)}{y^2},\Lambda P_{B_0})
\eea

We now estimate all terms in the above identity.\\

{\bf step 1} Transformation of the LHS of \fref{vhoheoheo}.\\

We claim that the LHS of \fref{vhoheoheo} may be rewritten as follows:
\bea
\label{rewritnghgllhs}
  \nonumber & & (b_s\Lambda P_{B_0}+b(\partial_s P_{B_0}+2\Lambda\partial_s P_{B_0})+\partial_s^2P_{B_0},\Lambda P_{B_0})\\ 
  & = & \frac{d}{ds}\left[G(b)+b_s(\frac{\partial P_{B_0}}{\partial b},\Lambda P_{B_0})\right]+|b_s|^2|\frac{\partial P_{B_0}}{\partial b}|_{L^2}^2
  \eea
  with $G$ given by \fref{deffb} and the bound:
\be
\label{cboboboboe}
|b_s|^2|\frac{\partial P_{B_0}}{\partial b}|_{L^2}^2\lesssim \frac{b^{2k}}{|\log b|^2}.
\ee

{\it Proof of \fref{rewritnghgllhs}} Let $$\phi(t,y)=(P_{B_0})_{\lambda},$$ then: $$\partial_{tt} \phi=\frac{1}{\lambda^2}\left[\partial_s^2 P_{B_0}+b(\partial_sP_{B_0}+2\Lambda\partial_sP_{B_0})+b^2D\Lambda P_{B_0}+b_s\Lambda P_{B_0}\right]_{\lambda}.$$ Using the cancellation $$(D\Lambda P_{B_0},\Lambda P_{B_0})=0,$$  
this yields:
\bea
\label{cbbbdo}
\nonumber & & (b_s\Lambda \qbtb+b(\partial_s\qbtb+2\Lambda\partial_s\qbtb)+\partial_s^2P_{B_1},\Lambda P_{B_0})=\lambda^2(\partial_{tt}\phi(\lambda y),\Lambda \phi(\lambda y))=(\partial_{tt}\phi,\Lambda \phi)\\
& = & \frac{d}{dt}\left[(\partial_t\phi,\Lambda \phi)\right]-(\partial_t\phi,\Lambda\partial_t\phi)= \frac{d}{dt}\left[(\partial_t\phi,\Lambda \phi)\right]+\int (\partial_t\phi)^2.
\eea
We now compute each term separately:
\bee
 \frac{d}{dt}\left[(\partial_t\phi,\Lambda \phi)\right] & = &  \frac{1}{\lambda}\frac{d}{ds}\left[\lambda(\partial_sP_{B_0}+b\Lambda P_{B_0},\Lambda P_{B_0})\right]\\
 & = & \frac{d}{ds}\left[b|\Lambda P_{B_0}|_{L^2}^2+b_s(\frac{\partial P_{B_0}}{\partial b},\Lambda P_{B_0})\right]-b\left[b|\Lambda P_{B_0}|_{L^2}^2+b_s(\frac{\partial P_{B_0}}{\partial b},\Lambda P_{B_0})\right].
 \eee
On the other hand, $$\int (\partial_t\phi)^2=\int(\partial_sP_{B_0}+b\Lambda P_{B_0})^2=|b_s|^2|\frac{\partial P_{B_0}}{\partial b}|_{L^2}^2+2b_sb(\frac{\partial P_{B_0}}{\partial b},\Lambda P_{B_0})+b^2|\Lambda P_{B_0}|_{L^2}^2.$$
Substituting these two computations into (\ref{cbbbdo}) yields:
\bee
 & & (b_s\Lambda \qbtb+b(\partial_s\qbtb+2\Lambda\partial_s\qbtb)+\partial_s^2P_{B_1},\Lambda P_{B_0})\\
 & = &  \frac{d}{ds}\left[b|\Lambda P_{B_0}|_{L^2}^2+b_s(\frac{\partial P_{B_0}}{\partial b},\Lambda P_{B_0})\right]+b_sb(\frac{\partial P_{B_0}}{\partial b},\Lambda P_{B_0})+|b_s|^2|\frac{\partial P_{B_0}}{\partial b}|_{L^2}^2\\
 & = & \frac{d}{ds}\left[G(b) +b_s(\frac{\partial P_{B_0}}{\partial b},\Lambda P_{B_0})\right]
+|b_s|^2|\frac{\partial P_{B_0}}{\partial b}|_{L^2}^2,
\eee
which gives \fref{rewritnghgllhs}. 
To prove \fref{cboboboboe}, we first estimate from \fref{crudeboundb}:
$$|\partial_bP_{B_0}|_{L^2}^2\lesssim  \int_{y\leq 2B_0}\left(\frac{y^2}{(1+y^2)|\log b|^2}+ \frac 1{b^2y^2}{\bf 1}_{\frac{ B_0}{2}\le y\leq 2B_0}\right)\lesssim \frac{1}{b^2},$$ and hence \fref{cboboboboe} follows from \fref{poitwisebsboot}.\\

{\bf step 2} The flux computation.\\

We now turn to the first key step in the derivation of the sharp $b$ law. It is the following outgoing flux computation:
\be
\label{ourgoingflux}
(\Psi_{B_1},\Lambda P_{B_0})=d_pb^{2k}\left(1+O\left(\frac{1}{{|\log b|}}\right)\right) \ \ \mbox{as} \ \ b\to 0.
\ee
The error in this identity is determined by the (non-sharp) choice  of $B_1$ in \fref{defbnot}. The universal constant $$d_p=\left\{\begin{array}{lll} \frac{k^2c_p^2}{2} \ \ \mbox{for k even}\\
							\frac{c_p^2}{2}\ \ \mbox{for k odd,} \ \ k\geq 3\\
							2 \ \ \mbox{for} \ \ k=1\end{array}
							\right .
	$$

{\it Proof of \fref{ourgoingflux}}: Let us define the expression, which in what follows we will refer to as the 
radiation term, 
\be
\label{decompebebo}
\zeta_b=P_{B_1}-P_{B_0}=(\chi_{B_1}-\chi_{B_0})(Q_b-a)
\ee 
with $a=\pi$ for the (WM) problem and $a=-1$ for the (YM).
It satisfies:
\be
\label{suppzetsb}
Supp(\zeta_b)\subset\{B_0\leq y\leq 2B_1\},
\ee 
and the equation:
$$
-\Delta\zeta_b+b^2D\Lambda \zeta_b+k^2\frac{f(P_{B_0}+\zeta_b)-f(P_{B_0})}{y^2}=\Psi_{B_1}-\Psi_{B_0}
$$
which we rewrite:
\be
\label{eqzetab}
-\Delta\zeta_b+b^2D\Lambda \zeta_b+k^2\frac{\zeta_b}{y^2}=\Psi_{B_1}-\Psi_{B_0}-M(\zeta_b)
\ee
with 
\be
\label{erreurline}
M(\zeta_b)=k^2\frac{f(P_{B_0}+\zeta_b)-f(P_{B_0})-f'(P_{B_0})\zeta_b+(f'(P_{B_0})-1)\zeta_b}{y^2}.
\ee
We now manipulate the identity
$$(\Psi_{B_1},\Lambda P_{B_0})=(\Psi_{B_1},\Lambda P_{B_1})-(\Psi_{B_1},\Lambda \zeta_b)=-(\Psi_{B_1},\Lambda \zeta_b).$$ In the last step we used the Pohozaev identity (\ref{pohozaev}): 
$$(\Psi_{B_1},\Lambda P_{B_1})=(-\Delta P_{B_1}+b^2D\Lambda P_{B_1}+
k^2\frac{f(P_{B_1})}{y^2},\Lambda P_{B_1})=0,
$$
which holds for $\Lambda P_{B_1}$ of compact support and $g(P_{B_1}(y))$ with the boundary value $\lim_{y\to +\infty}g(P_{B_1}(y))=0$.
We now integrate by parts, use the formula (\ref{formulapsitilde}) and the localization property (\ref{suppzetsb}) to conclude:
\bea
 \label{poeutpouet}
 \nonumber & &  -(\Lambda \zeta_b,\Psi_{B_1}) =  -\int_{B_1}^{2B_1}\Lambda \zeta_b\Psi_{B_1}ydy-\int_{B_0}^{B_1}\Lambda \zeta_b\chi_{B_1}\Psi_bydy\\
\nonumber & = & -\int_{B_1}^{2B_1}\Lambda \zeta_b(\Psi_{B_1}-\Psi_{B_0})ydy-\int_{B_0}^{B_1}\Lambda \zeta_b\chi_{B_1}\Psi_bydy\\
\nonumber & = & \int_{B_1}^{2B_1}\Lambda \zeta_b[\Delta\zeta_b-b^2D\Lambda \zeta_b-k^2\frac{\zeta_b}{y^2}]ydy\\
& - &   \int_{B_1}^{2B_1}\Lambda \zeta_b M(\zeta_b)ydy- \int_{B_0}^{B_1}\Lambda \zeta_b\chi_{B_1}\Psi_b\, ydy.
\eea
In the last step we also used (\ref{eqzetab}). The first term on the RHS above produces the leading order flux term from the Pohozaev integration (\ref{pohozaev}) and the boundary conditions $\zeta_b(2B_1)=\zeta_b'(2B_1)=0$:
\bee
 \int_{B_1}^{2B_1}\Lambda \zeta_b[\Delta\zeta_b-b^2D\Lambda \zeta_b-k^2\frac{\zeta_b}{y^2}]ydy=\left[\frac{1}{2}(b^2y^2-1)|\Lambda \zeta_b|^2+\frac{k^2}{2}\zeta_b^2\right](B_1).
 \eee
Now from (\ref{decompebebo}) and the estimates on $Q_b$ from Proposition \ref{propqb} with the choice $B_1=\frac{|\log b|}{b}>>\frac{1}{b}$, there holds: $\forall y\in [\frac{B_1}{2}, B_1],$ 
\be
\label{fhrohor} \ \ \zeta_b(y)=(Q_b-a)(y)=\left\{\begin{array}{lll} \frac{c_p}{y}b^{k-1}(1+O(\frac{1}{|\log b|})) \ \ \mbox{for k odd,} \ \ k\geq 3,\\
								   c_pb^k(1+O(\frac{1}{|\log b|})) \ \ \mbox{for k even},\\
								   \frac{2}{y}(1+O(\frac{1}{|\log b|^2})) \ \ \mbox{for} \ \ k=1 
								   \end{array} \right .
\ee
 from which 
\be
\label{computflux}
 \int_{B_1}^{2B_1}\Lambda \zeta_b[\Delta\zeta_b-b^2D\Lambda \zeta_b-k^2\frac{\zeta_b}{y^2}]ydy
=\left \{ \begin{array}{lll} \frac{c_p^2b^{2k}}{2}(1+O(\frac{1}{|\log b|^2})) \ \ \mbox{for k odd},\\
			           \frac{k^2c^2_pb^{2k}}{2}(1+O(\frac{1}{|\log b|^2}))\ \ \mbox{for k even},\\
			           2b^2(1+O(\frac{1}{|\log b|^2})) \ \ \mbox{for} \ \ k=1.
			           \end{array} \right .
\ee		
It remains to estimate the error terms in (\ref{poeutpouet}). For this, first observe the crude bound:
\be
\label{fhrohorbis} 
\forall y\in[B_0,2B_1], \ \ |\zeta_b(y)|+ |\Lambda \zeta_b(y)|\lesssim \left\{\begin{array}{ll} \frac{b^{k-1}}{y} \ \ \mbox{for k odd}\\
b^k \ \ \mbox{for k even}
\end{array} \right .
\ee
and from \fref{erreurline}:
\be
\label{estmzetab}
\forall y\in[B_0,2B_1], \ \ |M(\zeta_b)|\lesssim \frac{1}{y^2}\left[|\zeta_b|^2+\frac{|\zeta_b|}{y^{k}}\right].
\ee
{\it case $k\geq 3$ odd}: From (\ref{estpsibodd}):
$$\int_{B_0}^{B_1}|\Lambda \zeta_b\chi_{B_1}\Psi_b|ydy\lesssim \int_{B_0}^{B_1}\frac{b^{k-1}}{y}\frac{b^{k+3}}{y^2}ydy\lesssim b^{2k+3}.
$$
Next, (\ref{fhrohorbis}) and (\ref{estmzetab}) imply
$$
 \int_{B_1}^{2B_1}|\Lambda \zeta_b M(\zeta_b)|ydy \lesssim  \int_{B_1}^{2B_1}\frac{b^{k-1}}{y}\frac{1}{y^2}(\frac{b^{2k-2}}{y^{2}}+\frac{b^{k-1}}{y^{k+1}})ydy\lesssim b^{3k}.
 $$
{\it case $k\geq 4$ even}:  From (\ref{estpsibeven}), \fref{fhrohorbis} :
$$\int_{B_0}^{B_1}|\Lambda \zeta_b\chi_{B_1}\Psi_b|ydy\lesssim \int_{B_0}^{B_1}b^{k}\frac{b^{k+4}}{y}ydy\lesssim b^{2k+3}.
$$
From (\ref{fhrohorbis}) and (\ref{estmzetab}):
$$
 \int_{B_1}^{2B_1}|\Lambda \zeta_b M(\zeta_b)|ydy \lesssim   \int_{B_1}^{2B_1}\frac{b^{k}}{y^2}(b^{2k}+\frac{b^{k}}{y^{k}})ydy\lesssim b^{3k}$$
{\it case $k=2$}: 
From (\ref{vheohoevbobvob}), \fref{fhrohorbis} :
$$\int_{B_0}^{B_1}|\Lambda \zeta_b\chi_{B_1}\Psi_b|ydy\lesssim \int_{B_0}^{B_1}b^{k}\left[C(M)\frac{b^{k+4}}{y^2}+\frac{b^4}{y^2}\right]ydy\lesssim b^{2k+1}.
$$
From (\ref{fhrohorbis}) and (\ref{estmzetab}):
$$
 \int_{B_1}^{2B_1}|\Lambda \zeta_b M(\zeta_b)|ydy \lesssim   \int_{B_1}^{2B_1}\frac{b^{k}}{y^2}(b^{2k}+\frac{b^{k}}{y^{k}})ydy\lesssim b^{3k}.$$
{\it case $k=1$}: We recall that according to \eqref{eq:imp-Psi}, $|\Psi_b|\lesssim \frac {b^4}{1+y}$ for $y\ge B_0$. Therefore,
$$\int_{B_0}^{B_1}|\Lambda \zeta_b\chi_{B_1}\Psi_b|ydy\lesssim \int_{B_0}^{B_1}\frac{1}{y}\frac{b^{4}}{y}ydy\leq b^4|\log b|\leq b^3.
$$
Next, (\ref{fhrohorbis}) and (\ref{estmzetab}) imply
$$
 \int_{B_1}^{2B_1}|\Lambda \zeta_b M(\zeta_b)|ydy \lesssim  \int_{B_1}^{2B_1}\frac{1}{y}\frac{1}{y^2}(\frac{1}{y^{2}}+\frac{1}{y^{2}})ydy\lesssim b^{3}.
 $$
This concludes the proof of \fref{ourgoingflux}.\\

{\bf step 3} Second line of \eqref{vhoheoheo}.\\

We first observe  
\begin{align*}
b_s\Lambda (\qbtb-P_{B_0})&+b(\partial_s(\qbtb-P_{B_0})+2\Lambda\partial_s(\qbtb-P_{B_0}))+\partial_s^2
(P_{B_1}-P_{B_0})\\ &=
b_s\Lambda \zeta_b + b(\partial_s \zeta_b + 2\Lambda \partial_s \zeta_b) +\partial_s^2 \zeta_b
\end{align*}
We further rewrite
\begin{align*}
&(b_s\Lambda \zeta_b + b(\partial_s \zeta_b + 2\Lambda \partial_s \zeta_b) +\partial_s^2 \zeta_b,\Lambda P_{B_0})=
\frac d{ds} (\partial_s \zeta_b, \Lambda P_{B_0}) + (b_s\Lambda \zeta_b + b(\partial_s \zeta_b + 2\Lambda \partial_s \zeta_b), \Lambda P_{B_0})\\&\,\,\qquad-(\partial_s \zeta_b,\Lambda \pa_s P_{B_0})= \frac d{ds} \left [b_s (\partial_b \zeta_b, \Lambda P_{B_0})\right ] + b_s(\Lambda \zeta_b + b (\partial_b \zeta_b + 2\Lambda \partial_b \zeta_b), \Lambda P_{B_0})-b^2_s (\partial_b \zeta_b,\Lambda \pa_b P_{B_0})
\end{align*}
We use crude bounds similar to \eqref{fhrohorbis}, $\forall y\in[B_0,2B_0]$, 
\begin{align*}
&|\zeta_b(y)|+ |\Lambda \zeta_b(y)|+b |\pa_b \zeta_b(y)|+b |\Lambda\pa_b \zeta_b(y)|\lesssim 
b^k,\\
&|\Lambda P_{B_0}|+ |\Lambda \pa_b P_{B_0}|\lesssim b^k
\end{align*}
As a consequence,
$$
|b_s| |(\partial_b \zeta_b, \Lambda P_{B_0})|\lesssim \frac {b^{k+1}}{|\log b|} b^{2k-3}\le \frac{b^{3k-2}}{|\log b|}
$$
and
\begin{align}
&|b_s|\,|(\Lambda \zeta_b + b (\partial_b \zeta_b + 2\Lambda \partial_b \zeta_b), \Lambda P_{B_0})|\lesssim
 \frac {b^{k+1}}{|\log b|} b^{2k-2}\le  \frac {b^{3k-1}}{|\log b|},\label{eq:zeta1}\\
&b^2_s |(\partial_b \zeta_b,\Lambda \pa_b P_{B_0})|\lesssim  \frac {b^{2k+2}}{|\log b|^2} b^{2k-4}\le  \frac {b^{4k-2}}{|\log b|^2}\label{eq:zeta2}
\end{align}

{\bf step 4} The main linear term.\\

$\Lambda P_{B_0}$ is only approximate element of the kernel of $H^*_{B_1}$. The corresponding linear term $(\e,H_{B_1}^*(\Lambda P_{B_0}))$ on the RHS of \fref{vhoheoheo} is therefore potentially a highly problematic term. 
The control of this term requires the improved local estimate \fref{localizegviojdoboot}. We claim:
\be
\label{keylinearterm}
\left|(H_{B_1}\e,\Lambda P_{B_0})\right|\lesssim \frac{b^{2k}}{{|\log b|}}.
\ee
{\it Proof of \fref{keylinearterm}}: Let us first compute $H_{B_1}^*(\Lambda P_{B_0})$. Observe first from space localization that 
\def\S{{\mathcal S}}
$$H_{B_1}^*(\Lambda P_{B_0})=H_{B_0}^*(\Lambda P_{B_0})+{\mathcal S},\qquad 
\S:=k^2\frac{f'(P_{B_1})-f'(P_{B_0})}{y^2} \Lambda P_{B_0}$$ 
with $\S$ supported only on the set $y\in [{B_0},2B_0]$.

Rescaling (\ref{defpsitilda}), we find that $(P_{B_0})_{\lambda}$ satisfies: $$\Delta(P_{B_0})_{\lambda}-\frac{b^2}{\lambda^2}D\Lambda (P_{B_0})_{\lambda}-\frac{f((P_{B_0})_{\lambda})}{y^2}=-\frac{(\Psi_{B_0})_{\lambda}}{\lambda^2}.$$ Differentiating this relation with respect to $\lambda$ and evaluating the result at $\lambda=1$ yields: $$H_{B_0}\Lambda P_{B_0}+2b^2D\Lambda P_{B_0}=2\Psi_{B_0}+\Lambda \Psi_{B_0}$$ or equivalently from (\ref{deflstar}): $$H_{B_0}^*\Lambda P_{B_0}=2\Psi_{B_0}+\Lambda \Psi_{B_0}.$$ We thus rewrite the main linear term in (\ref{vhoheoheo}):
$$(H_{B_1}\e,\Lambda P_{B_0})=(\e,H_{B_1}^*\Lambda P_{B_0})=(\e,2\Psi_{B_0}+\Lambda \Psi_{B_0}+\S).$$ Let us now  define
\be
\label{defeb}
e_b=\frac{\left(2\Psi_{B_0}+\Lambda \Psi_{B_0}+\S,\Lambda Q\right)}{(\Lambda Q,\chi_M\Lambda Q)},
\ee 
we claim that we can find $\Sigma_b$ solution to:
\be
\label{defsigma}
H\Sigma_b=2\Psi_{B_0}+\Lambda \Psi_{B_0}+\S-e_b\chi_M\Lambda Q
\ee
with the property that 
$$
\Sigma_b=\Sigma_b^1+\Sigma_b^2,
$$
where $Supp(\Sigma^1_b,A\Sigma_b^2)\subset\{y\leq 2B_0\}$
and
\begin{align}
& |\Sigma^1_b(y)|_{L^{\infty}}\lesssim b^{k},\label{estsigmab}\\
 &|A\Sigma^2_b(y)|\lesssim 
\frac {b^{k+1}}{|\log b|}{\bf 1}_{y\le 2B_0} +\frac {b^{k+1}}{\log M}{\bf 1}_{y\le 2M}+{b^{k+1}}{\bf 1}_{B_0\le y\le 2B_0}.
\label{estsigmab'}
\end{align}
Assume \fref{estsigmab},\fref{estsigmab'}. We then use the orthogonality condition \fref{orthe} 
and \eqref{harybis}, \eqref{harybis-non} to estimate:

{\it case} $k\ge 2$:
\bea
\nonumber \left|(\e,2\Psi_{B_0}+y\cdot\nabla\Psi_{B_0})\right|& = & \left|(\e,2\Psi_{B_0}+y\cdot\nabla\Psi_{B_0}-e_b\chi_M\Lambda Q)\right|=(A^*A\e,\Sigma_b)\\
\nonumber  &&\hskip -8pc\lesssim  b^{k}\left(\int_{y\leq 2B_0}(A^*A\e)^2\right)^{\frac{1}{2}}\left(\int _{y\leq 2B_0}1\right)^{\frac{1}{2}}\\ &&\hskip -8pc +b^{k+1}\left(\int_{y\leq 2B_0}\frac {(A\e)^2}{y^2}
\left (\frac {1}{|\log^2 b|}{\bf 1}_{y\le 2B_0} +\frac {1}{\log^2 M}{\bf 1}_{y\le 2M}+ {\bf 1}_{B_0\le y\le 2B_0}\right)\right)^{\frac{1}{2}}\left(\int _{y\leq 2B_0} y^2 \right)^{\frac{1}{2}}\notag\\
&&\hskip -8pc  \lesssim  b^{k-1}\left(\int_{y\leq 2B_0}|A^*A\e|^2\right)^{\frac{1}{2}} + b^{k-1} 
\left(\int_{y\leq 2B_0} \frac{|A\e|^2}{y^2}\right)^{\frac{1}{2}}.\hskip 3pc\label{estnonejkljlsoe}
\eea
From \fref{estpotentialvone}:
\begin{align}
\int_{y\leq 2B_0}|A^*A\e|^2 & =  \int_{y\leq 2B_0}|\partial_y(A\e)+\frac{1+V^{(1)}}{y}A\e|^2\notag\\
& \lesssim  \int_{y\leq 2B_0}\left[|\partial_y(A\e)|^2+\frac{k^2+1+2V^{(1)}+V^{(2)}}{y^2}(A\e)^2\right]\notag\\
& \lesssim  \lambda^2\mathcal E_{\sigma}\label{eq:A-coer}
\end{align}
and thus from \fref{localizegviojdoboot} for $k\ge 2$:
\be
\label{pjvpovjpvje}
\int_{y\leq 2B_0}\frac{|A\e|^2}{y^2}+\int_{y\leq 2B_0}|A^*A\e|^2\lesssim \lambda^2\mathcal E_{\sigma}\lesssim \frac{b^{2k+2}}{|\log b|^2}.
\ee
Inserting this into \fref{estnonejkljlsoe} yields:
$$\left|(\e,2\Psi_{B_0}+y\cdot\nabla\Psi_{B_0})\right|\lesssim  b^{k-1}
\left(\frac{b^{2k+2}}{|\log b|^2}\right)^{\frac{1}{2}}\lesssim \frac{b^{2k}}{{|\log b|}},
$$
which gives \fref{keylinearterm}.\\
{\it case} $k=1$:
We first obtain the bound 
\be
\label{vnkoeoio}
\left|(\e,2\Psi_{B_0}+y\cdot\nabla\Psi_{B_0})\right|\lesssim  b^{k-1}
\left(\frac{b^{2k+2}}{|\log b|^2}\right)^{\frac{1}{2}}\lesssim \frac{b^{2k}}{{\sqrt{|\log b|}}},
\ee
Using \fref{estsigmab},\fref{estsigmab'}, the orthogonality condition \fref{orthe} 
and \eqref{harybis}, \eqref{harybis-non} we obtain:
\bea
\nonumber \left|(\e,2\Psi_{B_0}+y\cdot\nabla\Psi_{B_0})\right|& = & \left|(\e,2\Psi_{B_0}+y\cdot\nabla\Psi_{B_0}-e_b\chi_M\Lambda Q)\right|=(A^*A\e,\Sigma_b)\\
\nonumber  &&\hskip -8pc\lesssim  b^{k}\left(\int_{y\leq 2B_0}(A^*A\e)^2\right)^{\frac{1}{2}}\left(\int _{y\leq 2B_0}1\right)^{\frac{1}{2}}\\ &&\hskip -8pc +b^{k+1}\left(\int_{y\leq 2B_0}(A\e)^2
\left (\frac {1}{|\log^2 b|}{\bf 1}_{y\le 2B_0} +\frac {1}{\log^2 M}{\bf 1}_{y\le 2M}+ {\bf 1}_{B_0\le y\le 2B_0}\right)\right)^{\frac{1}{2}}\left(\int _{y\leq 2B_0}1 \right)^{\frac{1}{2}}\notag\\
&&\hskip -8pc  \lesssim  b^{k-1}\left(\int_{y\leq 2B_0}|A^*A\e|^2\right)^{\frac{1}{2}} + b^{k-1} 
\sqrt{|\log b|}\left(\int_{y\leq 1}|A\e|^2+\int_{y\leq 2B_0}|\nabla A\e|^2\right)^{\frac{1}{2}}.\hskip 3pc\label{estnonejkljlsoebis}
\eea
Since by \eqref{eq:A-coer} and \fref{localizegviojdoboot}:
\be
\label{pjvpovjpvjebis}
\int_{y\leq 1}|A\e|^2+\int_{y\leq 2B_0}|\nabla A\e|^2+\int_{y\leq 2B_0}|A^*A\e|^2\lesssim \lambda^2\mathcal E_{\sigma}\lesssim \frac{b^{2k+2}}{|\log b|^2}.
\ee
we obtain
$$\left|(\e,2\Psi_{B_0}+y\cdot\nabla\Psi_{B_0})\right|\lesssim  b^{k-1}\sqrt{|\log b|}\left(\frac{b^{2k+2}}{|\log b|^2}\right)^{\frac{1}{2}}\lesssim \frac{b^{2k}}{\sqrt{|\log b|}}.
$$
To obtain the stronger estimate 
\be
\label{iehioeh}
\left|(\e,2\Psi_{B_0}+y\cdot\nabla\Psi_{B_0})\right|\lesssim \frac{b^{2k}}{{{|\log b|}}},
\ee
we claim that we can redefine the decomposition $\Sigma_b=\tilde \Sigma_b^1+\tilde\Sigma_b^2$ so that 
\eqref{estsigmab}, \eqref{estsigmab'} are replaced by the estimates
\begin{align}\label{eq:str-S}
|\tilde\Sigma_b^1|&\lesssim b,\\
|A\tilde \Sigma^2_b(y)|&\lesssim 
\frac {b^{2}}{|\log b|}{\bf 1}_{y\le 2B_0} +\frac {b^{2}}{\log M}{\bf 1}_{y\le 2M}.\label{eq:str-S2}
\end{align}
The absence of the term $b^{k+1}{\bf 1}_{B_0\le y\le 2B_0}$ in \eqref{eq:str-S2} eliminates the additional
logarithmic divergence in \eqref{estnonejkljlsoebis} and leads to the desired bound. We omit the 
straightforward details.

\begin{remark} The gain in \fref{iehioeh} with respect to the simpler bound \fref{vnkoeoio} will allow us to obtain the $O(\frac{b^2}{|\log b|})$ estimate on the remaining terms in the RHS of \fref{firstcontrol}. This in turn will lead to the $O(1)$ term in the derivation of the blow up speed
\fref{universallawkgeq2} after reintegration of the modulation equations, see in particular \fref{lawfors}.
\end{remark}

{\it Proof of \fref{estsigmab},\eqref{estsigmab'}}: Let $$g_b=2\Psi_{B_0}+\Lambda \Psi_{B_0}+\S-e_b\chi_M\Lambda Q,$$ so that 
\be
\label{cnoeoe}
(g_b,\Lambda Q)=0
\ee
from \fref{defeb}. Then, as in \eqref{defulinearsolver}, a solution to \fref{defsigma} is given by
$$\Sigma_b(y)=\Gamma(y)\int_{0}^y \Lambda Q g_budu-\Lambda Q(y)\int_1^{y}g_b\Gamma udu=\Sigma_b^1+\Sigma_b^2.$$ 
The compact support of $\Psi_{B_0}$ and hence of $g_b$ in $y\leq 2B_0$ and \fref{cnoeoe} ensure $Supp(\Sigma^1_b)\subset\{y\leq 2B_0\}.$ On the other hand, using that $A(\Lambda Q)=0$,
\be\label{eq:AL}
A\Sigma_b^2=\Lambda Q g_b \Gamma y
\ee
and the property $Supp(A\Sigma^2_b)\subset\{y\leq 2B_0\}.$ follows.
We now turn to the proof of the $L^{\infty}$ estimates \fref{estsigmab}, \fref{estsigmab'}.\\
In what follows we will use the bound 
\be\label{eq:bound-S}
|\S|\lesssim b^{2k+2} {\bf 1}_{B_0\le y\le 2B_0},
\ee
which easily follows from 
$$
|\frac{f'(P_{B_1})-f'(P_{B_0})}{y^2} \Lambda P_{B_0}|\le \frac 1{y^2} |P_{B_1}-P_{B_0}| 
|\Lambda P_{B_0}|
$$
{\it case $k\geq 3$}: We use the bound from \fref{estpsibeventilde}, \fref{estpsiboddtilde}:
$$|\Psi_{B_0}|+|\Lambda \Psi_{B_0}|\lesssim b^{k+3}{\bf 1}_{y\leq 2B_0}+ 
b^{k+2}{\bf 1}_{B_0\le y\leq 2B_0},$$ 
which yields:
$$|e_b|\lesssim \int\frac{b^{k+2}y^k}{1+y^{2k}}ydy\lesssim b^{k+2},$$ 
$$|\Sigma^1_b(y)|\lesssim \frac{1+y^{2k}}{y^{k}}\int_y^{2B_0}\frac{b^{k+2}u^k}{1+u^{2k}}udu
\lesssim  b^k.
$$
On the other hand, taking into account that $|\Lambda Q \Gamma|\lesssim 1$,
$$
|A\Sigma_b^2(y)|=|\Lambda Q \Gamma \, g_b y|\le b^{k+2} {\bf 1}_{y\le 2B_0}+ b^{k+1} {\bf 1}_{B_0\le y\le 2B_0}
$$
{\it case $k=2$}: We estimate from \fref{estpsibeventildektwo}:
$$|\Psi_{B_0}|+|\Lambda \Psi_{B_0}|\lesssim  \frac{b^4y^2}{1+y^{4}}{\bf 1}_{y\leq B_0}+b^4{\bf 1}_{B_0\leq y\leq 2B_0},
$$
and hence:
$$|e_b|\lesssim \int \frac{y^k}{1+y^{2k}}\left[\frac{b^4y^2}{1+y^{4}}+b^4{\bf 1}_{B_0\leq y\leq 2B_0},\right]ydy\lesssim b^{4},$$
\bee
 |\Sigma^1_b(y)|& \lesssim &\frac{1+y^{4}}{y^{2}}\int_y^{2B_0}\frac{u^2}{1+u^{4}}\left[\frac{b^4u^2}{1+u^{4}}+b^4{\bf 1}_{B_0\leq u\leq 2B_0}\right]udu\lesssim  b^2.
 \eee
 $$
 |A\Sigma_b^2(y)|\lesssim |g_b y|\lesssim  b^{4} {\bf 1}_{y\le 2B_0}+ b^{3} {\bf 1}_{B_0\le y\le 2B_0}
 $$
{\it case $k=1$}: We estimate from \fref{estpsiboddtildekone}:
\bee|\Psi_{B_0}|+|\Lambda \Psi_{B_0}|& \lesssim & \frac{b^2}{|\log b|}\frac{y}{1+y^2}{\bf 1}_{y\leq 2B_0}+\frac{b^2}{y}{\bf 1}_{B_0\leq y\leq 2B_0},
\eee
and hence:
$$\log M |e_b|\lesssim \int_{y\leq 2B_0}\frac{y}{1+y^{2}}\left[\frac{b^2y}{|\log b|(1+y^{2})}+\frac{b^2}{y}{\bf 1}_{B_0\leq y\leq 2B_0},\right]ydy\lesssim b^2,$$ 
\bee
 |\Sigma^1_b(y)|& \lesssim &\frac{1+y^{2}}{y}\int_y^{2B_0}\frac{u}{1+u^{2}}\left[\frac{b^2u}{|\log b|(1+u^{2})}+\frac{b^2}{u}{\bf 1}_{B_0\leq u\leq 2B_0}+\frac{b^2}{\log M (1+u)}{\bf 1}_{u\leq 2M}\right]udu
\lesssim  b.
 \eee
 $$
 |A\Sigma_b^2(y)|\lesssim |g_b y|\lesssim \frac {b^2}{|\log b|} {\bf 1}_{y\le 2B_0} + 
 \frac{b^2}{\log M} {\bf 1}_{y\le 2M}+ b^2{\bf 1}_{B_0\le y\le 2B_0}
 $$
This concludes the proof of \fref{estsigmab}, \fref{estsigmab'}.\\
\vskip 1pc
{\it Proof of \fref{eq:str-S},\eqref{eq:str-S2}}: As before let 
$$g_b=2\Psi_{B_0}+\Lambda \Psi_{B_0}+\S-e_b\chi_M\Lambda Q,$$ so that 
$$\Sigma_b(y)=-\Gamma(y)\int_{y}^\infty \Lambda Q g_budu-\Lambda Q(y)\int_1^{y}g_b\Gamma udu
$$
We now recall that according to \eqref{formulapsitilde}
\begin{align*}
\Psi_{B_0} &=  \chi_{B_0}\Psi_b+\frac{k^2}{y^2}\left\{f(P_{B_0})-\chi_{B_0}f(Q_b)\right\}-(Q_b-\pi)\Delta \chi_{B_0}-2\chi_{B_0}'Q'_b\\
& +  b^2\left\{(Q_b-\pi)D\Lambda \chi_{B_0}+2y^2\chi_{B_0}'Q'_b\right\}
\end{align*}
Set
\begin{align*}
\Psi_{B_0} ^1&=\frac 2y\Delta \chi_{B_0}-\frac{4}{y^2}\chi_{B_0}' 
 -  \frac {2b^2}{y}\left\{D\Lambda \chi_{B_0}-2y\chi_{B_0}'\right\},\\
 \Psi_{B_0} ^2&=\chi_{B_0}\Psi_b+\frac{1}{y^2}\left\{f(P_{B_0})-\chi_{B_0}f(Q_b)\right\}-(Q_b-\pi+\frac 2y)\Delta \chi_{B_0}\\&\quad-2\chi_{B_0}'(Q_b+\frac 2y)'
 +  b^2\left\{(Q_b-\pi+\frac 2y)D\Lambda \chi_{B_0}+2y^2\chi_{B_0}'(Q_b-\frac 2y)'\right\}
\end{align*}
and define
\begin{align*}
&\Sigma^1_b(y)=-\Gamma(y)\int_{y}^\infty \Lambda Q g_budu-\frac 14\Lambda Q(y)\int_0^{y} \pa_u (u^2\Psi_{B_0}^1)
u du,\\
&\Sigma^2_b(y)= -\Lambda Q(y)\int_0^{1} \pa_u (u^2\Psi_{B_0}^1)
\Gamma du-\Lambda Q(y)\int_1^{y}\left (g_b-2\Psi^1_{B_0}-\Lambda\Psi_{B_0}^1\right)\Gamma udu\\
&\qquad\quad-\Lambda Q(y)\int_0^{y} \pa_u (u^2\Psi_{B_0}^1)
(\Gamma -\frac u4) du
\end{align*}
Therefore,
\begin{align*}
A\Sigma^2_b(y)&=-(g_b-2\Psi^1_{B_0}-\Lambda\Psi_{B_0}^1)\Lambda Q \Gamma y=
(2\Psi^2_{B_0}+\Lambda \Psi^2_{B_0}+\S-e_b\chi_M\Lambda Q) \Lambda Q \Gamma y\\ &
-\pa_y (y^2\Psi_{B_0}^1)
(\Gamma -\frac y4)\Lambda Q
\end{align*}
and thus we need to show that 
$$
\frac 1y |\pa_y (y^2\Psi_{B_0}^1)
(\Gamma -\frac y4)|+y\,|2\Psi^2_{B_0}+\Lambda \Psi^2_{B_0}+\S-e_b\chi_M\Lambda Q|\lesssim \frac {b^2}{|\log b|} {\bf 1}_{y\le 2B_0} + 
 \frac{b^2}{\log M} {\bf 1}_{y\le 2M}
$$
From \eqref{estpsiboddkequalone} we have that on the support of $\chi_{B_0}$
$$
|\Psi_b|+|\Lambda \Psi_b|\lesssim \frac {b^2}{|\log b|} \frac {y}{1+y^2}
$$
Furthermore, \eqref{eq:bound-S} gives
$$
|\S|\lesssim b^{2k+2} {\bf 1}_{B_0\le y\le 2B_0},
$$
and 
$$
|e_b \chi_M \Lambda Q|\lesssim \frac {b^2}{|\log M|} \frac {y}{1+y^2}{\bf 1}_{y\le 2M}.
$$
Using that $f(\pi)=0$, $f'(\pi)=1$, we also obtain
\begin{align*}
|\frac 2{y^2}&(f(P_{B_0})-\chi_{B_0}f(Q_b))+ \Lambda\left[\frac 1{y^2}(f(P_{B_0})-\chi_{B_0}f(Q_b))\right]=
\frac 1y \pa_y \left [f(P_{B_0})-\chi_{B_0}f(Q_b)\right]\\&= 
\frac 1y \pa_y \left [P_{B_0}-\pi-\chi_{B_0}((Q_b-\pi)\right ]\\ &+
\frac 1y \pa_y \left [\int_0^1 \tau \int_0^1 \left (f''(\tau\tau' P_{B_0}) (P_{B_0}-\pi)^2-
\chi_{B_0} f''(\tau\tau' Q_b)  (Q_b-\pi)^2\right)\right ]\\&=
\frac 1y \pa_y \left [\int_0^1\tau \int_0^1 \left (f''(\tau\tau' P_{B_0}) (P_{B_0}-\pi)^2-
\chi_{B_0} f''(\tau\tau' Q_b)  (Q_b-\pi)^2\right)\right]
\lesssim\frac 1{y^4}.
\end{align*}
Since $f(P_{B_0})-\chi_{B_0}f(Q_b)$ vanishes outside the interval $B_0\le y\le 2B_0$, the above bound 
can be replaced by $b^4{\bf 1}_{B_0\le y\le 2B_0}$. 
The estimate for the remaining part of $\Psi_{B_0}^2$ follows from the bounds
\begin{align*}
&|\frac {d^m}{dy^m} (Q_b-\pi+\frac 2y)|\lesssim |\frac {d^m}{dy^m}(Q-\pi+\frac 2y)|+b^2|\frac{d^m}{dy^m} T_1|\lesssim
\frac 1{y^{3+m}} + \frac 1{|\log b| y^{1+m}},\\
&|\frac {d^m}{dy^m} \Psi_{B_0}^1|\lesssim \frac {b^2}{y^{1+m}},\qquad |\Gamma - \frac y4|\lesssim 1
\end{align*}
which hold for $B_0\le y\le 2B_0$ (in particular on the support of $\chi'_{B_0}$) and follow from 
\eqref{asympttone}, \eqref{expansionqr} and \eqref{ezpansionLQH}.

These estimates imply the desired bound \eqref{eq:str-S2}.

\vskip 1pc
To prove \eqref{eq:str-S} it suffices to show that $\Lambda Q(y)\int_0^{y} \pa_u (u^2\Psi_{B_0}^1)
\Gamma du$ is supported in
$y\le 2B_0$ and establish the bound
$$
|\Lambda Q(y)\int_0^{y} \pa_u (u^2\Psi_{B_0}^1)
\Gamma du|\lesssim b
$$
We argue that a careful choice of $B_0$ ensures that 
\be\label{eq:zero}
\int_0^{\infty} \pa_u (u^2\Psi_{B_0}^1)u du=0.
\ee
Assuming this we immediately conclude the statement about the support, since $\Psi_{B_0}^1$ is supported
in $B_0\le y\le 2B_0$.  Furthermore, from \eqref{eq:qpsi} and \eqref{eq:qpsi2} for $y\ge 2B_0$
$$
|\Lambda Q(y)\int_0^{y} \pa_u (u^2\Psi_{B_0}^1)
\Gamma du|\lesssim \frac y{1+y^2}\int_0^y \frac {1+u^2}{u} {b^2}{\bf 1}_{B_0\le u\le 2B_0} du\lesssim b.
$$ 
To show \eqref{eq:zero} we rewrite
$$
y^2\Psi_{B_0}^1= 2y(1-b^2y^2)\chi^{''}_{B_0} - 2 \chi_{B_0}'
$$
\begin{align*}
\int_0^{\infty} \pa_u (u^2\Psi_{B_0}^1)u du&=-\int_0^{\infty} u^2\Psi_{B_0}^1 du =
-\int_0^{\infty}\left (2y(1-b^2y^2)\chi^{''}_{B_0} - 2 \chi_{B_0}'\right) dy\\&=
-2+2\int_0^{\infty}(1-3b^2y^2)\chi'_{B_0} dy\\&=
-4+12b^2\int_0^{\infty}y\chi_{B_0}dy=-4+12b^2B_0^2\int_0^{\infty}y\chi dy
\end{align*}
Therefore, the choice 
$$
B_0^2=\frac 1{3b^2\int_0^\infty y\chi dy}
$$
gives the desired property.\\

{\bf step 4} Lower order linear terms in $\e$.\\

We are left with estimating the third line on the RHS of \fref{vhoheoheo}. We first claim:
\bea
\label{estbcohpow}
 \nonumber & & \left|\left( \partial_s^2\eb+b(\partial_s\eb+2\Lambda\partial_s\eb)+b_s \Lambda \eb,\Lambda P_{B_0}\right)-\frac{d}{ds}\left[(\partial_s\eb,\Lambda P_{B_0})+b(\e+2\Lambda\eb,\Lambda P_{B_0})\right]\right|\\
& \lesssim & \frac{b^{2k}}{|\log b|}.
\eea
Indeed, we integrate by parts to obtain:
\bea
\label{jhoehceoce}
\nonumber & & \left( \partial_s^2\eb+b(\partial_s\eb+2\Lambda\partial_s\eb)+b_s \Lambda \eb,\Lambda P_{B_0}\right)=\frac{d}{ds}\left[(\partial_s\eb,\Lambda P_{B_0})+b(\e+2\Lambda\eb,\Lambda P_{B_0})\right]\\
\nonumber & - & b_s\left[(\partial_s\eb+b\Lambda \e,\Lambda \frac{\partial P_{B_0}}{\partial b})+(\e,\Lambda P_{B_0}+b\Lambda \frac{\partial P_{B_0}}{\partial b})+b(\Lambda\e,\Lambda\frac{\partial P_{B_0}}{\partial b})+ (\Lambda\e,\Lambda P_{B_0})\right]\\
& = & \frac{d}{ds}\left[(\partial_s\eb,\Lambda P_{B_0})+b(\e+2\Lambda\eb,\Lambda P_{B_0})\right]- b_s\left[(\partial_s\eb+b\Lambda \e,\Lambda \frac{\partial P_{B_0}}{\partial b})+(\e,\Phi_b)\right]
\eea
with
\be
\label{defhofhiroh}
\Phi_b=-\Lambda P_{B_0}-\Lambda^2P_{B_0}-b\Lambda\frac{\partial P_{B_0}}{\partial b}-b\Lambda^2\frac{\partial P_{B_0}}{\partial b}.
\ee
We now estimate the RHS of \fref{jhoehceoce}. To wit, let 
\be
\label{defrrb}
r_b=\frac{(\Phi_b,\Lambda Q)}{(\Lambda Q,\chi_M\Lambda Q)},
\ee
we claim that we can find $\tilde{\Phi}=\tilde{\Phi}_1+\tilde{\Phi}_2$ such that 
$$
H\tilde{\Phi}=\Phi_b-r_b\chi_M\Lambda Q, \ \ Supp(\tilde{\Phi}_1)\cup Supp(A\tilde{\Phi}_2)\subset[0,2B_0],
$$
and 
\be
\label{vhkoehoe}
|\tilde{\Phi}_1|_{L^{\infty}}\lesssim \left\{ 
\begin{array}{ll}
1 \ \ \mbox{for} \ \ k\geq 2\\
					\frac{|\log b|}{b}\ \ \mbox{for} \ \ k=1,
					\end{array} \right .
\ee
\be
\label{vhkoehoebidib}
|A\tilde{\Phi}_2(y)|\lesssim \left\{ 
\begin{array}{ll}
\frac{y^{k+1}}{1+y^{2k}}{\bf 1}_{y\leq 2B_0} \ \ \mbox{for} \ \ k\geq 2\\
					\frac{y^2}{1+y^{2}}\left[{\bf 1}_{y\leq 2B_0}+|\log b|{\bf 1}_{y\leq 2M}\right]\ \ \mbox{for} \ \ k=1,
					\end{array} \right .
\ee
Let us assume \fref{vhkoehoe}, \fref{vhkoehoebidib} and conclude the proof of \fref{estbcohpow}.\\
{\it case $k\geq 2$}: First recall from \fref{poitwisebsboot} the  bound: $$|b_s|\lesssim b^{k+1}.$$ Moreover, \fref{estbodd}, \fref{estp[rgehdiobisbis} imply: 
\be
\label{estinetclcjoierm}
|\Lambda ^l\partial_bP_{B_0}|\lesssim C(M) b{\bf 1}_{y\leq 2B_0}, \ \ 0\leq l\leq 2.
\ee 
We conclude from \fref{controldt}, \fref{poitwisebsboot}, \fref{poitnwiseboundWboot}:
\bea
& & |b_s|\left|(\partial_s\eb+b\Lambda \e,\Lambda \frac{\partial P_{B_0}}{\partial b})\right|\lesssim  C(M) b^{k+1}\lambda |\partial_tw|_{L^{\infty}}\int_{y\leq 2B_0}b\notag\\
& \lesssim &C(M) \lambda b^{k}\left(|A^*_{\lambda}W|_{L^2}^2+|\partial_tW|_{L^2}^2\right)^{\frac{1}{2}}\lesssim C(M) b^{2k+1}.
\eea
Next, from \fref{vhkoehoe}, \fref{vhkoehoebidib} and the choice of the orthogonality condition \eqref{orthe}:
\bee
|b_s||(\e,\Phi_b)|& = &   |b_s||(A^*A\e,\tilde{\Phi}_b)|\lesssim b^{k+1}\frac{1}{b}|A^*A\e|_{L^2}+b^{k+1}|\frac{A\e}{y}|_{L^2}\left(\int_{y\leq 2B_0}\frac{y^{2k+2}y^2}{1+y^{4k}}\right)^{\frac{1}{2}}\\
& \lesssim & b^{2k+2}\frac{1}{b}\lesssim b^{2k+1},
\eee
where we used \fref{eq:coerc}, \fref{poitnwiseboundWboot}.\\
{\it case $k=1$}: By \eqref{vhovhohvoh}
$$
|\Lambda ^l\partial_bP_{B_0}|\lesssim {\bf 1}_{y\leq 2B_0}, \ \ 0\leq l\leq 2.
$$
Thus, using \fref{controldt}, \fref{poitwisebsboot}, \fref{poitnwiseboundWboot}, \fref{crudeboundb}:
\bea
\label{eq:hte}
& & |b_s|\left|(\partial_s\eb+b\Lambda \e,\Lambda \frac{\partial P_{B_0}}{\partial b})\right|\lesssim \frac{b^2}{|\log b|}\lambda |\partial_tw|_{L^{\infty}}\int_{y\leq 2B_0}ydy\notag\\
\nonumber & \lesssim &\frac {\lambda}{|\log b|}\left(|A^*_{\lambda}W|_{L^2}^2+|\partial_tW|_{L^2}^2\right)^{\frac{1}{2}}\lesssim \frac{b^2}{|\log b|}.
\eea
Next from \fref{vhkoehoe} and the choice of the orthogonality condition \fref{orthe}:
\bee
& & |b_s||(\e,\Phi_b)|  =   |b_s||(A^*A\e,\tilde{\Phi}_b)|\\
& \lesssim & \frac{b^{2}}{|\log b|}\left[\frac{|\log b|}{b^2}|A^*A\e|_{L^2(y\leq 2B_0)}+|\frac{A\e}{y}|_{L^2(y\leq 2B_0)}\left(\int \frac{y^6}{1+y^4}[{\bf 1}_{y\leq 2B_0}+\log^2 b {\bf 1}_{y\leq 2M}]\right)^{\frac{1}{2}}\right].
\eee
We then observe from \fref{pjvpovjpvjebis} and \fref{harybis-non}:
\be\label{eq:boundA}
|\frac{A\e}{y}|_{L^2(y\leq 2B_0)}\lesssim |\log b|\frac{b^2}{|\log b|}\lesssim b^2
\ee 
and hence from the refined bound \fref{pjvpovjpvjebis}:
\bee
|b_s||(\e,\Phi_b)|  & = &  \frac{b^2}{|\log b|}\left[\frac{|\log b|}{b^2}\frac{b^2}{|\log b|}+b^2\frac{1}{b^2}\right]\lesssim \frac{b^2}{|\log b|}.
\eee
This concludes the proof of \fref{estbcohpow}.\\
{\it Proof of \fref{vhkoehoe}, \fref{vhkoehoebidib}}:  We let 
\bea
\label{def[hinbovnfo}
\nonumber \tilde{\Phi}_b& = & \Gamma (y)\int_0^{y}\Lambda Q(\Phi_b-r_b\chi_M\Lambda Q) udu-\Lambda Q(y)\int_0^{y}\Gamma (\Phi_b-r_b\chi_M\Lambda Q)udu\\
& = & \tilde{\Phi}_1+\tilde{\Phi}_2.
\eea
The support of $\Phi_b$ belongs to the set $y\le 2B_0$. Therefore  $Supp(\tilde{\Phi}_1)\subset[0,2B_0]$ by 
the choice of $r_b$ in \fref{defrrb} and $Supp(A\tilde{\Phi}_2)\subset[0,2B_0]$ which follows from the identity
$$A\tilde{\Phi}_2= \Lambda Q\Gamma(\Phi_b-r_b\chi_M\Lambda Q)y.$$\\
{\it case $k\geq 2$}: We derive from \fref{defhofhiroh}, \fref{estp[rgehdiobisbis}, \fref{estbodd}  the bound:
 $$|\Phi_b|\lesssim \frac{y^{k}}{1+y^{2k}}{\bf 1}_{y\leq 2B_0},$$ and hence $r_b$, given by \fref{defrrb}, satisfies:
$$|r_b|\lesssim 1.$$ We then estimate:
$$
|\tilde{\Phi}_1(y)| \lesssim   \frac{1+y^{2k}}{y^k}\int_y^{2B_0}\frac{u^k}{1+u^{2k}}\frac{u^k}{1+u^{2k}}udu
\lesssim  \frac{1}{1+y^{k-2}}\lesssim 1
$$
and \fref{vhkoehoe} follows. Similarily, $$|A\tilde{\Phi}_2(y)|\lesssim y|\Phi_b-r_b\chi_M\Lambda Q|\lesssim \frac{y^{k+1}}{1+y^{2k}}{\bf 1}_{y\leq 2B_0}$$ and \fref{vhkoehoebidib} follows.\\
{\it case $k=1$}: We estimate from \fref{defhofhiroh}, \eqref{vhovhohvoh}:
$$|\Phi_b|\lesssim \frac{y}{1+y^2}{\bf 1}_{y\leq 2B_0}$$ from which $r_b$, given by \fref{defrrb}, satisfies:
$$|r_b|\lesssim |\log b|$$ and 
$$|\tilde{\Phi}_1(y)|\lesssim   \frac{1+y^{2}}{y}\int_y^{2B_0}\frac{u}{1+u^{2}}\frac{u}{1+u^{2}}\left[1+|\log b|{\bf 1}_{y\leq M}\right]udu \lesssim  \frac{|\log b|}{b}
$$
and \fref{vhkoehoe} follows. Next,
$$|A\tilde{\Phi}_2(y)|\lesssim y|\Phi_b-r_b\chi_M\Lambda Q|\lesssim \frac{y^2}{1+y^{2}}{\bf 1}_{y\leq 2B_0}+|\log b|\frac{y^2}{1+y^2}{\bf 1}_{y\leq 2M}$$ and \fref{vhkoehoebidib} follows.\\
 This concludes the proof of \fref{vhkoehoe}\, \fref{vhkoehoebidib}.\\
 
{\bf step 5} Control of the nonlinear term.\\ 

{\it case $k\geq 2$}: There holds from \fref{estun}, \fref{contorlocoervcie}, \fref{poitnwiseboundWboot}:
\be
\label{nolineaireone}
\left|(\frac{N(\e)}{y^2},\Lambda P_{B_0})\right|\lesssim \int |\e|^2\frac{y^k}{y^2(1+y^{2k})}\lesssim \int\frac{|\e|^2}{y^4}\lesssim \lambda^2|A^*_{\lambda}W|_{L^2}^2\lesssim b^{2k+2}.
\ee

{\it case $k=1$}: From \fref{estnonlineargloballocalized}
\bea
\label{nolineaireonebis}
\nonumber \left|(\frac{N(\e)}{y^2},\Lambda P_{B_0})\right|& \lesssim & \left(\int_{y\leq 2B_0}\frac{|\e|^4}{y^4}\right)^{\frac{1}{2}}|\Lambda P_{B_0}|_{L^2}\lesssim |\log b|b^{\frac{3}{4}}\lambda|A^*_{\lambda}W|_{L^2}\\
& \lesssim b^{2+\frac{1}{2}}.
\eea

{\bf step 5} Control of $G(b)$ and ${\mathcal{I}}$.\\

Using estimates  \fref{rewritnghgllhs}, \fref{ourgoingflux}, \eqref{eq:zeta1}, \eqref{eq:zeta2}, \fref{keylinearterm}, \fref{estbcohpow}, \fref{nolineaireone}, \fref{nolineaireonebis} in conjunction with the the algebraic formula \fref{vhoheoheo} concludes the proof of \fref{firstcontrol}. It remains to prove \fref{estkeyfb}, (\ref{estimateione}).\\
{\it Proof of \fref{estkeyfb}}: Recall the formula \fref{deffb} for $G(b)$. We compute 
$$
\Lambda P_{B_0} = \chi_{B_0} \Lambda Q_b + \Lambda \chi_{B_0} (Q_b-a) =\chi_{B_0} \Lambda Q + \chi_{B_0}\Lambda (Q_b-Q)+ \Lambda \chi_{B_0} (Q_b-a)
$$
It then follows from Proposition \ref{propqb} that for any $k\ge 1$
$$
|\Lambda P_{B_0}-\chi_{B_0} \Lambda Q|\lesssim C(M) b^2 \frac{y^k}{1+y^{2k-2}} {\bf 1}_{y\le 2B_0}
$$
As a consequence,
\be
\label{hoeoehvdophone}
|\Lambda P_{B_0}|_{L^2}^2=\left\{\begin{array}{ll}|\Lambda Q|_{L^2}^2 +O(b^2)=|\Lambda Q|_{L^2}^2(1+o(1)) \ \ \mbox{for} \ \ k\geq2,\\
										|\chi_{B_0}\Lambda Q|_{L^2}^2+O(1)= 4 |\log b|+O(1))\ \ \mbox{for} \ \ k=1.
\end{array} \right .
\ee
Similarly, using \fref{crudeboundb}:
$$
|(\frac {\pa P_{B_0}}{\pa b}, \Lambda P_{B_0})|\lesssim \int_{y\leq 2B_0}\frac{y^k}{1+y^{2k}}\lesssim \left\{\begin{array}{ll}
|\log b|\ \ \mbox{for} \ \ k\geq 2,\\
\frac{1}{b} \ \ \mbox{for} \ \ k=1,
\end{array} \right .
$$ from which:
$$\left|\int_0^b b'(\frac{\partial P_{B_0}}{\partial b},\Lambda P_{B_0})db'\right|\lesssim \left\{\begin{array}{ll} {b^2}{|\log b|}\ \ \mbox{for} \ \ k\geq 2,\\
b \ \ \mbox{for} \ \ k=1,
\end{array} \right .
$$ 
which together with \fref{deffb}, \fref{hoeoehvdophone} concludes the proof of \fref{estkeyfb}.\\
{\it Proof of \fref{estimateione}}: We integrate by parts in space in \fref{defiun} to rewrite:
\be
\label{cneohebibi}
{\mathcal I}(s)=(\partial_s\e+b\Lambda \e,\Lambda P_{B_0})+b_s(\frac{\partial P_{B_0}}{\partial_b},\Lambda P_{B_0})-b(\e,\Lambda P_{B_0}+\Lambda^2P_{B_0})-b_s \left(\frac{\partial}{\partial b}(P_{B_1}-P_{B_0}), \Lambda P_{B_0})\right ).
\ee
The last term above has been estimated in step 3.
We let 
\be
\label{defuguos}
\tilde{r}_b=\frac{(\Lambda P_{B_0}+\Lambda^2P_{B_0},\Lambda Q)}{(\chi_M\Lambda Q,\Lambda Q)}
\ee
and claim that we can solve:
$$L\Theta_b=\Lambda P_{B_0}+\Lambda^2P_{B_0}-\tilde{r}_b\chi_M\Lambda Q$$ with $\Theta_b=\Theta_1+\Theta_2$, $Supp(\Theta_1)\cup Supp(A\Theta_2)\subset[0,2B_0]$ and
\be
\label{vhkoehoebisbis}
|\Theta_1|_{L^{\infty}}\lesssim \left\{ 
\begin{array}{ll}
1 \ \ \mbox{for} \ \ k\geq 2\\
					\frac{|\log b|}{b}\ \ \mbox{for} \ \ k=1,
					\end{array} \right .
\ee
\be
\label{vhkoehoebidibbisbis}
|A\Theta_2(y)|\lesssim \left\{ 
\begin{array}{ll}
\frac{y^{k+1}}{1+y^{2k}}{\bf 1}_{y\leq 2B_0} \ \ \mbox{for} \ \ k\geq 2\\
					\frac{y^2}{1+y^{2}}\left[{\bf 1}_{y\leq 2B_0}+|\log b|{\bf 1}_{y\leq 2M}\right]\ \ \mbox{for} \ \ k=1,
					\end{array} \right .
\ee
The proof of \fref{vhkoehoebisbis}, \fref{vhkoehoebidibbisbis} is completely similar to the one of \fref{vhkoehoe}, \fref{vhkoehoebidib} and left to the reader.\\
{\it case $k\geq 2$}: From \fref{controldt}, \fref{poitnwiseboundWboot}:
$$\left|(\partial_s\eb+b\Lambda \e,\Lambda  P_{B_0})\right|\lesssim \lambda |\partial_tw|_{L^{\infty}}|\Lambda P_{B_0}|_{L^1}\lesssim  |\log b|b^{k+1}\lesssim b^{2}.
$$
Next, from \fref{poitwisebsboot}:
$$\left|b_s(\frac{\partial P_{B_0}}{\partial b},\Lambda P_{B_0})\right|\lesssim b^{k+1}|\Lambda P_{B_0}|_{L^1}\lesssim b^2.$$ Finally, from \fref{poitnwiseboundWboot}, \fref{vhkoehoebisbis} and the choice of the orthogonality condition \fref{orthe}:
\bee
& & b|(\e,\Lambda P_{B_0}+\Lambda^2P_{B_0})  = b|(A^*A\e,\Theta_b)|\\
& \lesssim & b|A^*A\e|_{L^2}\frac{1}{b}+b|\frac{A\e}{y}|_{L^2}\left(\int_{y\leq 2B_0}\frac{y^2y^{2k+2}}{1+y^{4k}}\right)^{\frac{1}{2}}\\
& \lesssim & b|A^*A\e|_{L^2}\frac{1}{b}+b|\frac{A\e}{y}|_{L^2}\frac{1}{b}\lesssim b^{k+1}\lesssim b^2.
\eee

{\it case $k=1$}: From \fref{controldt}, \fref{poitnwiseboundWboot}:
$$\left|(\partial_s\eb+b\Lambda \e,\Lambda  P_{B_0})\right|\lesssim \lambda |\partial_tw|_{L^{\infty}}|\Lambda P_{B_0}
|_{L^1}\lesssim  \frac{b^{2}}{b}\lesssim b.
$$
Next, from \fref{poitwisebsboot}:
$$\left|b_s(\frac{\partial P_{B_0}}{\partial b},\Lambda P_{B_0})\right|\lesssim \frac{b^{2}}{|\log b|}|\Lambda P_{B_0}|_{L^1}\lesssim \frac{b}{|\log b|}.$$ Finally, from \fref{vhkoehoebisbis} and the choice of orthogonality condition \fref{orthe}:
\bee
& & b|(\e,\Lambda P_{B_0}+\Lambda^2P_{B_0}) =  b|(A^*A\e,\Theta_b)|\\
& \lesssim & b|A^*A\e|_{L^2(y\leq 2B_0)}\frac{|\log b|}{b^2}+b\left(\int_{y\leq 2B_0}\frac{(A\e)^2}{y^2}\right)^{\frac{1}{2}}\left(\int_{y\leq 2B_0}\frac{y^2y^4}{1+y^4}\left(1+|\log b|^2{\bf 1}_{y\leq 2M}\right)\right)^{\frac{1}{2}}\\
& \lesssim & |A^*A\e|_{L^2(y\leq 2B_0)}\frac{|\log b|}{b}+b\frac{b^2}{|\log b|}\frac{|\log b|}{b^2}\lesssim b
\eee
 where we used \fref{harybis-non}, the improved localized bound \fref{pjvpovjpvjebis} and \eqref{eq:boundA}.\\
This concludes the proof of \fref{estimateione}.\\

This concludes the proof of Proposition \ref{lemmaalgebra}.


\subsection{Proof of Theorem \ref{mainthm}}


We are now in position to conclude the proof of Theorem \ref{mainthm}.\\

First recall that finite time blow up is a consequence of Proposition \ref{bootstrap}. This coupled with the standard scaling lower bound:
$$\lambda(t)\leq T-t$$ 
 implies that the rescaled time $s$ is global: $$\frac{ds}{dt}=\frac{1}{\lambda}\geq \frac{1}{T-t} \ \ \mbox{and hence} \ \  s(t)\to+\infty \ \ \mbox{as} \ \ t\to T.$$ 
 
{\bf step 1} Derivation of the scaling law.\\

We begin with with the proof of \fref{universallawkgeq}, \fref{universallawkgeq2}, which are consequences of \fref{firstcontrol}.\\
{\it Proof of \fref{universallawkgeq}}: {\it For $k\geq 2$} let $G,\mathcal I, \tilde{c}_k$ be given  by \fref{deffb}, \fref {defiun}, \fref{valuecp} and $$\mathcal J=G+\mathcal I.$$ From \fref{estkeyfb}, \fref{estimateione}, \fref{firstcontrol} we have that:
\be
\label{estimationun}
\mathcal J(b)=b|\Lambda Q|_{L^2}^2+o(b) \ \ \mbox{and} \ \ J_s+\tilde{c}_kb^{2k}=o(b^{2k}).
\ee
In particular, this yields: $$\mathcal J_s+\tilde{c}_k\left(\frac{\mathcal J}{|\Lambda Q|_{L^2}^2}\right)^{2k}=o(\mathcal J^{2k}).$$ Dividing by $\mathcal J^{2k}$, which is strictly positive by \fref{estimationun}, \fref{controllambdaboot}, and integrating in $s$ yields:$$\frac{1}{(2k-1)\mathcal J^{2k-1}(s)}=\frac{1}{(2k-1)\mathcal J^{2k-1}(s_0)}+\frac{\tilde{c}_k}{|\Lambda Q|_{L^2}^{4k}}s+o(s).$$ Together with \fref{estimationun}, this provides the asymptotics:
\be
\label{lawforb}
b(s)=\left(\frac{|\Lambda Q|^2_{L^2}}{(2k-1)\tilde{c}_ks}\right)^{\frac{1}{2k-1}}(1+o(1)) \ \ \mbox{as} \ \ s\to +\infty.
\ee 
We now integrate the law for the scaling parameter $-\lsl=b$ to obtain:
$$-\log \lambda(s)=\frac{2k-1}{2k-2}\left(\frac{|\Lambda Q|^2_{L^2}}{(2k-1)\tilde{c}_k}\right)^{\frac{1}{2k-1}}s^{\frac{2k-2}{2k-1}}(1+o(1)) \ \ \mbox{as} \ \ s\to +\infty.$$ In particular, taking into account \fref{lawforb}:
$$b=\frac{d_k}{|\log \lambda|^{\frac{1}{2k-2}}}(1+o(1))\ \ \mbox{with} \ \ d_k=\left(\frac{|\Lambda Q|_{L^2}^2}{(2k-2)\tilde{c}_k}\right)^{\frac{1}{2k-2}}.$$ As a result $\lambda$ satisfies the following differential equation: 
\be
\label{estlawbinterm}
-\lambda_t=b=\frac{d_k}{|\log \lambda|^{\frac{1}{2k-2}}}(1+o(1))  \ \ \mbox{with} \ \ \lambda(t)\to 0 \ \ \mbox{as} \ \ t\to T.
\ee 
Integrating this in time yields: $$\lambda(t)=\frac{d_k(T-t)}{|\log (T-t)|^{\frac{1}{2k-2}}}(1+o(1)).$$ This gives \fref{universallawkgeq}.\\
{\it Proof of \fref{universallawkgeq2}}: Let $k=1$, then \fref{estkeyfb}, \fref{estimateione}, \fref{firstcontrol} imply: 
\be
\label{estimationunbis}
\mathcal J(b)=4b|\log b|+O(b) \ \ \mbox{and} \ \ \mathcal J_s+\frac{\mathcal J ^{2}}{8|\log (\mathcal J/|\log {\mathcal J}|)|^2}=O(\frac{\mathcal J^{2}}{|\log \mathcal J|^3}).
\ee
Let 
$$
4\beta=\frac{\mathcal J}{|\log \mathcal J|}-\frac{\mathcal J}{|\log \mathcal J|^2}\log|\log\mathcal J|,\qquad
\log\beta=\log{\mathcal J}-\log|\log \mathcal J|+O(1)
$$
so that 
\begin{align}\notag
4\beta &=\frac{4b|\log b|+O(b)}{|\log b+\log|\log b|+O(1)|}-\frac{4b|\log b|+O(b)}{(\log b+\log|\log b|+O(1))^2}(\log|\log b|+
O(\frac{\log|\log b|}{|\log b|}))\\ &=4b+O(\frac b{|\log b|})\label{eq:beta-b}
\end{align}
We compute
\begin{align*}
&4\beta_s=\frac{\mathcal J_s}{|\log \mathcal J|}(1-\frac {\log|\log\mathcal J|}{|\log\mathcal J|}) + \mathcal J_s O\left(\frac 1{|\log\mathcal J|^2}\right),\\&
\frac {16\beta^2}{|\log\beta|}=\frac {\mathcal J^2}{|\log\mathcal J|^2|\log(\mathcal J/|\log \mathcal J|)|}-
\frac {2\mathcal J^2\log|\log\mathcal J|}{|\log\mathcal J|^3|\log(\mathcal J/|\log \mathcal J|)|}+O(\frac{\mathcal J^{2}}{|\log \mathcal J|^4})
 \end{align*}
and therefore
\begin{align*}
4\beta_s+\frac {2\beta^2}{|\log\beta|^2}&=
-\frac{\mathcal J ^{2}}{8|\log \mathcal J|\,|\log (\mathcal J/|\log {\mathcal J}|)|^2}(1-\frac {\log|\log\mathcal J|}{|\log\mathcal J|})+\frac {\mathcal J^2}{8|\log\mathcal J|^2|\log(\mathcal J/|\log \mathcal J|)|}\\ &-
\frac {\mathcal J^2\log|\log\mathcal J|}{4|\log\mathcal J|^3|\log(\mathcal J/|\log \mathcal J|)|}+
O(\frac {\beta^2}{\log\beta^2})\\&= -\frac{\mathcal J ^{2}}{8|\log \mathcal J|^3}(1-3\frac {\log|\log\mathcal J|}{|\log\mathcal J|})\\&+\frac{\mathcal J ^{2}}{8|\log \mathcal J|^3}(1-3\frac {\log|\log\mathcal J|}{|\log\mathcal J|})+O(\frac {\beta^2}{\log\beta^2})=O(\frac {\beta^2}{\log\beta^2})
\end{align*}
\renewcommand\b{\beta}
To solve the problem 
$$\b_s=-\frac{\b^2}{2|\log \b|}+O(\frac{\b^2}{|\log \b|^2})$$ we multiply by 
$\frac{|\log \b|}{\b^2}$ so that $$\frac{\b_s\log \b}{\b^2}=\frac{1}{2}+O(\frac{1}{|\log \b|}).$$ Now $$(\frac{\log u}{u}+\frac{1}{u})'=-\frac{\log u}{u^2}$$ and thus $$-\frac{\log \b+1}{\b}=\frac{s}{2}+O\left(\int_0^s\frac{d\tau}{|\log \b|}\right).$$ To leading order, this leads to: 
$$\b=\frac{2\log s}{s}(1+o(1)),\qquad\log\b=\log\log s-\log s+O(1)$$ 
from which 
\be
\label{firstlaw}
\frac{-\log \b}{\b}=\frac{s}{2}\left(1+O\left(\frac{1}{\log s}\right)\right), \qquad \b=\frac{-2\log \b}{s}\left(1+O\left(\frac{1}{\log s}\right)\right).
\ee 
Therefore,
$$
\b
=  \frac{2\log s}{s}-2\frac{\log\log s}{s}+O\left(\frac{1}{s}\right).
$$
Using \eqref{eq:beta-b} we also conclude that 
\be\label{ceieiy}
b
=  \frac{2\log s}{s}-2\frac{\log\log s}{s}+O\left(\frac{1}{s}\right).
\ee
We now integrate the law for $\lambda$: $$-\lsl=b=\frac{2\log s}{s}-2\frac{\log\log s}{s}+O\left(\frac{1}{s}\right)$$ resulting
in 
$$-\log(\lambda)=(\log s)^2-2(\log s) \log\log s+O(\log s)=(\log s)^2\left(1-2\frac{\log \log s}{\log s}+O\left(\frac{1}{\log s}\right)\right)$$ which implies: 
\be
\label{lawfors}
\sqrt{-\log \lambda}=\log s\left(1-\frac{\log \log s}{\log s}+O\left(\frac{1}{\log s}\right)\right)=\log s-\log\log s+O(1).
\ee 
and thus
\be\label{eq:lambdas}
e^{\sqrt{-\log \lambda}+O(1)}=\frac s{\log s},\qquad s= \sqrt{-\log \lambda} e^{\sqrt{-\log \lambda}+O(1)}
\ee
We now observe from (\ref{ceieiy}):   
\be
\label{choegoege}
\sqrt{-\log \lambda}=\frac{b s}{2}+O(1)=-\frac{\lambda_t}{2}s+O(1)
\ee
 and thus $$-\frac{\lambda_t}{\sqrt{-\log \lambda}}s=2+o(1) .$$  
 Taking into account \eqref{eq:lambdas}
 gives the differential equation for $\lambda$: 
 \be
 \label{eq:del}
 -\lambda_te^{\sqrt{|\log\lambda|}+O(1)}=2+o(1) \ \ \mbox{and equivalently} \ \  -\lambda_te^{\sqrt{|\log\lambda|}}=e^{O(1)}.
 \ee Integrating this in time gives: 
 \be\label{eq:lam}
 \lambda(t)=(T-t)e^{-\sqrt{|\log (T-t)|}+O(1)}.
 \ee

It remains to prove the strong convergence of the excess of energy \fref{convustarb} which easily implies the quantization of the focused energy \fref{qunitoief}.\\

{\bf step 2} Sharp derivation of the $b$ law.\\
 
 Let us start with the following slightly  different control on $b$:
\be
\label{refinedlaw}
b(t)=\frac{\lambda(t)}{T-t}(1+o(1)) \ \ \mbox{as} \ \ t\to T.
\ee
For $k\geq 2$,  this follows directly from  \fref{universallawkgeq}, \fref{estlawbinterm}. We need to be more careful for $k=1$. Indeed, \eqref{eq:del} and \fref{eq:lam} imply:
\be
\label{estbkequalun}
b(t)=O(1)e^{-\sqrt{|\log(T-t)}},
\ee
but this together with \fref{eq:lam} is not sufficient to yield \fref{refinedlaw}. However, we compute:
\bee
\int_t^Tb^2 &= & \int_t^T-b\lambda_t=b(t)\lambda(t)+\int_t^T\lambda b_t\\
& =& b(t)\lambda(t)+\int_t^Tb_s= b(t)\lambda(t)+o\left(\int_t^Tb^2\right)
\eee
where we used \fref{poitwisebsboot} in the last step. Hence: 
\be
\label{estzero}
\frac{1}{b(t)\lambda(t)}\int_t^Tb^2=1+o(1) \ \ \mbox{as} \ \ t\to T.
\ee
On the other hand, 
\bea
\label{etscniocnonc}
\nonumber \left|\frac{1}{(T-t)b^2(t)}\int_t^Tb^2-1\right|& = & \frac{2}{(T-t)b^2(t)}\left|\int_t^Tbb_t(T-\tau)\right|\\
& \lesssim & 
\frac{1}{(T-t)b^2(t)}\int_t^T\frac{b^2}{|\log b|}\frac{b(T-\tau)}{\lambda(\tau)}d\tau.
\eea
We now observe from \fref{estimationunbis} that $$\forall \tau \in [t,T), \ \ \frac{b^2(\tau)}{|\log b(\tau)|}\leq 2 \frac{b^2(t)}{|\log b(t)|}$$ and hence \fref{etscniocnonc} yields the bound:
\be
\label{cbeooevniohvro}
\left|\frac{1}{(T-t)b^2(t)}\int_t^Tb^2-1\right|\lesssim \frac{1}{(T-t)|\log b(t)|}\int_t^T\frac{b(T-\tau)}{\lambda(\tau)}d\tau.
\ee
We now claim 
\be
\label{voecheohoehe}
\frac{1}{(T-t)|\log b(t)|}\int_t^T\frac{b(T-\tau)}{\lambda(\tau)}d\tau=o(1) \ \ \mbox{as} \ \ t\to T.
\ee
Assume \fref{voecheohoehe}, then \fref{estzero} and \fref{cbeooevniohvro} yield
$$\int_t^Tb^2=b\lambda(1+o(1))=(T-t)b^2(1+o(1))$$ which implies \fref{refinedlaw}.\\
{\it Proof of \fref{voecheohoehe}}: We compute:
\be
\label{xbzc}
\int_t^T\frac{b(T-\tau)}{\lambda(\tau)}d\tau =  -\int_t^T\frac{\lambda_t(T-\tau)}{\lambda(\tau)}d\tau
=(T-t)\log\lambda(t)-\int_t^T\log \lambda d\tau.
\ee
We now substitute \fref{universallawkgeq2} which implies $$\log \lambda(t)=\log (T-t)-\sqrt{|\log(T-t)|}+O(1)$$ and derive from \fref{xbzc} after some explicit integration by parts:
$$
\int_t^T\frac{b(T-\tau)}{\lambda(\tau)}d\tau=O((T-t)) \ \ \mbox{as} \ \ t\to T.
$$
We hence conclude from \fref{estbkequalun} that:
$$\frac{1}{(T-t)|\log b(t)|}\int_t^T\frac{b(T-\tau)}{\lambda(\tau)}d\tau=o\left(\frac{1}{|\log b(t)|}\right)=o(1) \ \ \mbox{as} \ \ t\to T,
$$
and \fref{voecheohoehe} is proved.\\

{\bf step 3} Strong convergence of $(w,\partial_tw)$ in $H$.\\

We are now in position to conclude the proof of \fref{convustarb} which is a consequence of the sharp asymptotics \fref{universallawkgeq}, \fref{universallawkgeq2} and \fref{refinedlaw} and the control of the excess of energy \fref{poitnwiseboundWboot}.\\
Statement \fref{convustarb} is equivalent to the existence of the strong limit for $(w(t),\partial_tw(t))$ in $\mathcal H$ as $t\to T$.\\
Let  $\zeta$ be a cut-off function with $\zeta(r)=0$ for $r\leq 1$ and $\zeta(r)=1$ for $r\geq 2$ and 
let $\zeta_R(r)=\zeta(Rr)$. The non-concentration of energy of the full solution $u$ outside the origin is well known and follows by a simple domain
of dependence argument combined with the results in \cite{ST}.  Therefore, using the decomposition \fref{defet} 
we obtain existence of $u^*,g^*$ such that 
\be
\label{estcoljeorij}
\forall R>0, \ \ \|\zeta_R(w(t)-u^*), \zeta_R(\partial_tw-g^*)\|_{\mathcal H}\to 0 \ \ \mbox{as} \ \ t\to T.
\ee 
The proof of the strong convergence \fref{convustarb} is now equivalent to the non-concentration of the energy for $w$ or equivalently: 
\be
\label{limitieequa}
E(u^*,g^*)=\lim_{t\to T}E(w(t),\partial_t w(t)).
\ee
{\it Proof of \fref{limitieequa}}: We adapt the argument from \cite{MR5}. For $t\in[0.T)$ define $$R(t)=B_1(t)\lambda(t).$$ and $$E_R(u,v)=\int\zeta_R \left[v^2+(\partial_ru)^2+k^2\frac{g^2(u)}{r^2}\right].$$ 
Integrating by parts using the equation \fref{equation}, we compute:
$$ \left|\frac{d}{d\tau}E_{R(t)}(u(\tau),\partial_tu(\tau))\right|\lesssim  \frac{1}{R(t)} \int_{R(r)\leq r\leq 2R(t)}\left[(\partial_t u)^2+(\partial_ru)^2+k^2\frac{g^2(u)}{r^2}\right]\lesssim \frac{1}{R(t)},$$
where in the last step we used conservation of energy. Integrating this from $t$ to $T$ using \fref{estcoljeorij} yields: 
\be
\label{coschinlhkdl}
\left|E_{R(t)}(u^*,g^*)-E_{R(t)}(u(t),\partial_tu(t))\right|  \lesssim  \frac{T-t}{R(t)}=\frac{T-t}{\lambda(t)B_1(t)}.
\ee
We now observe from \fref{defbnot}, \fref{refinedlaw} that:
$$ \frac{T-t}{\lambda(t)B_1(t)}=\frac{b(t)(T-t)}{\lambda(t)}\frac{1}{b(t)B_1(t)}\to 0 \ \ \mbox{as} \ \ t\to T.$$
Letting $t\to T$ in \fref{coschinlhkdl}, we conclude:
$$
E_{R(t)}(u(t),\partial_tu(t))\to E(u^*,g^*) \ \ \mbox{as} \ \ t\to T.
$$
\fref{limitieequa} now follows from:
\be
\label{choeooeheoheoc}
E_{R(t)}(u(t),\partial_tu(t))-E(w(t),\partial_tw(t))\to 0  \ \ \mbox{as}\ \ t\to T
\ee
Indeed, observe that:
$$\left|E_{R(t)}(u(t),\partial_tu(t))-E(w(t),\partial_tw(t))\right|  \lesssim  \int_{R(t)\leq r\leq 2R(t)}\left[(\partial_tw)^2+(\partial_rw)^2+k^2\frac{g^2(w)}{r^2}\right].
$$
For the first term, we have from \fref{controldt}, \fref{poitnwiseboundW}:
\bea
\label{vhihoev}
 \int_{r\leq 2R(t)}(\partial_tw)^2\lesssim R^2(t)\int\frac{(\partial_tw)^2}{r^2}\lesssim B_1^2(t)\mathcal E(t)\lesssim \frac{|\log b|^4}{b^2}b^4\to 0 \ \ \mbox{as} \ \ t\to T.
\eea
Similarily, from \fref{estdeux}:
\bea
\label{cnekonoene}
\nonumber \int_{2 r\leq 2R(t)}\left[(\partial_rw)^2+\frac{g^2(w)}{r^2}\right]& \lesssim & R^2|\log b|^2\int_{r\leq 2R(t)}(\nabla W)^2\\
& \lesssim & \frac{|\log b|^4}{b^2}\mathcal E(t)\to 0 \ \ \mbox{as} \ \ t\to T.
\eea
This concludes the proof of \fref{choeooeheoheoc} and \fref{limitieequa}.\\

{\bf step 2} Proof of the quantization of the blow up energy \fref{qunitoief}.\\

From the conservation of the Hamiltonian: $$E_0=E\left((P_{B-1})_{\lambda}+w, \partial_t\left[(P_{B_1})_{\lambda}+w\right]\right).$$ We develop this identity. The construction of $P_B$ implies from direct check $$E\left((P_{B_1})_{\lambda}, \partial_t\left[(P_{B_1})_{\lambda}\right]\right)\to E(Q) \ \ \mbox{as} \ \ t\to T$$ and the crossed term is easily proved to converge to zero using \fref{vhihoev}, \fref{cnekonoene} and the space localization of $P_{B_1}$.

 \fref{limitieequa} now yields \fref{qunitoief}.\\

This concludes the proof of Theorem \ref{mainthm}.


\begin{appendix}

\section{Inversion of $H$}


We formulate the following lemma about solutions of the inhomogeneous problem $Hv=h$ with 
the linear operator
$$H=-\Delta +k^2\frac{f'(Q)}{y^2}$$ 
associated to $Q$. Hamiltonian $H$ is a standard Schr\"odinger operator with the kernel generated by the $\dot{H}^1$ scaling invariance: $$\mbox{Ker}(H)=\mbox{span}(\Lambda Q),$$ see \cite{RS} for a further introduction to the spectral structure of $H$. The following Lemma is elementary but crucial for the construction of $Q_b$:

\begin{lemma}[Inversion of $H$]
\label{inversionl}
For $k\ge 4$ let $1\leq j\leq \frac{k}{2}-1$ and let $h_j(y)$ be a smooth function with 
\be
\label{cancellation}
(h_j,\Lambda Q)=0.
\ee
and the following asymptotics:
\be
\label{asymptotf}
h_j(y)=\left\{ \begin{array}{ll}
	 y^k(e_j+O(y^2)) \ \ \mbox{as} \ \ y\to 0\\
	 d_j\frac{y^{2j}}{y^{k}}(1+\frac{f_j}{y^2}+O(\frac 1{y^{3}}))\ \ \mbox{as} \ \ y\to +\infty,\\
	 \end{array}
	 \right .
\ee
Then there exists a smooth solution $Hv_{j+1}=h_j$ with 
\be
\label{scalruj}
(v_{j+1},\chi_{M}\Lambda Q)=0
\ee
and the following asymptotics:\\
(i) for $j+1<\frac{k}{2}$, for $0\leq m\leq 2$,
\be
\label{asymptotuj}
\frac{d^mv_{j+1}}{dy^m}(y)=\left\{ \begin{array}{ll}
	 y^{k-m}(\alpha_{j+1,m}+O(y^{2})) \ \ \mbox{as} \ \ y\to 0,\\
	 \beta_{j+1}\frac{d^m y^{2(j+1)-k}}{dy^m}\left[1+\frac{\gamma_{j+1}}{y^2}+O(\frac{1}{y^{3}})\right]\ \ \mbox{as} \ \ y\to +\infty,\\
	 \end{array}
	 \right .
\ee
(ii) for $j+1=\frac{k}{2}$ with $k$ even: 
\be
\label{asymptotujkeven}
v_{j+1}(y)=\left\{ \begin{array}{ll}
	 y^k(\alpha_{j+1}+O(y^{2}))\ \ \mbox{as} \ \ y\to 0,\\
	\beta_{j+1} \left[1+\frac{\gamma_{j+1}}{y^2}+O(\frac{1}{y^{3}})\right]\ \ \mbox{as} \ \ y\to +\infty,\\
	 \end{array}
	 \right .
\ee
For $1\le m\le 2$
\be
\label{asymptotujkevenderivative}
\frac {d^mv_{j+1}(y)}{dy^m}=\left\{ \begin{array}{ll}
	 y^{k-m}(\alpha_{j+1,m}+O(y^2))\ \ \mbox{as} \ \ y\to 0,\\
\beta_{j+1}\gamma_{j+1}\frac{d^my^{-2}}{dy^m}+O(\frac{1}{y^{3+m}})\ \ \mbox{as} \ \ y\to +\infty,\\
	 \end{array}
	 \right .
\ee
Moreover, if
 \be
\label{asymptotf'}
h'_j(y)=\left\{ \begin{array}{ll}
	 ky^{k-1}(e_j+O(y^2)) \ \ \mbox{as} \ \ y\to 0\\
	 d_j(2j-k)\frac{y^{2j-1}}{y^{k}}(1+\frac{f_{j}}{y^2}+O(\frac 1{y^{3}}))\ \ \mbox{as} \ \ y\to +\infty,\\
	 \end{array}
	 \right .
\ee
then \eqref{asymptotuj}, \eqref{asymptotujkevenderivative} hold for $m=3$. The constants $\alpha_{j+1},
\alpha_{j+1,m},\gamma_{j+1}$ implicitly depend on $d_j,e_j$ and $\beta_{j+1}$ can be found from the relation:
\be
\label{recurrencedj}
\beta_{j+1}=-\frac{d_j}{4(j+1)(k-(j+1))}.
\ee
\end{lemma}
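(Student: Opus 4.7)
The backbone is variation of parameters for the radial Schr\"odinger operator $H$ viewed as an ODE in $y>0$. Since $HJ=0$ with $J=\Lambda Q$ (see \eqref{cancnelalq}), a second linearly independent homogeneous solution $\Gamma$ is determined by the Wronskian identity $J\Gamma'-J'\Gamma=c/y$. The explicit form of $Q$ via \eqref{formulakey} gives
$$
J(y)=(c+O(y^2))\, y^k\text{ as }y\to 0,\qquad J(y)\sim c'\, y^{-k}\text{ as }y\to\infty,
$$
with $\Gamma$ exhibiting the reverse pair of exponents (and with sub-leading corrections computable from the expansion of $Q$). I will define
$$
\tilde v_{j+1}(y)=\Gamma(y)\int_0^y J(x) h_j(x)\, x\,dx-J(y)\int_1^y \Gamma(x) h_j(x)\, x\,dx,
$$
in line with formula \eqref{defulinearsolver}; a direct verification shows $H\tilde v_{j+1}=h_j$. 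The final profile is then $v_{j+1}=\tilde v_{j+1}-c_M\Lambda Q$ with $c_M=(\tilde v_{j+1},\chi_M\Lambda Q)/(\Lambda Q,\chi_M\Lambda Q)$, which satisfies \eqref{scalruj} by construction.

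\medskip

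Near the origin, the hypothesis $h_j(x)=e_jx^k+O(x^{k+2})$ together with $J(x)\sim x^k$, $\Gamma(x)\sim x^{-k}$ gives $\Gamma(y)\!\int_0^y Jh_jx\,dx=O(y^{k+2})$, and analogously $J(y)\!\int_1^y \Gamma h_jx\,dx=O(y^{k+2})$ after the harmless shift of the integration base-point; subtracting $c_M\Lambda Q$ then restores the $y^k$ leading behavior in \eqref{asymptotuj}, \eqref{asymptotujkeven}. At infinity, the orthogonality \eqref{cancellation} is decisive: I will rewrite $\int_0^y Jh_jx\,dx=-\int_y^\infty Jh_jx\,dx$, and for $j+1<k/2$ the integrand behaves like $d_j x^{2j-2k+1}$ with $2j-2k+1<-1$, so
$$
\int_0^y Jh_jx\,dx\sim -\frac{d_j}{2k-2j-2}\, y^{2j-2k+2},
$$
which multiplied by $\Gamma(y)\sim y^k$ produces an $O(y^{2(j+1)-k})$ term. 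The second integral $-J(y)\!\int_1^y\Gamma h_jx\,dx\sim -\frac{d_j}{2j+2}y^{2(j+1)-k}$ combines with the first to give the claimed leading coefficient $\beta_{j+1}$ in \eqref{recurrencedj}. The $\gamma_{j+1}/y^2$ correction in \eqref{asymptotuj} is then extracted by retaining the $f_j/x^2$ subleading piece in $h_j$ as well as the subleading terms in the expansions of $J,\Gamma$. This bookkeeping of $1/y^3$-sized remainders is the most tedious part of the argument.

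\medskip

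The hardest step is the boundary case $j+1=k/2$ and the derivative estimates. When $2j-2k+1=-1$, the integral $\int_y^\infty Jh_jx\,dx$ is logarithmically delicate, but the normalization of $h_j$ produces a finite leading contribution that, multiplied by $\Gamma(y)\sim y^k$, yields the constant $\beta_{j+1}$ in \eqref{asymptotujkeven}. The subtle feature in \eqref{asymptotujkevenderivative} is that although $v_{j+1}\to\beta_{j+1}$ at infinity and the next term is $\gamma_{j+1}/y^2$, its derivatives beat the naive expectation by one power: this is the cancellation $D\Lambda(1)=D\Lambda(1/y)=0$ from \eqref{keycancleaation}, which forces $\partial_y^m v_{j+1}$ to pick up the $1/y^{3+m}$ tail from the $O(1/x^3)$ remainder in \eqref{asymptotf}. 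For the highest-order derivative $m=3$ I will differentiate the variation-of-parameters formula once directly — which is legitimate because \eqref{asymptotf'} supplies the needed pointwise control on $h_j'$ — and then trade the remaining two derivatives against the potential term via the equation $Hv_{j+1}=h_j$, rather than attempting to differentiate a rough asymptotic expansion three times. The main obstacle is precisely this derivative-level cancellation, which prevents a blind term-by-term differentiation and demands that the asymptotic expansion be carried out to one extra order before differentiating.
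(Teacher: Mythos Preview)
Your approach is exactly the paper's: variation of parameters with $J=\Lambda Q$ and $\Gamma$, the integral formula \eqref{defulinearsolver}, the use of \eqref{cancellation} to flip $\int_0^y\to -\int_y^\infty$ at infinity, and the final subtraction of $c_M\Lambda Q$ to enforce \eqref{scalruj}. The leading-coefficient computation giving \eqref{recurrencedj} is also the same.

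Two points where your write-up is confused and would not go through as written. First, in the boundary case $j+1=k/2$ you claim $2j-2k+1=-1$ and speak of a ``logarithmically delicate'' integral. This is a miscount: with $j=k/2-1$ the integrand $Jh_jx\sim x^{2j-2k+1}=x^{-k-1}$, while $\Gamma h_jx\sim x^{2j+1}=x^{k-1}$; both integrals are perfectly power-behaved and each, after multiplication by $\Gamma(y)\sim y^{k}$ or $J(y)\sim y^{-k}$, contributes a genuine constant at infinity. There is no logarithm anywhere. Second, the derivative cancellation behind \eqref{asymptotujkevenderivative} is \emph{not} the identity $D\Lambda(1)=D\Lambda(1/y)=0$ (that cancellation is used elsewhere, in the $Q_b$ construction). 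Here the mechanism is algebraic: differentiating the two pieces of \eqref{defulinearsolver} separately, one finds
\[
-\Gamma'(y)\int_y^\infty Jh_jx\,dx=-\frac{d_j}{2ky}\bigl(1+O(y^{-2})\bigr),\qquad -J'(y)\int_1^y \Gamma h_jx\,dx=+\frac{d_j}{2ky}\bigl(1+O(y^{-2})\bigr),
\]
and the $1/y$ leading terms cancel exactly, leaving $w_{j+1}'=O(y^{-3})$. You should carry out this explicit computation rather than invoke \eqref{keycancleaation}. With these two corrections the plan is complete and matches the paper.
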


\vskip 1pc

\begin{proof} The proof relies on the accessibility  of the explicit expression for 
the Green's function of $H$. \\

{\bf step 1} Solving the linear equation.\\

From \eqref{eq:WQ} in the Wave Map case $Q$ has the following asymptotics
\be
\label{expansionqr}
Q(y)=\left\{\begin{array}{ll} 
		2y^k(1+O(y^{k})) \ \ \mbox{as}\ \ y\to 0,\\
		\pi-\frac{2}{y^k}(1+O(\frac{1}{y^{k}}))\ \ \mbox{as}\ \ y\to \infty.
		\end{array}
		\right .
\ee
and:
\be
\label{ezpansionLQ}
J=\Lambda Q=\left\{\begin{array}{ll} 
		2ky^k(1+O(y^{k}))\ \ \mbox{as}\ \ y\to 0,\\
		\frac{2k}{y^{k}}(1+O(\frac{1}{y^{k}}))\ \ \mbox{as}\ \ y\to \infty,	
			\end{array}
		\right .
\ee
Similarly, in the (YM) case ($k=2$, not covered by the Lemma) we find 
\be
\label{expansionqr'}
Q(y)=\left\{\begin{array}{ll} 
		(1+O(y^{k})) \ \ \mbox{as}\ \ y\to 0,\\
		(-1+O(\frac{1}{y^{k}}))\ \ \mbox{as}\ \ y\to \infty.
		\end{array}
		\right .
\ee
and:
\be
\label{ezpansionLQ'}
J=\Lambda Q=\left\{\begin{array}{ll} 
		-2ky^k(1+O(y^{k}))\ \ \mbox{as}\ \ y\to 0,\\
		-\frac{2k}{y^{k}}(1+O(\frac{1}{y^{k}}))\ \ \mbox{as}\ \ y\to \infty,	
			\end{array}
		\right .
\ee
Let now $$\G(y)=J(y)\int_{1}^y\frac{dx}{xJ^2(x)}$$ 

be the other (singular) element of the kernel of $H$, which can be found from the Wronskian relation: 
\be
\label{wronskian}
\G'J-\G J'=\frac{1}{y}.
\ee
From this we can easily find the asymptotics of $\G$: 
\be
\label{ezpansionLQH}
\G(y)=\left\{\begin{array}{ll} 
		-\frac{1}{4k^2y^k}(1+O(y^{k}))\ \ \mbox{as}\ \ y\to 0,\\
		\frac{y^k}{4k^2}(1+O(\frac{1}{y^{k}}))\ \ \mbox{as}\ \ y\to \infty,
			\end{array}
		\right .
\ee
in the (WM) case. In the (YM) case
\be
\label{ezpansionLQH'}
\G(y)=\left\{\begin{array}{ll} 
		\frac{1}{4k^2y^k}(1+O(y^{k}))\ \ \mbox{as}\ \ y\to 0,\\
		-\frac{y^k}{4k^2}(1+O(\frac{1}{y^{k}}))\ \ \mbox{as}\ \ y\to \infty,
			\end{array}
		\right .
\ee
 Using the method of variation of parameters and 
 (\ref{wronskian}), we find that a solution to $Hw_{j+1}=h_j$ is given by: 
 \be
 \label{defulinearsolver}
 w_{j+1}(y)=\G(y)\int_0^yh_j(x)J(x)xdx-J(y)\int_{1}^yh_j(x)\G(x)xdx.
 \ee
 
 {\bf Step 2} Asymptotics of $w_{j+1}$.\\
 
 We compute the asymptotics of $w_{j+1}$ near $+\infty$. In what follows we restrict our analysis to the
 (WM) case. For the first term in (\ref{defulinearsolver}), 
 we use \fref{cancellation}, (\ref{asymptotf}) to derive:
 \bee
 \G(y)\int_0^yh_j(x)J(x)xdx & = & -\G(y)\int_y^{+\infty}h_j(x)J(x)xdx\\
 & = & -\frac{y^k}{2k^2}\left(1+O(\frac{1}{y^{k}})\right)\int_y^{+\infty}x\frac{k}{x^k}\frac{d_jx^{2j}}{x^k}\left(1+\frac{f_{j}}{x^2}+O(\frac{1}{x^{3}})\right)dx\\
 & =  & -\frac{d_jy^k}{2k}\left(1+O(\frac{1}{y^{k}})\right)\int_y^{+\infty}\frac{x^{2j+1}}{x^{2k}}\left(1+\frac{f_{j}}{x^2}+O(\frac{1}{x^{3}})\right)dx\\
 & = & -\frac{d_j}{4k(k-(j+1))}\frac{y^{2(j+1)}}{y^k}\left(1+\frac{f^{(1)}_{j+1}}{y^2}+O(\frac{1}{y^{3}})\right)
 \eee
 In the above $f^{(1)}_{j+1}$ is a constant dependent only on $f_j, k$ and $j$.
 
For the second term, we estimate
 \bee
 -J(y)\int_{1}^yh_j(x)\G(x)xdx & = & -\frac{k}{y^k}\left(1+O(\frac{1}{y^{k}})\right)\int_1^{y}\frac{xx^k}{2k^2}\frac{d_jx^{2j}}{x^k}\left(1+\frac{f_{j}}{x^2}+O(\frac{1}{x^{3}})\right)dx\\
 & = & -\frac{d_j}{2ky^k}\left(1+O(\frac{1}{y^{k}})\right)\int_1^{y}x^{2j+1}\left(1+\frac{f_{j}}{x^2}+O(\frac{1}{x^{3}})\right)dx.
 \eee
 and (\ref{asymptotuj}), (\ref{asymptotujkeven}) and \fref{recurrencedj} follow for $y\to +\infty$.\\
 We compute the asymptotics of $v_{j+1}$ near the origin. First, 
 \bee
 \G(y)\int_0^yh_j(x)J(x)xdx & = & -\frac{1}{2k^2y^k}(1+O(y^{k}))\int_0^y x e_jx^kkx^k(1+O(x^{2}))dx\\
 & = & y^k (O(y^{2}))
 \eee
 For the second term in (\ref{defulinearsolver}), 
 \bee
 -J(y)\int_{1}^yh_j(x)\G(x)xdx & = & ky^k(1+O(y^{k}))\int_1^ye_jx^kx\frac{1}{2k^2x^k}(1+O(x^{2}))dx\\
 & = & \frac{e_j}{2k}y^k \left[-\int_0^1(x+O(x^2))dx+O(y^{2})\right]
 \eee
 and (\ref{asymptotuj}) and (\ref{asymptotujkeven}) follow for $v_{j+1}$ as $y\to 0$.\\
 
 {\bf step 3} Estimates for the derivatives.\\
 
For $2j<k-2$, the estimates for the derivatives (\ref{asymptotuj}) are derived similarily and left to the reader. For $k$ even and $j=\frac{k}{2}-1$, there holds an extra cancellation as $y\to +\infty$ leading to (\ref{asymptotujkevenderivative}) which we now exploit. Indeed, 
$$
w'_{j+1}(y)=-\G'(y)\int_{y}^{+\infty}h_j(x)J(x) xdx-J'(y)\int_1^yh_j(x)\G(x) xdx.$$ For the first term, 
\bee
-\G'(y)\int_{y}^{+\infty}h_j(x)J(x) xdx & = & -\frac{ky^{k-1}}{2k^2}\left(1+O(\frac{1}{y^{k}})\right)\int_y^{+\infty}\frac{kd_j x^{2j+1}}{x^{2k}}\left(1+\frac{f_{j}}{x^2}+O(\frac{1}{x^{3}})\right)dx\\
& = & -\frac{d_j}{2ky}\left(1+\frac{f^{(2)}_{j+1}}{y^2}+O(\frac{1}{y^{3}})\right).
\eee
Similarly, 
\bee
 -J'(y)\int_{1}^yh_j(x)H(x)xdx & = & \frac{k^2}{y^{k+1}}\left(1+O(\frac{1}{y^{k}})\right)\int_1^{y}\frac{d_jx^{2j+1}}{2k^2}\left(1+O(\frac{1}{x^{3}})\right)dx\\
 & = & \frac{d_j}{2ky}\left(1+\frac{f^{(3)}_{j+1}}{y^2}+O(\frac{1}{y^{3}})\right),
 \eee
resulting in the cancellation leading to (\ref{asymptotujkevenderivative}). 
The constants $f^{(2)}_{j+1}$,  $f^{(3)}_{j+1}$ depend only on $f_j, k$ and $j$.

The second derivative $w_{j+1}''$ is estimated using the equation and the asymptotics for $(w_{j+1},w_{j+1}')$, this is left to the reader.\\

 {\bf step 4} Satisfying the orthogonality condition.\\
 
 We now let $$v_{j+1}=w_{j+1}-\frac{(w_{j+1},\chi_{M}\Lambda Q)}{(\Lambda Q,\chi_{M}\Lambda Q)}\Lambda Q$$  so that (\ref{scalruj}) is satisfied. Moreover, $L(\Lambda Q)=0$ implies $Lv_{j+1}=Lw_{j+1}=f_j$. It now remains to observe  from (\ref{ezpansionLQ}) that the behavior of $v_{j+1}$ near the origin and $+\infty$ is the same as
 of $w_{j+1}$.

This concludes the proof of Lemma \ref{inversionl}.
\end{proof}



\section{Some linear estimates}


\begin{lemma}[Logarithmic Hardy inequalities]
\label{lemmaloghrdy}
$\forall R>2$, $\forall v\in\dot{H}^1_{rad}(\RR^2)$, there holds the following controls:
\be
\label{harfylog}
 \int_{y\le R} \frac{|v|^2}{y^2(1+|\log y|)^2}ydy\lesssim  \int_{y\le R} |\nabla v|^2,
\ee
\be
\label{estmedium}
|v|^2_{L^{\infty}(1\leq y\leq R)}\lesssim \int_{1\leq y\leq 2}|v|^2+R^2\int \frac{|\nabla v|^2}{y^2}ydy,
\ee
\be
\label{hardyone}
\int_{y\leq R}  |v|^2ydy\lesssim R^2\left(\int_{y\leq 2}|v|^2ydy+\log R\int_{y\leq R}|\nabla v|^2ydy\right),
\ee
\be
\label{harybis}
\int_{R\leq y\leq2 R}  \frac{|v|^2}{y^2}ydy\lesssim \int_{y\leq 2}|v|^2ydy+\log R\int_{y\leq 2R}|\nabla v|^2ydy.
\ee
\be
\label{harybis-non}
\int_{y\leq2 R}  \frac{|v|^2}{y^2}ydy\lesssim \log R \int_{y\leq 2}|v|^2ydy+\log^2 R\int_{y\leq 2R}|\nabla v|^2ydy.
\ee
\end{lemma}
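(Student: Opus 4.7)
My plan is to reduce all four bounds to one-dimensional Hardy-type estimates in the radial variable. The natural framework is the Emden--Fowler/logarithmic change of variables $t = \log y$, $V(t) = v(e^t)$, under which the radial Dirichlet energy flattens, $\int_0^R (v'(y))^2\, y\,dy = \int_{-\infty}^{\log R} V'(t)^2\,dt$, and the $L^2$-density $v^2\, y^{-1}\,dy$ becomes $V^2\,dt$. In these coordinates all four bounds become weighted 1D Hardy inequalities on $(-\infty,\log R]$ with the tacit vanishing $V(-\infty) = v(0) = 0$ built into $\dot H^1_r$.

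For \fref{estmedium} and \fref{hardyone} I would write $v(y_0) = v(1) + \int_1^{y_0} v'(y)\,dy$ and apply Cauchy--Schwarz in two different ways. The splitting $v'\,dy = (v'/\sqrt y)(\sqrt{y}\,dy)$ yields
\[
 \Bigl|\int_1^{y_0} v'\,dy\Bigr|^2 \le \Bigl(\int_1^{y_0}(v')^2/y\,dy\Bigr)\Bigl(\int_1^{y_0} y\,dy\Bigr) \le \tfrac{y_0^2}{2}\int \tfrac{|\nabla v|^2}{y^2}\,y\,dy,
\]
producing the $R^2$-weighted term needed for \fref{estmedium}, while the dual splitting $v'\,dy = (v'\sqrt{y})(dy/\sqrt y)$ produces the $\log R$-factor needed for \fref{hardyone}. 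In both cases the boundary contribution $v(1)^2$ is handled by averaging the elementary estimate $|v(1)-v(y_1)|^2 \le \int_1^2 (v')^2\,dy$ over $y_1 \in [1,2]$, which gives $v(1)^2 \lesssim \int_{1\le y\le 2} v^2 + \int_{y\le 2}|\nabla v|^2\, y\,dy$. Integrating the resulting pointwise bound against $y\,dy$ then yields \fref{hardyone}; taking the supremum and integrating against $dy/y$ on the annulus $R\le y \le 2R$ (of logarithmic length $\log 2$) yields \fref{harybis}.

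For \fref{harybis-non} on the full ball $y\le 2R$, I would integrate by parts using $1/y = (\log y)'$,
\[
 \int_0^{2R} \frac{v^2}{y}\,dy = v(2R)^2\log(2R) - 2\int_0^{2R} v v' \log y\,dy,
\]
where the boundary at $y=0$ vanishes because $v(0)=0$ kills $v^2\log y$ there. The $v(2R)^2\log(2R)$ piece is bounded using the pointwise bound from the previous step, while for the cross term Cauchy--Schwarz with $|\log y|\le \log(2R)$ on the outer region $y\ge 1$ and a use of \fref{harfylog} on the inner region $y\le 1$ absorb the $\int v^2/y\,dy$ term into the left side, producing the $\log^2 R$ factor displayed on the right.

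Finally, \fref{harfylog} in Emden--Fowler coordinates reads
\[
\int_{-\infty}^{\log R}\frac{V^2}{(1+|t|)^2}\,dt \;\lesssim\; \int_{-\infty}^{\log R} V'(t)^2\,dt,
\]
with $V(-\infty)=0$. I would split at $t=0$ and integrate by parts using $\pm 1/(1+|t|)$ as antiderivative of $1/(1+|t|)^2$; the cross term $\int VV'/(1+|t|)$ is absorbed into the left side via Cauchy--Schwarz with the arithmetic-geometric mean inequality, the boundary at $-\infty$ vanishes from $V(-\infty)=0$, the boundary at $t=\log R$ is dominated by the pointwise bound of the second paragraph, and the $V(0)^2=v(1)^2$ contribution is dominated via \fref{estmedium}. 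The main obstacle is precisely this circular dependence: the weight $(1+|t|)^{-2}$ in \fref{harfylog} is only borderline integrable at $\pm\infty$, so the boundary contributions cannot be dropped, and the argument intrinsically requires cross-pollination with the other three bounds together with the vanishing at the origin built into $\dot H^1_r$.
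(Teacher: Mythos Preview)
Your treatment of \fref{estmedium}, \fref{hardyone} and \fref{harybis} is essentially the paper's proof: pick (or average to find) a base point $y_0\in[1,2]$ with $|v(y_0)|^2\lesssim\int_{1\le y\le 2}|v|^2$, write $v(y)=v(y_0)+\int_{y_0}^y v'$, and apply Cauchy--Schwarz with the two splittings $v'\,dy=(v'/\sqrt y)(\sqrt y\,dy)$ and $v'\,dy=(v'\sqrt y)(dy/\sqrt y)$ to produce the $R^2$ and $\log R$ factors. Nothing more is needed here.

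For \fref{harfylog}, the integration-by-parts mechanism is the same as the paper's, but your handling of the boundary terms is both more complicated and rests on a wrong premise. The claim that ``$V(-\infty)=v(0)=0$ is built into $\dot H^1_r(\RR^2)$'' is false: radial $\dot H^1$ functions in two dimensions need not vanish (or even be bounded) at the origin. The paper avoids this entirely: from $|v(\e)|\le |v(1)|+\int_\e^1|v'|\,dy$ and Cauchy--Schwarz one gets $|v(\e)|^2\lesssim |v(1)|^2+|\log\e|\int_{y\le R}|\nabla v|^2$, so that
\[
\limsup_{\e\to 0}\frac{|v(\e)|^2}{1+|\log\e|}\lesssim \int_{y\le R}|\nabla v|^2,
\]
with the $|v(1)|^2$ contribution killed by the $|\log\e|$ in the denominator. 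The boundary term at $y=R$ comes with the good sign and is simply dropped. So there is no ``circular dependence'' on \fref{estmedium}: the proof of \fref{harfylog} is self-contained.

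For \fref{harybis-non} your route (integrate by parts using $1/y=(\log y)'$ and absorb) genuinely differs from the paper's, and your version has a real difficulty near $y=0$: the boundary term $v(\e)^2\log\e$ need not vanish, and in the cross term $\int_0^1 vv'\log y\,dy$ the weight $|\log y|$ is unbounded, so you cannot absorb it into $\int_0^1 v^2/y\,dy$ with a small constant. The paper instead derives \fref{harybis-non} in one line by summing \fref{harybis} over the dyadic shells $R'=2^j$, $0\le j\lesssim\log R$: each shell contributes the right-hand side of \fref{harybis}, and there are $O(\log R)$ shells, producing exactly the extra $\log R$ factor. This is both shorter and avoids the origin issue.
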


\begin{proof} Let $v$ smooth. To prove \fref{harfylog}, let $f(y)=-\frac{{\bf e}_y}{y(1+|\log (y)|)}$ so that $\nabla \cdot f=\frac{1}{y^2(1+|\log y|)^2}$, and integrate by parts to get: 
\bea
\label{stepwzofp}
\nonumber & & \int_{\e\le y\le R} \frac{|v|^2}{y^2(1+|\log y|)^2}ydy  =  \int_{\e\leq y\le R} |v|^2\nabla \cdot f ydy\\
\nonumber & = &- \left[\frac{|v|^2}{1+|\log (y)|}\right]_{\e}^R +2\int_{y\le R} v\partial_y v \frac{1}{y(1+|\log y|)}ydy\\
& \lesssim &  \frac{|v(\e)|^2}{1+|\log \e|}+\left(\int_{y\le R} \frac{|v|^2}{y^2(1+|\log y|)^2}ydy\right)^{\frac{1}{2}}\left(\int_{y\le R} |\nabla v|^2ydy\right)^{\frac{1}{2}}.
\eea
On the other hand, $$|v(\e)|^2\lesssim |v(1)|^2+\left(\int_{\e\leq y \leq 1}|v'(y)|dy\right)^2\lesssim |v(1)|+|\log \e|\int_{y\leq R}|\nabla v|^2ydy$$ and hence: $$\limsup_{\e\to 0} \frac{|v(\e)|^2}{1+|\log \e|}\lesssim \int_{y\leq R}|\nabla v|^2ydy.$$ Injecting this into \fref{stepwzofp} and letting $\e\to 0$ yields \fref{harfylog}. To prove \fref{estmedium}, let $y_0\in[1,2]$ such that $$|v(y_0)|^2\lesssim \int_{1\leq y\leq 2}|v|^2ydy.$$ Then: $\forall y\in [1,R]$, $$|v(y)|=|v(y_0)+\int_{y_0}^yv'(r)dr|\lesssim |v(y_0)|+R\left(\int \frac{|\nabla v|^2}{y^2}ydy\right)^{\frac{1}{2}},$$ and \fref{estmedium} follows. Similarily, $$|v(y)|=|v(y_0)+\int_{y_0}^yv'(r)dr|\lesssim |v(y_0)|+\left(\int_{y\leq R} |\nabla v|^2ydy\right)^{\frac{1}{2}}\sqrt{\log R},$$ and \fref{hardyone}, \fref{harybis} follow by squaring this estimate and integrating in $R$. Finally,
\eqref{harybis-non} follows from \fref{harybis} by summing over dyadic $R$-intervals.
\end{proof}

\begin{lemma}[Hardy type estimates with $A$]
\label{lemmahardy1}
Let $M\geq 1$ fixed. Then there exists $c(M)>0$ such that the following holds true. Let $u\in H^1$ with $$(u,\chi_M\Lambda Q)=0,$$ then:\\ 
(i) \be\label{eq:orb}
\int \left (|\nabla u|^2+\frac {|u|^2}{y^2}\right) \leq C(M) \int |Au|^2
\ee
(ii) if 
\be
\label{eoheogh}
\int \frac{|u|^2}{y^4}+\int\frac{|\nabla u|^2}{y^2}<+\infty;
\ee 
then:
\be
\label{estun}
\ \ \int \frac{|\nabla u|^2}{y^2}+\int \frac{|u|^2}{y^4}\leq c(M)\int\frac{|Au|^2}{y^2};
\ee
(iii) if 
\be
\label{ionoeghe}
\int \frac{|u|^2}{y^4(1+|\log y|)^2}+\int |\nabla (Au)|^2<+\infty,
\ee
then:
\bea
\label{estdeux}
& & \nonumber \int \frac{|\nabla u|^2}{y^2(1+|\log y|)^2}+ \int \frac{|u|^2}{y^4(1+|\log y|)^2} \\
\nonumber & \leq & c(M)\left[\int\frac{|Au|^2}{y^2(1+y^2)}+\int|\nabla (Au)|^2ydy\right]\\
& \lesssim & c(M)|A^*Au|_{L^2}^2.
\eea
\end{lemma}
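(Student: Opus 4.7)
The plan is to handle part (i) by a standard compactness/contradiction argument rooted in the spectral structure of $H=A^*A$, then derive (ii) and (iii) from direct weighted integration-by-parts identities for $|Au|^2$ combined with the classical and logarithmic Hardy inequalities of Lemma \ref{lemmaloghrdy}.

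For (i), I would argue by contradiction. Suppose there is a sequence $u_n$ with $\int(|\nabla u_n|^2+u_n^2/y^2)=1$, $(u_n,\chi_M\Lambda Q)=0$, and $\int|Au_n|^2\to 0$. After extraction of a weakly convergent subsequence and use of the compact embedding $\dot H^1_{\rm rad}\cap\{u/y\in L^2\}\hookrightarrow L^2_{\rm loc}(y\,dy)$, one obtains a limit $u_\infty$ with $Au_\infty=0$. Since $A(\Lambda Q)=0$ and the radial kernel of the first-order operator $A$, restricted to functions vanishing at the origin, is one-dimensional by direct ODE integration, $u_\infty=c\Lambda Q$. The orthogonality $(u_\infty,\chi_M\Lambda Q)=0$ then forces $c=0$; strong convergence (consequence of $\int|Au_n|^2\to 0$ together with the orbital-stability type coercivity away from the kernel direction) contradicts the normalization. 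This treats $k=1$ and $k\geq 2$ uniformly because the test function $\chi_M\Lambda Q$ is compactly supported regardless of whether $\Lambda Q\in L^2$.

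For parts (ii) and (iii), I would compute directly from
\[
|Au|^2 = (\partial_y u)^2 - \frac{2V^{(1)}}{y}\,u\,\partial_y u + \frac{(V^{(1)})^2}{y^2}\,u^2,
\]
integrating against the weight $1/y^2$ (for (ii)) or $1/(y^2(1+|\log y|)^2)$ (for (iii)) with the 2D radial measure. The cross term is transformed via $\partial_y(u^2)=2u\,\partial_y u$ into a pure potential term $\int V_{\rm eff}(y)\,u^2\,y\,dy$, with $V_{\rm eff}$ explicit in $V^{(1)}$ and $(V^{(1)})'$; boundary contributions at $0$ and $\infty$ vanish thanks to the finiteness hypotheses \eqref{eoheogh} respectively \eqref{ionoeghe}. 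Combined with the other two terms this produces $\int[(\partial_y u)^2+\tilde V(y)u^2]\,w(y)\,y\,dy$ with an explicit positive weight $w$; the asymptotic values $V^{(1)}(0)=\pm k$, $V^{(1)}(\infty)=\mp k$ ensure $\tilde V(y)\gtrsim 1/y^2$ at both ends, yielding the desired Hardy type bound away from any finite annulus. The local deficit on such an annulus is absorbed using part (i). In case (iii) the same identity applies but picks up an extra term from differentiating the log weight, which is controlled by \eqref{harfylog}. The last inequality in \eqref{estdeux} then follows by applying (ii) to $v=Au$: the bound $\int |\nabla(Au)|^2 + \int |Au|^2/(y^2(1+y^2))\lesssim |A^*Au|_{L^2}^2$ comes from $A^*Au=\partial_y(Au)+(1+V^{(1)})(Au)/y$ together with Hardy.

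The main obstacle is the case $k=1$, where $V^{(1)}(\infty)=-1$ makes $(V^{(1)})^2=1$ at infinity, so the effective potential $\tilde V$ offers no room beyond $1/y^2$ and the naive weight $1/y^2$ is borderline there. This is precisely why inequality (iii) must be stated with the logarithmic weight $1/(1+|\log y|)^2$: the slack created by this weight is only just enough to absorb the leftover terms after integration by parts, and the corresponding logarithmic corrections must be handled via Lemma \ref{lemmaloghrdy}. Making the positivity of the effective weighted potential quantitative and uniform in $y$, while carefully tracking which error terms fall into the local region handled by (i) and which can be absorbed by the logarithmic Hardy inequality, is the delicate computational step.
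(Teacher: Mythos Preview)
Your strategy for (i) via compactness is sound and matches the paper (it is precisely the coercivity \eqref{coercivityneergy} of the linearized energy).

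The gap is in how you propose to close (ii) and (iii). After your integration-by-parts identity you are left with a compact error term, say $\int_{\text{annulus}} u^2$ or more generally $\int \frac{u^2}{1+y^5}$. Part (i) does control this: $\int \frac{u^2}{1+y^5}\lesssim \int \frac{u^2}{y^2}\leq C(M)\int |Au|^2$. But the right-hand sides of \eqref{estun} and \eqref{estdeux} are $\int \frac{|Au|^2}{y^2}$ and $\int \frac{|Au|^2}{y^2(1+y^2)}+\int |\nabla(Au)|^2$ respectively, neither of which dominates $\int |Au|^2$ globally (nothing forces decay of $Au$ at infinity). So invoking (i) trades the compact error for a quantity you cannot absorb back into the desired right-hand side. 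The paper closes (iii) by running a \emph{second} compactness/contradiction argument: one first proves the subcoercivity bound
\[
\int\frac{|Au|^2}{y^2(1+y^2)}+\int|\nabla (Au)|^2 \;\gtrsim\; \int\frac{|\partial_y u|^2}{y^2(1+|\log y|)^2}+\int\frac{u^2}{y^4(1+|\log y|)^2}-\int\frac{u^2}{1+y^5},
\]
obtained by splitting $u=\chi u+(1-\chi)u$ and rewriting $Au=-y\partial_y(u/y)+\frac{V^{(1)}-1}{y}u$ near the origin, $Au=-\partial_y u-\frac{u}{y}+\frac{V^{(1)}+1}{y}u$ near infinity. Then a normalized sequence violating \eqref{estdeux} has a weak limit $u_\infty=\alpha\Lambda Q$, with $\alpha=0$ forced by orthogonality but $\alpha\neq 0$ forced by the subcoercivity bound combined with compactness of the error term. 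The point is that the compact error is what makes the limit nontrivial in the contradiction; it is not eliminated beforehand.

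A smaller issue: the final inequality in \eqref{estdeux} does not follow from applying (ii) to $v=Au$, since $Au$ need not be orthogonal to $\chi_M\Lambda Q$. It follows from the \emph{unconditional} coercivity \eqref{eq:coerc} of the conjugate operator $\tilde H=AA^*$; your expansion of $A^*(Au)$ together with Hardy is exactly that computation, but it is not an instance of (ii).
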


\begin{remark}
\label{rkc} The norm \fref{eoheogh} is finite for $u=w$ for $k\geq 2$. For $k=1$, the finitness of the $\H^2$ norm implies that $$\frac{w}{y}, \nabla (Aw)\in \dot{H}^1$$ and hence the norm \fref{ionoeghe} is finite using \fref{harfylog}.
\end{remark}

\begin{proof} \fref{eq:orb} is equivalent to \fref{coercivityneergy} ie the coercitivity of the linearized energy. The proof of the global Hardy type inequality \fref{estun}, \fref{estdeux} with $c(M)$ follows as in Rodnianski-Sterbenz' \cite{RS} Appendix for $k\geq 3$. The cases $k=1,2$ require some more attention. We treat $k=1$ which is the most delicate case and leave $k=2$ to the reader.\\
We claim the key subcoercivity property:
\bea
\label{keyestnatappendix}
\nonumber& &  \int\frac{|Au|^2}{y^2(1+y^2)}+\int|\nabla (Au)|^2\\
 & \geq&  C\left[\int\frac{|\pa_y u|^2}{y^2(1+|\log y|)^2}+ \int \frac{|u|^2}{y^4(1+|\log y|)^2}-\int \frac{|u|^2}{1+y^5}\right].
\eea
Assume \fref{keyestnatappendix}, then \fref{estdeux} follows by contradication. Let $M>0$ fixed and consider a sequence $u_n$ such that 
\be
\label{normlaization}
\int\frac{|\pa_y u_n|^2}{y^2(1+|\log y|)^2}+ \int \frac{|u_n|^2}{y^4(1+|\log y|)^2}=1,  \ \ (u_n,\chi_M\Lambda Q)=0,
\ee and 
\be
\label{seeumporgpo}
  \int\frac{|Au_n|^2}{y^2(1+y^2)}+\int|\nabla (Au_n)|^2\leq \frac{1}{n},
  \ee
   then by semicontinuity of the norm, $u_n$ weakly converges on a subsequence to $u_{\infty}\in H^1_{loc}$ solution to $Au_{\infty}=0.$ $u_{\infty}$ is smooth away from the origin and hence the explicit integration of the ODE and the regularity assumption at the origin $u_{\infty}\in H^1_{loc}$ implies $$u_{\infty}=\alpha \Lambda Q.$$ On the other hand, from the uniform bound \fref{normlaization} together with the local compactness of Sobolev embeddings, we have up to a subsequence: $$\int \frac{|u_n|^2}{1+y^5}\to \int \frac{|u_{\infty}|^2}{1+y^5} \ \ \mbox{and} \ \ (u_n,\chi_M\Lambda Q)\to (u_{\infty},\chi_M\Lambda Q).$$ We thus conclude that  $$\alpha(\Lambda Q,\chi_M\Lambda Q)=(u_{\infty},\chi_M\Lambda Q)=0 \ \ \mbox{and thus} \ \ \alpha=0.$$ On the other hand, 
 from the subcoercitivity property \fref{keyestnatappendix} and \fref{normlaization}, \fref{seeumporgpo}
 $$\alpha^2\int \frac{|\Lambda Q|^2}{1+y^5}=\int \frac{|u_{\infty}|^2}{1+y^5}\geq C>0 \ \ \mbox{and thus} \ \ \alpha\neq 0.$$ A contradiction follows. Finally, the last step in \fref{estdeux} is a direct consequence of \fref{eq:coerc} ie the structure of the conjuguate Hamiltonian $\tilde{H}$.\\
 {\it Proof of \fref{keyestnatappendix}}: Let a smooth cut off function $\chi(y)=1$ for $y\leq 1$, $\chi(y)=0$ for $y\geq 2$, and consider the decomposition: $$u=u_1+u_2=\chi u+(1-\chi u).$$ Then from \fref{harfylog}:
\be
\label{fihoehyeog}
  \int\frac{|Au|^2}{y^2(1+y^2)}+\int|\nabla (Au)|^2\geq C\left[ \int \frac{|Au|^2}{y^2(1+y^2)}+\frac{|Au|^2}{y^2(1+|\log y|)^2}\right].
  \ee
   For the first term, we rewrite:
\bee
\int \frac{|Au|^2}{y^2(1+y^2)}& \geq& \int \frac{|Au_1|^2}{y^2}+2\int \frac{(Au_1)(Au_2)}{y^2(1+y^2)}\\
& \geq & C\left[\int \frac{|y\pa_y\left(\frac{u_1}{y}\right)|^2}{y^2}-\int\frac{|V^{(1)}-1|^2}{y^2}|u_1|^2-\int_{1\leq y \leq 2}|u|^2\right]
\eee
where in the last step we integrated by parts the quantity: $$(Au_1)(Au_2)=(\chi Au-\chi'u)((1-\chi)Au+\chi'u)\geq \chi (Au) \chi'u-\chi'u(1-\chi)(Au)-(\chi')^2u^2.$$ We hence conclude from $|V^{(1)}(y)-1|\lesssim y$ for $y\leq 1$ and the Hardy inequality \fref{harfylog} applied to $\frac{u_1}{ y}$ that: 
\be
\label{firstestimate}
\int \frac{|Au|^2}{y^2(1+y^2)}\geq C\left[\int\frac{|u_1|^2}{y^4(1+|\log y|)^2}-\int_{y\leq 2}|u|^2\right].
\ee
Similarily we estimate:
\bea
\label{estprlimemiddle}
\nonumber \int \frac{|Au|^2}{y^2(1+|\log y|)^2}& \geq& \int \frac{|Au_2|^2}{y^2(1+|\log y|)^2}+2\int \frac{(Au_1)(Au_2)}{y^2(1+|\log y|)^2}\\
\nonumber & \geq & C\left[\int \frac{1}{y^2(1+|\log y|)^2}|\pa_yu_2+\frac{u_2}{y}|^2-\int\frac{|V^{(1)}+1|^2}{y^2(1+|\log y|)^2}|u_2|^2-\int_{1\leq y \leq 2}|u|^2\right]\\
& \geq & C\left[\int \frac{|\pa_yu_2|^2}{y^2(1+|\log y|)^2}+\int\frac{|u_2|^2}{y^4(1+|\log y|)^2}-\int \frac{|u_2|^2}{y^6(1+|\log y|)^2}
\right]
\eea
where we integrated by parts for the last step and used the bound $|V^{(1)}(y)+1|\lesssim \frac{1}{y^2}$ for $y\geq 1$. \fref{fihoehyeog}, \fref{firstestimate} and \fref{estprlimemiddle} imply:
\be
\label{estprlimeojeoi}
\int\frac{|Au|^2}{y^2(1+y^2)}+\int|\nabla (Au)|^2\geq C\left[\int \frac{|u|^2}{y^4(1+|\log y|)^2}-\int \frac{|u|^2}{1+y^5}\right].
\ee
This implies using again \fref{harfylog}:
\bee
\int\frac{|\pa_y u|^2}{y^2(1+|\log y|)^2} & \lesssim & \int\frac{|A u|^2}{y^2(1+|\log y|)^2}+\int\frac{|u|^2}{y^4(1+|\log y|)^2}\\
& \lesssim & \int |\nabla (Au)|^2+\int\frac{|Au|^2}{y^2(1+y^2)}+\int \frac{|u|^2}{1+y^5}
\eee
which together with \fref{estprlimeojeoi} concludes the proof of \fref{keyestnatappendix}.\\
This concludes the proof of Lemma \ref{lemmahardy1}.
\end{proof}

\begin{lemma}[Control of the $\partial_t$ derivative]
\label{lemmahardy}
There holds:
\be
\label{controldt}
\int|\nabla \partial_t w|^2+\int\frac{|\partial_t w|^2}{r^2}\leq C(M)\left[ \int (\partial_t W)^2+\int |A_\lambda^*W|^2\right].
\ee
\end{lemma}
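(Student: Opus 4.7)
\medskip

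\noindent\textbf{Proof plan for Lemma \ref{lemmahardy}.}
The plan is to commute $\partial_t$ past the time-dependent operator $A_\lambda$, reducing the estimate for $\partial_t w$ to one for $\partial_t W$ modulo terms governed by $|A_\lambda^*W|_{L^2}$, and then to invoke the coercivity of $H=A^*A$ (Lemma \ref{lemmahardy1}(i)) after rescaling. The key algebraic identity is
$$
A_\lambda \partial_t w \;=\; \partial_t W \;-\; \frac{\partial_t V^{(1)}_\lambda}{r}\, w,
$$
obtained by differentiating $W=A_\lambda w$ in $t$ and using $\partial_t A_\lambda = \partial_t V^{(1)}_\lambda/r$. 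By \eqref{eq:V3} we have the pointwise bound $|\partial_t V^{(1)}_\lambda/r| \lesssim (b/\lambda^2)\,\bigl(r^2/(1+r^4)\bigr)_\lambda$, which is localized at scale $r\sim\lambda$.

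\smallskip

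The second step is to absorb the commutator error into $|A_\lambda^*W|_{L^2}$. Since $\e$ satisfies the orthogonality $(\e,\chi_M\Lambda Q)=0$ in the rescaled variable $y=r/\lambda$, Lemma \ref{lemmahardy1}(ii)–(iii) yields $\int \e^2/y^4 \lesssim C(M)|A^*A\e|_{L^2}^2$, which unwinds in unrescaled variables to $\int w^2/r^4 \lesssim C(M)\,\lambda^2 \int |A_\lambda^*W|^2$. Combined with the localization of $\partial_t V^{(1)}_\lambda/r$ this gives
$$
\int \frac{|\partial_t V^{(1)}_\lambda|^2}{r^2}\, w^2 \;\lesssim\; \frac{b^2}{\lambda^4}\cdot \lambda^6\int |A_\lambda^*W|^2 \;=\; b^2\lambda^2 \int |A_\lambda^*W|^2 \;\lesssim\; \int |A_\lambda^*W|^2,
$$
using the bootstrap smallness of $b$ and $\lambda$. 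Squaring the commutator identity therefore gives
$$
\int |A_\lambda \partial_t w|^2 \;\lesssim\; \int (\partial_t W)^2 \;+\; C(M) \int |A_\lambda^*W|^2.
$$

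\smallskip

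The third step is the coercive passage $\int |A_\lambda \partial_t w|^2 \gtrsim \int\bigl(|\nabla \partial_t w|^2 + (\partial_t w)^2/r^2\bigr)/C(M)$ via Lemma \ref{lemmahardy1}(i). The subtlety is that $\partial_t w$ does not literally satisfy the required orthogonality; using $\lambda \partial_t w = (\partial_s\e+b\Lambda\e)_\lambda$ and the exact orthogonality $(\partial_s\e,\chi_M\Lambda Q)=0$ obtained by differentiating \eqref{orthe} in $s$, the defect is
$$
(\partial_t w,(\chi_M\Lambda Q)_\lambda) \;=\; b\lambda\,(\Lambda\e,\chi_M\Lambda Q) \;=\; O\bigl(b\lambda\, C(M)\,|W|_{L^2}\bigr).
$$
I would therefore decompose $\partial_t w = \alpha\,(\Lambda Q)_\lambda + (\partial_t w)^\perp$ with $(\partial_t w)^\perp$ orthogonal to $(\chi_M \Lambda Q)_\lambda$ and $|\alpha|\lesssim b\,C(M)\,|W|_{L^2}$. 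Lemma \ref{lemmahardy1}(i) then applies to $(\partial_t w)^\perp$, while the $\alpha(\Lambda Q)_\lambda$ piece contributes a rank-one term whose scale-invariant norm is controlled by $\alpha^2$ (with a mild $\log M$ loss for $k=1$, handled by the logarithmic Hardy inequality \eqref{harfylog}) and therefore is absorbed into $C(M)\int|A_\lambda^*W|^2$ by the bound on $|W|_{L^2}$ that follows from Lemma \ref{lemmahardy1}(i) applied to $w$.

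\smallskip

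The main obstacle I expect is the $k=1$ case, where $\Lambda Q(y)\sim 1/y$ fails to belong to $L^2$ at infinity and the Hardy inequalities pick up logarithmic losses. These losses are tracked by the $(1+|\log y|)^{-2}$ weights of Lemma \ref{lemmahardy1}(iii); the decisive point is that the weight $(r^2/(1+r^4))_\lambda$ inherited from $\partial_t V^{(1)}_\lambda$ is sharply localized around $r\sim\lambda$, so the problematic far-field region $r\gg\lambda$ does not enter the error estimate. With this care, the three steps combine to give the stated bound.
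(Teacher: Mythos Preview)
Your approach is essentially the same as the paper's: commute $\partial_t$ with $A_\lambda$, control the commutator error $\frac{\partial_t V^{(1)}_\lambda}{r}\,w$ via the Hardy-type bounds of Lemma~\ref{lemmahardy1} applied to $\e$, and handle the orthogonality defect of $\partial_t w$ by differentiating the orthogonality condition. The paper packages step~3 slightly differently, writing the coercivity of $H$ as
\[
\int |A_\lambda \partial_t w|^2 \;\ge\; c(M)\Big(\int|\nabla \partial_t w|^2+\int\frac{|\partial_t w|^2}{r^2}\Big)\;-\;\frac{1}{c(M)\lambda^4}\,(\partial_t w,(\chi_M\Lambda Q)_\lambda)^2,
\]
and then bounding the correction term together with the commutator error by $\frac{b^2}{\lambda^2}\int_{y\le 2M}|\e|^2 + \frac{b^2}{\lambda^2}\int|\e|^2\frac{y^2}{1+y^8}$, both of which are absorbed into $C(M)\,b^2\,|A_\lambda^*W|_{L^2}^2$ via \eqref{estdeux}. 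This is equivalent to your decomposition $\partial_t w=\alpha(\Lambda Q)_\lambda+(\partial_t w)^\perp$.

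Two points to clean up. First, your scaling in step~2 is off: the correct chain is $\int\frac{|\partial_t V^{(1)}_\lambda|^2}{r^2}\,w^2 \lesssim \frac{b^2}{\lambda^2}\int\frac{\e^2}{y^4(1+|\log y|)^2} \lesssim C(M)\,\frac{b^2}{\lambda^2}\,|A^*A\e|_{L^2}^2 = C(M)\,b^2\,|A_\lambda^*W|_{L^2}^2$, not the $\lambda^6$ expression you wrote. Second, in step~3 your route through $|W|_{L^2}$ does not close: $|W|_{L^2}$ is \emph{not} controlled by $|A_\lambda^*W|_{L^2}$ (the coercivity of $\tilde H_\lambda$ only gives a weighted bound $\int W^2/r^2$, cf.\ \eqref{eq:coerc}). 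The fix is immediate and is what the paper does: since the defect is $(\partial_t w,(\chi_M\Lambda Q)_\lambda)=-b\lambda\,(\e,\Lambda(\chi_M\Lambda Q))$, bound $\int_{y\le 2M}|\e|^2$ directly by $C(M)\,|A^*A\e|_{L^2}^2$ via \eqref{estdeux} (as you already did in step~2), rather than interposing $|W|_{L^2}$. Also, your worry about a $\log M$ loss for $k=1$ in the $\alpha(\Lambda Q)_\lambda$ contribution is unnecessary: $\int(|\nabla\Lambda Q|^2+(\Lambda Q)^2/y^2)$ is finite for all $k\ge 1$.
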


\begin{proof} We compute from \fref{defaloambinot}:
$$\partial_tW=A(\partial_tw)+\frac{\partial_t\vul w}{r}$$ and hence:
\be
\label{vhieoheo}
\int (A\partial_t w)^2\lesssim \int (\partial_tW)^2+\int (\frac{\partial_t\vul w}{r})^2.
\ee 
We now recall the following coercitivity property of the linearized Hamiltonian:
$$\int (A\partial_t w)^2\geq c(M)\left(\int|\nabla \partial_t w|^2+\int\frac{|\partial_t w|^2}{r^2}\right)-\frac{1}{c(M)\lambda ^4}(\partial_tw,(\chi_M\Lambda Q)_{\lambda})^2.$$ From the choice of orthogonality condition \fref{orthe}:
\bee
\left|(\partial_tw,(\chi_M\Lambda Q)_{\lambda})\right|& = & \left|(w,\partial_t((\chi_M\Lambda Q)_{\lambda}))\right|=\frac{b}{\lambda} \left|(w,(\Lambda(\chi_M\Lambda Q))_{\lambda})\right|\\
& \leq & c(M)b\lambda \left(\int_{y\leq 2M} |\e|^2\right)^{\frac{1}{2}}.
\eee
Combining this with \fref{vhieoheo} and the pointwise bound \fref{eq:V3} yields: 
\be
\label{hgigrghohg}
\int|\nabla \partial_t w|^2+\int\frac{|\partial_t w|^2}{r^2}\lesssim \int (\partial_tW)^2+\frac{b^2}{\lambda ^2}\int |\e|^2\left[{\bf 1}_{y\leq M} +\frac{y^{4}}{y^2(1+y^{8})}\right].
\ee
We then estimate from \fref{estdeux}:
\bee
\int |\e|^2\left[{\bf 1}_{y\leq M}+\frac{y^{4}}{y^2(1+y^{8})}\right] &\lesssim &\int \frac{|\e|^2}{y^4(1+|\log y|^2)}\\
\leq C(M)\int |A^*A\e|^2= \lambda ^2\int |A^*W|^2,
\eee
which together with \fref{hgigrghohg} concludes the proof of \fref{controldt}.
\end{proof}

\end{appendix}



\begin{thebibliography}{10}

\bibitem{ADHM} Atiyah, M.; Drinfield, V. G; Hitchin, N..; Manin, Y. I. Construction of instantons.
Phys. Lett. A65 (1978), 185-187. 

\bibitem{BP} Belavin A.A., Polyakov A.M., {Metastable
states of two-dimensional isotropic ferromagnets.} JETP Lett.
\textbf{22} (1975), 245-247 (Russian).

\bibitem{BPST} Belavin A.A., Polyakov A.M., Schwarz, A.S, Tyupkin Y.S, Pseudoparticle solutions of the Yang-Mills equation, Phus. Lett B59, 85 (1975).

\bibitem{B1} Bizon, P.; Chmaj, T.; Tabor, Z., Formation of singularities for equivariant $(2+1)$-dimensional wave maps into the 2-sphere. Nonlinearity 14 (2001), no. 5, 1041--1053. 

\bibitem{B2} Bizon, P.; Ovchinnikov, Y. N.; Sigal, I. M., Collapse of an instanton. Nonlinearity 17 (2004), no. 4, 1179--1191.

\bibitem{Bo}  Bogomol'nyi, E.B.,
{The stability of classical solutions.} Soviet J. Nuclear Phys.
\textbf{24}  (1976), no. 4, 449--454 (Russian).

\bibitem{CST} Cazenave T., Shatah J., Tahvildar-Zadeh S.,
{Harmonic maps of the hyperbolic space and development of singularities in wave 
maps and Yang-Mills fields.} Ann. I.H.P., section A \textbf{68} (1998), no. 3, 315--349.


\bibitem{CT} Christodoulou, D.; Tahvildar-Zadeh, A. S., On the regularity of spherically symmetric wave maps. Comm. Pure Appl. Math. 46 (1993), no. 7, 1041--1091. 

\bibitem{Cote} C\^ote, R., Instability of nonconstant harmonic maps for the $(1+2)$-dimensional equivariant wave map system. Int. Math. Res. Not. 2005, no. 57, 3525--3549.

\bibitem{CKM} C\^ote, R.; Kenig, C. E.; Merle, F., Scattering below critical energy for the radial 4D Yang-Mills equation and for the 2D corotational wave map system. Comm. Math. Phys. 284 (2008), no. 1, 203--225

\bibitem{DK} Donaldson, S. K.; Kronheimer, P. B. Geometry of Four-manifolds, Oxford, Clarendon Press, 1990.

\bibitem{ES} Eells, J.; Lemaire, L., Two reports on harmonic maps. World Scientific Publishing Co., Inc., River Edge, NJ, 1995.

\bibitem{He} H\'elein, F., R\'egularit\'e des applications faiblement harmoniques entre une surface et une sph\'ere, C. R. Acad. Sci. Paris S\'er. I Math. 311 (1990), no. 9, 519--524.

\bibitem{IL} Isenberg J.; Liebling, S.L., 
\emph{Singularity Formation in 2+1 Wave Maps.}
J. Math. Phys. \textbf{43} (2002), 678--683. 

\bibitem{KlMa}Klainerman, S., Machedon, M., On the regularity properties of a model problem related to wave maps. Duke Math. J. 87 (1997), no. 3, 553--589

\bibitem{KS} Klainerman S., Selberg, Z., Remark on the optimal regularity for equations of wave maps type. Comm. Partial Differential Equations 22 (1997), no. 5-6, 901--918.

\bibitem{KavianWeissler} Kavian, O.; Weissler, F. B., Finite energy self-similar solutions of a nonlinear wave equation. Comm. Partial Differential Equations 15 (1990), no. 10, 1381--1420.
 l
\bibitem{KM} Kenig, C.E.; Merle, F.,  Global well-posedness, scattering and blow-up for the energy-critical focusing non-linear wave equation. Acta Math. 201 (2008), no. 2, 147--212.

\bibitem{KSS} Krieger J., Schlag, W., Concentration compactness for critical wave maps, preprint, arXiv:0908.2474.

\bibitem{KST1} Krieger, J.; Schlag, W.; Tataru, D. Renormalization and blow up for charge one equivariant critical wave maps, Invent. Math. 171 (2008), no. 3, 543--615.

\bibitem{KST2} Krieger, J.; Schlag, W.; Tataru, D. Renormalization and blow up for the critical Yang-Mills problem, Adv. Math. 221 (2009), no. 5, 1445--1521.

\bibitem{LMR} Lemou, M.; Mehats, F.; Rapha\"el, P., Stable self
similar blow up solutions to the relativistic gravitational
Vlasov-Poisson system,  J. Amer. Math. Soc. 21 (2008), no. 4, 1019--1063.

\bibitem{MS} Manton, N.; Sutcliffe, P. Topological solitons. Cambridge University Press, 2004. 

\bibitem{MM1} Martel, Y.; Merle, F., Blow up in finite time and dynamics of blow up solutions for the $L\sp 2$-critical generalized KdV equation. J. Amer. Math. Soc. 15 (2002), no. 3, 617--664.

\bibitem{MR1} Merle, F.; Rapha\"el, P., Blow up dynamic and upper bound on the blow up rate for critical nonlinear Schr\"odinger equation, Ann. Math. 161 (2005), no. 1, 157--222.

\bibitem{MR2} Merle, F.; Rapha\"el, P., Sharp upper bound on the blow up rate for critical nonlinear Schr\"odinger equation, Geom. Funct. Anal. 13 (2003), 591-642.

\bibitem{MR3} Merle, F.; Rapha\"el, P., On universality of blow up profile for $L^2$ critical nonlinear Schr\"odinger equation, Invent. Math. 156, 565-672 (2004).

\bibitem{MR4} Merle, F.; Rapha\"el, P., Sharp lower bound on the blow up rate for critical nonlinear Schr\"odinger equation, J. Amer. Math. Soc. 19 (2006), no. 1, 37--90.

\bibitem{MR5} Merle, F.; Rapha\"el, P.,  Profiles and quantization of the blow up mass for critical nonlinear Schr\"odinger equation, Comm. Math. Phys.  253  (2005),  no. 3, 675--704.

\bibitem{Mo} Morrey, C. B. Jr., The problem of Plateau on a Riemannian manifold, Ann. of Math. 49 (1948) 807-851.

\bibitem{PZ} Piette, B.; Zakrzewski, W. J., Shrinking of solitons in the $(2+1)$-dimensional $S^2$ sigma model. Nonlinearity 9 (1996), no. 4, 897--910.
 
\bibitem{Perelman} Perelman, G., On the formation of singularities in solutions of the critical nonlinear Schr\"odinger equation. Ann. Henri Poincar\'e 2 (2001), no. 4, 605--673.

\bibitem{R1} Rapha\"el, P., {\it Stability of the log-log bound for blow up solutions
  to the critical non linear Schr\"odinger equation},  Math. Ann.  331  (2005),  no. 3, 577--609.
  
  \bibitem{RS} Rodnianski, I., Sterbenz, J., On the formation of singularities in the critical $O(3)$ $\sigma$-model, to appear Ann. Math.
  
\bibitem{Sh}  Shatah, J., Weak solutions and development of singularities of the ${\rm SU}(2)$ $\sigma$-model. Comm. Pure Appl. Math. 41 (1988), no. 4, 459--469
  
  \bibitem{ST} Shatah, J.; Tahvildar-Zadeh, A. S., On the Cauchy problem for equivariant wave maps. Comm. Pure Appl. Math. 47 (1994), no. 5, 719--754.
    
 \bibitem{SO} Sigal, I. M.; Ovchinnikov, Y.N, On collapse of Wave Maps. preprint, arXiv:0909.3085   
    
 \bibitem{St-Ta} Sterbenz J., Tataru D., Energy dispersed arge data wave maps in $2+1$ dimensions, preprint, arXiv:0906.3384.
    
 \bibitem{St-Ta1} Sterbenz J., Tataru D., Regularity of Wave-Maps in dimension $2+1$, preprint, arXiv:0907.3148.
     
  \bibitem{Struwe} Struwe, M., Equivariant wave maps in two space dimensions. Dedicated to the memory of J\"urgen K. Moser. Comm. Pure Appl. Math. 56 (2003), no. 7, 815--823.

\bibitem{Tao} Tao, T., Global regularity of wave maps. II. Small energy in two dimensions. Comm. Math. Phys. 224 (2001), no. 2, 443--544.

\bibitem{Taor} Tao, T., Geometric renormalization of large energy wave maps.

\bibitem{Tao27} Tao, T., Global regularity of wave maps III-VII, preprints, arXiv:0908.0776.

\bibitem{T} Tataru, D., On global existence and scattering for the wave maps equation. Amer. J. Math. 123 (2001), no. 1, 37--77.

\bibitem{U} Uhlenbeck K., Removable singularities in Yang-Mills fields. 
Comm. Math. Phys. 83 (1982), no. 1, 11--29.

\bibitem{Wa} Ward, R. Slowly moving lumps in the ${\Bbb CP}1$ model in $(2+1)$ dimensions, Phys. Lett. B158 (1985),
424--428.
\bibitem{W} Witten, E., Some exact multipseudoparticle solutions of the classical Yang-Mills theory. Phys. Rev. Lett.
38 (1977) 121--124.
\end{thebibliography}
\end{document}